\def\H{{\cal H}}
\def\L{\mathcal{L}}
\def\F{\textbf{F}}
\def\R{\mathbb{R}}
\def\H2{H^2(\R^N)}
\def\L2{L^2(\R^N)}
\def\to{\rightarrow}
\def\ET{\textbf{E}}
\def\n{\hat{n}_0}
       \newtheorem{lemma}{\bf Lemma}[section]
       \newtheorem{theorem}{\bf Theorem}[section]
       \newtheorem{proposition}{\bf Proposition}[section]
       \newtheorem{definition}{\bf Definition}[section]
       \newtheorem{remark}{\bf Remark}[section]
       \numberwithin{equation}{section}
\begin{document}

\title{{\LARGE Steady hydrodynamic model of
semiconductors \\
with sonic boundary} \footnotetext{\small
*Corresponding author.}
 \footnotetext{\small E-mail addresses: lijy645@nenu.edu.cn (J. Li), \ \  ming.mei@mcgill.ca (M. Mei), \\
 zhanggj100@nenu.edu.cn (G. Zhang), \ \
 zhangkj201@nenu.edu.cn (K. Zhang)} }

\author{{Jingyu Li$^1$, Ming Mei$^{2,3}$, Guojing Zhang$^1$$^\ast$  and  Kaijun Zhang$^1$}\\[2mm]
\small\it $^1$School of Mathematics and Statistics, Northeast Normal University,\\
\small\it   Changchun 130024, P.R.China \\
\small\it $^2$Department of Mathematics, Champlain College Saint-Lambert,\\
\small\it     Saint-Lambert, Quebec, J4P 3P2, Canada\\
\small\it $^3$Department of Mathematics and Statistics, McGill University,\\
\small\it     Montreal, Quebec, H3A 2K6, Canada }

\date{}

\maketitle

\begin{abstract}

In this paper, we study the well-posedness/ill-posedness and
regularity of stationary solutions to the hydrodynamic model of
semiconductors represented by Euler-Poisson equations with sonic
boundary, and make a classification on these solutions. When the
doping profile is subsonic, we prove that, the  corresponding
steady-state equations with sonic boundary possess a unique interior
subsonic solution, and at least one interior supersonic solution;
and if  the relaxation time is large and the
doping profile is a small perturbation of constant, then the equations admit infinitely  many interior transonic shock solutions;
 while,  if the relaxation time is small enough and the doping
 profile is a subsonic constant, then the equations admits infinitely  many interior
 $C^1$ smooth transonic solutions, and no transonic shock solution exists. When the
doping profile is supersonic, we show that the system does not hold
any subsonic solution; furthermore, the system
 doesn't admit
any supersonic solution or any transonic solution if such a
supersonic doping profile is small enough or the relaxation time
 is small, but it has at least one supersonic solution and infinitely
many transonic solutions if the supersonic doping profile is close
to the sonic line and the  relaxation time is large. The interior
subsonic/supersonic solutions all are globally $C^{\frac{1}{2}}$
H\"older-continuous, and the H\"older exponent $\frac{1}{2}$ is
optimal. The non-existence of any type solutions in the case of
small doping profile or small relaxation time indicates that the
semiconductor effect for the system is remarkable and cannot be
ignored.  The proof
 for the existence of subsonic/supersonic solutions  is the technical compactness analysis
combining   the energy method and the phase-plane analysis, while
the approach for the existence of multiple transonic solutions is
artfully constructed.  The results obtained significantly improve and develop
the existing studies.

\

\indent \textbf{Keywords}: Euler-Poisson equations, hydrodynamic
model of semiconductors, sonic boundary, subsonic solutions,
supersonic solutions, transonic solutions with shock, $C^1$ smooth
transonic solution.

\indent \textbf{AMS (2010) Subject Classification}: 35R35, 35Q35,
76N10, 35J70


\end{abstract}

\newpage

\tableofcontents


\baselineskip=18pt

\section{Introduction}

The hydrodynamic model of semiconductors, first introduced by
Bl{\o}tekj{\ae}r in \cite{B}, is usually described  for  the
charged fluid particles such as electrons and holes in semiconductor
devices \cite{B,Jungel,Markowich-Ringhofer-Schmeiser} and positively
and negatively charged ions in plasma \cite{S-M}. The governing
equations are  Euler-Poisson equations as follows
\cite{Guo,HMWY,HMWY2,LMM}:
\begin{equation} \label{hydrodynamic}
\left\{ \begin{aligned}
         &\rho_t+(\rho u)_x =0,\\
         &(\rho u)_t+(\rho u^2+P(\rho))_x=\rho E-\frac{\rho
         u}{\tau},\\
         & E_x=\rho-b(x).
\end{aligned} \right.
\end{equation}
Here $\rho$, $u$ and $E$ represent the electron density, the
velocity and the electric field, respectively. $P(\rho)$ is the
pressure function of the electron density. When the system is
isothermal, the pressure function is physically represented by
\begin{equation}\label{pressure}
P(\rho)=T\rho, \ \mbox{ with the constant temperature } T>0.
\end{equation}
The function $b(x)>0$ is the doping profile standing for the density
of impurities in semiconductor device. The constant $\tau>0$ denotes
the momentum relaxation time.

In this series of study, we are mainly interested in investigating
the existence of the solutions to \eqref{hydrodynamic} with sonic
boundary, and the large-time behavior of the solutions. At the first
but important stage, we focus on the existence and classification of
all stationary solutions to the steady-state system of equations
with sonic boundary. This will be the main purpose of the present paper.

 In this paper, we consider the following steady-state equations to \eqref{hydrodynamic} in the  bounded domain $[0,1]$.
Denote $J=\rho u$, the current density, then we have the stationary
equations of \eqref{hydrodynamic} as follows
\begin{equation} \label{stationary}
\left \{\begin{array}{ll}
        J = \text{constant},\\
        \left(\dfrac{J^2}{\rho}+P(\rho)\right)_x=\rho E-\dfrac{J}{\tau}, \qquad x\in (0,1).\\
         E_x=\rho-b(x).
        \end{array} \right.
\end{equation}
Using the terminology from gas dynamics, we call
$c:=\sqrt{P'(\rho)}=\sqrt{T}>0$ the sound speed for $P(\rho)=T\rho$
(see \eqref{pressure}). Thus, the stationary flow of
\eqref{stationary} is called to be subsonic/sonic/supersonic, if the
fluid velocity satisfies
\begin{equation}\label{pressure-2}
\mbox{fluid velocity: } u=\frac{J}{\rho} \lesseqqgtr  c=\sqrt{P'(\rho)}=\sqrt{T}: \mbox{ sound speed}. \\
\end{equation}
We consider the current driven flow, thus the current density $J$ is
a prescribed constant. Note that if $(\rho(x),E(x))$ is a solution
to \eqref{stationary} with a given constant current density $J$,
then $(\rho(1-x),-E(1-x))$ is a solution to \eqref{stationary} with
respect to $-J$ and $b(1-x)$. So, we may consider only the case of
$J>0$. Without loss of generality, let us assume throughout the
paper
\[
T=J=1.
\]
Thus, \eqref{stationary} is transformed to
\begin{equation} \label{1.5}
\left \{\begin{array}{ll}
        \left(1-\dfrac{1}{\rho^2}\right)\rho_x=\rho E-\dfrac{1}{\tau},\\
         E_x=\rho-b(x).
        \end{array} \right.
\end{equation}
From \eqref{pressure-2}, it can be identified that,  $\rho>1$ is for
the subsonic flow, $\rho=1$ stands for the sonic flow, and
$0<\rho<1$ represents for the supersonic flow. Therefore, our sonic
boundary conditions to \eqref{stationary} are proposed as follows
\begin{equation}\label{boundary}
\mbox{sonic boundary: } \ \ \rho(0)=\rho(1)=1.
\end{equation}
Dividing the first equation of \eqref{1.5} by $\rho$ and differentiating the resultant
equation with respect to $x$, and substituting the second equation of \eqref{1.5} to
this modified equation, then we have
\begin{equation} \label{elliptic}
\begin{cases}
\left[\left(\dfrac{1}{\rho}-\dfrac{1}{\rho^3}\right)\rho_x\right]_x
+\dfrac{1}{\tau}\left(\dfrac{1}{\rho}\right)_x- [\rho-b(x)]=0,\
\ x\in(0,1), \\
\rho(0)=\rho(1)=1 \ (\mbox{sonic boundary}).
\end{cases}
\end{equation}
When $\rho(x)>1$ or $0<\rho(x)<1$ for $x\in (0,1)$, the equation
\eqref{elliptic} is elliptic but degenerate at the sonic boundary.
When $\rho(x)>0$ varies around the sonic line $\rho=1$ for $x\in
(0,1)$, the system then changes its property and occurs phase
transitions. The degeneracy of \eqref{elliptic} on the boundary will
cause us some essential difficulty in the study of well-posedness
and regularity of the solutions, and the phenomena of structure of
solutions are really rich and interesting.

Throughout the paper we assume that the
doping profile $b(x)\in L^\infty(0,1)$  and denote
\[
\underline{b}:=\underset{x\in(0,1)}{\text{essinf }}b(x) \ \  \mbox{
and }  \ \ \overline{b}:=\underset{x\in(0,1)}{\text{esssup }}b(x).
 \]
Now we introduce the concepts of interior
subsonic/supersonic/transonic solutions.

\begin{definition}\label{definition-1}
$\rho(x)$  is called an interior subsonic (correspondingly, interior
supersonic)  solution of equation \eqref{elliptic}, if
$\rho(0)=\rho(1)=1$ but $\rho(x)\geq1$ (correspondingly,
$0<\rho(x)\leq1$) for $x\in (0,1)$, and $(\rho(x)-1)^2\in
H_0^1(0,1)$, and it holds that for any $\varphi\in H_0^1(0,1)$
\[
\int_0^1\Big(\frac{1}{\rho}-\frac{1}{\rho^3}\Big)\rho_x\varphi_x dx
+\frac{1}{\tau}\int_0^1\frac{\varphi_x}{\rho}dx+ \int_0^1(\rho-b)\varphi
dx=0,
\]
which is equivalent to
\begin{equation} \label{weak-solution}
\frac{1}{2}\int_0^1\frac{\rho+1}{\rho^3}\left((\rho-1)^2\right)_x\varphi_xdx
+\frac{1}{\tau}\int_0^1\frac{\varphi_x}{\rho}dx+ \int_0^1(\rho-b)\varphi
dx=0.
\end{equation}

\end{definition}
Once $\rho=\rho(x)$ is determined by equation \eqref{elliptic}, in
view of the first equation of \eqref{1.5}, the electric field $E(x)$ can be solved by
\begin{equation*}
E(x)=\left(\frac{1}{\rho}-\frac{1}{\rho^3}\right)\rho_x+\frac{1}{\tau\rho}
=\frac{(\rho+1)[(\rho-1)^2]_x}{2\rho^3}+\frac{1}{\tau\rho}.
\end{equation*}
In this way, we could obtain the interior subsonic/supersonic solutions to system \eqref{1.5}-\eqref{boundary}.

\begin{definition}\label{definition-2}
$\rho(x)>0$ is called  a $C^1$ transonic  solution of system
\eqref{1.5}-\eqref{boundary}, if $\rho(x)\in C^1(0,1)$ with $\rho(0)=\rho(1)=1$ and there exists a number $x_0\in (0,1)$ such that
\begin{equation*}
\rho(x)=\left \{\begin{array}{ll}
        \rho_{sup}(x), \ x\in(0,x_0),\\
        \rho_{sub}(x), \ x\in(x_0,1),
        \end{array} \right.
\end{equation*}
where $0<\rho_{sup}(x)\le 1$, $\rho_{sub}(x)\ge 1$ and
\begin{equation}\label{c1-transonic}
\rho_{sup}(x_0)=\rho_{sub}(x_0) \ \mbox{ and }
\ \rho_{sup}'(x_0)=\rho_{sub}'(x_0).
\end{equation}

$\rho(x)>0$ is called  a transonic shock solution of system
\eqref{1.5}-\eqref{boundary}, if $\rho(0)=\rho(1)=1$ and it is separated at a point
$x_0\in(0,1)$ in the form
\begin{equation*}
\rho(x)=\left \{\begin{array}{ll}
        \rho_{sup}(x), \ x\in(0,x_0),\\
        \rho_{sub}(x), \ x\in(x_0,1),
        \end{array} \right.
\end{equation*}
where $0<\rho_{sup}(x)\le 1$ and $\rho_{sub}(x)\ge 1$ satisfies the
entropy condition  at $x_0$
\begin{equation}\label{entropy}
0<\rho_{sup}(x_0^-)<1<\rho_{sub}(x_0^+),
\end{equation}
 and the Rankine-Hugoniot condition
\begin{equation}\label{RH}
\begin{split}
\rho_{sup}(x_0^-)+\frac{1}{\rho_{sup}(x_0^-)}&=\rho_{sub}(x_0^+)+\frac{1}{\rho_{sub}(x_0^+)},\\
E_{sup}(x_0^-)&=E_{sub}(x_0^+).
\end{split}
\end{equation}
\end{definition}
Set $\rho_l=\rho_{sup}(x_0^-)$ and $\rho_r=\rho_{sub}(x_0^+)$, a
simple computation from \eqref{RH} shows that
\begin{equation}\label{2.24}
\rho_l\rho_r=1.
\end{equation}

The existence of subsonic/supersonic/transonic solutions to the
steady-state Euler-Poisson equations for the hydrodynamic model of
semiconductors has been intensively studied. In 1990, Degond and
Markowich \cite{Degond-Markowich} first showed the existence of
subsonic solution when  the flow and its boundary are completely
subsonic. The uniqueness was obtained with a very strong subsonic
background, namely, $|J|\ll 1$. Then, the steady subsonic flows were
deeply studied with different boundaries as well as the higher
dimensions case in
\cite{Bae,Bae-Duan-Xie,Degond-Markowich2,Fang-Ito,Guo,Jerome,Nishibata-Suzuki},
see the references and therein. For the case of steady supersonic
flows, Peng and Violet \cite{Peng-Violet} obtained the existence and
uniqueness  of supersonic solution when the boundary is with a
strongly supersonic background (i.e. $J\gg 1$). On the other hand,
the case of steady transonic flows has been also paid a lot of
attention.  By a phase-plane analysis, Ascher {\it et al}
\cite{AMPS} first tested the existence of transonic solution when
the boundary is subsonic but the   constant background charge $b(x)$ is
supersonic, which was then extended by Rosini \cite{Rosini} for a
bit general case.  On the other hand,  by using the vanishing
viscosity limit method, Gamba  constructed  1-D transonic solutions
with transonic shocks in \cite{Gamba}, and  2-D transonic solutions
in \cite{Gamba2}, but the solutions as the limits of vanishing
viscosity yield boundary layers. Recently, Luo-Xin \cite{Luo-Xin}
and Luo-Rauch-Xie-Xin \cite{Luo-Rauch-Xie-Xin} studied  the
hydrodynamic model \eqref{hydrodynamic} of Euler-Poisson equations
without the effect of semiconductor, namely, the momentum equation
\eqref{hydrodynamic}$_2$ is missing the term of $-\frac{J}{\tau}$.
This means  the current density $J=0$ (the absence of semiconductor
effect for the device), or the relaxation time $\tau=\infty$ (the
huge relaxation time). Some interesting results on the
structure of steady solutions with non-sonic boundary are obtained.
Precisely, based on
 phase-plane analysis, Luo-Xin
\cite{Luo-Xin} thoroughly studied the
existence/non-existence, uniqueness/non-uniqueness of the transonic
solutions with one side supersonic boundary and the other side
subsonic boundary when the doping profile $b(x)$ is a constant
either in the supersonic regime or the subsonic regime. Some
restrictions on the boundary and the domain are also needed. Then,
Luo-Rauch-Xie-Xin \cite{Luo-Rauch-Xie-Xin}  showed the existence of
the transonic solution to the variable doping profile $b(x)$ which
is regarded as a small perturbation of the constant doping profile
$b(x)\equiv b$, and further proved  the time-asymptotic stability of
the transonic shock profiles.

In this paper, the model considered is with the semiconductor
effect, and the boundary is, in particular,  sonic. These features
make the study more difficult and different from the existing
studies. In fact, the elliptic equation \eqref{elliptic} is
degenerate at the boundary, but the equations considered in the
previous studies  are uniformly elliptic whatever in the supersonic
regime \cite{Peng-Violet} or the subsonic regime
\cite{Degond-Markowich}. On the other hand, when the doping profile
is sonic or supersonic, we realize that there is
 no any physical solution if the doping profile is
small or the relaxation time is small, which is totally different
from the studies \cite{Luo-Xin,Luo-Rauch-Xie-Xin} in the case
without the semiconductor effect. In fact, this demonstrates that
the semiconductor effect is remarkable and cannot be ignored.

The main purpose in this paper is to prove the
well-posedness/ill-posedness of the steady Euler-Poisson equations
\eqref{1.5} with the sonic boundary \eqref{boundary},
 and the regularity of subsonic and supersonic
solutions when these solutions exist, and the property of the
infinitely many transonic solutions. Precisely speaking, when the
doping profile is subsonic, we prove that, the  corresponding
steady-state equations with sonic boundary possess a unique interior
subsonic solution, and at least one interior supersonic solution,
and infinitely  many interior transonic shock solutions for
$\tau\gg1$ (the small effect of semiconductor), where the case
$\tau=\infty$ studied in \cite{Luo-Xin,Luo-Rauch-Xie-Xin} is our
special case; while, when $\tau\ll 1$ (the large effect of
semiconductor), the system possesses infinitely many $C^1$ transonic
solutions, and no transonic shocks exist. Note that, the transonic
shocks have been intensively studied in
\cite{AMPS,Gamba,Gamba2,Luo-Xin,Luo-Rauch-Xie-Xin}, but, to our best
knowledge, the $C^1$  transonic solutions in
semiconductor models are first obtained in the present paper. Essentially,
the strong damping effect makes the transonic solutions to be $C^1$
smooth. Recall that $C^2$  transonic flow also arises in
finite de Laval nozzles, where the geometry structure of the nozzle
causes the transonic flow to be $C^2$ smooth (see the interesting
work of C. Wang and Z. Xin \cite{Wang-Xin-smooth,Wang-Xin-smooth2}).  On the other
hand, when the doping profile is supersonic, we show that the system
does not hold any subsonic solution; and the system also has
 no supersonic solution and no transonic solution if such a
supersonic doping profile is small enough or the relaxation time is
large; but it possesses at least one supersonic solution and
infinitely many transonic shock solutions if the supersonic doping
profile is close to the sonic line and the  semiconductor effect is
small. When the doping profile is sonic, then the system exists the
sonic solution. In all cases mentioned above, all interior
subsonoic/supersonic solutions obtained are proved to be globally
$C^{\frac{1}{2}}$ H\"older continuous, and the
$C^{\frac{1}{2}}$-regularity  is optimal. We notice that the same
regularity $C^{\frac{1}{2}}$ was also obtained for the
subsonic-sonic flow for the steady nozzle in \cite{Wang,Wang-Xin}.
Regarding the other interesting studies on the subsonic-sonic flow
for the steady nozzle, we refer to
\cite{Bers,Chen-Dafermos-Slemrod-Wang,Chen-Huang-Wang,Xie-Xin}. To
prove the existence of the subsonic/supersonic/transonic solutions
and their regularity are non-trivial, because the degeneracy of
ellipticity for equation \eqref{elliptic} at the sonic boundary
causes us essential difficulty. Here, for the existence of
subsonic/supersonic solutions to equations \eqref{1.5} and \eqref{boundary}, the
proof adopted is the technical compactness analysis combining  the
energy method with the help of the phase-plane analysis, while for
the existence of multiple transonic shock solutions and $C^1$-smooth
transonic solutions, the approach is the artful construction method.
These results are presented in the following Theorems
\ref{main-thm-1}-\ref{main-thm-3}, which  essentially improve and
develop the existing studies.

\begin{theorem}[Case of subsonic doping profile]\label{main-thm-1}
Let the doping profile be subsonic such that $b(x)\in L^\infty(0,1)$
and $\underline{b}>1$. Then the steady-state Euler-Poisson equations \eqref{1.5} and
\eqref{boundary} admit:
\begin{enumerate}
\item   A unique pair of interior subsonic solution
$(\rho_{sub},E_{sub})(x)\in C^{\frac{1}{2}}[0,1]\times H^1(0,1)$ satisfying
\begin{equation}\label{2.2-new}
1+m\sin(\pi x)\leq\rho_{sub}(x)\leq\overline{b},\ \ x\in[0,1],
\end{equation}
and particularly,
\begin{equation}
\begin{cases}
C_1(1-x)^{\frac{1}{2}}\le \rho_{sub}(x)-1 \le C_2(1-x)^{\frac{1}{2}}, \\
-C_3(1-x)^{-\frac{1}{2}}\le \rho'_{sub}(x) \le
-C_4(1-x)^{-\frac{1}{2}},
\end{cases}
\ \mbox{ for } x \mbox{ near } 1, \label{2.2-3new}
\end{equation}
where $m=m(\tau,\underline{b})<\overline{b}-1$ is a
small positive constant, and $C_2>C_1>0$ and $C_3>C_4>0$ are some
positive constants;

\item At least one pair of supersonic solution $(\rho_{sup},E_{sup})(x)\in C^{\frac{1}{2}}[0,1]\times H^1(0,1)$ satisfying  $0<\rho_{sup}(x)\le 1$
and
\begin{equation}
\begin{cases}
C_5x^{\frac{1}{2}}\le 1-\rho_{sup}(x)  \le C_6x^{\frac{1}{2}}, \\
-C_7x^{-\frac{1}{2}}\le \rho'_{sup}(x)  \le -C_8x^{-\frac{1}{2}},
\end{cases}
\ \mbox{ for } x \mbox{ near } 0, \label{2.2-3}
\end{equation}
where $C_6>C_5>0$ and $C_7>C_8>0$ are some positive constants.
$\rho_{sup}$ has only one critical point $z_0$ over $(0,1)$, such
that $(\rho_{sup})_x<0$ on $(0,z_0)$ and $(\rho_{sup})_x>0$ on
$(z_0,1)$.

\item Assume further that $\tau$ is large and that $\bar{b}-\underline{b}\ll1$, then equations  \eqref{1.5}-
\eqref{boundary} have
infinitely many transonic solutions $(\rho_{trans},E_{trans})(x)$ combining
stationary shocks which satisfy the entropy condition
\eqref{entropy} and the Rankine-Hugoniot jump condition \eqref{RH}
at different jump locations $x_0$, where $x_0$ can be uniquely
determined when $\rho_l$ satisfying $\rho_r-\rho_l\ll 1$ is fixed,
but the choice of $\rho_l$ can be infinitely many;

\item Assume further that $b(x)=b>1$ is a constant, then when $\tau$ is
small enough,  equations  \eqref{1.5}-
\eqref{boundary} have infinitely many $C^1$
 transonic solution; moreover, in this case there is no
transonic shock solution.

\end{enumerate}
\end{theorem}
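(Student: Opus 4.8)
Since $b(x)\equiv b>1$, the system \eqref{1.5} is autonomous in $x$, and the plan is to read the $C^{1}$ transonic profiles off a phase-plane analysis in the $(\rho,E)$-plane, gluing a supersonic arc to a subsonic arc at the distinguished sonic point $P^{\ast}=(1,1/\tau)$. First I would settle the local structure at an interior sonic crossing $x_{0}$. Since $1-\rho^{-2}$ vanishes at $\rho=1$, if $\rho(x_{0})=1$ and $\rho'(x_{0})$ is finite then the first equation of \eqref{1.5} forces $E(x_{0})=1/\tau$, so the crossing must sit at $P^{\ast}$; writing $\sigma=\rho-1,\ \eta=E-1/\tau$ and balancing the leading terms gives $\sigma\sim\alpha\,(x-x_{0})$ with
\[
2\alpha^{2}-\frac{\alpha}{\tau}+(b-1)=0,\qquad \alpha=\alpha_{\pm}:=\frac{1}{4\tau}\Bigl(1\pm\sqrt{1-8\tau^{2}(b-1)}\Bigr).
\]
Hence two real positive crossing slopes exist exactly when $\tau\le\tau_{0}:=\bigl(8(b-1)\bigr)^{-1/2}$; this is the precise meaning of ``$\tau$ small enough'', and when $\tau>\tau_{0}$ no interior $C^{1}$ crossing is possible, which already reveals the dichotomy with part~(3).

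In the phase-plane equation $\dfrac{dE}{d\rho}=\dfrac{(\rho-b)(1-\rho^{-2})}{\rho E-1/\tau}$ the point $P^{\ast}$ is a singular point. Putting $\eta=\mu\sigma$ and linearising the resulting equation for $\mu(\sigma)$ about its roots $\mu_{\pm}=2\alpha_{\pm}-1/\tau$, I expect to find that $\mu_{+}$ carries a single orbit through $P^{\ast}$, whereas $\mu_{-}$ is a nodal direction carrying a one-parameter family of orbits on each side $\{\rho<1\}$ and $\{\rho>1\}$. Because $P^{\ast}$ is singular, an orbit reaching it from $\{\rho<1\}$ may be continued by any orbit leaving it into $\{\rho>1\}$; demanding that the slopes match singles out orbits tangent to $\mu_{-}$, and one thus obtains a two-parameter family of profiles that are supersonic for $x<x_{0}$, subsonic for $x>x_{0}$, of class $C^{1}$ across $x_{0}$ — the compatibility $E_{x}(x_{0})=\rho(x_{0})-b=1-b$ holds since $\mu_{-}\alpha_{-}=2\alpha_{-}^{2}-\alpha_{-}/\tau=1-b$ — and in general not $C^{2}$ at $x_{0}$, consistent with \eqref{c1-transonic} and Definition~\ref{definition-2}.

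Next I would globalise. Extended backwards into $\{0<\rho<1\}$ such an orbit has $E_{x}=\rho-b<0$, so $E$ is strictly decreasing and the orbit cannot be captured (the unique equilibrium $(b,1/(\tau b))$ lies in the subsonic region); arguing as for the supersonic solution in part~(2), one shows it returns to the sonic line at a point $(1,E_{0})$ with $E_{0}>1/\tau$ exhibiting the $x^{1/2}$ behaviour of \eqref{2.2-3}, so this is the left endpoint $x=0$, and the arc has a finite length $T_{\mathrm{sup}}$. Symmetrically, the forward extension yields a subsonic arc of finite length $T_{\mathrm{sub}}$ terminating at $(1,E_{1})$ with $E_{1}<1/\tau$ and $(1-x)^{1/2}$ behaviour, the right endpoint $x=1$. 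The map (branch parameters) $\mapsto(T_{\mathrm{sup}},T_{\mathrm{sub}})$ is continuous and each of $T_{\mathrm{sup}},T_{\mathrm{sub}}$ sweeps an interval of positive length, so that for $\tau$ sufficiently small the single equation $T_{\mathrm{sup}}+T_{\mathrm{sub}}=1$ is solvable along a one-parameter subfamily; translating the crossing to $x_{0}=T_{\mathrm{sup}}\in(0,1)$ and recovering $E$ from \eqref{1.5} turns each solution of this equation into a genuine solution of \eqref{1.5}--\eqref{boundary}, supersonic on $(0,x_{0})$, subsonic on $(x_{0},1)$, satisfying \eqref{c1-transonic} and globally of class $C^{\frac{1}{2}}$. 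Being a one-parameter subfamily, these transonic solutions are infinitely many. The two steps requiring genuine work, and which I expect to be the main obstacle, are (i) solving \eqref{1.5} through the degenerate singular point $P^{\ast}$ and verifying that the branch tangent to $\mu_{-}$ really is a one-parameter family (a fixed-point argument after desingularising), and (ii) the quantitative control of $T_{\mathrm{sup}},T_{\mathrm{sub}}$ near the degenerate sonic boundary needed to hit the value $1$.

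Finally, for the non-existence of transonic shocks when $\tau\le\tau_{0}$: if one sat at $x_{0}$ with $\rho_{l}=\rho_{sup}(x_{0}^{-})<1<\rho_{r}=\rho_{sub}(x_{0}^{+})$ and $\rho_{l}\rho_{r}=1$ by \eqref{2.24}, then expressing $E$ from \eqref{1.5} on each side and imposing $E_{sup}(x_{0}^{-})=E_{sub}(x_{0}^{+})$ from \eqref{RH} gives, after simplification using $\rho_{r}=1/\rho_{l}$,
\[
\rho_{sup}'(x_{0}^{-})+\rho_{l}^{4}\,\rho_{sub}'(x_{0}^{+})=\frac{\rho_{l}^{2}}{\tau}>0 .
\]
But the supersonic arc runs from the sonic line at $x=0$, where $\rho'\to-\infty$, down to the interior value $\rho_{l}$, while the subsonic arc runs from $\rho_{r}$ to the sonic line at $x=1$, again with $\rho'\to-\infty$; the sign and monotonicity restrictions forced by these boundary behaviours, together with the entropy condition \eqref{entropy} and the length constraint, are seen to contradict the positivity above once $\tau\le\tau_{0}$ — heuristically, in the vanishing-jump limit the common slope at the crossing would have to be $1/(2\tau)$, which is never a root of $2\alpha^{2}-\alpha/\tau+(b-1)=0$. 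Hence no transonic shock exists, in sharp contrast with part~(3).
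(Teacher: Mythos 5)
You address only Part 4, and your local computation at the sonic crossing is correct and matches the paper: the paper also shows that any $C^1$ crossing must occur at $(\rho,E)=(1,1/\tau)$ and that the crossing slope is the smaller root $\frac{1}{4}\bigl(\frac{1}{\tau}-\sqrt{\frac{1}{\tau^2}-8(b-1)}\bigr)$ of $2\alpha^2-\alpha/\tau+(b-1)=0$ (its Theorems 2.5--2.6). But the two steps you flag as "requiring genuine work" are precisely where the paper's proof lives, and they are not routine. To prove that every interior subsonic (resp.\ supersonic) arc actually emanates from (resp.\ terminates at) the point $(1,1/\tau)$, is $C^1$ up to the sonic endpoint, and selects the root $\alpha_-$ rather than $\alpha_+$, the paper passes to coordinates $n=\rho-1$, $F=E-\frac{1}{\tau\rho}$ and runs a chain of comparison arguments against the nullcline $\Xi(n)=-\tau(n+1-b)(2+n)n/(1+n)$: a trapping estimate $F^2>\beta^2\Xi^2$ forcing all interior trajectories through the origin, the bound $F\le\frac32\Xi$ that kills the $O(1/\tau)$ root, a uniqueness-of-critical-point argument giving Lipschitz bounds, and a Cardano analysis of the locus $F''=0$ to get one-sided $C^1$ regularity. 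These lemmas are also what fix the actual smallness threshold, $\tau<\min\{\frac{1}{3\sqrt{b^3+b}},\frac{1}{4\sqrt{b-1}}\}$, which is strictly smaller than your $\tau_0=(8(b-1))^{-1/2}$; reality of the crossing slope alone is not "the precise meaning of $\tau$ small enough." Your length-matching step $T_{sup}+T_{sub}=1$ is also avoidable: the paper simply invokes its existence theorems for interior supersonic solutions on $[0,x_0]$ and the unique interior subsonic solution on $[x_0,1]$ for an arbitrary prescribed $x_0\in(0,1)$, so no shooting in the length parameter is needed and the infinite family is parametrized directly by $x_0$.

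The genuine gap is the non-existence of transonic shocks. Your Rankine--Hugoniot identity $\rho_{sup}'(x_0^-)+\rho_l^4\,\rho_{sub}'(x_0^+)=\rho_l^2/\tau$ is algebraically correct, but it does not produce a contradiction: in the shock solutions that actually exist for large $\tau$ the jump is taken at the last point where $\rho_{sup}=\rho_l$, so $\rho_{sup}'(x_0^-)>0$ and $\rho_{sub}'(x_0^+)<0$, and the identity is satisfied with room to spare. Nothing in the sign of that combination distinguishes small $\tau$ from large $\tau$, and the "vanishing-jump limit" remark is only a heuristic about a limit that need not be attained. The paper's argument is of a completely different nature: since $E_l=E_r$, either $E_l\le 1/\tau$ or $E_l>1/\tau$; in the first case the supersonic piece can be continued until it meets the sonic line with $F=E-\frac{1}{\tau}<0$ there, and the trapping lemma for negative trajectories shows such an orbit blows up as $\rho\to 0^+$ going backward, so it cannot satisfy $\rho_{sup}(0)=1$; in the second case the subsonic piece, continued backward to the sonic line, has $F>0$ there and the corresponding trajectory escapes to infinity, contradicting $\rho_{sub}(1)=1$. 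In other words, the obstruction is global (no interior orbit can touch the sonic line at $E\neq 1/\tau$), not a pointwise jump relation. As written, your proposal establishes neither this nor Parts 1--3, which it uses as input.
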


\begin{theorem}[Case of supersonic doping profile]\label{main-thm-3}
Let the doping profile be supersonic such that $b(x)\in
L^\infty(0,1)$ and $0<b(x)\leq\overline{b}\leq1$. Then:
\begin{enumerate}
\item there is no interior subsonic solution to  equations  \eqref{1.5}-
\eqref{boundary};

\item  there is no interior supersonic solution nor transonic solution to  \eqref{1.5}-
\eqref{boundary},
if the doping profile is sufficiently small such that
$\overline{b}(1+\sqrt{2\overline{b}})<1$;

\item there is no interior supersonic solution nor transonic solution to  \eqref{1.5}-
\eqref{boundary},
if the relaxation time  is  small with $\tau<\frac{1}{3}$;

\item there exists at least one interior supersonic solution $(\rho_{sup},E_{sup})(x)$ to \eqref{1.5}-
\eqref{boundary},
satisfying  $\rho_{sup}\in C^{\frac{1}{2}}[0,1]$  and the optimal
estimate \eqref{2.2-3}, if the doping profile $b(x)$ is close to the
sonic boundary $\rho=1$ and the relaxation time is sufficiently
large $\tau\gg 1$;

\item there exist infinitely many transonic shock solutions $(\rho_{trans},E_{trans})(x)$ to  \eqref{1.5}-
\eqref{boundary}
joint with some stationary shocks satisfying the  entropy condition
\eqref{entropy} and the Rankine-Hugoniot jump condition \eqref{RH}
at different jump locations $x_0$, if the doping profile $b(x)$ is
close to the sonic boundary $\rho=1$ and the relaxation time is
sufficiently large $\tau\gg 1$, where $x_0$ can be uniquely
determined when $\rho_l$ satisfying $\rho_r-\rho_l\ll 1$ is fixed,
but the choice of $\rho_l$ can be infinitely many.

\end{enumerate}
\end{theorem}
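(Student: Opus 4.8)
The plan is to treat the five assertions by combining the phase-plane analysis with integral identities extracted from the weak formulation \eqref{weak-solution}. For assertion (1), I would argue by contradiction: suppose an interior subsonic solution $\rho\geq 1$ exists. Test \eqref{weak-solution} against a suitable nonnegative function (or use the pointwise ODE form \eqref{1.5}) to derive an integral balance; since $\rho\geq 1$ but $b(x)\leq 1$ a.e., the term $\int_0^1(\rho-b)\varphi\,dx$ has a definite sign that cannot be cancelled by the damping and flux terms once the sonic boundary condition forces $(\rho-1)^2\in H_0^1$. The cleanest route is to integrate the first equation of \eqref{1.5} over $(0,1)$: $\int_0^1 E_x\,dx = E(1)-E(0) = \int_0^1(\rho-b)\,dx \geq \int_0^1(1-b)\,dx \geq 0$, with strict inequality unless $b\equiv 1$; then compare with the expression for $E$ in terms of $\rho_x$ and the boundary behavior to reach a contradiction with the sign of $1/\tau$ term. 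I expect the sign bookkeeping here to be short once the right test function is chosen.

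For assertions (2) and (3), the strategy is a quantitative \emph{a priori} bound that rules out \emph{any} solution (supersonic or transonic) staying near or crossing $\rho=1$. Here I would exploit the ODE system \eqref{1.5} directly along a supersonic arc $0<\rho<1$. Multiply the first equation of \eqref{1.5} by an appropriate weight and integrate; the degenerate factor $(1-1/\rho^2)$ is negative on $0<\rho<1$, so monotonicity/convexity information about $\rho$ can be read off. The key inequality should take the form: if a supersonic solution exists, then $\overline{b}$ must satisfy $\overline{b}(1+\sqrt{2\overline{b}})\geq 1$ (assertion 2), respectively $\tau\geq 1/3$ (assertion 3). To get assertion (2) I would estimate the maximal drop of $\rho$ from the sonic value using the energy-type identity $\frac12\int_0^1\frac{\rho+1}{\rho^3}\big((\rho-1)^2\big)_x\varphi_x\,dx + \frac1\tau\int_0^1\frac{\varphi_x}{\rho}\,dx + \int_0^1(\rho-b)\varphi\,dx = 0$ with $\varphi$ chosen adapted to the supersonic regime, bounding $\int(\rho-b)\varphi$ from below by something involving $\overline{b}$ and bounding the flux term from above via $0<\rho\le 1$; the threshold $1+\sqrt{2\overline{b}}$ should emerge from optimizing this comparison. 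For assertion (3), the relaxation term $\frac1\tau\int\frac{\varphi_x}{\rho}$ dominates when $\tau$ is small, and the factor $1/3$ should come from the worst-case constant in a Poincar\'e-type estimate on $(0,1)$ combined with the bound $\rho\le 1$; I anticipate this is the most delicate calculation, since one must extract a \emph{sharp} numerical constant rather than merely an order of magnitude.

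Assertions (4) and (5) are existence statements and I would reduce them to Theorem \ref{main-thm-1}(2),(3) by a perturbation/continuation argument around the sonic line. When $b(x)$ is $L^\infty$-close to the constant $1$ from below and $\tau\gg 1$, write $b = 1 - \delta\beta(x)$ with $\delta$ small and $\beta\geq 0$. The supersonic solution is constructed by the same compactness scheme used for Theorem \ref{main-thm-1}(2): set up a regularized (non-degenerate) elliptic problem away from the boundary, obtain uniform $H^1$-type bounds on $(\rho-1)^2$ from the energy identity \eqref{weak-solution}, pass to the limit, and recover the boundary profile \eqref{2.2-3} from the ODE near $x=0$ exactly as in the subsonic-doping case; the smallness of $\delta$ and largeness of $\tau$ are what keep the solution in $0<\rho\le 1$ and prevent the obstruction found in (2),(3). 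For the infinitely many transonic shock solutions in (5), I would mirror Theorem \ref{main-thm-1}(3): fix $\rho_l\in(\rho_*,1)$ with $1-\rho_l$ small, let $\rho_r=1/\rho_l$ by \eqref{2.24}, solve the supersonic problem on $(0,x_0)$ with data $\rho(0)=1$, $\rho(x_0^-)=\rho_l$ and the subsonic problem on $(x_0,1)$ with $\rho(x_0^+)=\rho_r$, $\rho(1)=1$, and use a shooting/monotonicity argument in $x_0$ together with the matching of $E$ in \eqref{RH} to show $x_0=x_0(\rho_l)$ is uniquely solvable while $\rho_l$ ranges over an interval, giving a one-parameter family. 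The main obstacle throughout (4),(5) is ensuring the constructed solution genuinely remains strictly supersonic on $(0,x_0)$ and strictly subsonic on $(x_0,1)$ — i.e., that no spurious additional sonic crossing occurs — which is where the phase-plane portrait for \eqref{1.5} (the structure of trajectories near the saddle/node at $\rho=1$) must be invoked, together with the hypotheses $\overline{b}$ near $1$ and $\tau$ large to locate the relevant orbits.
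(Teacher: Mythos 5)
Your treatment of Part 1 essentially matches the paper's: taking $\varphi=(\rho-1)^2$ in \eqref{weak-solution} makes the relaxation term vanish exactly (it equals $\frac{2}{\tau}\int_0^1(\rho-\ln\rho)_x\,dx=0$) and leaves two nonnegative terms summing to zero, which is Theorem \ref{thm3.1}; your alternative route via $E(1)-E(0)$ would additionally need the boundary regularity of $E$, so the test-function version is the one to keep. For Parts 2 and 3, however, your plan is missing the device that actually produces the stated thresholds. The paper does not estimate in the weak form in $\rho$; it passes to the velocity $u=1/\rho\ge 1$, which solves \eqref{4.1}, locates an interior maximum point $\hat y$ of $u$ at which the first equation forces $E(\hat y)=u(\hat y)/\tau$, and multiplies by the specific weight $((u-1)^2)_x$. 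The identity $u\,((u-1)^2)_x=\tfrac13\bigl((u-1)^2(2u+1)\bigr)_x$ then turns the relaxation term into the exact boundary contribution $-(u(\hat y)-1)^3/(3\tau)$ in \eqref{3.1}; this is where the constant $\tfrac13$ comes from, not from a Poincar\'e constant as you suggest. Likewise $\sqrt{2\overline b}$ comes from the resulting a priori bound $\int|[(u-1)^2]_x|^2\,dx\le 4\overline b^{\,2}$, which gives $u\le 1+\sqrt{2\overline b}$ and hence $b-1/u<0$ pointwise under $\overline b(1+\sqrt{2\overline b})<1$. Without the substitution $u=1/\rho$, the interior maximum point, and this multiplier, the sign of $\int(\rho-b)\varphi$ is indeterminate in the supersonic regime (both $\rho$ and $b$ lie below $1$), and I do not see how your version closes. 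The transonic cases also need the separate preliminary fact $E_l=E_r<1/\tau$ (Theorem \ref{3.3}) before the same integration can be run on the supersonic piece alone.

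For Parts 4 and 5 there is a more structural problem: you propose to reuse the compactness scheme of Theorem \ref{main-thm-1}(2), i.e.\ the approximate problem \eqref{supersonic equation} with current parameter $k>1$ and uniform $H^1$ bounds on $(1-\rho_k)^2$. But the key step of that scheme --- proving $u\ge k$ by testing with $(u-k)^-$ in Lemma \ref{supersonic-solution} --- uses $\frac{k}{\psi}-b<0$, which requires $\underline b>1$; it fails outright when $b\le 1$. This is precisely why the paper abandons that scheme here and instead regularizes the boundary data to $\rho(0)=\rho(1)=1-\delta$, builds supersonic solutions of the $\tau=\infty$ system \eqref{3.8} on intervals of length near $\tfrac12$ and $\tfrac32$, perturbs them to solutions of \eqref{3.15} by a Gronwall comparison (this is exactly where $\tau\gg1$ and $\|1-b\|_{L^\infty}\ll1$ enter quantitatively), matches the length to $1$ by continuity in the shooting datum $E(0)\in(E_1(0),E_2(0))$, and only then passes $\delta\to0^+$ by compactness (Theorems \ref{thm3.4} and \ref{thm3.5}). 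Your shooting picture for Part 5 is qualitatively right, but in the actual construction the family is parametrized by $\rho_l=1-\eta$, the jump is placed at the last point where the supersonic arc reaches $\rho_l$ (so near its right end) followed by a short subsonic tail of length $O(\eta)$, and the whole argument runs on the $\delta$-regularized boundary rather than on degenerate boundary-value problems with data exactly at $\rho=1$.
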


\begin{remark}  In Parts 1 and 2 of Theorem \ref{main-thm-1}, see also Part 4 of Theorem \ref{main-thm-3}, the estimates
\eqref{2.2-3new} and \eqref{2.2-3} imply that $C^{\frac{1}{2}}[0,1]$
is the optimal H\"{o}lder space for the global regularity of the
subsonic solution $\rho_{sub}(x)$ and the supersonic solution
$\rho_{sup}(x)$. Such a regularity $C^{\frac{1}{2}}$ matches also the analysis for the steady
nozzle in \cite{Wang,Wang-Xin}.

In Part 3 of Theorem \ref{main-thm-1} and Part 5 of Theorem \ref{main-thm-3}, when $\tau\gg 1$, namely, the semiconductor effect is small, then the steady hydrodynamic system
possesses infinitely many transonic shock solutions. The similar results in \cite{Luo-Xin,Luo-Rauch-Xie-Xin} can be regarded
in some sense of our special example as  $\tau=\infty$ .

In Part 4 of Theorem \ref{main-thm-1}, if $b(x)$ is a constant and
$\tau$ is small, Part 4 implies that the regularity of the subsonic
solution on the left boundary, as well as the regularity for the
supersonic solution on the right boundary, can be lifted up to
$C^1$. It seems that such a $C^1$ regularity of transonic solutions
is the first result obtained for semiconductor models so far.
Essentially, the strong damping effect (the semiconductor effect) of
$-\frac{J}{\tau}$ makes the transonic solutions to be $C^1$ smooth.
Notice that the $C^2$  transonic flow also arises in the
finite de Laval nozzles, where the geometry structure causes the
transonic flow to be smooth. For details, we  refer to the
interesting works of C. Wang and Z. Xin \cite{Wang-Xin-smooth,Wang-Xin-smooth2}.
\end{remark}

\begin{remark}
If $b(x)\equiv1$, then  the steady-state Euler-Poisson equations \eqref{1.5} and
\eqref{boundary} admit the sonic solution $(\rho_{sonic},E_{sonic})(x)\equiv (1,\frac{1}{\tau})$.
\end{remark}

\begin{remark}
Theorem \ref{main-thm-3} indicates that when the doping profile is
small enough, or the relaxation time is small enough, then the
system has no solution. This also explains the physical phenomenon
that the semiconductor device doesn't work efficiently when the
background of the device is too pure.
\end{remark}

The paper is organized as follows. In Section 2, we prove Theorem \ref{main-thm-1}. The adopted approach is the method of viscosity vanishing and the
technical energy method with the help of phase-plane analysis. The existence of infinity many $C^1$-smooth  transonic solutions and transonic shocks  are proved by the artful construction method. In Section 3,
the main duty is to prove Theorem \ref{main-thm-3}.
Finally, in Section 4, when the pressure function is $P(\rho)=T\rho^\gamma$ for $\gamma>1$,
the hydrodynamic system of Euler-Poisson equations becomes  isentropic. We conclude that  the results presented in Theorems \ref{main-thm-1} and
\ref{main-thm-3}  all hold for the isentropic system with $\gamma>1$.

\section{The case of subsonic doping profile}\label{subsonic doping profile}
In this section, we assume that $\underline{b}>1$. In other words,
the doping profile is subsonic. First of all, let us test a special
case when $b(x)\equiv b>1$ (constant), we may observe the structure
of stationary solutions to system \eqref{1.5}-\eqref{boundary} from the
phase-plane analysis.  Notice that, when $b>1$, the critical point
of system \eqref{1.5}  is $A=\Big(b,\dfrac{1}{\tau b}\Big)$, and the
Jacobian matrix of system \eqref{1.5}  at $A$ is:
\begin{equation*}
J(A) =\begin{bmatrix}
\dfrac{b}{\tau(b^2-1)} & \dfrac{b^3}{b^2-1} \\
 1 & 0
\end{bmatrix}.
\end{equation*}
It is easy to see that the eigenvalues $\lambda$ of matrix
$J(A)$ satisfy the following characteristic equation
\begin{equation}\label{eigenvalue}
\lambda^2-\dfrac{b\lambda}{\tau(b^2-1)}-\dfrac{b^3}{b^2-1}=0.
\end{equation}
Notice that,  $\lambda_1\lambda_2=-\dfrac{b^3}{b^2-1}<0$, where
$\lambda_1$ and $\lambda_2$ are the roots of \eqref{eigenvalue}.
Thus, $A$ is a saddle point. On the other hand, it follows from system
\eqref{1.5} that
\begin{equation}\label{direction}
\dfrac{d\ET}{d\rho}=\dfrac{(\rho-b)(1-\frac{1}{\rho^2})}{\rho
\ET-\frac{1}{\tau}},
\end{equation}
which helps to determine the directions of all trajectories. Here
and in the sequel, to avoid confusion, we denote by $\ET=\ET(\rho)$
the function of the trajectory.

Figure \ref{fig1-1} is the phase-plane of $(\rho,\ET)$ with
$\tau=15$ and $b=1.5$, from which we observe that there exist at
least one interior subsonic solution and one interior supersonic
solution. In Figure \ref{fig1-2}, we draw the profiles of the
interior subsonic solution and interior supersonic solution. Figure
\ref{fig2-1} demonstrates how to construct an interior transonic
shock solution when $\tau$ is large: the discontinuous trajectory in
blue stands for a transonic shock solution with smaller length (e.g.
$\frac{1}{2}$) and is structured by a stationary shock at $x_0$ with
the Rankine-Hugoniot jump condition \eqref{RH} linking the other two
solutions: one is a supersonic solution $\rho_{sup}(x)$ with
$\rho_{sup}(0)=1$ and $\rho_{sup}(x_0^-)=\rho_l<1$, and the other is
a subsonic solution $\rho_{sub}(x)$ with
$\rho_{sub}(x_0^+)=\rho_r>1$ and $\rho_{sub}(\frac{1}{2})=1$; the
discontinuous trajectory in red represents a similar transonic shock
solution with larger length (e.g. $\frac{3}{2}$) satisfying the
entropy condition and the Rankine-Hugoniot condition at some jump
location. By continuity, there is an interior transonic shock
solution to \eqref{1.5} on $[0,1]$. Since the choice of
$\rho_l=\rho_{sup}(x_0^+)$ can be infinitely many when
$\rho_r-\rho_l\ll1$, there are infinitely many transonic shock
solutions. In Figure \ref{fig2-2}, we draw two transonic shock
solutions to system \eqref{1.5} with different $\rho_l$.

While, when $\tau$ is small, we see in Figure \ref{fig1-5} that the
phase-plane changes dramatically: many subsonic trajectories start
from the same point $(1,\frac{1}{\tau})$, and many supersonic
trajectories end at the same point $(1,\frac{1}{\tau})$. As a
result, one can see that there are possibly smooth transonic
solutions, which is constructed by two solutions at some location
$x_0$: one is an interior supersonic solution with
$\rho_{sup}(0)=1=\rho_{sup}(x_0)$, and the other is an interior
subsonic solution with $\rho_{sub}(x_0)=1=\rho_{sub}(1)$. Since the
transition location $x_0$ can be chosen arbitrarily in $(0,1)$,
these smooth transonic solutions are infinitely many.


\begin{figure}
\centering
\includegraphics[height=2.5in]{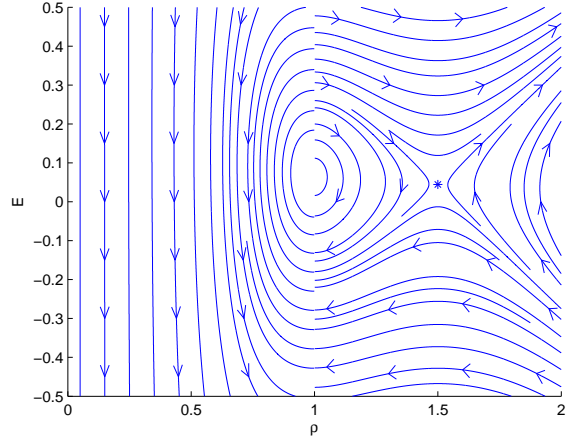}
\caption{Phase plane of $(\rho,E)$ with $\tau=15$ and $b=1.5$; $*$
is the saddle point $A=(1.5,2/45)$.} \label{fig1-1}
\end{figure}



\begin{figure}
\centering
\includegraphics[height=3in]{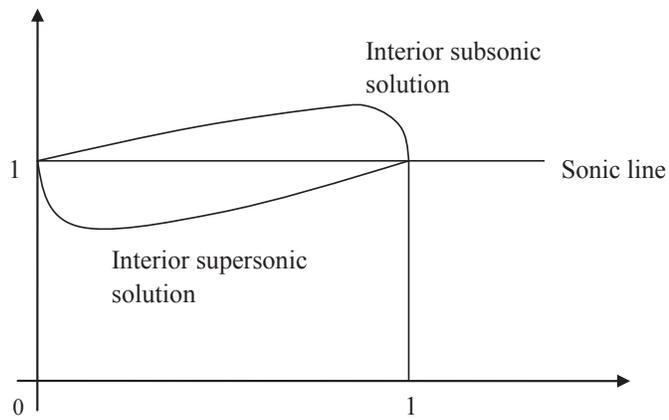}
\caption{Interior subsonic solution and interior supersonic solution for the case of subsonic doping
profile.}
\label{fig1-2}
\end{figure}



\begin{figure}
\centering
\includegraphics[height=3in]{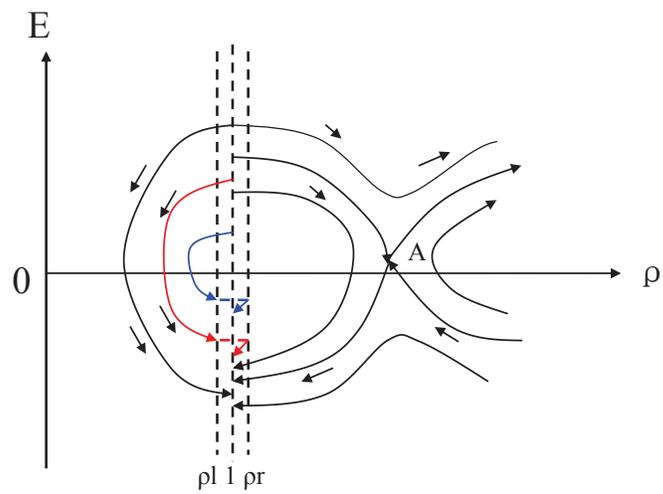}
\caption{Transonic shock trajectories  in the phase plane of
$(\rho,E)$ for the case of subsonic doping profile when $\tau$ is
large.} \label{fig2-1}
\end{figure}



\begin{figure}
\centering
\includegraphics[height=2.5in]{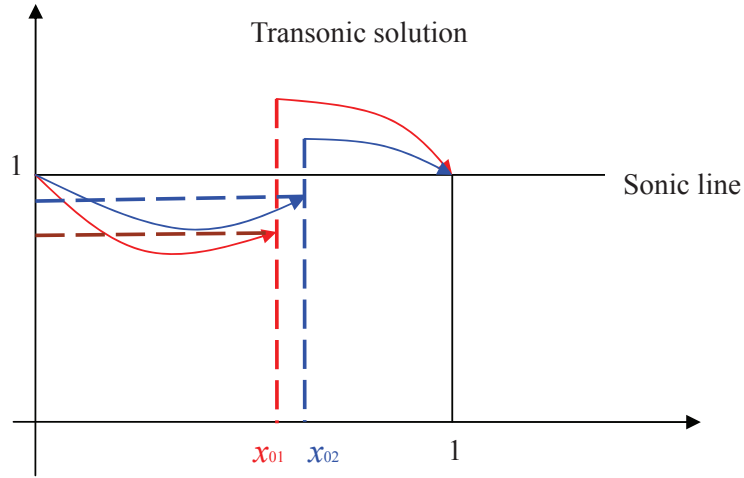}
\caption{Transonic shock solutions in the case of subsonic doping
profile when $\tau$ is large.} \label{fig2-2}
\end{figure}



\begin{figure}
\centering
\includegraphics[height=2.5in]{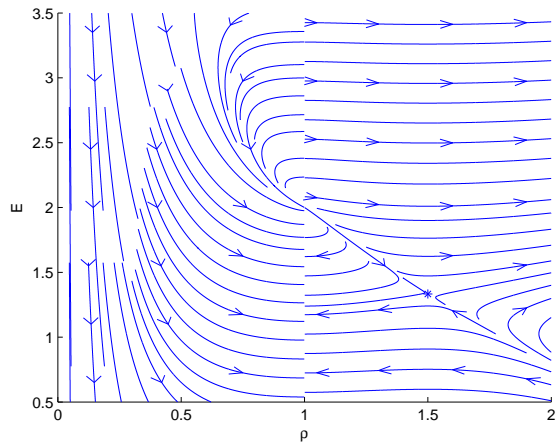}
\caption{Phase plane of $(\rho,E)$ with $\tau=0.5$ and $b=1.5$; $*$
is the saddle point $A=(1.5,4/3)$.} \label{fig1-5}
\end{figure}


Next we are going to prove Theorem \ref{main-thm-1} for a subsonic
doping profile $b(x)>1$ in general form.

\subsection{Unique interior subsonic solution}
Firstly, we  prove that there exists a unique interior subsonic
solution to equation \eqref{elliptic}. The adopted approach is the technical
compactness method, which is inspired by the vanishing viscosity
method.

\begin{theorem}\label{thm1}
Assume that $b\in L^\infty(0,1)$ and $\underline{b}>1$, then
equation \eqref{elliptic} has a unique interior subsonic solution
$\rho_{sub}$ satisfying
\begin{equation}\label{2.2}
1+m\sin(\pi x)\leq\rho_{sub}\leq\overline{b},\ \ x\in[0,1],
\end{equation}
where $m=m(\tau,\underline{b})$ is a positive
constant.
\end{theorem}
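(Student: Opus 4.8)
The plan is to construct the interior subsonic solution via a vanishing-viscosity (elliptic regularization) scheme and then pass to the limit using the energy method. First I would introduce, for $\v>0$, the regularized problem
\begin{equation*}
-\v\rho_{xx}+\left[\left(\tfrac{1}{\rho}-\tfrac{1}{\rho^3}\right)\rho_x\right]_x+\tfrac{1}{\tau}\left(\tfrac{1}{\rho}\right)_x-[\rho-b(x)]=0,\quad \rho(0)=\rho(1)=1,
\end{equation*}
which is uniformly elliptic but still has the degenerate coefficient $\frac{1}{\rho}-\frac{1}{\rho^3}$ vanishing as $\rho\to1$. To handle this degeneracy cleanly and stay in the subsonic regime, I would actually work with a further truncation: replace $\rho$ by $\max\{\rho,1\}$ (or a smoothed version) in the nonlinear coefficients and solve for $\rho^\v\ge1$; the maximum principle should then force $\rho^\v\ge1$ automatically so the truncation is inactive. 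Existence for each fixed $\v$ follows from standard theory for quasilinear uniformly elliptic equations (Leray--Schauder degree, or monotone operator methods after writing the principal part in divergence form $\partial_x G(\rho)$ with $G'(\rho)=\v+\frac{1}{\rho}-\frac{1}{\rho^3}>0$).

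Next I would establish $\v$-uniform bounds. The lower bound $\rho^\v\ge1$ comes from comparison with the constant subsolution $1$ (using $\underline b>1$, the term $-[\rho-b]$ has a favorable sign at $\rho=1$). The upper bound $\rho^\v\le\overline b$ comes from comparison with the constant supersolution $\overline b$ (at $\rho=\overline b$ the Poisson term $-[\overline b - b]\le0$, the first-derivative terms vanish for a constant, and the viscous term vanishes). For the refined lower bound $\rho^\v\ge 1+m\sin(\pi x)$, I would check that $w_m(x):=1+m\sin(\pi x)$ is a subsolution for $m$ small: plugging $w_m$ in, the dominant contribution for small $m$ is $\v m\pi^2\sin(\pi x)+O(m)$ from the transport terms minus $[w_m-b]$, and since $b-w_m\ge \underline b -1 - m>0$ the Poisson term dominates and makes $w_m$ a genuine subsolution once $m=m(\tau,\underline b)$ is chosen small enough, uniformly in $\v$. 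The crucial uniform estimate is an energy bound: multiplying the equation by $\rho^\v-1$ (which vanishes on the boundary) and integrating by parts yields
\begin{equation*}
\v\int_0^1(\rho^\v_x)^2dx+\int_0^1\left(\tfrac{1}{\rho^\v}-\tfrac{1}{(\rho^\v)^3}\right)(\rho^\v_x)^2dx=\int_0^1\left[\tfrac{1}{\tau}\tfrac{(\rho^\v-1)_x}{\rho^\v}+(\rho^\v-b)(\rho^\v-1)\right]dx,
\end{equation*}
and the right side is bounded using $1\le\rho^\v\le\overline b$ and Cauchy--Schwarz, absorbing the $\rho^\v_x$ term. Since $\frac{1}{\rho}-\frac{1}{\rho^3}=\frac{(\rho-1)(\rho+1)}{\rho^3}\sim\frac{\rho-1}{\cdot}$ and $(\rho-1)\rho_x^2\asymp ((\rho-1)^{3/2})_x^2$ up to bounded factors, this delivers a uniform $H^1$ bound on $((\rho^\v-1)^{2})^{1/2}$-type quantities; more directly one shows $(\rho^\v-1)^2$ is uniformly bounded in $H^1_0(0,1)$, which is exactly the regularity class in Definition \ref{definition-1}.

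With these bounds in hand, I would extract a subsequence $\rho^\v\to\rho$ weakly in the relevant space and (by compact embedding $H^1_0\hookrightarrow C[0,1]$ applied to $(\rho^\v-1)^2$) uniformly, pass to the limit in the weak formulation \eqref{weak-solution} — the only subtlety being the nonlinear coefficient $\frac{\rho+1}{\rho^3}$, handled by uniform convergence plus the weak convergence of $((\rho^\v-1)^2)_x$, and the viscous term $\v\int\rho^\v_x\varphi_x\to0$ since $\v\int(\rho^\v_x)^2$ is bounded. The pointwise bounds $1+m\sin(\pi x)\le\rho\le\overline b$ survive. For uniqueness, I would use the monotone structure of the equation: if $\rho_1,\rho_2$ are two interior subsonic solutions, test the difference of the weak formulations with $\varphi=(\rho_1-1)^2-(\rho_2-1)^2\in H^1_0$ (or with a suitable monotone function of the difference); the principal part gives a nonnegative quadratic form, the Poisson term $\int(\rho_1-\rho_2)\varphi$ has the right sign since $\varphi$ is increasing in the difference, and the first-order term $\frac{1}{\tau}\int\frac{\varphi_x}{\rho}$ must be controlled — this last term is where care is needed, but writing $\frac{\varphi_x}{\rho}$ against the monotonicity and using $\rho\ge1$ it can be shown not to destroy the sign, forcing $\rho_1\equiv\rho_2$.

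The main obstacle is the boundary degeneracy: all coercivity of the principal part is lost as $\rho\to1$, so every estimate — existence for fixed $\v$, the uniform energy bound, the passage to the limit, and especially uniqueness — must be phrased in terms of the degenerate weight $\frac{\rho+1}{\rho^3}$ and the variable $(\rho-1)^2$ rather than $\rho$ itself. Getting the uniform-in-$\v$ energy estimate to close (absorbing the transport and source terms into the degenerate Dirichlet form) and then verifying that the monotone-operator uniqueness argument is not spoiled by the degeneracy near $x=0,1$ are the delicate points; the pointwise barrier constructions, by contrast, are routine once the right sub/supersolutions are guessed.
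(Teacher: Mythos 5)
Your existence argument is sound in spirit but uses a genuinely different regularization from the paper: you add a viscosity term $-\v\rho_{xx}$ (plus a truncation of the degenerate coefficient), whereas the paper replaces the current density $1$ by a parameter $j<1$, so the approximate coefficient becomes $\frac{1}{\rho_j}-\frac{j^2}{\rho_j^3}=\frac{(\rho_j+j)(\rho_j-j)}{\rho_j^3}$, which is uniformly elliptic on $\{\rho_j\ge1\}$ without any truncation. The paper's choice buys two things: the approximate problems are themselves the physical subsonic problems of Degond--Markowich, so existence for fixed $j$ is a citation rather than a Leray--Schauder argument, and a clean comparison principle (their Lemma 2.2) is available for the barriers $1+m\sin(\pi x)$ and $\overline b$. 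Your energy estimate (multiply by $\rho-1$, extract uniform $H^1$ control of $(\rho-1)^{3/2}$ and hence of $(\rho-1)^2$) and the passage to the limit are essentially identical to the paper's, and I see no obstruction to making your viscosity route work for existence.

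The genuine gap is uniqueness. Your proposed monotone-operator argument --- testing the difference of the two weak formulations with $\varphi=(\rho_1-1)^2-(\rho_2-1)^2$ --- does not close. Writing $w_i=(\rho_i-1)^2$ and $a_i=\frac{\rho_i+1}{2\rho_i^3}$, the principal part of the difference is $\int\bigl[a_1(w_1-w_2)_x+(a_1-a_2)w_{2,x}\bigr](w_1-w_2)_x\,dx$, and the cross term $(a_1-a_2)w_{2,x}(w_1-w_2)_x$ has no sign; since $|a_1-a_2|\le C|\rho_1-\rho_2|\le C|w_1-w_2|^{1/2}$ and at this stage $w_{2,x}$ is only known to be in $L^2$, absorbing it by Young's inequality leaves a remainder of order $\int|w_1-w_2|\,dx$. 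The transport term $\frac{1}{\tau}\int(\frac{1}{\rho_1}-\frac{1}{\rho_2})\varphi_x\,dx$ produces a remainder of the same size. The only available good zeroth-order term is $\int(\rho_1-\rho_2)(w_1-w_2)\,dx=\int|\rho_1-\rho_2|^2(\rho_1+\rho_2-2)\,dx$, which degenerates cubically where $\rho_i\to1$ (i.e.\ near the boundary) and therefore cannot dominate $\int|w_1-w_2|\,dx$. This is precisely why the paper does not argue this way: it first bootstraps the regularity of $w=(\rho-1)^2$ to $C^{1+1/4}[0,1]$ (using that the flux $f_3=\frac{f_1w_x}{2}+\frac{f_2}{\tau}$ lies in $H^1$, hence $\sqrt{w}\in C^{1/4}$, hence $w_x\in C^{1/4}$), then rewrites the equation as the first-order system \eqref{ode-w} for $(w,G_w)$ and runs a maximal-interval argument: on a maximal interval $[a,c]$ where $w_1>w_2$, comparison of one-sided derivatives at $a$ and $c$ together with the integrated identity for $G_w$ and the monotonicity of $s\mapsto\sqrt{s}$ yields a contradiction. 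You would need either to reproduce this regularity-plus-ODE argument or to supply a substantively new mechanism for controlling the transport and cross terms; as written, the claim that the first-order term ``can be shown not to destroy the sign'' is unsupported and, I believe, false at the level of the weak formulation alone.
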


Since the equation \eqref{elliptic} is partially elliptic but degenerates at the boundary, so the corresponding solution to  \eqref{elliptic} will lack the necessary regularity, and we cannot directly work on \eqref{elliptic}.   In order to prove Theorem \ref{thm1}, now we  consider
the following approximate equation:
\begin{equation}\label{approximate equation}
\left \{\begin{array}{ll}
\left[\left(\dfrac{1}{\rho_j}-\dfrac{j^2}{(\rho_j)^3}\right)(\rho_j)_x\right]_x
+\left(\dfrac{j}{\tau\rho_j}\right)_x- [\rho_j-b(x)]=0,& x\in(0,1)\\
\rho_j(0)=\rho_j(1)=1,&
\end{array} \right.
\end{equation}
where the parameter $j$ is chosen as a constant such that $0<j<1$.
Thus, the equation \eqref{approximate equation} is expected to be
uniformly elliptic in $[0,1]$, because
$\frac{1}{\rho_j}-\frac{j^2}{\rho_j^3}=\frac{1}{\rho_j^3}(\rho_j+j)(\rho_j-j)>0$
for the expected solution $\rho_j\ge 1$. To show the wellposedness
of the approximate equation \eqref{approximate equation} and to
establish the lower bound estimate in \eqref{2.2}, we need the
following comparison principle.

\begin{lemma}[Comparison principle]\label{comparison-principle}
Let $U\in C^1[0,1]$ be a weak solution of \eqref{approximate
equation} satisfying $U\geq1$ on $[0,1]$, and that
\begin{equation}\label{equality}
\int_0^1\left[\left(\frac{1}{U}-\frac{j^2}{U^3}\right)U_x+\frac{j}{\tau
U}\right]\varphi_xdx + \int_0^1(U-b)\varphi dx=0 \ \
\text{for any } \varphi\in H_0^1(0,1),
\end{equation}
where $0<j<1$ is a constant, and let $V\in C^1[0,1]$ be such that
$V(x)>0$ for $x\in[0,1]$, $V(0)\leq1$, $V(1)\leq1$ and
\begin{equation*}
\int_0^1\left[\left(\frac{1}{V}-\frac{j^2}{V^3}\right)V_x+\frac{j}{\tau
V}\right]\varphi_xdx + \int_0^1(V-b)\varphi dx\leq0
\ \ \text{for any } \varphi\geq0,\ \varphi\in H_0^1(0,1).
\end{equation*}
Then $U(x)\geq V(x)$ over $[0,1]$.
\end{lemma}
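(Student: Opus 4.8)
\emph{Proof proposal.} The plan is to argue by contradiction and show that the open set $\Omega^+:=\{x\in[0,1]:V(x)>U(x)\}$ is empty. One could attempt to test the two weak (in)equalities directly with $(V-U)^+\in H_0^1(0,1)$, but then the quasilinearity of the principal part together with the lower-order flux term $\frac{j}{\tau\rho}$ makes the required monotonicity estimate awkward. Instead I would pass to the first-order (``electric field'') form of \eqref{approximate equation}, which turns the second-order comparison into a one-dimensional monotonicity statement. Set
\[
E_U:=\Big(\frac1U-\frac{j^2}{U^3}\Big)U_x+\frac{j}{\tau U},\qquad
E_V:=\Big(\frac1V-\frac{j^2}{V^3}\Big)V_x+\frac{j}{\tau V}.
\]
From \eqref{equality} one reads off that $E_U$ has distributional derivative $E_U'=U-b$ on $(0,1)$, while the subsolution inequality for $V$ gives $E_V'\ge V-b$ in the sense of distributions; since $U,V\in C^1[0,1]$ and $V>0$, both $E_U$ and $E_V$ are continuous on $[0,1]$ ($E_U$ is even Lipschitz, $E_U(x)=E_U(0)+\int_0^x(U-b)$). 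Subtracting, $(E_V-E_U)'\ge V-U$ on $(0,1)$ in the sense of distributions.

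Assume now $\Omega^+\ne\emptyset$ and let $(a,b)\subset[0,1]$ be one of its connected components, so that $\eta:=V-U>0$ on $(a,b)$ and $\eta(a)=\eta(b)=0$. (At an interior endpoint the latter is continuity together with $a,b\notin\Omega^+$; at $a=0$, resp.\ $b=1$, the one-sided boundary bound $V\le1=U$ combined with $V>U\ge1$ on the component forces $V=U=1$ there.) Since $\eta=V-U\in C^1[0,1]$ vanishes at $a$ with $\eta>0$ immediately to the right, and at $b$ with $\eta>0$ immediately to the left, we get $\eta'(a)\ge0$ and $\eta'(b)\le0$. On $(a,b)$ we have $(E_V-E_U)'\ge\eta>0$ while $E_V-E_U$ is continuous, hence $E_V-E_U$ is strictly increasing on $[a,b]$.

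On the other hand, evaluating at the endpoints: at $x=a$ the equality $U(a)=V(a)$ cancels the $\frac{j}{\tau\rho}$-terms, leaving
\[
(E_V-E_U)(a)=\Big(\frac1{U(a)}-\frac{j^2}{U(a)^3}\Big)\eta'(a)=\frac{(U(a)+j)(U(a)-j)}{U(a)^3}\,\eta'(a)\ \ge\ 0,
\]
because $U(a)\ge1>j$ and $\eta'(a)\ge0$; the same computation at $x=b$ gives $(E_V-E_U)(b)=\frac{(U(b)+j)(U(b)-j)}{U(b)^3}\,\eta'(b)\le0$. But strict monotonicity forces $(E_V-E_U)(b)>(E_V-E_U)(a)\ge0$, contradicting $(E_V-E_U)(b)\le0$. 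Therefore $\Omega^+=\emptyset$, i.e.\ $U\ge V$ on $[0,1]$.

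I expect the only genuinely delicate point to be the first paragraph: promoting the weak (in)equalities to the distributional statements $E_U'=U-b$ and $E_V'\ge V-b$ and, for $E_V$, observing that $E_V'-(V-b)$ is a nonnegative distribution hence a measure, so that $E_V$ has bounded variation and the monotonicity argument in the second paragraph is legitimate. Everything else is elementary; it is precisely the uniform ellipticity $\frac1s-\frac{j^2}{s^3}>0$ for $s\ge1>j$ that fixes the sign in the boundary evaluation and thereby drives the whole argument.
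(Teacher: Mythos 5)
Your proof is correct, but it takes a genuinely different route from the paper's. The paper proves the comparison principle by the classical Gilbarg--Trudinger device for quasilinear divergence-form operators: writing $A(z,p)=(\tfrac1z-\tfrac{j^2}{z^3})p+\tfrac{j}{\tau z}$, setting $e=V-U$, testing the difference of the two weak (in)equalities with $\varphi=\frac{e^+}{e^++h}$, obtaining an $h$-uniform bound on $\int_0^1|[\ln(1+e^+/h)]_x|^2dx$, and letting $h\to0^+$ to contradict $e^+\not\equiv0$. You instead exploit the one-dimensional structure: integrating once to the flux variables $E_U,E_V$, reading off $E_U'=U-b$ and $E_V'\ge V-b$ distributionally, and comparing $E_V-E_U$ at the two endpoints of a component of $\{V>U\}$, where the uniform ellipticity $\tfrac1s-\tfrac{j^2}{s^3}>0$ for $s\ge1>j$ fixes the signs via $\eta'(a)\ge0$, $\eta'(c)\le0$. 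All the steps check out (the endpoint identities, the treatment of boundary components via $V\le1=U$, and the strict monotonicity of $E_V-E_U$ from the nonnegative-measure derivative). Amusingly, your argument is essentially the same first-integral/endpoint-derivative device the paper itself deploys later, in the uniqueness part of the proof of Theorem 2.1, with $G_w$ in place of your $E_V-E_U$. The trade-off: the paper's test-function argument is the standard machinery and would survive in higher dimensions, while yours is more elementary (no $h\to0$ limit, no Poincar\'e step) but is intrinsically one-dimensional. One cosmetic remark: denote the component by $(a,c)$ rather than $(a,b)$ to avoid a clash with the doping profile $b(x)$.
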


\begin{proof} Inspired by the textbook \cite{Gilbarg-Trudinger} (see Theorem 2.7 in Section 10.4), we can prove this comparison principle.
Let us  denote
$$A(z,p):=\left(\frac{1}{z}-\frac{j^2}{z^3}\right)p+\frac{j}{\tau z}$$
for simplicity. Then, for
any $\varphi\in H_0^1(0,1)$, $\varphi\geq0$, we have
\begin{equation} \label{2.4}
\int_0^1[A(V,V_x)-A(U,U_x)]\varphi_xdx+ \int_0^1(V-U)\varphi
dx\leq0.
\end{equation}
Set $e(x):=V(x)-U(x)$. A simple calculation gives
\begin{equation*}
\begin{split}
A(V,V_x)-A(U,U_x)&=A(V,V_x)-A(U,V_x)+A(U,V_x)-A(U,U_x)\\&=\int_0^1\frac{\partial
A}{\partial z}(V_t,V_x)dt\cdot e(x)+\int_0^1\frac{\partial
A}{\partial p}(U,(V_{t})_x)dt\cdot e_x(x),\end{split}\end{equation*}
where $V_t(x):=tV(x)+(1-t)U(x)$. Taking
$\varphi(x)=\frac{e^+(x)}{e^+(x)+h}$ with $e^+(x):=\max\{0,e(x)\}$
and $h>0$ being a constant, a straightforward computation yields
\begin{equation*}
\left[\ln(1+e^+(x)/h)\right]_x=\frac{e^+_x(x)}{e^+(x)+h} \ \text{
and }\ \varphi_x=\frac{h}{e^+(x)+h}\left[\ln(1+e^+(x)/h)\right]_x.
\end{equation*}
Since $0<j<1$, $v\in C^1[0,1]$ and $\underset{x\in[0,1]}{\min} v>0$,
it is easy to see that
\begin{equation*}
\begin{split}
&\int_0^1\frac{\partial A}{\partial
p}(U,(V_{t})_x)dt=\frac{1}{U}-\frac{1}{U^3}+\frac{1-j^2}{U^3}\geq
\frac{1-j^2}{\|U\|_{L^\infty}^3},\\
&\int_0^1\frac{\partial A}{\partial z}(V_t,V_x)dt\leq
C\|V_x\|_{C[0,1]}+\frac{Cj}{\tau}\leq C.\end{split}
\end{equation*}
It then follows from \eqref{2.4} that
\begin{equation*}
\begin{split}
&\frac{h(1-j^2)}{\|U\|_{L^\infty}^3}\int_0^1\left|\left[\ln(1+e^+(x)/h)\right]_x\right|^2dx
+ \int_0^1\frac{(e^+(x))^2}{e^+(x)+h}dx\\ &\leq
Ch\int_0^1\frac{e^+(x)}{e^+(x)+h}\left|\left[\ln(1+e^+(x)/h)\right]_x\right|dx\\
&\leq\frac{h(1-j^2)}{2\|U\|_{L^\infty}^3}\int_0^1\left|\left[\ln(1+e^+(x)/h)\right]_x\right|^2dx
+\frac{C^2h\|U\|_{L^\infty}^3}{2(1-j^2)},\end{split}
\end{equation*}
where we have used Young's inequality in the second inequality.
Thus,
\[
\int_0^1\left|\left[\ln(1+e^+(x)/h)\right]_x\right|^2dx
\leq\frac{C^2\|U\|_{L^\infty}^6}{(1-j^2)^2} \ \text{ for any }h>0.
\]
This inequality together with Poincar\'e's inequality leads to
\begin{equation}
\int_0^1\left[\ln(1+e^+(x)/h)\right]^2dx\le 2
\int_0^1\left|\left[\ln(1+e^+(x)/h)\right]_x\right|^2dx
\leq\frac{C^2\|U\|_{L^\infty}^6}{(1-j^2)^2} \ \text{ for any }h>0.
\label{new-1}
\end{equation}
Now letting $h\rightarrow0^+$, one can see that if $e^+(x)\neq0$ for
some $x\in(0,1)$, then
\[
\lim_{h\to 0^+}\int_0^1\left|\left[\ln(1+e^+(x)/h)\right]\right|^2dx=\infty,
\]
which is a contradiction to \eqref{new-1}. Therefore, $U(x)\geq
V(x)$ over $[0,1]$.
\end{proof}

Let us now prove the wellposedness of equation \eqref{approximate
equation}.
\begin{lemma}\label{subsonic solution}
Assume that $b(x)\in L^\infty(0,1)$ and $\underline{b}>1$, then
\eqref{approximate equation} admits a unique weak solution $\rho_j$
satisfying $\rho_j\in H_0^1(0,1)$ and
\begin{equation}\label{2.3}
1+m\sin(\pi x)\leq\rho_j(x)\leq\overline{b},\ \ x\in[0,1],
\end{equation}
where $m=m(\tau,\underline{b})<\overline{b}-1$ is a positive constant
independent of $j$.
\end{lemma}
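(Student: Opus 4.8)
The plan is to establish Lemma \ref{subsonic solution} by regularizing \eqref{approximate equation} into a uniformly elliptic problem with bounded coefficients, solving it by a fixed point argument, and then using energy estimates together with the comparison principle (Lemma \ref{comparison-principle}) to force the solution into the desired range. Fix $0<j<1$ and replace the coefficient $a_j(\rho):=\frac{1}{\rho}-\frac{j^2}{\rho^3}$ and the drift $g_j(\rho):=\frac{j}{\tau\rho}$ by bounded Lipschitz functions $\tilde a_j,\tilde g_j:\mathbb{R}\to\mathbb{R}$ that coincide with $a_j,g_j$ on $[1,\overline{b}]$ and satisfy $0<\delta_j\le\tilde a_j\le\Lambda_j$, where $\delta_j:=\min_{[1,\overline{b}]}a_j>0$ because $a_j(1)=1-j^2>0$ and $a_j(\overline{b})=(\overline{b}^2-j^2)/\overline{b}^3>0$; the zeroth order term $\rho-b$ is kept unchanged. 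For the truncated equation $-(\tilde a_j(\rho)\rho_x+\tilde g_j(\rho))_x+\rho-b=0$ with $\rho-1\in H_0^1(0,1)$ (which encodes $\rho(0)=\rho(1)=1$), I would produce a weak solution by a Schauder or Leray--Schauder fixed point argument: for $w\in L^2(0,1)$, Lax--Milgram gives a unique $\rho=\mathcal T w$ solving the frozen linear problem $-(\tilde a_j(w)\rho_x)_x+\rho=b+(\tilde g_j(w))_x$ with the inhomogeneous Dirichlet data, testing with $\rho-1$ gives an $H^1$ bound on $\rho$ independent of $w$, and $\mathcal T$ is compact by the Rellich embedding; hence $\mathcal T$ has a fixed point $\rho_j$.

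The heart of the argument is to show that every such $\rho_j$ in fact satisfies $1\le\rho_j\le\overline{b}$, so the truncation is inactive and $\rho_j$ solves the original \eqref{approximate equation}. Testing the truncated equation with $(1-\rho_j)^+\in H_0^1(0,1)$ and noting that the drift contribution $\int_0^1\tilde g_j(\rho_j)\big((1-\rho_j)^+\big)_x\,dx$ vanishes, being the integral of the derivative of a function that is zero wherever $\rho_j\ge1$, yields $\int_{\{\rho_j<1\}}\tilde a_j(\rho_j)(\rho_j)_x^2\,dx=\int_{\{\rho_j<1\}}(\rho_j-b)(1-\rho_j)\,dx$; since $\rho_j-b\le1-\underline{b}<0$ and $1-\rho_j\ge0$ on this set while the left side is $\ge0$, we get $|\{\rho_j<1\}|=0$. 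Symmetrically, testing with $(\rho_j-\overline{b})^+$ and using $\rho_j-b\ge\overline{b}-b\ge0$ gives $\rho_j\le\overline{b}$. A one-dimensional bootstrap (the flux $\tilde a_j(\rho_j)(\rho_j)_x+\tilde g_j(\rho_j)$ is absolutely continuous with $L^\infty$ derivative, and $\tilde a_j(\rho_j)$ is continuous and bounded below) then upgrades $\rho_j$ to $C^1[0,1]$, which is what Lemma \ref{comparison-principle} requires.

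Finally I would obtain the sharp lower bound in \eqref{2.3} and uniqueness. Take the barrier $V(x)=1+m\sin(\pi x)$: then $V>0$, $V(0)=V(1)=1$, and a direct differentiation using $V_x=m\pi\cos\pi x$, $V_{xx}=-m\pi^2\sin\pi x$ gives $\big[(\tfrac{1}{V}-\tfrac{j^2}{V^3})V_x+\tfrac{j}{\tau V}\big]_x=-m\pi^2(1-j^2)\sin\pi x-\tfrac{jm\pi}{\tau}\cos\pi x+O(m^2)$, whose absolute value is at most $C_0(\tau)\,m$ with $C_0(\tau)$ independent of $j$ (since $0<j<1$ and $1-j^2\le1$), while $V-b\le1+m-\underline{b}$; hence for $m=m(\tau,\underline{b})$ small enough, and in particular $m<\overline{b}-1$, one has $\big[(\tfrac{1}{V}-\tfrac{j^2}{V^3})V_x+\tfrac{j}{\tau V}\big]_x\ge V-b$, i.e. $V$ is a subsolution in the sense of Lemma \ref{comparison-principle}, so with $U=\rho_j$ that lemma yields $\rho_j\ge1+m\sin(\pi x)$. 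Uniqueness within the class of $C^1$ solutions with $\rho_j\ge1$ follows by applying Lemma \ref{comparison-principle} to two such solutions with the roles of $U$ and $V$ interchanged. The main obstacle I anticipate is bookkeeping the $j$-dependence: although the ellipticity constant $\delta_j=1-j^2$ of the truncated problem degenerates as $j\to1^-$, the bounds $1\le\rho_j\le\overline{b}$ and the subsolution constant $m$ must be taken independent of $j$, which is precisely what makes the later compactness passage $j\to1^-$ from \eqref{approximate equation} to the degenerate equation \eqref{elliptic} work, and it relies on keeping $1-j^2\le1$ throughout the barrier computation.
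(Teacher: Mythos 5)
Your proposal is correct, and it diverges from the paper in the existence step. The paper disposes of existence and of the upper bound $1\leq\rho_j\leq\overline{b}$ in one line by citing Theorem~1 of Degond--Markowich \cite{Degond-Markowich} (the problem is uniformly elliptic for $0<j<1$ once $\rho_j\geq 1$), and then only proves uniqueness and the lower bound itself, both via Lemma~\ref{comparison-principle} with exactly the barrier $1+m\sin(\pi x)$ that you use. You instead build the solution from scratch: truncate $a_j$ and $g_j$ off $[1,\overline{b}]$, solve the truncated problem by Lax--Milgram plus Schauder, and then deactivate the truncation by testing with $(1-\rho_j)^+$ and $(\rho_j-\overline{b})^+$; your observation that the drift contribution is the integral of an exact derivative of a function vanishing at both endpoints (since $\rho_j(0)=\rho_j(1)=1$) is the right way to kill that term, and your flux bootstrap $\tilde a_j(\rho_j)(\rho_j)_x+\tilde g_j(\rho_j)\in W^{1,\infty}$ correctly supplies the $C^1$ regularity that Lemma~\ref{comparison-principle} demands of $U$. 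What each approach buys: the paper's route is shorter but leans on an external theorem; yours is self-contained, makes the $L^\infty$ bounds transparent, and (as you note) isolates exactly where the $j$-dependence must not enter --- the barrier constant $m$ and the bounds $1\leq\rho_j\leq\overline{b}$ --- even though the ellipticity constant $1-j^2$ degenerates, which is what the subsequent passage $j\to 1^-$ in the proof of Theorem~\ref{thm1} requires. The uniqueness argument (two applications of the comparison principle with $U$ and $V$ swapped) is identical to the paper's. The only points left at sketch level are routine: continuity of the frozen-coefficient map for Schauder, and the $O(m^2)$ bookkeeping in the barrier computation, both of which go through as you indicate.
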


\begin{remark} In \cite{Degond-Markowich}, Degond and Markowich  also obtained the uniqueness of the subsonic solution, but they needed to restrict the current density sufficiently small $j\ll 1$ (the completely subsonic case). Here, we still have the uniqueness of the subsonic solution for   any $j$ with $0<j<1$ in the
case of subsonic doping profile.
\end{remark}

\begin{proof}
Because $0<j<1$, the fluid velocity of equation \eqref{approximate
equation} is $j/\rho_j$, which is subsonic if $\rho_j\geq1$. In
other words, equation \eqref{approximate equation} is uniformly
elliptic for $\rho_j\geq1$. Recall Theorem 1 of
\cite{Degond-Markowich}, equation \eqref{approximate equation} has a
subsonic weak solution $\rho_j\in H^2(0,1)$ satisfying
$1\leq\rho_j(x)\leq\overline{b}$. Thus, we only need to show that
such $\rho_j$ is unique for any $0<j<1$, and to establish the lower
bound estimate in \eqref{2.3}.

Suppose that there are two solutions $u$ and $v$ satisfying
$u,v\geq1$, $u,v\in H^2(0,1)$. By the Sobolev imbedding theorem,
$u$, $v\in C^1[0,1]$. Hence, the comparison principle (Lemma
\ref{comparison-principle}) gives  $u(x)=v(x)$ over  $[0,1]$.

We now derive the lower bound estimate for $\rho_j(x)$. Denote
\[
q(x):=1+m\sin(\pi x),
\]
 where $m>0$ is a constant to be determined
later. Since $0<j<1$, it is easy to calculate that
\begin{equation*}
-\left[\left(\dfrac{1}{q}-\dfrac{j^2}{q^3}\right)q_x\right]_x
-\left(\dfrac{j}{\tau q}\right)_x+ (q-b)\leq
Cm+ (1-b)\leq
Cm+ (1-\underline{b})<0,
\end{equation*}
if $m$ is small enough such that $Cm< (\underline{b}-1)$. Here
$C=C(\tau)$ is a positive constant. Thus, by Lemma
\ref{comparison-principle} again, we have $\rho_j(x)\geq
q(x)=1+m\sin(\pi x)$ on $[0,1]$.
\end{proof}

\begin{proof}[Proof of Theorem \ref{thm1}]
Multiplying \eqref{approximate equation} by $(\rho_j-1)$, we have
\begin{equation} \label{2.5}
\begin{split}
&(1-j^2)\int_0^1\frac{|(\rho_j)_x|^2}{(\rho_j)^3}dx
+\frac{4}{9}\int_0^1\frac{(\rho_j+1)}{(\rho_j)^3}\cdot|((\rho_j-1)^{\frac{3}{2}})_x|^2dx
\\&\quad+\frac{j}{\tau}\int_0^1\frac{(\rho_j)_x}{\rho_j}dx+ \int_0^1(\rho_j-b)(\rho_j-1)dx=0.
\end{split}\end{equation}
Noting that
\begin{equation*}
\begin{split}
\frac{j}{\tau}\int_0^1\frac{(\rho_j)_x}{\rho_j}dx&=\frac{j}{\tau}\int_0^1(\ln\rho_j)_xdx=0,\\
\int_0^1(\rho_j-b)(\rho_j-1)dx&=\int_0^1(\rho_j-1)^2dx+\int_0^1(1-b)(\rho_j-1)dx\\
&\geq\frac{1}{2}\int_0^1(\rho_j-1)^2dx-\frac{1}{2}\int_0^1(b-1)^2dx,
\end{split}\end{equation*}
$0<j<1$, and $1\leq\rho_j\leq\overline{b}$, it follows from
\eqref{2.5} that
\begin{eqnarray*}
&&\frac{(1-j^2)}{\overline{b}^3}\int_0^1|(\rho_j)_x|^2dx
+\frac{8}{9\overline{b}^3}\int_0^1\left|((\rho_j-1)^{\frac{3}{2}})_x\right|^2dx
+\frac{1}{2}\int_0^1(\rho_j-1)^2dx\\
&&\leq\frac{1}{2}\int_0^1[b(x)-1]^2dx,
\end{eqnarray*}
which gives
\begin{equation}\label{eqn-2.8}
\left\|(\rho_j-1)^{\frac{3}{2}}\right\|_{H^1}\leq C\ \text{and }\
\left\|(1-j^2)(\rho_j)_x\right\|_{L^2}\leq C(1-j^2)^{\frac{1}{2}}.
\end{equation}
Thus, by the compact imbedding $H^1(0,1)\hookrightarrow
C^{1/2}[0,1]$, there exists a function $\rho$ such that, as
$j\rightarrow1^-$, up to a subsequence,
\begin{align}
&(\rho_j-1)^{\frac{3}{2}}\rightharpoonup(\rho-1)^{\frac{3}{2}} \ \
\text{weakly in } H^1(0,1),\label{eqn-2.9}\\&
(\rho_j-1)^{\frac{3}{2}}\rightarrow(\rho-1)^{\frac{3}{2}} \ \
\text{strongly in } C^{\frac{1}{2}}[0,1],\label{eqn-2.10}
\\&
(1-j^2)(\rho_j)_x\rightarrow0 \ \ \text{strongly in }
L^2(0,1).\label{eqn-2.11}
\end{align}
Observing that
$((\rho_j-1)^2)_x=\frac{4}{3}(\rho_j-1)^{\frac{1}{2}}((\rho_j-1)^{\frac{3}{2}})_x$,
we get from \eqref{eqn-2.8} that
\begin{equation*}
\left\|(\rho_j-1)^2\right\|_{H^1}=\left\|(\rho_j-1)^2\right\|_{L^2}+\left\|((\rho_j-1)^2)_x\right\|_{L^2}
\leq C\left\|(\rho_j-1)^{\frac{3}{2}}\right\|_{H^1}\leq C,
\end{equation*}
which leads to
\begin{equation}\label{eqn-2.12}
(\rho_j-1)^2\rightharpoonup(\rho-1)^2 \ \ \text{weakly in } H^1(0,1)
\ \text{as }j\rightarrow1^-.
\end{equation} Now we multiply
\eqref{approximate equation} by $\varphi\in H_0^1(0,1)$ to derive
\begin{equation*}
\begin{split}
&\frac{1}{2}\int_0^1\frac{\rho_j+1}{\rho_j^3}[(\rho_j-1)^2]_x\varphi_xdx
+\int_0^1\frac{1}{\rho_j^3}(1-j^2)(\rho_j)_x\varphi_xdx\\&\quad+\frac{j}{\tau}\int_0^1\frac{\varphi_x}{\rho_j}
dx+ \int_0^1[\rho_j(x)-b(x)]\varphi dx=0.
\end{split}\end{equation*}
Letting $j\rightarrow1^-$, and applying
\eqref{eqn-2.10}-\eqref{eqn-2.12}, we prove the existence of weak
solution $\rho(x)=\rho_{sub}(x)$ satisfying
\eqref{weak-solution}. Since  $m$ presented in \eqref{2.3} is
independent of $j$, then the lower bound estimate in \eqref{2.2}
immediately follows from \eqref{2.3} and \eqref{eqn-2.10}.

To prove the uniqueness of interior subsonic solution, we first need
to investigate the regularity of $w(x)$ defined by
$w(x):=(\rho(x)-1)^2$. Clearly, $w\in H_0^1(0,1)$. From
\eqref{elliptic}, it can be verified that $w$ satisfies
\begin{equation}\label{w}
\left(\dfrac{(2+\sqrt{w})w_x}{2(1+\sqrt{w})^3}+\dfrac{1}{\tau(1+\sqrt{w})}\right)_x
- (\sqrt{w}+1-b)=0,\ \ x\in(0,1).
\end{equation}
For simplicity, we set
$$
f_1(x):=\dfrac{2+\sqrt{w}}{(1+\sqrt{w})^3},\quad
f_2(x):=\dfrac{1}{1+\sqrt{w}},\quad
f_3(x):=\frac{f_1(x)w_x(x)}{2}+\frac{f_2(x)}{\tau}.$$ Because
\eqref{w} holds in the sense of distribution, we have $f_3\in
H^1(0,1)$. By Sobolev imbedding theorem, we have $w,f_3\in
C^{1/2}[0,1]$. Since $w\geq0$ on $[0,1]$, then
\begin{equation*}
\begin{split}
|\sqrt{w(y)}-\sqrt{w(x)}|=\frac{|w(y)-w(x)|}{\sqrt{w(y)}+\sqrt{w(x)}}
\leq\frac{|w(y)-w(x)|}{\sqrt{|w(y)-w(x)|}}\leq
C|y-x|^{1/4}.\end{split}\end{equation*}
On the other hand, for any $x,y\in[0,1]$, it holds
\begin{equation*}
f_2(x)-f_2(y)=\dfrac{1}{1+\sqrt{w(x)}}-\dfrac{1}{1+\sqrt{w(y)}}
=\dfrac{\sqrt{w(y)}-\sqrt{w(x)}}{(1+\sqrt{w(x)})(1+\sqrt{w(y)})}.
\end{equation*}
Thus,
\[|f_2(x)-f_2(y)|\leq|\sqrt{w(y)}-\sqrt{w(x)}|\leq
C|y-x|^{1/4}.\] This means $f_2\in C^{1/4}[0,1]$. Similarly, we have
$f_1\in C^{1/4}[0,1]$. Notice that
$w_x=\frac{2f_3-2f_2/\tau}{f_1}\in C^{1/4}[0,1]$, then
\begin{equation}\label{2.16}
w\in C^{1+1/4}[0,1]. \end{equation} Now, integrating \eqref{w} over
$[0,x]$ and setting
$G_w(x):=\dfrac{(2+\sqrt{w(x)})w_x(x)}{2(1+\sqrt{w(x)})^3}+\dfrac{1}{\tau(1+\sqrt{w(x)})}$,
then
\begin{equation} \label{ode-w}
\left \{\begin{array}{ll}
        \dfrac{(2+\sqrt{w})w_x}{2(1+\sqrt{w})^3}=G_w-\dfrac{1}{\tau(1+\sqrt{w})},\\
       \displaystyle{  G_w(x)= G_w(0)+\int_0^x[\sqrt{w(s)}+1-b(s)]ds.}
        \end{array} \right.
\end{equation}

We are now ready to prove the uniqueness of interior subsonic solution.
Suppose $\rho_1(x)$ and $\rho_2(x)$ are two different interior subsonic
solutions to equation \eqref{elliptic}. So, there exists at least a
number $z\in(0,1)$ such that $\rho_1(z)\neq\rho_2(z)$. Without loss
of generality, we may assume $\rho_1(z)>\rho_2(z)$, then
$w_1(z)>w_2(z)$. Since $w_1,w_2\in C^{1+1/4}[0,1]$, there exists a
maximal interval $[a,c]\subset[0,1]$  such that $z\in(a,c)$,
$$w_1(a)=w_2(a),\ w_1(c)=w_2(c)\ \ \text{and}\ \ w_1(x)>w_2(x),\ x\in (a,c).$$
Obviously, it holds
\begin{align}
(w_1)_x(a)=\underset{x\rightarrow
a^+}{\lim}\frac{w_1(x)-w_1(a)}{x-a} \geq\underset{x\rightarrow
a^+}{\lim}\frac{w_2(x)-w_2(a)}{x-a}=(w_2)_x(a),\label{w11}\\
(w_1)_x(c)=\underset{x\rightarrow
c^-}{\lim}\frac{w_1(x)-w_1(c)}{x-c} \leq\underset{x\rightarrow
c^-}{\lim}\frac{w_2(x)-w_2(c)}{x-c}=(w_2)_x(c).\label{w22}
\end{align}
Owing to \eqref{w22} and the first equation of \eqref{ode-w},
$$G_{w_1}(c)\leq G_{w_2}(c).$$
Substituting this inequality into the second equation of
\eqref{ode-w}, we have
$$ G_{w_1}(a)+\int_a^c[\sqrt{w_1(x)}+1-b(x)]dx\leq G_{w_2}(a)+\int_a^c[\sqrt{w_2(x)}+1-b(x)]dx.$$
Since  $w_1(x)>w_2(x)$ over $(a,c)$, then
$$G_{w_1}(a)<G_{w_2}(a).$$
Using the first equation of \eqref{ode-w} again, we obtain
$$(w_1)_x(a)<(w_2)_x(a),$$
which contradicts to \eqref{w11}. Therefore, $\rho_1(x)=\rho_2(x)$ over
$[0,1]$, namely, the interior subsonic solution $\rho_{sub}(x)$ is unique.
\end{proof}

We proceed to study the regularity of this interior subsonic
solution.

\begin{proposition}\label{local-analysis-sub}
$\rho_{sub}\in C^{1/2}[0,1]$, and there exist $0<s_1<1$, $C_i$
$(i=1,2,3,4)$ such that
\begin{equation} \label{eqn-2.20}
\begin{split}
C_1(1-x)^{1/2}&<\rho_{sub}(x)-1<C_2(1-x)^{1/2}, \\
-C_3(1-x)^{-1/2}&<(\rho_{sub})_x(x)<-C_4(1-x)^{-1/2},
\end{split}
\ \ \mbox{ for } x\in [1-s_1,1].
\end{equation}

\end{proposition}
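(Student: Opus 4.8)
The plan is to read off all of \eqref{eqn-2.20} from the scalar ODE system \eqref{ode-w} satisfied by $w:=(\rho_{sub}-1)^2$. Recall from the proof of Theorem~\ref{thm1} that $w\in C^{1+1/4}[0,1]$ (estimate \eqref{2.16}), that $w(1)=0$ because $\rho_{sub}(1)=1$, and that $w(x)\ge m^{2}\sin^{2}(\pi x)>0$ for $x\in(0,1)$ by \eqref{2.2}; in particular $w$ is Lipschitz on $[0,1]$. Since $w\ge0$ and $w(1)=0$, the difference quotient $\frac{w(x)-w(1)}{x-1}$ is $\le0$ for $x<1$, so letting $x\to1^-$ yields $w_x(1)\le0$. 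The crux of the proof — and the step I expect to be the main obstacle — is to improve this to the strict inequality $w_x(1)<0$, which is precisely what produces the square-root (i.e. $C^{1/2}$) degeneracy at the boundary.

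To get $w_x(1)<0$ I would argue by contradiction. If $w_x(1)=0$, then substituting $w(1)=w_x(1)=0$ into the definition of $G_w$ gives $G_w(1)=\frac1\tau$, hence by the second relation in \eqref{ode-w},
\[
G_w(x)-\frac1\tau=G_w(x)-G_w(1)=\int_x^1\big[b(s)-1-\sqrt{w(s)}\big]\,ds .
\]
Using this together with $\frac1\tau-\frac1{\tau(1+\sqrt w)}=\frac{\sqrt w}{\tau(1+\sqrt w)}\ge0$ in the first relation of \eqref{ode-w}, we obtain for $x$ near $1$
\[
\frac{(2+\sqrt w)\,w_x}{2(1+\sqrt w)^3}=\int_x^1\big[b(s)-1-\sqrt{w(s)}\big]\,ds+\frac{\sqrt w}{\tau(1+\sqrt w)} .
\]
Since $\underline b>1$ and $w$ is continuous with $w(1)=0$, there is $s_1\in(0,1)$ with $\sqrt{w(s)}<\underline b-1$ on $[1-s_1,1]$; then the right-hand side above is strictly positive on $[1-s_1,1)$, forcing $w_x>0$ there, hence $w$ is strictly increasing on $[1-s_1,1)$. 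By continuity at $1$ this gives $w(x)<w(1)=0$ for $x\in[1-s_1,1)$, contradicting $w>0$ on $(0,1)$. Therefore $w_x(1)<0$; write $w_x(1)=-\alpha_0$ with $\alpha_0>0$.

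The remainder is routine calculus. By continuity of $w_x$ on $[0,1]$, shrink $s_1$ so that $-\tfrac32\alpha_0\le w_x\le-\tfrac12\alpha_0$ on $[1-s_1,1]$; integrating from $x$ to $1$ and using $w(1)=0$ gives $\tfrac{\alpha_0}{2}(1-x)\le w(x)\le\tfrac{3\alpha_0}{2}(1-x)$, and since $\rho_{sub}-1=\sqrt w\ge0$, this is the first line of \eqref{eqn-2.20} with $C_1=\sqrt{\alpha_0/2}$, $C_2=\sqrt{3\alpha_0/2}$ (shrinking the band slightly if one insists on strict inequalities). Differentiating, $(\rho_{sub})_x=\frac{w_x}{2\sqrt w}$ on $[1-s_1,1)$, and combining the two-sided bounds on $w_x$ and on $w$ just obtained yields $-C_3(1-x)^{-1/2}\le(\rho_{sub})_x\le-C_4(1-x)^{-1/2}$ with $C_3=\frac{3\alpha_0/2}{2\sqrt{\alpha_0/2}}>C_4=\frac{\alpha_0/2}{2\sqrt{3\alpha_0/2}}>0$, which is the second line of \eqref{eqn-2.20}. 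Finally, the global regularity $\rho_{sub}\in C^{1/2}[0,1]$ follows from the elementary inequality $|\sqrt a-\sqrt b|\le\sqrt{|a-b|}$ and the Lipschitz bound on $w$: $|\rho_{sub}(x)-\rho_{sub}(y)|=|\sqrt{w(x)}-\sqrt{w(y)}|\le|w(x)-w(y)|^{1/2}\le\|w_x\|_{L^\infty}^{1/2}|x-y|^{1/2}$; the lower bound in \eqref{eqn-2.20} simultaneously shows that the exponent $\tfrac12$ cannot be improved near $x=1$, so it is optimal.
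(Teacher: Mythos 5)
Your proof is correct and follows essentially the same route as the paper: since $G_w=E$ and $w(1)=0$ give $w_x(1)=E(1)-\frac{1}{\tau}$, your contradiction argument establishing $w_x(1)<0$ is exactly the paper's proof that $E(1)<\frac{1}{\tau}$, and the subsequent two-sided bounds on $w_x$, the integration, and the $|\sqrt{a}-\sqrt{b}|\le\sqrt{|a-b|}$ trick for the global $C^{1/2}$ regularity all match the paper's computations.
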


\begin{remark}
This proposition indicates that $\frac{1}{2}$ is the optimal
exponent in H\"{o}lder space for the global regularity of the unique interior
subsonic solution $\rho_{sub}(x)$. And the derivative of the
approximate subsonic solution sequence $\{\rho_j\}_{0<j<1}$
constructed in Lemma \ref{subsonic solution} blows up as
$j\rightarrow1^-$  for $x\approx 1$, namely, $\underset{j\to
1^-}{\lim} \rho'_j(x)=-\infty$ for $x\approx 1$.
\end{remark}

\begin{proof} For convenience, we denote by $\rho$ the interior
subsonic solution of \eqref{1.5}. By \eqref{2.16}, we
have $(\rho-1)^2=w\in C^1[0,1]$. Since $\rho\geq1$ on $[0,1]$, then
\[
|\rho(x)-1+\rho(y)-1|=|\rho(x)-1|+|\rho(y)-1|\ge |(\rho(x)-1)-(\rho(y)-1)| =|\rho(x)-\rho(y)|.
\]
Thus, we have
\begin{equation*}
\frac{|\rho(x)-\rho(y)|^2}{|x-y|}
=\frac{|\rho(x)-\rho(y)||(\rho(x)-1)^2-(\rho(y)-1)^2|}{|x-y||\rho(x)-1+\rho(y)-1|}
\leq\frac{|w(x)-w(y)|}{|x-y|}\leq C,
\end{equation*}
for any $x,y\in[0,1]$, which indicates that $\rho\in C^{1/2}[0,1]$.

Now we are going to prove the estimates in  \eqref{eqn-2.20}. We
first claim $E(1)<\dfrac{1}{\tau}.$ Otherwise, if
$E(1)\geq\dfrac{1}{\tau}$, then it will imply a contradiction. In
fact, since $\rho\in C[0,1]$ and $\rho(1)=1<\underline{b}\le b(x)$
for $x\in [0,1]$,  there exists $\hat{\epsilon}>0$ such that
$\rho(x)-b(x)<0$ for a.e. $x\in[1-\hat{\epsilon},1]$. By integrating
the second equation of \eqref{1.5} over $[x,1]$ for
$x\in[1-\hat{\epsilon},1]$, we have
\[
 E(x)= E(1)-\int_x^1[\rho(s)-b(s)]ds> E(1)\geq\dfrac{1}{\tau}, \ \ \mbox{ for }  x\in[1-\hat{\epsilon},1].
\]
Noting $\rho(x)>1$ over $(0,1)$, we have
$E(x)-\dfrac{1}{\tau\rho(x)}\geq\dfrac{1}{\tau}\left(1-\dfrac{1}{\rho(x)}\right)>0$
for $x\in[1-\hat{\epsilon},1]$. It then follows from the first
equation of \eqref{1.5} that $\rho_x(x)>0$ on
$[1-\hat{\epsilon},1]$, which contradicts to the fact that
$\rho(1)=1$ and $\rho(x)>1$ over $(0,1)$.

Now let $q:=E(1)-\dfrac{1}{\tau}$, then $q<0$. Based on the continuity of the function
$\left(E(x)-\dfrac{1}{\tau\rho(x)}\right)$, there exists a number
$0<s_1<\hat{\epsilon}$ such that
\begin{equation}\label{new-2}
\frac{3q}{2}\le E(x)-\dfrac{1}{\tau\rho(x)}\le \frac{q}{2}<0 \text{
for } x\in[1-s_1,1].
\end{equation}
From the first equation of \eqref{1.5}, we have
\[
E(x)-\frac{1}{\tau\rho(x)}=\Big(1-\frac{1}{\rho^2}\Big)\frac{\rho_x}{\rho}=
\frac{\rho+1}{\rho^3}(\rho-1)\rho_x = \frac{\rho+1}{2\rho^3}\Big((\rho-1)^2\Big)_x.
\]
Applying \eqref{new-2} to the above equation, we then
have
\[
\frac{3q\rho^3(x)}{\rho(x)+1}\le
\Big((\rho-1)^2\Big)_x=\Big[E(x)-\frac{1}{\tau\rho(x)}\Big]\frac{2\rho^3(x)}{\rho(x)+1}
\le  \frac{q\rho^3(x)}{\rho(x)+1}<0 \text{ for } x\in[1-s_1,1].
\]
Applying  \eqref{2.2}  to the above inequalities, we can estimate
\begin{equation}\label{new-4}
\frac{3q\bar{b}^3}{2}<\Big((\rho(x)-1)^2\Big)_x<\frac{q}{\bar{b}+1}<0
\ \text{for}\ x\in[1-s_1,1].
\end{equation}
Integrating \eqref{new-4} over $[x,1]$ for $x\in [1-s_1,1]$, we get
\begin{equation}\label{new-5}
C_1(1-x)^{\frac{1}{2}}<\rho(x)-1<C_2(1-x)^{\frac{1}{2}}, \
\text{for}\ x\in[1-s_1,1],
\end{equation}
with
\[
C_1:=\sqrt{\frac{|q|}{\bar{b}+1}} \mbox{ and }
C_2:=\sqrt{\frac{3|q|{\bar{b}}^3}{2}}.
\]
Furthermore, from \eqref{new-4}, we have
\[
\frac{3q\bar{b}^3}{4(\rho(x)-1)}<\rho_x(x)<\frac{q}{2(\bar{b}+1)(\rho(x)-1)}<0
\ \text{for}\ x\in[1-s_1,1].
\]
This with \eqref{new-5} together implies
\[
-C_3(1-x)^{-\frac{1}{2}}<\rho_x(x)<-C_4(1-x)^{-\frac{1}{2}}, \ \ \
x\in[1-s_1,1],
\]
for some positive constants $C_3$ and $C_4$.
The proof is complete.
\end{proof}

\subsection{Interior supersonic solutions}

We next prove the existence of interior supersonic solutions of
\eqref{elliptic}.

\begin{theorem}\label{thm2}
Assume that $b\in L^\infty(0,1)$ and $\underline{b}>1$, then
equation \eqref{elliptic} admits an interior supersonic solution
$\rho_{sup}(x)$ satisfying $\ell\leq\rho_{sup}(x)\leq1$ over $[0,1]$ for
some positive constant $\ell$. Moreover,
$\rho_{sup}$ satisfies the following properties.

\begin{compactenum}[(i)]

\item For any $\frac{1}{2}>\epsilon>0$, there exists a number
$\delta>0$ such that $\rho_{sup}(x)\leq1-\delta$ for any
$x\in[\epsilon,1-\epsilon]$.

\item $\rho_{sup}$ has only one critical point $z_0$ over $(0,1)$ such
that $(\rho_{sup})_x<0$ on $(0,z_0)$ and $(\rho_{sup})_x>0$ on
$(z_0,1)$, i.e. $z_0$ is the minimal point.

\end{compactenum}

\end{theorem}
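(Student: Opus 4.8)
\emph{Overall strategy.} The plan is to obtain $\rho_{sup}$ as the limit of solutions of a \emph{strictly supersonic}, hence non‑degenerate, approximation of \eqref{elliptic}, and then to read off its global regularity and its shape directly from the first‑order system \eqref{1.5}. A warning is in order: the comparison/maximum‑principle machinery that drove the subsonic part is \emph{not} available here, because in the supersonic regime the equation carries the zeroth‑order term $-(\rho-b)$ with the ``wrong'' sign for a maximum principle (the linearized operator is of the type $a\rho''+\cdots+\rho$ with $a>0$); this is exactly why the phase‑plane analysis of \eqref{1.5} must enter. Dual to \eqref{approximate equation}, I replace the current $J=1$ by a parameter $j>1$ and consider
\[
\left[\left(\frac{1}{\rho_j}-\frac{j^2}{\rho_j^3}\right)(\rho_j)_x\right]_x+\frac{j}{\tau}\left(\frac{1}{\rho_j}\right)_x-[\rho_j-b(x)]=0,\qquad \rho_j(0)=\rho_j(1)=1 .
\]
For $0<\rho_j\le1<j$ the leading coefficient $\tfrac{\rho_j^2-j^2}{\rho_j^3}\le 1-j^2<0$ is bounded away from $0$, so the endpoint data are now strictly supersonic and the problem is uniformly elliptic. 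I would establish solvability of this problem together with the a priori trapping $\ell\le\rho_j\le1$ uniform in $j\in(1,2]$ by a topological (shooting / Leray--Schauder) argument for the planar system \eqref{1.5}: the direction field \eqref{direction} shows that a trajectory issuing from the sonic line $\rho=1$ with $E$ slightly above $\tfrac1\tau$ bends into $\{\rho<1\}$, cannot re‑enter $\{\rho>1\}$ in the interior, and — because $\underline b>1$ keeps $E_x=\rho-b<0$ while $\rho\le1$, hence $E$ a priori bounded — cannot reach vacuum; tuning the one free parameter so that the trajectory returns to $\rho=1$ precisely at $x=1$ produces $\rho_j$. This trapping, and in particular the \emph{strictly positive lower bound} $\ell$, is the step I expect to be the main obstacle, since (unlike the subsonic case) there is no barrier from below: with $\underline b>1$ the term $-(\rho_j-b)$ always pushes $\rho_j$ upward.

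\emph{Energy estimate and compactness.} Multiplying the approximate equation by $(\rho_j-1)$ and integrating by parts on $(0,1)$ (all boundary terms vanish since $\rho_j-1=0$ at $0,1$), using $\int_0^1\frac{(\rho_j)_x}{\rho_j}\,dx=0$ together with $\frac{1}{\rho_j}-\frac{j^2}{\rho_j^3}=-\frac{j^2-1}{\rho_j^3}-\frac{(1-\rho_j)(1+\rho_j)}{\rho_j^3}$ and $\big((1-\rho_j)^{3/2}\big)_x=-\tfrac32(1-\rho_j)^{1/2}(\rho_j)_x$, one arrives at
\[
(j^2-1)\int_0^1\frac{|(\rho_j)_x|^2}{\rho_j^3}\,dx+\frac{4}{9}\int_0^1\frac{1+\rho_j}{\rho_j^3}\Big|\big((1-\rho_j)^{3/2}\big)_x\Big|^2\,dx=\int_0^1(\rho_j-b)(\rho_j-1)\,dx .
\]
Since $(\rho_j-b)(\rho_j-1)=(1-\rho_j)^2+(b-1)(1-\rho_j)$ and $0\le1-\rho_j\le1$, the right‑hand side is bounded by $\overline b$; hence $\|(1-\rho_j)^{3/2}\|_{H^1}\le C$ and $(j^2-1)\|(\rho_j)_x\|_{L^2}^2\le\overline b$, uniformly in $j$, and as in the proof of Theorem \ref{thm1} this also gives $\|(1-\rho_j)^2\|_{H^1}\le C$. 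By the compact embedding $H^1(0,1)\hookrightarrow C^{1/2}[0,1]$ and a subsequence, $(1-\rho_j)^{3/2}\to(1-\rho)^{3/2}$ and $(1-\rho_j)^2\to(1-\rho)^2$ strongly in $C^{1/2}[0,1]$ and weakly in $H^1(0,1)$, with $\ell\le\rho\le1$; in the weak formulation of the approximate equation the term $\int_0^1\frac{1-j^2}{\rho_j^3}(\rho_j)_x\varphi_x\,dx$ is $O\big((j^2-1)^{1/2}\big)$ by the above, so passing to the limit $j\to1^+$ yields an interior supersonic solution $\rho_{sup}:=\rho$ satisfying \eqref{weak-solution}.

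\emph{Global regularity, the estimate \eqref{2.2-3}, and property (i).} These follow exactly as in Proposition \ref{local-analysis-sub} with the two endpoints interchanged. One first bootstraps $w:=(1-\rho_{sup})^2\in C^{1+1/4}[0,1]$ from the distributional equation and uses $|\rho_{sup}(x)-\rho_{sup}(y)|^2\le|w(x)-w(y)|$ to get $\rho_{sup}\in C^{1/2}[0,1]$. Next, run the ``$E(1)<\tfrac1\tau$''‑type argument at $x=0$ with reversed inequality: if $E_{sup}(0)\le\tfrac1\tau$, then $E_{sup}(x)<\tfrac1\tau$ for $x>0$ near $0$ (because $\rho_{sup}-b\le1-\underline b<0$), so $\rho_{sup}E_{sup}-\tfrac1\tau=\rho_{sup}\big(E_{sup}-\tfrac1{\tau\rho_{sup}}\big)<0$ there, and since $1-\tfrac1{\rho_{sup}^2}<0$ this forces $(\rho_{sup})_x>0$ near $0$, contradicting $\rho_{sup}(0)=1$ with $\rho_{sup}<1$ on a right neighbourhood; hence $E_{sup}(0)>\tfrac1\tau$. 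Consequently $\big((1-\rho_{sup})^2\big)_x=\frac{2\rho_{sup}^3}{1+\rho_{sup}}\big(E_{sup}-\tfrac{1}{\tau\rho_{sup}}\big)$ is trapped between two positive constants on some $[0,s_1]$, and integrating gives \eqref{2.2-3}. The same device proves $\rho_{sup}<1$ on all of $(0,1)$: if $\rho_{sup}(x_1)=1$ at an interior $x_1$, then $w\in C^1$ with $w(x_1)=0$ forces $w_x(x_1)=0$, hence $E_{sup}(x_1)=\tfrac1\tau$; for $x>x_1$ near $x_1$ one then has $E_{sup}(x)<\tfrac1\tau\le\tfrac1{\tau\rho_{sup}(x)}$, so $\big((1-\rho_{sup})^2\big)_x<0$ there, which is impossible since $(1-\rho_{sup})^2\ge0=(1-\rho_{sup}(x_1))^2$ (and $\rho_{sup}\equiv1$ near $x_1$ is excluded, as the equation would then give $b\equiv1$). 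Continuity of $\rho_{sup}$ on the compact set $[\epsilon,1-\epsilon]$ now gives the uniform gap $\delta$ in (i).

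\emph{Property (ii).} Set $g:=E_{sup}-\tfrac{1}{\tau\rho_{sup}}$. The first equation of \eqref{1.5} reads $(\rho_{sup})_x=\dfrac{\rho_{sup}^3\,g}{\rho_{sup}^2-1}$, so on $(0,1)$ (where $0<\rho_{sup}<1$ by (i)) the signs of $(\rho_{sup})_x$ and $g$ are opposite, and the critical points of $\rho_{sup}$ are precisely the zeros of $g$. Differentiating, $g_x=(\rho_{sup}-b)+\dfrac{(\rho_{sup})_x}{\tau\rho_{sup}^2}$, so at any zero of $g$ we get $g_x=\rho_{sup}-b\le\rho_{sup}-\underline b<0$; hence $g$ is strictly decreasing through each of its zeros and so has at most one zero in $(0,1)$. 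Since $\rho_{sup}(0)=\rho_{sup}(1)=1$ and $\rho_{sup}\not\equiv1$ (otherwise $b\equiv1$), $\rho_{sup}$ attains an interior minimum, which is such a zero; therefore there is exactly one critical point $z_0$, with $g>0$ and $(\rho_{sup})_x<0$ on $(0,z_0)$ and $g<0$ and $(\rho_{sup})_x>0$ on $(z_0,1)$, which is (ii). To summarize, the delicate point of the whole argument is the uniform no‑vacuum bound $\rho_j\ge\ell>0$ in the approximation: it cannot come from a comparison function (there is no restoring force from below when $\underline b>1$) and must be extracted from the phase‑plane geometry of \eqref{1.5}, which is where the ``phase‑plane analysis'' enters the compactness scheme.
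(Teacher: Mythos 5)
Your overall architecture matches the paper's: approximate with a strictly supersonic current $j>1$, derive uniform energy bounds, pass to the limit $j\to1^+$, and then read off (i) and (ii) from the first--order system. Your energy identity in the $\rho_j$ variable is correct, and your arguments for ``$\rho_{sup}<1$ in the interior'' and for (ii) (via the sign of $g=E-\frac{1}{\tau\rho}$ and $g_x=\rho-b<0$ at every zero of $g$) are valid and in fact slightly cleaner than the paper's test-function and local-minimum arguments. However, there is a genuine gap exactly where you flag one: you never prove the uniform no-vacuum bound $\rho_j\ge\ell>0$, nor the solvability of the approximate problems, and you only gesture at ``phase-plane geometry'' and an unexecuted shooting/Leray--Schauder scheme. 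This is not a peripheral issue: the bound $\ell\le\rho_{sup}$ is part of the theorem's statement, and without a uniform lower bound the weights $\rho_j^{-3}$ in the weak formulation are uncontrolled, so your limit passage ``as in Theorem \ref{thm1}'' does not go through. Your own energy estimate cannot supply it, since $\|(1-\rho_j)^{3/2}\|_{H^1}\le C$ is compatible with $\rho_j$ touching $0$.

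The paper's resolution, which you are missing, is the velocity transformation $u_k=k/\rho_k$. The equation for $u_k$ (see \eqref{2.8}) has leading coefficient $u_k-\frac{1}{u_k}\ge k-1>0$ once $u_k\ge k$, and the forcing $\frac{k}{u_k}-b$ is \emph{bounded}; testing with $(u-k)^-$ gives $u_k\ge k$ (so the problem is uniformly elliptic and Schauder's fixed point theorem yields existence), and testing with $(u_k-k)$ gives $\|(u_k-k)^{3/2}\|_{H^1}\le C$ uniformly in $k$, hence $\|u_k\|_{L^\infty}\le k+C^{2/3}$ and therefore $\rho_k=k/u_k\ge(1+C^{2/3})^{-1}=:\ell$. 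In other words, the ``restoring force from below'' that you correctly observe is absent for $\rho$ becomes an \emph{upper} barrier for $u=k/\rho$, where energy methods do apply; no phase-plane argument is needed for this step. Incorporating this transformation would close both gaps (existence of $\rho_j$ and the uniform bound $\ell$) and make the rest of your proof complete.
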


As shown in the proof of Theorem \ref{thm1}, we consider the approximate
equation
\begin{equation}\label{supersonic equation}
\left \{\begin{array}{ll}
\left[\left(\dfrac{1}{\rho_k}-\dfrac{k^2}{(\rho_k)^3}\right)(\rho_k)_x\right]_x
+\left(\dfrac{k}{\tau\rho_k}\right)_x- [\rho_k(x)-b(x)]=0,& x\in(0,1)\\
\rho_k(0)=\rho_k(1)=1,&
\end{array} \right.
\end{equation}
but with the parameter   $1<k<\infty$.

\begin{lemma}\label{supersonic-solution}
Let  the doping profile be subsonic with  $b(x)\in L^\infty(0,1)$ and $\underline{b}>1$. Then
\eqref{supersonic equation} admits a weak solution $\rho_k(x)$
satisfying
\begin{equation}\label{2.7}
\rho_k\in H^1(0,1)\ and \ 0<\rho_k(x)\leq1 \ over \ [0,1].
\end{equation}
\end{lemma}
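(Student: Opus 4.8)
\subsection*{Proof sketch (plan)}

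The plan is to adapt the regularization–and–compactness scheme of the subsonic case, using that, for the expected range $0<\rho_k\le 1<k$, the approximate equation \eqref{supersonic equation} is \emph{uniformly} elliptic: written as
\[
-\Big[\Big(\tfrac{k^{2}}{\rho_k^{3}}-\tfrac1{\rho_k}\Big)(\rho_k)_x\Big]_x-\tfrac{k}{\tau\rho_k^{2}}(\rho_k)_x-\rho_k=-b ,
\]
its principal coefficient obeys $\tfrac{k^{2}}{\rho_k^{3}}-\tfrac1{\rho_k}\ge k^{2}-1>0$ whenever $0<\rho_k\le 1$, so the sole degeneracy/singularity is at $\rho_k=0$. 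I would first freeze the nonlinear coefficients $\tfrac1{\rho_k}-\tfrac{k^{2}}{\rho_k^{3}}$ and $\tfrac{k}{\tau\rho_k}$ at the cut-off $[\rho_k]_\ell:=\max\{\ell,\min\{\rho_k,1\}\}$ with $\ell\in(0,1)$ fixed; the resulting truncated Dirichlet problem is uniformly elliptic with bounded continuous coefficients and bounded source, so it admits a solution $\rho_k\in H^{2}(0,1)$ by the standard theory of quasilinear elliptic equations (a Leray--Schauder/Schaefer fixed point), the required compactness coming from $H^{1}(0,1)\hookrightarrow C^{1/2}[0,1]$ together with the a priori bounds below.

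The crux is the a priori estimate $0<\rho_k\le 1$ on $[0,1]$, which makes the truncation inactive so that $\rho_k$ actually solves \eqref{supersonic equation}. For the upper bound I would establish a comparison principle for \eqref{supersonic equation} analogous to Lemma \ref{comparison-principle}; the one change is that the $p$-derivative of the flux, $\tfrac1z-\tfrac{k^{2}}{z^{3}}$, is now \emph{negative} for $0<z<k$, so the principle yields upper (rather than lower) bounds from admissible barriers, and the closing logarithmic estimate instead uses $k^{2}-1>0$, which is available for every $k>1$. Testing against the constant barrier $V\equiv 1$ — admissible since $V=\rho_k=1$ at $x=0,1$, while the differential inequality $V$ must satisfy holds precisely because $\underline b>1$ (the term $\tfrac k\tau$ pairs to zero with every $\varphi\in H^{1}_0(0,1)$, leaving $\int_0^1(1-b)\varphi\,dx\le 0$) — gives $\rho_k\le 1$. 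For the positivity $\rho_k>0$ I would compare from below against a positive barrier pinned to $1$ at both endpoints; here a constant is not a valid barrier (again because the zeroth-order term $-\rho_k$ carries the ``wrong sign'' for a maximum principle at a would-be interior minimum), so one must construct a shallow convex profile whose slope increases across $[0,1]$ by an amount commensurate with $(b-1)/(k^{2}-1)$, and keeping that profile positive is the step I expect to be the main obstacle.

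With $0<\rho_k\le 1$ in hand, the truncation drops out and the $H^{1}$ bound follows from the energy identity obtained by testing \eqref{supersonic equation} with $\rho_k-1\in H^{1}_0(0,1)$: the flux boundary term vanishes, the term $\int_0^1(\tfrac{k}{\tau\rho_k})_x(\rho_k-1)\,dx$ vanishes as well (integrate by parts and use $\rho_k(0)=\rho_k(1)=1$), and there remains
\[
\int_0^1\Big(\tfrac{k^{2}}{\rho_k^{3}}-\tfrac1{\rho_k}\Big)(\rho_k)_x^{2}\,dx=\int_0^1(\rho_k-b)(\rho_k-1)\,dx=\int_0^1\Big[(\rho_k-1)^{2}+(b-1)(1-\rho_k)\Big]\,dx\le 1+\|b-1\|_{L^{\infty}} ,
\]
where we used $0<\rho_k\le 1<b$. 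Since $\tfrac{k^{2}}{\rho_k^{3}}-\tfrac1{\rho_k}\ge k^{2}-1$, this yields $\|(\rho_k)_x\|_{L^{2}}^{2}\le\dfrac{1+\|b-1\|_{L^{\infty}}}{k^{2}-1}$, hence $\rho_k\in H^{1}(0,1)$ and the lemma follows. Note that this bound degenerates as $k\to 1^{+}$, consistent with the loss of uniform ellipticity of the limit equation \eqref{elliptic}; the uniform-in-$k$ estimates needed to take $k\to\infty$ and produce the supersonic solution of \eqref{elliptic} will be treated separately.
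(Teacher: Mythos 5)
Your proposal contains a genuine gap, and it is exactly the one you flag yourself: the strict positivity $\rho_k>0$, which is what makes the truncation at the lower cut-off $\ell$ inactive. Without it the whole scheme collapses, because the coefficient $\frac{k^2}{\rho_k^3}-\frac{1}{\rho_k}$ blows up as $\rho_k\to 0^+$ and no barrier argument of the kind you sketch is available: at a would-be interior minimum the zeroth-order term $\rho_k-b<0$ pushes the wrong way, and more structurally the supersonic regime destroys the monotonicity needed for a comparison principle \`a la Lemma \ref{comparison-principle} (the flux $A(z,p)=(\tfrac1z-\tfrac{k^2}{z^3})p+\tfrac{k}{\tau z}$ is \emph{decreasing} in $p$, and multiplying through by $-1$ makes the source $b-z$ decreasing in $z$; you cannot have both monotonicities right simultaneously, which is also why the paper only gets existence, not uniqueness, of supersonic solutions). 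Your upper-bound step via the constant barrier $V\equiv 1$ suffers from the same structural problem: testing with $(\rho_k-1)^+$ leaves the term $\int(b-\rho_k)(\rho_k-1)^+dx$, which is not sign-definite without an a priori bound $\rho_k<b$.

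The paper's proof avoids all of this with one move you are missing: pass to the velocity $u_k=k/\rho_k$, which solves \eqref{2.8} with boundary data $u_k(0)=u_k(1)=k$. In that variable the expected solution satisfies $u_k\ge k$, the principal coefficient $u_k-\tfrac1{u_k}\ge k-\tfrac1k>0$ is uniformly elliptic, the lower bound $u_k\ge k$ (i.e.\ $\rho_k\le1$) follows cleanly by testing the frozen-coefficient problem with $(u-k)^-$ because $\tfrac{k}{\psi}-b\le 1-\underline b<0$ when $\psi\ge k$, and — crucially — the positivity of $\rho_k$ becomes the \emph{upper} bound $u_k\le k+\|b\|_{L^2}/(k-1)$, which drops out of an elementary energy estimate plus Poincar\'e. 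Existence is then closed by a Schauder fixed point on a convex set in $C^1[0,1]$. If you want to salvage your direct-in-$\rho$ approach, you essentially need to reproduce this reciprocal substitution in disguise; otherwise the positivity step remains open. Your final $H^1$ energy identity (testing with $\rho_k-1$) is correct, but it is conditional on the bounds you have not established.
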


\begin{remark}
Peng and Violet \cite{Peng-Violet} showed that if $k$ is large
enough, then equation \eqref{supersonic equation} has a supersonic
solution. Our Lemma \ref{supersonic-solution} further show that, in
the case of subsonic doping profile, for all $1<k<\infty$, equation
\eqref{supersonic equation} has a supersonic solution. So, our result essentially improves the previous study in
\cite{Peng-Violet}.
\end{remark}

\begin{proof}
The velocity $u_k(x)=\dfrac{k}{\rho_k(x)}$ satisfies
\begin{equation}\label{2.8}
\left \{\begin{array}{ll}
\left[\left(u_k-\dfrac{1}{u_k}\right)(u_k)_x\right]_x
+\dfrac{(u_k)_x}{\tau }- \left(\dfrac{k}{u_k}-b\right)=0,& x\in(0,1)\\
u_k(0)=u_k(1)=k.&
\end{array} \right.
\end{equation}
So we only need to show that \eqref{2.8} has a weak solution $u_k\in
H^1(0,1)$ satisfying $k\leq u_k<\infty$. To this end, we define an
operator $\mathcal{T}:\psi\rightarrow u$ by solving the following
linear elliptic equation
\begin{equation}\label{2.9}
\left \{\begin{array}{ll}
\left[\left(\psi-\dfrac{1}{\psi}\right)u_x\right]_x
+\dfrac{u_x}{\tau }- \left(\dfrac{k}{\psi}-b\right)=0,& x\in(0,1)\\
u(0)=u(1)=k.&
\end{array} \right.
\end{equation}
Set
\begin{equation*}
\mathcal{X}:=\{\psi(x):\psi\in C^1[0,1],k\leq \psi(x)\leq
M,\psi(0)=\psi(1)=k,\|\psi\|_{C^\alpha[0,1]}\leq
\Lambda,\|\psi\|_{C^1[0,1]}\leq \Upsilon(\Lambda)\},
\end{equation*}
where $0<\alpha<1/2$, $M$, $\Lambda$ and $\Upsilon(\Lambda)$ are
some positive constants to be determined later. Suppose that
$\psi\in \mathcal{X}$, by $L^2$ theory of elliptic equation and the
Sobolev imbedding theorem, we see that equation \eqref{2.9} has a
unique solution $u\in C^{1+\alpha}[0,1]$ for $0<\alpha<1$.
Multiplying \eqref{2.9} by $(u-k)^-(x):=\min\{0,(u-k)(x)\}$, we have
\begin{equation}\label{2.17}
\int_0^1\left(\psi-\dfrac{1}{\psi}\right)|[(u-k)^-]_x|^2dx
-\dfrac{1}{\tau}\int_0^1u_x(u-k)^-dx+ \int_0^1\left(\dfrac{k}{\psi}-b\right)(u-k)^-dx=0.
\end{equation}
Because $k>1$ and $\psi\geq k$, we have $\psi-\dfrac{1}{\psi}\geq
k-1>0$, and noting that
\begin{equation*}
\dfrac{1}{\tau}\int_0^1u_x(u-k)^-dx=\dfrac{1}{2\tau}\int_0^1([(u-k)^-]^2)_xdx=0,
\end{equation*}
it follows from \eqref{2.17} that
\begin{equation}\label{2.10}
\left(k-1\right)\int_0^1|[(u-k)^-]_x|^2dx +
\int_0^1\left(\dfrac{k}{\psi}-b\right)(u-k)^-dx\leq0.\end{equation}
This inequality in combination with the fact that
$\frac{k}{\psi(x)}-b(x)<0$ gives $(u-k)^-(x)=0$ for all $x\in[0,1]$.
Thus, $u(x)\geq k$ over $[0,1]$. Now multiplying \eqref{2.9} by
$(u-k)$, just as shown in \eqref{2.10}, using Young's inequality and
Poincar\'{e}'s inequality, we get
\begin{equation*}
\begin{split}
(k-1)\int_0^1|(u-k)_x|^2dx
&\leq \int_0^1\left(b-\dfrac{1}{\psi}\right)(u-k)dx\\
&\leq\frac{k-1}{2}\int_0^1(u-k)^2dx+\frac{1}{2(k-1)}\int_0^1\left(b(x)-\dfrac{k}{\psi}\right)^2dx\\
&\leq\frac{k-1}{2}\int_0^1|(u-k)_x|^2dx+\frac{1}{2(k-1)}\int_0^1b^2(x)dx.\end{split}\end{equation*}
It then follows that
\begin{equation*}
\|u_x\|_{L^2(0,1)}\leq\frac{\|b\|_{L^2}}{k-1}.
\end{equation*}
Furthermore, a straightforward computation yields
\begin{equation*}
0<u(x)\leq k+\frac{\|b\|_{L^2}}{k-1}.\end{equation*} Thus,
the compact imbedding of $H^1(0,1)$ into $C^{\alpha_0}[0,1]$ with
$0<\alpha_0<1/2$ gives
\begin{equation*}
\|u\|_{C^{\alpha_0}[0,1]}\leq C_0(k,\|b\|_{L^2}) \text{ for
a constant }C_0>0.
\end{equation*}
Hence we determine $M=1+\frac{\|b\|_{L^2}}{k-1}$,
$\alpha=\alpha_0$ and $\Lambda=C_0(k,\|b\|_{L^2})$. By the
H\"{o}lder estimate for the first order derivative of divergence
form elliptic equation \cite{Gilbarg-Trudinger}, we derive
\begin{equation*}
\|u\|_{C^{1+\alpha}[0,1]}\leq C_1(k,\|b\|_{L^2},\Lambda).
\end{equation*}
Now we take $\Upsilon(\Lambda)=C_1(k,\|b\|_{L^2},\Lambda)$
with $\Lambda=C_0(k,\|b\|_{L^2})$, then it is easy to see
that $u\in \mathcal{X}$ and $\mathcal{X}$ is a nonempty bounded and
closed convex set in $C^1[0,1]$. On the other hand, by the
Arzel\`{a}-Ascoli theorem, the imbedding
$C^{1+\alpha}[0,1]\hookrightarrow C^1[0,1]$ is compact. Thus, the
operator $\mathcal{T}$ is a compact map of $\mathcal{X}$ into
itself. By Schauder fixed point theorem (see Corollary 2.3.10 in
\cite{Chang}), there exists a fixed point $u\in \mathcal{X}$ such
that
$$\mathcal{T}(u)=u.$$
Therefore, equation \eqref{2.8} has a weak solution $u_k\in
C^1[0,1]$, and $\rho_k(x)=k /u_k(x)$ is a desired weak supersonic
solution of \eqref{supersonic equation}.
\end{proof}

\begin{proof}[Proof of Theorem \ref{thm2}]
Multiplying \eqref{2.8} by $(u_k-k)$ and using Young's inequality,
we have
\begin{equation*}
\begin{split}
&(k-1)\int_0^1\frac{u_k+1}{u_k}|(u_k)_x|^2dx+\frac{4}{9}\int_0^1\frac{u_k+1}{u_k}|[(u_k-k)^{3/2}]_x|^2dx\\
&= \int_0^1\left(b-\dfrac{k}{u_k}\right)(u_k-k)dx\\
&\leq\frac{1}{3}\int_0^1(u_k-k)^3dx
+\frac{2}{3}\int_0^1\left(b-\dfrac{k}{u_k}\right)^{3/2}dx\\
&\leq\frac{1}{3}\int_0^1|[(u_k-k)^{3/2}]_x|^2dx
+\frac{2}{3}\int_0^1b^{3/2}(x)dx.\end{split}\end{equation*}
Thus, we have
\begin{equation}\label{2.11}
\|(k-1)^{\frac{1}{2}}(u_k)_x\|_{L^2}+\|(u_k-k)^{\frac{3}{2}}\|_{H^1}\leq
C
\end{equation}
for a constant $C$ independent of $k$, where we have used $k>1$ and
$u_k\geq k$. This inequality together with the Sobolev imbedding
theorem yields
\begin{equation}\label{2.20}
\|u_k\|_{L^\infty}\leq k+C^{\frac{2}{3}}.
\end{equation}
Hence
\begin{equation}\label{2.21}
\rho_k(x)=\frac{k}{u_k(x)}\geq\frac{k}{\|u_k\|_{L^\infty}}
\geq\frac{k}{k+C^{\frac{2}{3}}}\geq\frac{1}{1+C^{\frac{2}{3}}}\triangleq\ell,\
\forall \ x\in[0,1].
\end{equation}
A direct calculation yields
\begin{equation*}
(\rho_k)_x=-\frac{k(u_k)_x}{u_k^2}\ \text{and }
((1-\rho_k)^2)_x=\frac{4k(u_k-1)^{\frac{1}{2}}((u_k-1)^{\frac{3}{2}})_x}{3u_k^3}.
\end{equation*}
It then follows from \eqref{2.11} and \eqref{2.20} that
\begin{equation*}
\begin{split}
&\|(1-\rho_k)^2\|_{H^1}+\|(1-\rho_k)^{3/2}\|_{H^1}\leq
C_1,\\
&\|(k-1)(\rho_k)_x\|_{L^2}\leq
C_1(k-1)^{\frac{1}{2}}.
\end{split}
\end{equation*}
Thus, there exists a function $\rho_{sup}(x)$ such that, as
$k\rightarrow1^+$, up to a subsequence,
\begin{equation}\label{2.22}
\begin{split}
&(1-\rho_k)^2\rightharpoonup(1-\rho_{sup})^2 \ \ \text{weakly in }
H^1(0,1),\\&(1-\rho_k)^{3/2}\rightharpoonup(1-\rho_{sup})^{3/2} \ \
\text{weakly in } H^1(0,1),\\&
(1-\rho_k)^{3/2}\rightarrow(1-\rho_{sup})^{3/2} \ \ \text{strongly
in } C^{\frac{1}{2}}[0,1]\\
&(k-1)(\rho_k)_x\rightarrow0 \ \ \text{strongly in } L^2(0,1).
\end{split}\end{equation} Applying the same procedure as the proof
of Theorem \ref{thm1}, one can show that $\rho_{sup}$ satisfies
\eqref{weak-solution}. The lower bound of $\rho_{sup}$ follows from
\eqref{2.21} and the third convergence of \eqref{2.22}.

Let us now prove that $\rho_{sup}(x)<1$  for any interior point
$x\in(0,1)$. Observing that if a function $\rho$ satisfies
$\rho(x)\equiv1$ on an interval $[\hat{a},\hat{c}]\subset[0,1]$,
then $\rho$ is not a solution of equation \eqref{elliptic} because
$\underline{b}>1$. Thus, for any $1\gg\epsilon>0$, there exists a
$\delta>0$ and two points $\hat{a}_\epsilon\in(0,\epsilon]$ and
$\hat{c}_\epsilon\in[1-\epsilon,1)$ such that
$\rho_{sup}(\hat{a}_\epsilon),
\rho_{sup}(\hat{c}_\epsilon)\leq1-\delta<1$. We only need to show
that $\rho_{sup}(x)\leq1-\delta$ over
$[\hat{a}_\epsilon,\hat{c}_\epsilon]$. Actually, set
$w:=(1-\rho_{sup})^2$, then $w\in H_0^1(0,1)$, $w(\hat{a}_\epsilon),
w(\hat{c}_\epsilon)\geq\delta^2$ and it follows from
\eqref{weak-solution} that  for any $\varphi\in
H_0^1(\hat{a}_\epsilon,\hat{c}_\epsilon)$
\begin{equation*}
\frac{1}{2}\int_{\hat{a}_\epsilon}^{\hat{c}_\epsilon}\frac{2-\sqrt{w}}{(1-\sqrt{w})^3}w_x\varphi_xdx
+\frac{1}{\tau}\int_{\hat{a}_\epsilon}^{\hat{c}_\epsilon}\frac{\varphi_x}{1-\sqrt{w}}dx
+ \int_{\hat{a}_\epsilon}^{\hat{c}_\epsilon}(1-\sqrt{w}-b)\varphi
dx=0.
\end{equation*}
Taking $\varphi(x)=(w-\delta^2)^-(x)$, then
\begin{equation*}
\frac{1}{2}\int_{\hat{a}_\epsilon}^{\hat{c}_\epsilon}\frac{2-\sqrt{w}}{(1-\sqrt{w})^3}|[(w-\delta^2)^-]_x|^2dx
+\frac{1}{\tau}\int_{\hat{a}_\epsilon}^{\hat{c}_\epsilon}\frac{[(w-\delta^2)^-]_x}{1-\sqrt{w}}dx
+ \int_{\hat{a}_\epsilon}^{\hat{c}_\epsilon}(1-\sqrt{w}-b)(w-\delta^2)^-
dx=0.
\end{equation*}
Observing that $\rho_{sup}\geq\ell$, hence
$2-\sqrt{w}>1-\sqrt{w}\geq\ell>0$. This implies that the first term of
the equality is non-negative. Because $b>\underline{b}>1$, the third
term is also non-negative. On the other hand a simple computation
gives $-2(\sqrt{w}+\ln(1-\sqrt{w}))_x=\frac{w_x}{1-\sqrt{w}}$, which
implies the second term is zero. Thus, $(w-\delta^2)^-(x)=0$ over
$[\hat{a}_\epsilon,\hat{c}_\epsilon]$. And as a result,
$\rho_{sup}(x)\leq1-\delta$ over
$[\hat{a}_\epsilon,\hat{c}_\epsilon]$.

It is left to show (ii). We only need to show that if
$z_0\in(0,1)$ is a critical point of $\rho_{sup}$, then it must be a
local minimal point. Because $\rho_{sup}\in C[0,1]$ and
$\rho_{sup}<1$ over $(0,1)$, by the interior regularity theory of
elliptic equation and the Sobolev imbedding, for any $z_0\in(0,1)$,
there exists an interval $z_0\in I\subset(0,1)$ such that $z_0\in I$,
$\rho_{sup}\in W^{2,p}(I)$ for any $1<p<\infty$ and $\rho_{sup}\in
C^1(\overline{I})$. Now if $z_0$ is a critical point, then
$(\rho_{sup})_x(z_0)=0$. Since $\rho_{sup}\in C^1(\overline{I})$,
there exists a $\delta>0$ such that
\[|(\rho_{sup})_x(x)|<\frac{\tau(\underline{b}-1)}{2} \text{ for any
}x\in(z_0-\delta,z_0+\delta).
\]
If $x\in(z_0,z_0+\delta)$, we integrate \eqref{elliptic} over
$(z_0,x)$ to derive
\begin{equation*}
\begin{split}
\left(\frac{1}{\rho_{sup}}-\frac{1}{\rho_{sup}^3}\right)(\rho_{sup})_x
=&\int_{z_0}^x\left[\rho_{sup}-b+\frac{(\rho_{sup})_x}{\tau\rho^2}\right]ds\\
<&\int_{z_0}^x\left(1-\underline{b}+\frac{|\rho_x|}{\tau\rho^2}\right)ds\\
<&\int_{z_0}^x\left(1-\underline{b}+\frac{\underline{b}-1}{2}\right)ds\\
=&\frac{(1-\underline{b})(x-z_0)}{2}\\<&0,
\end{split}\end{equation*}
where we have used $(\rho_{sup})_x(z_0)=0$ and $\rho_{sup}<1$. Thus,
\[(\rho_{sup})_x(x)>0 \text{ on } (z_0,z_0+\delta).\]
Similarly, integrating \eqref{elliptic} over $(z_0-\delta,x)$, one
can get that
\[(\rho_{sup})_x(x)<0 \text{ on } (z_0-\delta,z_0).\]
Therefore, $z_0$ is a local minimal point of $\rho_{sup}$. The proof
is complete.
\end{proof}

As in Proposition \ref{local-analysis-sub}, we also study the
optimal global regularity of the interior supersonic solution.

\begin{proposition}\label{local-analysis-super}
$\rho_{sup}\in C^{1/2}[0,1]$, and there exist $s_2\ll 1$, $C_i$
$(i=5,6,7,8)$ such that
\begin{equation} \label{eqn-2.30}
\begin{split}
-C_5x^{1/2}&<\rho_{sup}-1<-C_6x^{1/2}, \\
-C_7x^{-1/2}&<(\rho_{sup})_x<-C_8x^{-1/2},
\end{split}
\mbox{ for } x \in [0,s_2].
\end{equation}

\end{proposition}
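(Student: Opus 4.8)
The plan is to mirror, near $x=0$, the argument of Proposition~\ref{local-analysis-sub}, interchanging the roles of the subsonic/supersonic regimes and of the two endpoints. Write $\rho=\rho_{sup}$ and $w:=(1-\rho)^2$, and recall from the proof of Theorem~\ref{thm2} that $\ell\le\rho\le1$ on $[0,1]$, that $\rho<1$ on $(0,1)$, and that $w\in H_0^1(0,1)$ (since $\rho_{sup}$ is an interior supersonic solution). The first step is to push the boundary regularity of $w$ to the degenerate endpoint. Substituting $\rho=1-\sqrt w$ into \eqref{elliptic} produces the supersonic counterpart of \eqref{w},
\[
\left(\frac{(2-\sqrt w)\,w_x}{2(1-\sqrt w)^3}+\frac{1}{\tau(1-\sqrt w)}\right)_x-(1-\sqrt w-b)=0,\qquad x\in(0,1),
\]
and because $1-\sqrt w=\rho\ge\ell>0$ on $[0,1]$ the coefficients $\frac{2-\sqrt w}{(1-\sqrt w)^3}$ and $\frac{1}{1-\sqrt w}$ are bounded above and below by positive constants. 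I would then run verbatim the bootstrap leading to \eqref{2.16}: the combination $G_w:=\frac{(2-\sqrt w)w_x}{2(1-\sqrt w)^3}+\frac{1}{\tau(1-\sqrt w)}$ lies in $H^1(0,1)\hookrightarrow C^{1/2}[0,1]$; from $w\in C^{1/2}[0,1]$ and $w\ge0$ one gets $\sqrt w\in C^{1/4}[0,1]$, hence $\frac{2-\sqrt w}{(1-\sqrt w)^3},\frac{1}{1-\sqrt w}\in C^{1/4}[0,1]$; solving algebraically for $w_x$ gives $w_x\in C^{1/4}[0,1]$, so $w=(1-\rho)^2\in C^{1+1/4}[0,1]$. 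In particular $E=\frac{\rho+1}{2\rho^3}w_x+\frac{1}{\tau\rho}\in C^{1/4}[0,1]$ is well defined up to $x=0$, and the global regularity $\rho_{sup}\in C^{1/2}[0,1]$ follows exactly as in the subsonic case from $\rho\le1$, namely $|\rho(x)-\rho(y)|^2\le|w(x)-w(y)|\le C|x-y|$.

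Next I would determine the sign of $E(0)$, claiming $E(0)>1/\tau$. If this failed, then since $\rho(0)=1<\underline b$ and $\rho\in C[0,1]$ there would be $\hat\epsilon>0$ with $\rho(x)-b(x)<0$ for a.e. $x\in[0,\hat\epsilon]$; integrating the Poisson equation of \eqref{1.5} over $[0,x]$ gives $E(x)<E(0)\le1/\tau$ there, whence $E(x)-\frac{1}{\tau\rho(x)}<\frac1\tau\bigl(1-\frac{1}{\rho(x)}\bigr)<0$ because $\rho<1$. By the first equation of \eqref{1.5} this means $w_x=\frac{2\rho^3}{\rho+1}\bigl(E-\frac{1}{\tau\rho}\bigr)<0$ on $(0,\hat\epsilon]$, so $w(x)=\int_0^x w_x(s)\,ds<0$ there, contradicting $w\ge0$. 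Hence $q:=E(0)-1/\tau>0$.

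For the local estimates \eqref{eqn-2.30} I would argue as in Proposition~\ref{local-analysis-sub}. Since $E(x)-\frac{1}{\tau\rho(x)}=\frac{\rho+1}{2\rho^3}w_x$ is continuous on $[0,1]$ and equals $q$ at $x=0$, there is $s_2\in(0,\hat\epsilon)$ with $\frac q2\le E(x)-\frac{1}{\tau\rho(x)}\le\frac{3q}{2}$ on $[0,s_2]$; together with $\ell\le\rho\le1$ this traps $w_x$ between two positive constants on $[0,s_2]$. Integrating from $0$, and using $w(0)=0$, gives $c_1x\le w(x)\le c_2x$, i.e. $\sqrt{c_1}\,x^{1/2}\le 1-\rho_{sup}(x)\le\sqrt{c_2}\,x^{1/2}$, which is the first line of \eqref{eqn-2.30}; inserting this together with the two-sided bound on $w_x$ into $(\rho_{sup})_x=-\frac{w_x}{2(1-\rho_{sup})}$ yields the second line (and in particular $(\rho_{sup})_x<0$ near $0$).

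The only genuinely new ingredient compared with Proposition~\ref{local-analysis-sub} is the first step: obtaining $C^{1+1/4}$ regularity of $w=(1-\rho_{sup})^2$ all the way up to the degenerate sonic boundary, which for the supersonic solution was not recorded earlier. I expect this to be the main point, and the feature that makes it go through is the uniform lower bound $\rho_{sup}\ge\ell>0$ from Theorem~\ref{thm2}, which keeps $\frac{2-\sqrt w}{(1-\sqrt w)^3}$ and $\frac{1}{1-\sqrt w}$ non-degenerate. The remaining steps are then the exact mirror image — at $x=0$ instead of $x=1$, with $w=(1-\rho)^2$ in place of $(\rho-1)^2$ — of the subsonic argument.
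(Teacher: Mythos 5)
Your proposal is correct and is exactly the argument the paper intends: the paper's own proof of this proposition just says it mirrors Proposition \ref{local-analysis-sub} with the local analysis moved to $x=0$ and omits the details, and you have filled those details in faithfully (the supersonic analogue of \eqref{w} with the bootstrap to $w=(1-\rho_{sup})^2\in C^{1+1/4}[0,1]$ using $\rho_{sup}\ge\ell>0$, the sign claim $E(0)>1/\tau$ by contradiction from $w\ge0$ and $w(0)=0$, and the two-sided trapping of $w_x$ near $0$). No gaps.
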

\begin{proof}The proof is similar to that of Proposition \ref{local-analysis-sub}.
Here for supersonic solutions, we need the local analysis for the
solution near $x=0$. We omit the details.\end{proof}

\subsection{Infinitely many transonic solutions with shocks}

 We turn to study the existence of
transonic solutions of \eqref{1.5}-\eqref{boundary}. We first
consider Euler-Poisson equations \eqref{1.5} with constant doping
profile $\underline{b}$ but without the semiconductor effect (namely
$\frac{1}{\tau}=0$, or say $\tau=\infty$), and the boundary
condition  subjected  is completely supersonic. That is
\begin{equation} \label{2.35}
\left \{\begin{array}{ll}
        \left(1-\dfrac{1}{\rho^2}\right)\rho_x=\rho E,\\
         E_x=\rho-\underline{b},\\
        \rho(0)=\rho(L)=1-\delta, \ \mbox{ (supersonic boundary),}
        \end{array} \right.
\end{equation}
where $L\geq\frac{1}{4}$ is the parameter of length and $\delta>0$
is a small constant. As shown in the proof of Theorem \ref{thm2},
for any $\delta>0$, \eqref{2.35} has a supersonic solution. We have
the following uniform estimates with respect to $\delta$ for the
supersonic solutions of \eqref{2.35}.

\begin{lemma}\label{lem4}
Assume that $\underline{b}>1$, and that $(\rho_L,E_L)(x)$ are
supersonic solutions of \eqref{2.35}. Then
\[\beta(L,\underline{b})\leq\underset{x\in[0,L]}{\min}\rho_L(x)\leq\gamma(L,\underline{b}),
\text{ and } E_L(0)\geq C(L,\underline{b}),\] where
$\beta(L,\underline{b})$, $\gamma(L,\underline{b})$
and $C(L,\underline{b})$ are positive constants independent
of $\delta$.
\end{lemma}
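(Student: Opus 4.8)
The plan is to collapse the first-order system \eqref{2.35} into a single second-order ODE with a built-in concavity, and then read off the three estimates from elementary one-dimensional comparison arguments. First I introduce the Bernoulli-type variable
\[
h(\rho):=\ln\rho+\frac{1}{2\rho^2},\qquad Q(x):=h(\rho_L(x)),
\]
observing that $h$ is a strictly decreasing $C^\infty$ bijection of $(0,1)$ onto $(\tfrac12,\infty)$ with $h(1)=\tfrac12$. Dividing the first equation of \eqref{2.35} by $\rho_L$ gives $E_L=\big(\tfrac{1}{\rho_L}-\tfrac{1}{\rho_L^3}\big)(\rho_L)_x=\big(h(\rho_L)\big)'=Q'$, so, using the second equation,
\[
Q''(x)=E_L'(x)=\rho_L(x)-\underline{b},\qquad Q(0)=Q(L)=h(1-\delta).
\]
Since $\rho_L$ is a supersonic solution, $0<\rho_L\le1$, and \eqref{2.35} is non-degenerate there; thus $\rho_L$ (hence $Q$) is classical, the displayed ODE holds pointwise, and $Q\in C^2[0,L]$.

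The crude two-sided bound comes first. From $\rho_L\le1<\underline{b}$ we get $Q''\le1-\underline{b}<0$, so $Q$ is strictly concave with equal endpoint values, whence $Q\ge h(1-\delta)$ on $[0,L]$; applying the decreasing map $h^{-1}$, this is exactly $\rho_L\le1-\delta$ on $[0,L]$. For the uniform lower bound on $\rho_L$ I would compare $Q$ with the parabola $\psi(x):=h(1-\delta)+\tfrac{\underline{b}}{2}x(L-x)$, which has the same endpoint values and satisfies $(Q-\psi)''=\rho_L>0$; convexity of $Q-\psi$ with vanishing endpoints forces $Q\le\psi\le h(1-\delta)+\tfrac{\underline{b}L^2}{8}$, and since $\delta$ is small, $h(1-\delta)\le h(\tfrac12)=2-\ln2$, so
\[
\rho_L(x)\ge h^{-1}\!\Big(2-\ln2+\tfrac{\underline{b}L^2}{8}\Big)=:\beta(L,\underline{b})>0,
\]
independently of $\delta$.

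To pin down the minimum and the left-boundary field I would refine the concave bump. Using $\rho_L\le1-\delta<1$, compare $Q$ with $\chi(x):=h(1-\delta)+\tfrac{\underline{b}-1}{2}x(L-x)$: now $(Q-\chi)''=\rho_L-1<0$, so $Q-\chi$ is concave with vanishing endpoints, giving $Q\ge\chi$. At the unique interior maximum point $z_L$ of $Q$ — which is the minimum point of $\rho_L$ and the unique zero of $E_L=Q'$ — this yields
\[
Q(z_L)\ge Q(L/2)\ge h(1-\delta)+\tfrac{(\underline{b}-1)L^2}{8}>\tfrac12+\tfrac{(\underline{b}-1)L^2}{8},
\]
hence $\min_{[0,L]}\rho_L=h^{-1}(Q(z_L))\le h^{-1}\!\big(\tfrac12+\tfrac{(\underline{b}-1)L^2}{8}\big)=:\gamma(L,\underline{b})<1$. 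Finally, concavity makes $Q'$ decreasing, so with $z_L\le L$,
\[
E_L(0)=Q'(0)\ge\frac{Q(z_L)-Q(0)}{z_L}\ge\frac{1}{L}\big(Q(z_L)-h(1-\delta)\big)\ge\frac{(\underline{b}-1)L}{8}=:C(L,\underline{b})>0,
\]
and all three constants are manifestly independent of $\delta$.

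The one genuinely substantive point, I expect, is spotting the substitution $Q=h(\rho_L)$ — equivalently, isolating the Bernoulli first integral hidden in the phase-plane relation $\tfrac{dE}{d\rho}=\tfrac{(\rho-\underline{b})(1-\rho^{-2})}{\rho E}$; after that the proof is routine, the main care being to keep the direction of each inequality straight when passing through the decreasing maps $h$ and $h^{-1}$, and to note that the interior maximum $z_L$ of $Q$ really is interior, a non-constant concave function with equal endpoint values being unable to attain its maximum at an endpoint.
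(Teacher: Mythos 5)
Your proof is correct, and it takes a genuinely different and more elementary route than the paper. You collapse the system into the single scalar relation $Q''=\rho_L-\underline{b}$ via the potential $Q=h(\rho_L)$ with $h(\rho)=\ln\rho+\tfrac{1}{2\rho^2}$ (so that $Q'=E_L$), and then all three estimates follow from comparing the concave function $Q$ with parabolas sharing its endpoint values: convexity of $Q-\psi$ gives the upper bound on $Q$ and hence the lower bound $\beta$ on $\rho_L$, concavity of $Q-\chi$ gives the lower bound on $\max Q$ and hence $\min\rho_L\le\gamma<1$, and the secant inequality for the concave $Q$ gives $E_L(0)=Q'(0)\ge(\underline{b}-1)L/8$. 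The paper instead works in the phase plane: it integrates $dE/d\rho$ to obtain the explicit first integral \eqref{2.36}, deduces that the trajectory is symmetric so that $\underline{\rho}=\rho(L/2)$ and $E(L/2)=0$; it proves the lower bound $\beta$ by an energy estimate on $u=1/\rho$ tested against $(u-\tfrac{1}{1-\delta})^2$ combined with Sobolev and Poincar\'e inequalities; it gets $\gamma$ from the pointwise bound $\rho_{xx}\ge\underline{\rho}^3/2$ and a Taylor expansion at the midpoint; and it gets $E(0)\ge\sqrt{f(\gamma(L))}$ from the first integral together with monotonicity of the auxiliary function $f$. Your argument is shorter, avoids the functional inequalities entirely, and makes the $\delta$-independence transparent (modulo the same implicit smallness $\delta\le\tfrac12$ that the paper also assumes, to bound $h(1-\delta)\le 2-\ln 2$). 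What the paper's longer proof buys is the symmetry of the solution, the identities $\underline{\rho}=\rho(L/2)$, $E(L/2)=0$ and $E(0)=\int_0^{L/2}(1-\rho)\,dt$, and the explicit first integral \eqref{2.36}, all of which are reused in the construction of the transonic shock solutions in Theorem \ref{thm3}; your proof does not supply these, but they are not part of the statement of the lemma. Minor polish: justify once that $\rho_L$ is $C^1$ and bounded away from the sonic value so that $Q\in C^2$ and the pointwise ODE is legitimate (this is immediate from the construction of the solutions, since $\rho_L\le 1-\delta$ keeps the system nondegenerate), and note that $z_L\in(0,L)$ by strict concavity, as you do.
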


\begin{proof}
For convenience, we denote $(\rho_L,E_L)$ by $(\rho,E)$. In the
phase-plane $(\rho,\ET)$, we have
\[\frac{d\ET}{d\rho}=\frac{(\rho+1)(\rho-\underline{b})(\rho-1)}{
\ET\rho^3}.\]  Integrating the above equation with respect to
$\rho$, we obtain the part of trajectory through $(1-\delta,E(0))$
as follows
\begin{equation} \label{2.36}
\frac{E^2(x)}{2}=\frac{E^2(0)}{2}-\frac{2\rho(0)-\underline{b}}{2\rho^2(0)}-\rho(0)
+\underline{b}\ln\rho(0)+\frac{2\rho(x)-\underline{b}}{2\rho^2(x)}+\rho(x)-\underline{b}\ln\rho(x),
\end{equation}
and
\begin{equation*}
E(x)=\pm2\sqrt{\frac{E^2(0)}{2}-\frac{2\rho(0)-\underline{b}}{2\rho^2(0)}-\rho(0)
+\underline{b}\ln\rho(0)+\frac{2\rho(x)-\underline{b}}{2\rho^2(x)}+\rho(x)-\underline{b}\ln\rho(x)}.
\end{equation*}
Thus, all trajectories are symmetric with respect to $\textbf{E}\equiv0$ and
the supersonic solution obtained satisfies $0<\rho(x)<1-\delta$ and
is symmetric in $x\in(0,L)$. Set
$\underline{\rho}:=\underset{x\in[0,L]}{\min}\rho(x)$, by the
symmetry of $\rho(x)$ in $(0,L)$, we know  that $\rho(x)$ reaches
its minimum at $x=\frac{L}{2}$. Thus,
\begin{equation}\label{2.37}
\underline{\rho}=\rho(L/2) \text{ and }\rho'(L/2)=0.
\end{equation}
We next estimate $\underline{\rho}$. The velocity $u(x)=1/\rho(x)$
satisfies $u(x)\geq\frac{1}{1-\delta}$ and
\begin{equation}\label{2.38}
\left(\left(u-\dfrac{1}{u}\right)u_x\right)_x=\frac{1-\underline{b}u}{\lambda
u}, \ \ \ u(0)=u(L)=\frac{1}{1-\delta}.
\end{equation}
Multiplying \eqref{2.38} by $(u-\frac{1}{1-\delta})^2$, we get
\begin{equation}
2\int^L_0\Big(u-\frac{1}{u}\Big)\Big(u-\frac{1}{1-\delta}\Big)(u_x)^2dx
= \int^L_0\frac{\underline{b}u-1}{u}\Big(u-\frac{1}{1-\delta}\Big)^2dx.
\label{2.39}
\end{equation}
Artfully, we can reduce the left-hand-side of \eqref{2.39} to
\begin{eqnarray}
&&
2\int^L_0\Big(u-\frac{1}{u}\Big)\Big(u-\frac{1}{1-\delta}\Big)(u_x)^2dx \notag \\
&&=2\int^L_0\frac{u+1}{u}(u-1)\Big(u-\frac{1}{1-\delta}\Big)(u_x)^2dx \notag \\
&&=2\int^L_0\frac{u+1}{u}\Big(\frac{\delta}{1-\delta} +u-\frac{1}{1-\delta}\Big)\Big(u-\frac{1}{1-\delta}\Big)(u_x)^2dx \notag \\
&&=\frac{2\delta}{1-\delta}\int^L_0\frac{u+1}{u}\Big(u-\frac{1}{1-\delta}\Big)(u_x)^2dx \notag \\
&& \ \ + 2\int^L_0\frac{u+1}{u}\Big(u-\frac{1}{1-\delta}\Big)^2(u_x)^2dx \notag \\
&&=\frac{2\delta}{1-\delta}\int^L_0\frac{u+1}{u}\Big(u-\frac{1}{1-\delta}\Big)(u_x)^2dx \notag \\
&& \ \ +
\frac{1}{2}\int^L_0\frac{u+1}{u}\Big|\Big(\Big(u-\frac{1}{1-\delta}\Big)^2\Big)_x\Big|^2dx,
\label{2.40}
\end{eqnarray}
and by using Cauchy-Schwarz's inequality and  Poincar\'e's inequality,
we can estimate the right-hand-side of \eqref{2.39} as follows
\begin{eqnarray}
&& \int^L_0\frac{\underline{b}u-1}{u}\Big(u-\frac{1}{1-\delta}\Big)^2dx \notag \\
&&\le \frac{1}{2L^2} \int^L_0\Big(u-\frac{1}{1-\delta}\Big)^4dx +\frac{\underline{b}^2L^3}{2} \notag \\
&&\le \frac{1}{4}
\int^L_0\Big|\Big(\Big(u-\frac{1}{1-\delta}\Big)^2\Big)_x\Big|^2 dx
+\frac{\underline{b}^2L^3}{2}. \label{2.41}
\end{eqnarray}
Substituting \eqref{2.40} and \eqref{2.41} to \eqref{2.39}, we then
have
\begin{equation*}
\frac{2\delta}{1-\delta}\int_0^L\frac{u+1}{u}\Big(u-\frac{1}{1-\delta}\Big)u_x^2dx
+\int_0^L\frac{u+2}{4u}\left|\Big(\Big(u-\frac{1}{1-\delta}\Big)^2\Big)_x\right|^2
\le \frac{\underline{b}^2L^3}{2},
\end{equation*}
which gives
\begin{equation}\label{2.42}
\big\|\big(\big(u-\frac{1}{1-\delta}\big)^2\big)_x\big\|_{L^2(0,L)}\leq
\underline{b}L\sqrt{2L}.
\end{equation}
Notice that, for $\phi\in H^1_0(0,L)$,  Sobolev's inequality
\[
\|\phi\|_{L^\infty}\le
\sqrt{2}\|\phi\|_{L^2}^{1/2}\|\phi_x\|_{L^2}^{1/2}
\]
and Poincar\'e's inequality
\[
\|\phi\|_{L^2}\le 2L\|\phi_x\|_{L^2}
\]
imply
\[
\|\phi\|_{L^\infty}\le 2\sqrt{L}\|\phi_x\|_{L^2}.
\]
Thus, from \eqref{2.42} we have
\[
\Big(u(x)-\frac{1}{1-\delta}\Big)^2\le
2\sqrt{L}\big\|\big(\big(u-\frac{1}{1-\delta}\big)^2\big)_x\big\|_{L^2(0,L)}\le2\sqrt{2}\underline{b}L^2,
\]
which gives
\[
u(x)\leq\frac{1}{1-\delta}+\sqrt{2\sqrt{2}\underline{b}}\cdot
L.
\]
Thus, we can estimate the minimum of $\rho(x)$ by
\begin{equation*}
\underline{\rho}\geq\left(\frac{1}{1-\delta}+\sqrt{2\sqrt{2}\underline{b} }\cdot
L\right)^{-1}
\geq\left(2+\sqrt{2\sqrt{2}\underline{b} }\cdot
L\right)^{-1} \triangleq\beta(L), \text{ when
}\delta\leq\frac{1}{2}.
\end{equation*}
On the other hand, by \eqref{2.35}, since $\underline{b}\geq1>\rho$,
we have
\begin{equation}\label{2.43}
\rho_{xx}=\frac{\rho^3}{\rho+1}\left[\frac{1}{\rho^2(1-\rho)}\left(\frac{3}{\rho^2}-1\right)\rho_x^2
+\frac{\underline{b}-\rho}{ (1-\rho)}\right]\geq\frac{\underline{\rho}^3}{2}\geq\frac{\beta^3(L)}{2}
\text{ on } [0,L].
\end{equation}
By Taylor expansion
\[\rho(0)=\rho(L/2)-\rho'(L/2)L/2+\rho^{''}(\xi)(L/2)^2/2 \
\text{ with }\xi\in[0,L/2],
\]
it then follows from \eqref{2.37} and \eqref{2.43} that
\begin{equation}
\underline{\rho}\leq1-\delta-\frac{L^2\beta^3(L)}{2^4}
\leq1-\frac{L^2}{2^4}\cdot\frac{1}{(2+\sqrt{2\sqrt{2}\underline{b} }\cdot
L)^3} \triangleq\gamma(L). \label{2.44}
\end{equation}
Since $\underline{\rho}=\rho(L/2)$ is the minimum value, from
\eqref{2.35} and the fact $\rho_x(L/2)=0$, we have $E(L/2)=0$. Thus,
in view of \eqref{2.36}, we further obtain
\begin{equation*}
\begin{split}
\frac{E^2(0)}{2}&=\frac{2-\underline{b}-2\delta}{2(1-\delta)^2}+1-\delta
-\underline{b}\ln(1-\delta)-\Big[\frac{2\underline{\rho}-\underline{b}}{2\underline{\rho}^2}
+\underline{\rho}-\underline{b}\ln\underline{\rho}\Big]\\
&=\frac{\delta[2-2\underline{b}-(2-\underline{b})\delta]}{2(1-\delta)^2}-\delta
-\underline{b}\ln(1-\delta)+1\\&\quad+\frac{(2-\underline{b})(\underline{\rho}-1)^2
+(2-2\underline{b})(\underline{\rho}-1)}{2\underline{\rho}^2}
-\underline{\rho}+\underline{b}\ln\underline{\rho}\\
&\geq\frac{\delta[2-2\underline{b}-(2-\underline{b})\delta]}{2(1-\delta)^2}-\delta+f(\underline{\rho}),
\end{split}\end{equation*}
where
\[
f(s): =1+\frac{(2-\underline{b})(s-1)^2
+(2-2\underline{b})(s-1)}{2s^2}-s+\underline{b}\ln s, \ \ s\in(0,1).
\]
Notice that $f(1)=0$ and
$f'(s):=-\frac{(\underline{b}-s)(1-s^2)}{s^3}<0$ for $s\in(0,1)$,
namely, $f(s)$ is decreasing and positive for $s\in(0,1)$, using the
boundness estimates carried out in \eqref{2.44}:
$\underline{\rho}\le \gamma(L)$, we have, when $\delta$ is small
such that
$\frac{\delta^2}{2(1-\delta)^2}+\delta\leq\frac{f(\gamma(L))}{2}$,
then
\begin{equation}\label{2.45}
E^2(0)\geq
2\left[\frac{-\delta^2}{2(1-\delta)^2}-\delta+f(\underline{\rho})\right]
\geq
2\left[\frac{-\delta^2}{2(1-\delta)^2}-\delta+f(\gamma(L))\right]
\geq f(\gamma(L)).
\end{equation}
Integrating the second equation of \eqref{2.35} over $[0,L/2]$, we
get
\begin{equation*}
E(0)=E(L/2)+ \int_0^{L/2}(1-\rho)dt=\int_0^{L/2}(1-\rho)dt>0.\end{equation*}
Hence, it follows from \eqref{2.45} that, $E(0)$ has a positive
lower bound
\begin{equation}\label{2.46}
E(0)\geq \sqrt{f(\gamma(L)) },
\end{equation}
which is independent of $\delta$.
\end{proof}

%

\begin{theorem}\label{thm3}
If $\underline{b}>1$, $\tau\gg1$ and $0\leq\bar{b}-\underline{b}\ll1$, then system \eqref{1.5}-\eqref{boundary} has infinitely many
transonic shock solutions over $[0,1]$.
\end{theorem}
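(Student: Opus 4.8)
The plan is to construct a transonic shock solution on $[0,1]$ by gluing a supersonic solution on $[0,x_0]$ to a subsonic solution on $[x_0,1]$ across a stationary shock at $x_0$ that satisfies the Rankine--Hugoniot condition \eqref{RH} (equivalently $\rho_l\rho_r=1$ by \eqref{2.24}) and the entropy condition \eqref{entropy}. Since $\tau\gg1$ and $\bar b-\underline b\ll1$, the system is a small perturbation of the $\tau=\infty$, constant-doping problem analysed in Lemma \ref{lem4}, so I would carry out the construction first for the idealized system \eqref{2.35} and then transfer it by continuity. Concretely, for a parameter $\delta>0$ small, let $\rho_L(x)$ be the symmetric supersonic solution of \eqref{2.35} on $[0,L]$ given by Lemma \ref{lem4}; its boundary value is $\rho_L(0)=1-\delta$ and its left electric field $E_L(0)\ge\sqrt{f(\gamma(L))}>0$ is bounded below uniformly in $\delta$. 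The key device is to run a supersonic branch starting from the sonic value $\rho(0)=1$: one shows, again by a phase-plane/energy argument paralleling Theorem \ref{thm2} and Lemma \ref{lem4}, that for each admissible shock location $x_0$ there is a supersonic arc $\rho_{sup}$ on $(0,x_0)$ with $\rho_{sup}(0)=1$ and $\rho_{sup}(x_0^-)=\rho_l<1$, and symmetrically a subsonic arc $\rho_{sub}$ on $(x_0,1)$ with $\rho_{sub}(1)=1$ and $\rho_{sub}(x_0^+)=\rho_r>1$, where the endpoint data are governed by the first equation of \eqref{1.5} and the trajectory relation \eqref{direction}.

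Next I would set up the matching as a one-parameter shooting/continuity argument. Fix $\rho_l\in(0,1)$ close to $1$; then $\rho_r=1/\rho_l$ is determined by \eqref{2.24}, and $\rho_r-\rho_l\ll1$. Define two functions of the shock location: $x_0\mapsto \ell_{sup}(x_0)$, the length of the supersonic arc from $\rho=1$ down to the value $\rho_l$ (with the matching electric field prescribed by the first equation of \eqref{1.5} at $x_0$), and $x_0\mapsto \ell_{sub}$, the length of the subsonic arc from $\rho_r$ back up to $\rho=1$; the requirement is $\ell_{sup}+\ell_{sub}=1$. Using the uniform bounds of Lemma \ref{lem4} (applied with rescaled length to capture both a short, e.g.\ $\tfrac12$, and a long, e.g.\ $\tfrac32$, trajectory as in Figures \ref{fig2-1}--\ref{fig2-2}), one shows the total glued length varies continuously and monotonically and sweeps through the value $1$, so the intermediate value theorem yields exactly one $x_0=x_0(\rho_l)\in(0,1)$ for each fixed $\rho_l$. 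Because $\rho_l$ may be chosen from a whole interval near $1$, this produces infinitely many distinct transonic shock solutions, proving the theorem for \eqref{2.35}; the entropy condition \eqref{entropy} holds automatically since $\rho_l<1<\rho_r$, and $E$ is recovered from $\rho$ via the formula following Definition \ref{definition-1}, with continuity of $E$ at $x_0$ being exactly $E_{sup}(x_0^-)=E_{sub}(x_0^+)$, i.e.\ the second line of \eqref{RH}.

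Finally, I would restore the two perturbations. Reinstating the damping $-\tfrac1\tau$ with $\tau\gg1$ and allowing a variable doping $b(x)$ with $\bar b-\underline b\ll1$ changes the vector field in \eqref{1.5} and the trajectory ODE \eqref{direction} by $O(1/\tau)+O(\bar b-\underline b)$; since the uniform lower bound $E(0)\ge\sqrt{f(\gamma(L))}$ and the minimum-density bounds $\beta(L)\le\underline\rho\le\gamma(L)$ from Lemma \ref{lem4} are strict and stable under such small perturbations, the supersonic and subsonic arcs and their lengths depend continuously on these parameters, so the matching equation $\ell_{sup}+\ell_{sub}=1$ still has a solution $x_0(\rho_l)$ for every $\rho_l$ in a (possibly slightly smaller) interval near $1$. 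This gives infinitely many transonic shock solutions of \eqref{1.5}--\eqref{boundary} on $[0,1]$. The main obstacle I anticipate is the continuity and monotonicity of the arc-length maps $x_0\mapsto\ell_{sup},\ell_{sub}$ together with the verification that the glued length genuinely crosses $1$ (so that the IVT applies): this requires the $\delta$-uniform estimates of Lemma \ref{lem4} to be pushed through in the degenerate limit $\delta\to0$ where the arcs touch the sonic line $\rho=1$ and $\rho'$ blows up like $(distance)^{-1/2}$, and to be shown robust against the $O(1/\tau)$ and $O(\bar b-\underline b)$ perturbations.
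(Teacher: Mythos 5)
Your overall architecture is the same as the paper's: regularize the sonic boundary to $1\mp\delta$, use the undamped constant-doping system \eqref{2.35} and Lemma \ref{lem4} as a reference, produce glued supersonic/subsonic arcs whose total length brackets $1$ (reference lengths $\tfrac12$ and $\tfrac32$), invoke continuity to hit length exactly $1$, send $\delta\to0$, and get infinitely many solutions from the free choice of $\rho_l=1-\eta$. However, there are two concrete gaps in the execution. First, your shooting variable is ill-posed: for a fixed $\rho_l$ and fixed launch data, the shock location is \emph{not} free --- once the supersonic branch is launched from $(\rho(0),E(0))$, the point $\bar x_0$ where $\rho$ last attains $\rho_l$ is determined, and the Rankine--Hugoniot data then determine the subsonic branch completely, so ``$x_0\mapsto\ell_{sup}(x_0)$'' is not a one-parameter family and the IVT cannot be run in $x_0$. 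The correct free parameter is the initial electric field $E(0)=E_0$, swept over $(E_1(0),E_2(0))$ where $E_1,E_2$ are the left fields of the undamped reference solutions of lengths $\tfrac12$ and $\tfrac32$; the total length of the glued solution is a continuous function of $E_0$ taking values near $[\tfrac14,\tfrac34]$ at one end and near $[\tfrac54,\tfrac74]$ at the other, which is what forces it through $1$. (The monotonicity you assert is neither needed nor established.)

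Second, the claim that reinstating $-\tfrac1\tau$ and a variable $b$ is a harmless $O(1/\tau)+O(\bar b-\underline b)$ perturbation is exactly where the difficulty lies, because the coefficient $1-1/\rho^2$ degenerates at the sonic line. The paper's comparison between the damped and undamped trajectories is a Gronwall estimate valid only on the region where $|\rho-1|\gtrsim\eta$, and it produces a factor $e^{C/\eta^2}$; consequently the smallness of $1/\tau$ and of $\bar b-\underline b$ must be chosen \emph{after} $\eta$ (this is precisely why the hypotheses read $\tau\gg1$ and $\bar b-\underline b\ll1$), and the transit of the solution from $\rho=1-\delta$ to $\rho=1-\eta$ near the boundary has to be controlled separately (it takes only $O(\eta^2)$ in $x$ because $\rho_x$ is large there). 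A blanket ``stable under small perturbations'' statement, uniform in $\eta$, is false. Finally, the degenerate limit $\delta\to0$, which you flag but do not carry out, requires uniform-in-$\delta$ $H^1$ bounds on $(1-\delta-\rho^\delta_{sup})^2$ and $(\rho^\delta_{sub}-1-\delta)^2$ together with a weak-compactness and diagonal argument to ensure the limit actually attains the sonic boundary values and remains a weak solution; this is the content of Step 7 of the paper's proof and cannot be omitted.
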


\begin{proof}
The proof is technical and longer, we divide it into seven steps.

\emph{Step 1.} Let $\eta$ be a small number to be determined later
such that $\delta<\eta\ll1$. Denote by $(\rho_1,E_1)(x)$ the
solution of \eqref{2.35} with $L=\frac{1}{2}$. Then by \eqref{2.46},
\begin{equation}\label{E10}
E_1(0)\geq \sqrt{f(\gamma(1/2)) }\triangleq\Lambda_1.\end{equation}

Let us consider system \eqref{1.5} with the supersonic initial
value:
\begin{equation} \label{2.47}
\left \{\begin{array}{ll}
        \left(1-\dfrac{1}{\rho^2}\right)\rho_x=\rho E-\dfrac{1}{\tau},\\
         E_x=\rho-b(x),\\
        (\rho(0),E(0))=(1-\delta,E_1(0)).
        \end{array} \right.
\end{equation}
In this step, we will show that when $\tau\gg1$, there exists a
number $x_1\leq C\eta$ such that $\rho(x_1)=1-\eta$, and $E(x_1)\geq
E_1(0)-C\eta^2$, where $C>0$ is a constant independent of $\tau$,
$\delta$ and $\eta$.

It is easy to see that if $\tau\geq\frac{4}{\Lambda_1}\geq\frac{4}{E_1(0)}$ and
$\delta\leq\frac{1}{4}$, then the initial data of \eqref{2.47}
satisfies
\[
\rho(0)E(0)-\frac{1}{\tau}=(1-\delta)E_1(0)-\frac{1}{\tau}\geq\frac{E_1(0)}{2}>0.
\]
Observing that \eqref{2.47} is a standard initial value problem for
ODE system without degeneracy, it follows that \eqref{2.47} has a
unique supersonic solution on some interval. Because $b\geq
\underline{b}>1>\rho$, the solution component $E$ keeps decreasing.
Using the result (ii) of Theorem \ref{thm2},  $\rho$ is decreasing
until it attains the unique critical point, after that $\rho$ keeps
increasing. Denote by $x_1$ the first number that $\rho(x)$ attains
$1-\eta$, namely $\rho(x_1)=1-\eta$. By the second equation of
\eqref{2.47},
\[ E(x)= E_1(0)+\int_0^x(\rho-b)ds\geq E_1(0)-\bar{b}x \text{ for } x\in(0,x_1).
\]
Since $\rho\in(1-\eta,1-\delta)$ on $(0,x_1)$, if
$\eta\leq\frac{1}{2}$ and $\tau\geq\frac{4}{\Lambda_1}\geq\frac{4}{E_1(0)}$, then
\[\rho
E-\frac{1}{\tau}\geq(1-\eta)(E_1(0)-\bar{b}x)-\frac{E_1(0)}{4}\geq(1-\eta)\left(\frac{E_1(0)}{4}-\bar{b}x\right)
\ \text{ for }x\in(0,x_1),
\]
which in combination with the first equation of \eqref{2.47} leads
to
\begin{equation}\label{new-2.49}
x_1=\frac{\rho(x_1)-\rho(0)}{\rho_x(\xi)}=\frac{(\eta-\delta)(1-\rho^2(\xi))}{(\rho(\xi) E(\xi)-\frac{1}{\tau})\rho^2(\xi)}
\leq\frac{2\eta^2}{(1-\eta)^3(\frac{E_1(0)}{4}-\bar{b}x_1)} \ \ \text{
with }\xi\in(0,x_1).
\end{equation}
To solve this inequality, we notice that when $\eta$ is small such that $\eta\leq\min\big\{\frac{ E_1(0)}{2^4\sqrt{b}},\frac{1}{2}\big\}$, then
\[\frac{E_1^2(0)}{4}-\frac{8\bar{b}\eta^2}{(1-\eta)^3}\geq\frac{E_1^2(0)}{4}-2^6\bar{b}\eta^2\geq\frac{1}{4}(E_1^2(0)-2^8\bar{b}\eta^2)\geq0.\]
Thus, we get from inequality \eqref{new-2.49} that
\begin{equation*}
\begin{split}
x_1\leq&\frac{1}{2\bar{b}}\left(\frac{E_1(0)}{4}
-\left(\frac{E_1^2(0)}{4^2}-\frac{8\bar{b}\eta^2}{(1-\eta)^3}\right)^{1/2}\right)
\\=&\frac{4\eta^2}{(1-\eta)^3\left(\frac{E_1(0)}{4}+(\frac{E_1^2(0)}{4^2}
-\frac{8\bar{b}\eta^2}{(1-\eta)^3})^{1/2}\right)}\\
\leq&\frac{16\eta^2}{(1-\eta)^3E_1(0)} \leq\frac{2^7\eta^2}{\Lambda_1},
\end{split}\end{equation*}
where we have used \eqref{E10} in the last inequality. In view of the second equation of \eqref{2.47}, we further get
\begin{equation}\label{2.49}
E(x_1)=E_1(0)+ \int_0^{x_1}(\rho-b)ds\geq
E_1(0)-\bar{b}x_1\geq E_1(0)-\frac{2^7\bar{b}\eta^2}{\Lambda_1}.
\end{equation}

\emph{Step 2.} Now let us consider the initial value problem for the ODE system without
semiconductor effect
\begin{equation} \label{2.50}
\left \{\begin{array}{ll}
        \left(1-\dfrac{1}{\hat{\rho}^2}\right)\hat{\rho}_x=\hat{\rho} \hat{E},\\
         \hat{E}_x=\hat{\rho}-\underline{b},\\
        (\hat{\rho}(0),\hat{E}(0))=(1-\delta,\hat{E}_0).
        \end{array} \right.
\end{equation}
In this step, we prove that there exist numbers $x_2>0$
and $\hat{E}_0>0$ such that  $x_2\leq C\eta^2$ and the solution of \eqref{2.50} satisfies
$\hat{\rho}(x_2)=1-\eta$ and $\hat{E}(x_2)=E(x_1)$. Here $E$ and
$x_1$ are given by step 1, and $C>0$ is a constant independent of $\tau$, $\delta$ and $\eta$.

We argue by shooting method. Using phase-plane analysis, it is easy
to see that, for any $\hat{E}_0>0$, there exists $\hat{L}(\hat{E}_0)>0$, such that
\eqref{2.50} has a symmetric supersonic solution on $[0,\hat{L}(\hat{E}_0)]$
satisfying
\[\hat{\rho}(0)=\hat{\rho}(\hat{L}(\hat{E}_0))=1-\delta, \ \hat{E}(0)=\hat{E}(\hat{L}(\hat{E}_0))=\hat{E}_0.\]

Now taking $\hat{E}_0=2E_1(0)$, suppose $\bar{x}_2$ is the first
number that $\hat{\rho}$ attains $1-\eta$, since
$\hat{\rho}\in(1-\eta,1-\delta)$ on $(0,\bar{x}_2)$, by the second
equation of \eqref{2.50},
\begin{equation} \label{2.51}
\hat{E}(x)=\hat{E}(0)+ \int_0^{x}(\hat{\rho}-b)ds\geq
2E_1(0)-\bar{b}x \text{ for
}x\in(0,\bar{x}_2).\end{equation} Hence
\begin{equation*}
\hat{\rho}\hat{E}(x)\geq(1-\eta)(2E_1(0)-\bar{b}x),\end{equation*} which in combination with the first
equation of \eqref{2.50} leads to
\begin{equation}\label{new-2.53}
\bar{x}_2=\frac{\hat{\rho}(\bar{x}_2)-\hat{\rho}(0)}{ \hat{\rho}_x(\hat{\xi})}=\frac{(\eta-\delta)(1-\hat{\rho}^2(\hat{\xi}))}{\hat{\rho}^3(\hat{\xi})\hat{E}(\hat{\xi})}
\leq\frac{2\eta^2}{(1-\eta)^3(2E_1(0)- \bar{b}\bar{x}_2 )}.\end{equation}
Notice that when $\eta\leq\min\big\{\frac{E_1(0)}{4\sqrt{\bar{b}}},\frac{1}{2}\big\}$, it holds that \[4E_1^2(0)-\frac{8\bar{b}\eta^2}{(1-\eta)^3}\geq4(E_1^2(0)-2^4\bar{b}\eta^2)\geq0.\]
It then follows from \eqref{new-2.53} that
\begin{equation}\label{new-2.54}
\begin{split}
\bar{x}_2\leq&\frac{1}{2\bar{b}}\left(2E_1(0)
-\left(4E_1^2(0)-\frac{8\bar{b}\eta^2}{(1-\eta)^3}\right)^{1/2}\right)
\\=&\frac{2\eta^2}{(1-\eta)^3\left(E_1(0)+(E_1^2(0)
-\frac{2\bar{b}\eta^2}{(1-\eta)^3})^{1/2}\right)}\\
\leq&\frac{2\eta^2}{(1-\eta)^3E_1(0)}
\leq\frac{2^4\eta^2}{\Lambda_1},
\end{split}\end{equation}where we have used \eqref{E10} in the last inequality.
This inequality together with \eqref{2.51} gives
\begin{equation*}
\hat{E}(\bar{x}_2)\geq 2E_1(0)-\bar{b}
\bar{x}_2\geq2E_1(0)-\frac{2^4\bar{b}\eta^2}{E_1(0)}\geq
2E_1(0)-\frac{2^4\bar{b}\eta^2}{E_1(0)}\cdot\frac{E_1^2(0)}{16\bar{b}}=E_1(0)>E(x_1).\end{equation*}
Here we have used $E_1(0)=E(0)>E(x_1)$ because $E$ is decreasing.

On the other hand, if $\hat{E}_0=\frac{E_1(0)}{2}$, by \eqref{2.49}, one can easily see
that if $\eta<\frac{\Lambda_1}{2^4\bar{b}}$, it holds
$E(x_1)>\frac{E_1(0)}{2}$. Thus,
$\hat{E}(x)<\hat{E}_0=\frac{E_1(0)}{2}<E(x_1)$ for any $x>0$ because
$\hat{E}$ is decreasing. Now by the continuity of the solution with
respect to the initial data, there exist
$\hat{E}_0\in(\frac{E_1(0)}{2},2E_1(0))$ and length $\hat{L}>0$ such
that \eqref{2.50} has a supersonic solution $(\hat{\rho},\hat{E})$
satisfying
\[\hat{\rho}(0)=\hat{\rho}(\hat{L})=1-\delta, \ \hat{E}(0)=\hat{E}(\hat{L})=\hat{E}_0.\]
Moreover, as in \eqref{new-2.54}, there exists a number $x_2\leq
C\eta^2$ such that
\begin{equation}\label{2.52}
\hat{\rho}(x_2)=1-\eta \ \text{ and } \
\hat{E}(x_2)=E(x_1).\end{equation} Thus,
\[0<\hat{E}_0-E(x_1)=\hat{E}_0-\hat{E}(x_2)=-\hat{E}_xx_2=(b-\hat{\rho})x_2<\bar{b}x_2<C_1\eta^2,\]
which in combination with \eqref{2.49} yields
\[\begin{split}
\hat{E}_0-E_1(0)&=\hat{E}_0-E(x_1)+E(x_1)-E_1(0)>E(x_1)-E_1(0)>-C_2\eta^2,\\
\hat{E}_0-E_1(0)&=\hat{E}_0-E(x_1)+E(x_1)-E_1(0)<C_1\eta^2,
\end{split}\]
where we have used the fact that $E$ is decreasing. Thus
\[|\hat{E}_0-E_1(0)|\leq C_3\eta^2 \text{ with }C_3=\min\{C_1,C_2\}.\]
Observing that the length $\hat{L}$ of solution is also continuous
with respect to the initial data, since the length of the solution
$(\rho_1,E_1)$ to \eqref{2.35} with initial data $(1-\delta,
E_1(0))$ is $\frac{1}{2}$, there exists $l_0>0$ independent of
$\tau$, $\delta$ and $\eta$, such that if $C_3\eta^2<l_0$, then
\[\frac{1}{4}\leq\hat{L}\leq\frac{3}{4}.\]

\emph{Step 3.} In this step, we show that when $\tau\gg1$ and
$\bar{b}-\underline{b}\ll1$, system \eqref{2.47} has a unique
solution $(\rho,E)$ on $[0,x_4]$ with
\[\frac{1}{4}-C\eta^2\leq x_4\leq\frac{3}{4}+C\eta^2, \
\rho(0)=\rho(x_4)=1-\delta,
\] for some constant $C$ independent of
$\tau$, $\delta$ and $\eta$. Set
$(\bar{\rho},\bar{E})(x):=(\hat{\rho},\hat{E})(x-x_1+x_2)$, then
$(\bar{\rho},\bar{E})$ satisfies \eqref{2.50} with initial-boundary
data
\[(\bar{\rho},\bar{E})(x_1)=(1-\eta,\hat{E}(x_2))=(\rho,E)(x_1) \ \text { and } \bar{\rho}(x_3)=1-\eta \]
with $x_3:=\hat{L}+x_1-2x_2$, where we have used the symmetry of
$(\hat{\rho},\hat{E})$, and hence
$\hat{\rho}(\hat{L}-x_2)=\hat{\rho}(x_2)=1-\eta$. Set
$\phi:=\bar{\rho}-\rho$, $\psi:=\bar{E}-E$, then by \eqref{2.47} and
\eqref{2.50}, $(\phi,\psi)$ satisfies
\begin{equation} \label{2.53}
\left \{\begin{array}{ll}
        \phi_x=\frac{\bar{\rho}^3\psi}{(\bar{\rho}+1)(\bar{\rho}-1)}
        +\frac{(\bar{\rho}^2\rho^2-\bar{\rho}^2-\bar{\rho}\rho-\rho^2)\phi E}{(\bar{\rho}+1)(\bar{\rho}-1)(\rho+1)(\rho-1)}
        +\frac{\rho^2}{\tau(\rho+1)(\rho-1)},\\
         \psi_x=\phi+b-\underline{b},\\
        (\phi(x_1),\psi(x_1))=0.
        \end{array} \right.
\end{equation}
Define the solution space $X_T:=\{(\phi,\psi)\in
C[x_1,T]|\phi(x_1)=\psi(x_1)=0,|\phi|\leq\eta/2,|\psi|\leq\eta/2\}$.
We only need to show the a priori estimate
\begin{equation} \label{2.54}
\phi^2(x)+ \psi^2(x)\leq\eta^2/4 \ \text{ on }x\in[x_1,x_3].
\end{equation}
Multiplying the first equation of \eqref{2.53} by $\phi$ and  the
second one by $\psi$ and adding them, noting
$|\rho-\bar{\rho}|\leq\eta/2$, by Young's inequality, one can easily
get
\[(\phi^2+ \psi^2)_x\leq\frac{C}{\eta^2}(\phi^2+
\psi^2)+\frac{C}{\tau^2}+C(\bar{b}-\underline{b})^2,
\] where $C$ is a constant independent of
$\tau$, $\delta$ and $\eta$. It then follows from Gronwall's
inequality that
\[\phi^2+ \psi^2\leq C\left[\frac{C}{\tau^2}+(\bar{b}-\underline{b})^2\right]\eta^2e^{Cx/\eta^2}
\leq
C\left[\frac{C}{\tau^2}+(\bar{b}-\underline{b})^2\right]\eta^2e^{C/\eta^2}
\text{ for }x\in[x_1,x_3].
\]
Now taking $\tau\gg1$ and $\bar{b}-\underline{b}\ll1$ such that
$\left[\frac{C}{\tau^2}+(\bar{b}-\underline{b})^2\right]e^{C/\eta^2}\leq\frac{1}{4}$,
we derive \eqref{2.54}. Moreover, we also get
\[|\rho-\bar{\rho}|\leq\eta/2 \text{ and }  |E-\bar{E}|\leq\eta/2,
\]
which gives
$\rho(x_3)\leq\bar{\rho}(x_3)+\frac{\eta}{2}=1-\frac{\eta}{2}$, and
further by \eqref{2.52} and \eqref{2.49},
\begin{equation}\label{2.55}
\begin{split}
E(x_3)\leq\bar{E}(x_3)+\frac{\eta}{2 }
=\hat{E}(\hat{L}-x_2)+\frac{\eta}{2
}=-\hat{E}(x_2)+\frac{\eta}{2}=-E(x_1)+\frac{\eta}{2} \leq
-E_1(0)+C\eta.
\end{split}\end{equation}

Now taking $x_3$ as the initial data, we can extend $(\rho,E)$, the
solution of \eqref{2.47}, to the state satisfying $\rho=1-\delta$.
Denote by $x_4$ the number that $\rho(x_4)=1-\delta$. As in the
proof of step 2, we have
\[x_4-x_3\leq C\eta^2
\] for some constant $C$  independent of
$\tau$, $\delta$ and $\eta$. And then by \eqref{2.55},
\[E(x_4)\leq E(x_3)\leq -E_1(0)+C\eta.\]
Now we obtain a solution of \eqref{2.47} on $[0,x_4]$ satisfying
\begin{equation}\label{2.56}
\rho(0)=\rho(x_4)=1-\delta,\ E(0)=E_1(0),\ E(x_4)\leq -E_1(0)+C\eta.
\end{equation}
Moreover,
\begin{equation}\label{2.60}
\frac{1}{4}-C\eta^2\leq\hat{L}+x_1-2x_2 =x_3\leq x_4\leq
x_3+C\eta^2=\hat{L}+x_1-2x_2+C\eta^2\leq\frac{3}{4}+C\eta^2.
\end{equation}

\emph{Step 4.} In this step, we construct a transonic solution of
\eqref{2.47} on an interval $[0,x_5]$ with
\[\frac{1}{4}-C\eta\leq x_5\leq\frac{3}{4}+C\eta, \ \rho(0)=1-\delta,\ \rho(x_5)=1+\delta.
\]

Set $\rho_l=1-\eta$, then $\rho_r=1/\rho_l>1$. We take the jump
location $\bar{x}_0\in (0,x_4)$ as the last number that
$\rho(\bar{x}_0)=\rho_l$, and restrict our supersonic solution
$(\rho_{sup},E_{sup})(x)$  only on $[0,\bar{x}_0]$. We denote
$E_{sup}(\bar{x}_0)\triangleq E_l$. As in the proof of step 2,
\begin{equation}\label{2.61}
x_4-\bar{x}_0\leq C\eta.\end{equation} Thus, owing to the inequality
of \eqref{2.56}, the supersonic solution satisfies
\begin{equation}\label{2.62}
\rho_l=1-\eta,\ E_l\leq E(x_4)+C\eta\leq-E_1(0)+C\eta.\end{equation}
It is then easy to see that
\[\rho_lE_l-\frac{1}{\tau}\leq(1-\eta)(-E_1(0)+C\eta)=-E_1(0)+(C+E_1(0))\eta.\]
Thus, when $\eta\ll1$ such that
$(C+E_1(0))\eta\leq\frac{E_1(0)}{2}$, it holds
\begin{equation}\label{2.57}
\rho_lE_l-\frac{1}{\tau}\leq-\frac{E_1(0)}{2}<0 \ \text{ and } \
E_l<0.\end{equation}

Next we construct the corresponding subsonic solution. For
$x\geq\bar{x}_0$, let us consider the system \eqref{2.47} with the
initial data
\[\rho(\bar{x}_0)=\rho_r, \ \ E(\bar{x}_0)=E_r=E_l.\] By the standard ODE theory, the initial value problem
admits a unique solution $(\rho,E)(x)$ for $x>\bar{x}_0$. By
\eqref{2.57}, a simple calculation gives
\begin{equation*}
\begin{split}
\rho_rE_r-\frac{1}{\tau}&=\rho_lE_l-\frac{1}{\tau}+(\frac{1}{\rho_l}-\rho_l)E_l\\
&\leq-\frac{E_1(0)}{2}+\Big[\frac{1}{1-\eta}-(1-\eta)\Big](-E_1(0)+C\eta)\\
&\leq-\frac{E_1(0)}{2}+C\eta^2.
\end{split}\end{equation*}
It hence follows that when $C\eta^2<\frac{E_1(0)}{4}$,
\[\rho_rE_r-\frac{1}{\tau}\leq-\frac{E_1(0)}{4}<0.\]
From the first equation of \eqref{2.47}, we know the component
$\rho$ of such solution is decreasing in a neighborhood of
$\bar{x}_0^+$. We denote this subsonic solution by
$(\rho_{sub},E_{sub})(x)$. If $\eta<1-\frac{1}{b}$, then
\begin{equation*}
\begin{split}
E_{sub}(x)&=E_r+ \int_{\bar{x}_0}^{x}(\rho_{sub}-b)dx\\&\leq E_r+
\int_{\bar{x}_0}^{x}\Big(\frac{1}{1-\eta}-b\Big)dx\\&<E_r<0,\end{split}\end{equation*}
where we have used the second inequality of \eqref{2.57} and
$E_r=E_l$. Noting that the function $g(s)=\frac{s^3}{s^2-1}$ is
monotone decreasing on $(1,\sqrt{3})$, we thus get from \eqref{2.62}
that
\[(\rho_{sub})_x=\frac{\rho_{sub}E_{sub}-\frac{1}{\tau}}{1-\frac{1}{\rho_{sub}^2}}
\leq\frac{\rho_r^3E_r}{\rho_r^2-1}=\frac{E_r}{\eta(1-\eta)(2-\eta)}
\leq\frac{-E_1(0)+C\eta}{\eta(1-\eta)(2-\eta)}<-\frac{E_1(0)}{2},\]
if $\eta<\min\left\{\frac{E_1(0)}{2C},\frac{1}{2}\right\}$. This
inequality implies $\rho_{sub}$ will keep decreasing and attains
$1+\delta$ at a finite number $x_5$ and
\begin{equation}\label{2.64}
x_5-\bar{x}_0=\frac{\delta-\frac{\eta}{1-\eta}}{\int_0^1(\rho_{sub})_x(sx_5+(1-s)\bar{x}_0)ds}
\leq C\eta^2 \text{ if
}\eta<\min\left\{\frac{E_1(0)}{2C},\frac{1}{2}\right\}.\end{equation}
Now we have constructed the transonic solution to \eqref{2.47} in
$[0,x_5]$ as follows
\[
(\rho_{trans},E_{trans})(x)=
\begin{cases}
(\rho_{sup},E_{sup})(x), & x\in[0,\bar{x}_0),\\
(\rho_{sub},E_{sub})(x), & x\in(\bar{x}_0,x_5],
\end{cases}
\]
which satisfies the boundary condition
\[\rho_{sup}(0)=1-\delta, \ \rho_{sub}(x_5)=1+\delta,\]
and the entropy condition at $\bar{x}_0$
\[
0<\rho_{sup}(\bar{x}_0^-)=1-\eta<1<\rho_{sub}(\bar{x}_0^+),
\]
and the Rankine-Hugoniot condition \eqref{RH} at $\bar{x}_0$.
Furthermore, it follows from \eqref{2.60}, \eqref{2.61} and
\eqref{2.64} that
$$\frac{1}{4}-C\eta\leq x_5\leq\frac{3}{4}+C\eta.$$

\emph{Step 5.} In this step, we construct a transonic solution of
\eqref{2.47} on an interval $[0,x_7]$ with $\frac{5}{4}-C\eta\leq
x_7\leq\frac{7}{4}+C\eta$, $\rho(0)=1-\delta$ and
$\rho(x_7)=1+\delta$.

We take $L=\frac{3}{2}$ in \eqref{2.35} and denote by $(\rho_2,E_2)$
its solution. As shown in steps 1-3, we know that there exists an
interval $[0,x_6]$ with
\[\frac{5}{4}-C\eta^2\leq x_6\leq\frac{7}{4}+C\eta^2,
\]
such that system \eqref{2.47} has a supersonic solution on
$[0,x_6]$ satisfying
\begin{equation*}
\rho(0)=\rho(x_6)=1-\delta,\ E(0)=E_2(0),\ E(x_6)\leq -E_2(0)+C\eta.
\end{equation*}
As in step 4, we may construct another transonic solution for
\eqref{2.47} in the form of
\[
(\rho_{trans},E_{trans})(x)=
\begin{cases}
(\rho_{sup},E_{sup})(x), & x\in[0,\tilde{x}_0),\\
(\rho_{sub},E_{sub})(x), & x\in(\tilde{x}_0,x_7],
\end{cases}
\]
where $\tilde{x}_0\in (0,x_6)$ and $\frac{5}{4}-C\eta^2\leq
x_7\leq\frac{7}{4}+C\eta^2$ are some determined numbers. This
transonic solution satisfies the boundary condition
\[\rho_{sup}(0)=1-\delta, \ \rho_{sub}(x_7)=1+\delta,\]
the entropy condition at $\tilde{x}_0$
\[
0<\rho_{sup}(\tilde{x}_0^-)=1-\eta<1<\rho_{sub}(\tilde{x}_0^+),
\]
and the Rankine-Hugoniot condition \eqref{RH} at  $\tilde{x}_0$.

\emph{Step 6.} We next construct transonic solutions  of
\eqref{2.47} on $[0,1]$. Without loss of generality, we assume that
$E_1(0)<E_2(0)$. As in step 4, one can see that when
$0<\delta<\eta\ll1$, $\tau\gg1$, for any $E_0\in(E_1(0),E_2(0))$,
there exists a number $x_8>0$ and a transonic solution of
\eqref{2.47} on the interval $[0,x_8]$ satisfying the boundary
condition
\[\rho_{sup}(0)=1-\delta, \ E_{sup}(0)=E_0, \ \rho_{sub}(x_8)=1+\delta,\]
the entropy condition at $\tilde{\bar{x}}_0$
\[
0<\rho_{sup}(\tilde{\bar{x}}_0^-)=1-\eta<1<\rho_{sub}(\tilde{\bar{x}}_0^+),
\]
and the Rankine-Hugoniot condition. Applying the continuation
argument in the length of the interval $L$, we realize that
\eqref{2.47} has some transonic solutions
$(\rho_{trans},E_{trans})(x)$ for $x\in [0,1]$ and satisfies the
boundary condition
\[\rho_{sup}(0)=1-\delta, \ \rho_{sub}(1)=1+\delta,\] the entropy condition
\[
0<\rho_{sup}(x_0^\delta)=1-\eta<1<\rho_{sub}(x_0^\delta),
\]
and the Rankine-Hugoniot condition at some jump location
$x_0^\delta$ in $(0,1)$.

\emph{Step 7.} Let us now prove the existence of transonic solutions
of \eqref{1.5}-\eqref{boundary} on $[0,1]$.

For any $\delta>0$, denote by $(\rho^\delta,E^\delta)$ the transonic
solution of \eqref{2.47} on $[0,1]$ obtained in step 6. Multiplying
the first equation of \eqref{2.47} by
$((1-\delta-\rho^\delta)^2)_x$, integrating the resultant equation
on $(0,x_0^\delta)$, and using the second equation of \eqref{2.47},
noting
\begin{equation*}
\frac{((1-\delta-\rho^\delta)^2)_x}{\rho^\delta}=(-2(1-\delta)\ln\rho^\delta+2\rho^\delta)_x,
\end{equation*}
\begin{equation*}
\begin{split}
\int_0^{x_0^\delta}(b-\rho^\delta)(1-\delta-\rho^\delta)^2dx&\leq\int_0^{x_0^\delta}b(1-\delta-\rho^\delta)^2dx\\&
\leq\frac{1}{4}\int_0^{x_0^\delta}(1-\delta-\rho^\delta)^4dx+ \int_0^{x_0^\delta}b^2dx\\&
\leq\frac{1}{4}\int_0^{x_0^\delta}|((1-\delta-\rho^\delta)^2)_x|^2dx+b^2,
\end{split}\end{equation*}
we have
\begin{equation}\label{2.58}
\begin{split}
&\int_0^{x_0^\delta}\frac{2\delta(\rho^\delta+1)(1-\delta-\rho^\delta)(\rho_x)^2}{(\rho^\delta)^3}
+\frac{(\rho^\delta+1)|((1-\delta-\rho^\delta)^2)_x|^2}{2(\rho^\delta)^3}dx\\
&\leq\frac{1}{4}\int_0^{x_0^\delta}|((1-\delta-\rho^\delta)^2)_x|^2dx
+b^2+E_l(1-\delta-\rho_l)^2\\&\quad
-\frac{2}{\tau}[-(1-\delta)\ln\rho_l+\rho_l+(1-\delta)\ln(1-\delta)-(1-\delta)].
\end{split}\end{equation}
Similarly, multiplying the first equation of \eqref{2.47} by
$((\rho^\delta-1-\delta)^2)_x$, integrating the resultant equation
on $(x_0^\delta,1)$, we have
\begin{equation*}
\begin{split}
&\int_{x_0^\delta}^1\frac{2\delta(\rho^\delta+1)(\rho^\delta-1-\delta)((\rho^\delta)_x)^2}{\rho^3}
+\frac{(\rho^\delta+1)|((\rho^\delta-1-\delta)^2)_x|^2}{2(\rho^\delta)^3}dx\\
&\leq\frac{1}{4}\int_{x_0^\delta}^1|((\rho^\delta-1-\delta)^2)_x|^2dx+b^2
-E_r(\rho^\delta-1-\delta)^2\\&\quad
+\frac{2}{\tau}[\rho_r-(1+\delta)\ln\rho_r-1+(1+\delta)\ln(1+\delta)].
\end{split}\end{equation*}
Substituting this inequality into \eqref{2.58}, we get
\begin{equation*}
\|(1-\delta-\rho_{sup}^\delta)^2\|_{H^1(0,x_0^\delta)}\leq C, \
\|(\rho_{sub}^\delta-1-\delta)^2\|_{H^1(x_0^\delta,1)}\leq C.
\end{equation*}
Since $\eta>0$, as $\delta\rightarrow0^+$, up to a subsequence,
$x_0^\delta\rightarrow x_0\in(0,1)$. Thus, for integer $k$ large
enough, there exists a subsequence, still denoted by
$\{\rho^\delta\}$ such that
\begin{equation*}\begin{split}
&(1-\delta-\rho_{sup}^\delta)^2\rightharpoonup(1-\rho_{sup}^0)^2 \ \
\text{weakly in } H^1(0,x_0-1/k),\\&
(\rho_{sub}^\delta-1-\delta)^2\rightharpoonup(\rho_{sub}^0-1)^2 \ \
\text{weakly in } H^1(x_0+1/k,1).
\end{split}\end{equation*}
Applying the diagonal argument for
$(\rho_{trans}^\delta,E_{trans}^\delta)$, we know that
\eqref{1.5}-\eqref{boundary} has a transonic solution
$(\rho_{trans},E_{trans})(x)$ for $x\in [0,1]$ that satisfies the
sonic boundary condition, the entropy condition and the
Rankine-Hugoniot condition at the jump location $x_0$ in $(0,1)$.

Because $\tau$ only depends on $(E_1(0),E_2(0),\eta)$, and $\eta$
only depends on $(E_1(0),E_2(0))$, there exits a $\eta_0>0$ such
that for any $\eta\in(0,\eta_0)$, there exists a transonic solution
jumps at $\rho_l=1-\eta$. Thus, such transonic solutions are
infinitely many due to arbitrary choice of $0<\eta<\eta_0$. The
proof is complete.
\end{proof}

\subsection{Infinitely many $C^1$  transonic solutions}

In this subsection, we assume that the doping profile $b(x)=b>1$ is
a given constant. We will construct $C^1$ smooth transonic solution
on the base of refined local analysis of the interior subsonic
solutions and interior supersonic solutions on the boundary. The
approach highly relies on the phase-plane analysis.

We first study the structure of interior subsonic solution. For
convenience, we set
\begin{equation}\label{transformation}
F=E-\frac{1}{\tau\rho} \ \text{ and } n=\rho-1.\end{equation} Then
system \eqref{1.5} is transformed to
\begin{equation} \label{new1.5}
\left \{\begin{array}{ll}
        n_x=\dfrac{(1+n)^3F}{(2+n)n},\\
         F_x=n+1-b+\dfrac{(1+n)F}{\tau(2+n)n}.
        \end{array} \right.
\end{equation}
Clearly, $(b-1,0)$ is a saddle point of \eqref{new1.5}. In the
$(n,\F)$ plane, all trajectories satisfy
\begin{equation}\label{new3}
\begin{split}
\frac{d\F}{dn}&=\frac{(n+1-b)(2+n)}{(1+n)^3}\cdot\frac{n}{\F}
+\frac{1}{\tau(1+n)^2}\\
&\triangleq H_1(n,\F).\end{split}
\end{equation}
Here and in the sequel, to avoid confusion, $\F=\F(n)$ denotes the
function of the trajectory. The equation $H_1(n,\F)=0$ determines a
curve
\begin{equation}\label{new4}
\Xi=\Xi(n)=-\frac{\tau(n+1-b)(2+n)n}{1+n}.
\end{equation}
Obviously, if a trajectory interacts with the curve $\Xi=\Xi(n)$,
then the interacting point is a critical point of the trajectory;
and all critical points of a trajectory lie on the curve $\Xi(n)$.
We draw the phase-plane of $(n,\F)$ in Figure \ref{fig1-6}  with
$\tau=0.5$ and $b=1.5$. To state our results more precisely, we need
the following definition.

\begin{definition}
If $(\rho,E)$ is an interior subsonic (interior supersonic) solution
to system \eqref{1.5} on an interval $[0,L]$ satisfying
$\rho(0)=\rho(L)=1$, then the corresponding trajectory
$\ET=\ET(\rho)$ in the phase-plane $(\rho,\ET)$ is called an
interior subsonic (interior supersonic) trajectory to system
\eqref{1.5}. And the transformed trajectory $\F=\F(n)$ in the
$(n,\F)$ plane is called an interior positive (interior negative)
trajectory to system \eqref{new1.5}.

\end{definition}

Clearly, an interior subsonic (interior supersonic) trajectory
corresponds to an interior subsonic (interior supersonic) solution
to system \eqref{1.5} on some interval. Instead of studying system
\eqref{1.5} directly, we turn to analyze the structure of solutions
to the transformed system \eqref{new1.5}. Based on the analysis of
the relation between $\F(n)$ and $\Xi(n)$, we first obtain the
following important lemma.


\begin{figure}
\centering
\includegraphics[height=2.5in]{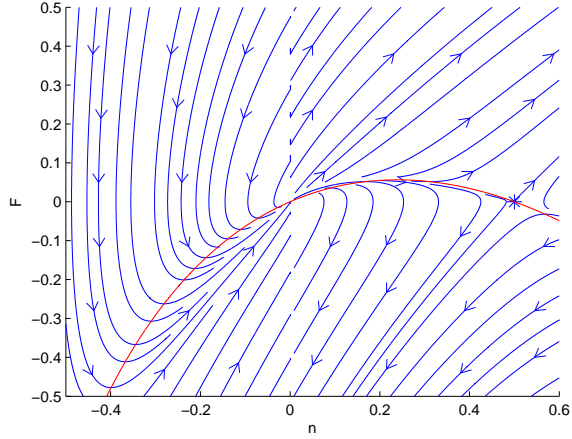}
\caption{Phase plane of $(n,\F)$ with $\tau=0.5$ and $b=1.5$; $*$ is
the saddle point $(0.5,0)$; the red line is the function
$\Xi(n)=-\frac{\tau(n+1-b)(2+n)n}{1+n}$.} \label{fig1-6}
\end{figure}


\begin{lemma}\label{new-lem1}
When $0<\tau<\frac{1}{2\sqrt{b^3+b}}$, all interior positive
trajectories to system \eqref{new1.5} start from the point $(0,0)$.

\end{lemma}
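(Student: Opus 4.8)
The plan is to analyze how an interior positive trajectory $\F=\F(n)$ of system \eqref{new1.5} must behave near $n=0$, using the slope field \eqref{new3} together with the comparison curve $\Xi=\Xi(n)$ from \eqref{new4}. An interior positive trajectory corresponds to an interior subsonic solution with $\rho(0)=\rho(L)=1$, so in the $(n,\F)$ plane it must begin and end on the line $n=0$ (i.e. $\rho=1$) while staying in the region $n>0$ in between. The key point is that such a trajectory cannot pass transversally through $n=0$ at a point with $\F\neq 0$: near $n=0$, from \eqref{new3} we have $\frac{d\F}{dn}\sim \frac{(1-b)\cdot 2}{1}\cdot\frac{n}{\F}+\frac{1}{\tau}$, and I would show that the first term dominates the dynamics in such a way that a trajectory touching $n=0$ can only do so at the origin. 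So first I would prove the dichotomy: any interior positive trajectory either starts at $(0,0)$ or would have to reach $n=0$ at some $\F_0\neq 0$, and then derive a contradiction from the latter.

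The second step is to rule out the case $\F_0\neq 0$. Here the smallness hypothesis $0<\tau<\frac{1}{2\sqrt{b^3+b}}$ must enter. I would examine the sign and size of $\frac{d\F}{dn}$ along a putative trajectory that exits through $(0,\F_0)$ with $\F_0>0$ (the case $\F_0<0$ being symmetric or handled by the direction of the flow). Using the first equation of \eqref{new1.5}, $n_x=\frac{(1+n)^3\F}{(2+n)n}$, one sees that as $n\to 0^+$ with $\F$ bounded away from zero, $n_x\to\pm\infty$, so the trajectory is "vertical" in the $(x,n)$ picture; the relevant question is whether $\F$ stays bounded away from $0$, which is governed entirely by \eqref{new3}. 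I expect that for $\tau$ in the stated range, the curve $\Xi(n)$, which collects all critical points of trajectories, is confined to a region that forces every positive trajectory emanating from $n=0$ with $\F\neq 0$ to curve back and cross $n=0$ again only through points where the geometry is incompatible with the monotonicity structure of $\F$ along the trajectory — or more directly, that the unstable manifold of the saddle $(b-1,0)$ and the structure of $\Xi$ pin down the only admissible entry/exit point on $\{n=0\}$ to be the origin.

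Concretely, I would argue as follows. Near $n=0^+$, on $\Xi(n)$ we have $\Xi(n)\approx -\tau(1-b)\cdot 2\cdot n = 2\tau(b-1)n>0$, so $\Xi>0$ for small $n>0$; thus a trajectory with $\F>0$ small can have a critical point near $n=0$ only where $\F$ meets this curve. I would estimate the quantity $b^3+b$ as the natural bound: note $\frac{d\F}{dn}=0$ on $\Xi$, and $\frac{d\F}{dn}>0$ (resp. $<0$) below (resp. above) $\Xi$ when $n,\F>0$; the threshold $\tau^2(b^3+b)<\tfrac14$ should be exactly what guarantees that the trajectory through any point $(0,\F_0)$, $\F_0>0$, cannot return to $n=0$ — it either runs into the saddle point region or has $\F$ forced monotonically away from the axis. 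Integrating the inequality for $\frac{d\F}{dn}$ backward from a hypothetical $(0,\F_0)$ and comparing with $\Xi(n)$ should yield $\F^2$ growing at a controlled rate, while the requirement of returning to the axis forces $\F^2$ to vanish, a contradiction once $\tau$ is small enough. Hence $\F_0=0$, i.e. every interior positive trajectory starts from $(0,0)$.

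The main obstacle I anticipate is the careful phase-plane bookkeeping in the second step: making the comparison between a general positive trajectory and the curve $\Xi(n)$ precise enough to extract exactly the constant $\frac{1}{2\sqrt{b^3+b}}$, rather than some weaker bound. This requires tracking the sign regions cut out by $\Xi$ and by the $n$- and $\F$-nullclines of \eqref{new1.5} simultaneously, and choosing the right comparison (likely an explicit quadratic lower barrier for $\F^2(n)$ obtained by dropping favorable terms in \eqref{new3}) so that the algebra collapses to the condition $4\tau^2(b^3+b)<1$. Everything else — the reduction to trajectories on $\{n=0\}$, the symmetry between $\F_0>0$ and $\F_0<0$, and the translation back to statements about $\rho$ — is routine given the earlier results in Section 2.
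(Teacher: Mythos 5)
Your overall strategy coincides with the paper's: work in the $(n,\F)$ phase plane, dismiss starting points $(0,\F_0)$ with $\F_0<0$ by the direction of the flow (there $n_x<0$, so the trajectory cannot enter $\{n>0\}$), and dismiss $\F_0>0$ by showing such a trajectory can never turn around and return to the line $n=0$, using the critical curve $\Xi(n)$ of \eqref{new4} and the saddle at $(b-1,0)$. One framing error: a positive trajectory \emph{does} meet $\{n=0\}$ a second time, at its right endpoint, where in general $\F<0$; so your claim that a trajectory ``can only touch $n=0$ at the origin'' is false as stated, and only the starting point is at issue.

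The genuine gap is in the step you yourself flag as the obstacle: you never produce the barrier, and the candidates you gesture at would not work. Writing, as in \eqref{new7}, $\left(\F^2-\beta^2\Xi^2\right)'=\left(\F^2-\beta^2\Xi^2\right)\cdot\frac{2}{\tau(1+n)^2(\F+\beta\Xi)}+2\Xi I$ with $I=\frac{\beta-1}{\tau(1+n)^2}+\tau\beta^2\bigl(2-b+2n-\frac{b}{(1+n)^2}\bigr)$, the direct comparison with $\Xi$ (i.e.\ $\beta=1$) fails because $I(0)=2\tau(1-b)<0$, so the inequality $\F^2>\Xi^2$ cannot be propagated from $n=0$; indeed Lemma \ref{new-lem2} shows every interior positive trajectory genuinely crosses $\Xi$ once, so no barrier of the form $\F>\Xi$ can hold. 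The missing idea is to take $\beta$ \emph{large}, of order $\tau^{-2}$: with $\beta=\frac{1}{2(b^3+b)\tau^2}$ one has $2(1+n)^3-b(1+n)^2-b\ge-(b^3+b)$ on $[0,b-1]$, and the positive term $\frac{\beta-1}{\tau(1+n)^2}$ dominates precisely when $\tau^2<\frac{1}{4(b^3+b)}$ — this optimization over $\beta$ is where the constant $\frac{1}{2\sqrt{b^3+b}}$ comes from, not from ``dropping favorable terms'' in \eqref{new3}. Granting $I>0$, one gets $\F^2>\beta^2\Xi^2>0$ on $(0,b-1]$, so the trajectory has no turning point, arrives at $n=b-1$ strictly above the saddle, and escapes to infinity rather than returning to $n=0$. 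Without this one-parameter family of comparison curves $\beta\Xi$ and the computation of $I$, your sketch does not close, and your heuristic near $n=0$ (that the term $\frac{2(1-b)n}{\F}$ ``dominates'') is also off: for a trajectory with $\F(0)=h>0$ that term vanishes at $n=0$.
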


\begin{proof}
It is easy to see that there are two zero points of $\Xi(n)$ on
$[0,+\infty)$: $n_1=0$, $n_2=b-1$ and
\begin{equation}\label{new5}
\Xi'(n)=-\tau\left(2-b+2n-\frac{b}{(1+n)^2}\right)\text{ for }
n\geq0,
\end{equation}
\begin{equation}\label{new52}
\Xi''(n)=-2\tau(1+\frac{b}{(1+n)^3})<0 \text{ for } n\geq0,
\end{equation}
\[
\Xi'(0)=2(b-1)\tau>0 \ \text{ and }\ \
\Xi'(b-1)=-\tau(b-\frac{1}{b})<0.\] Thus, $\Xi(n)$ is concave on
$[0,\infty)$ and has only one maximal point denoted by $n^*$ that
only depends on $b$. We just focus on the region $\F\geq0$. By
\eqref{new3} and \eqref{new4},
\begin{equation}\label{new6}
\frac{d\F}{dn}=-\frac{\Xi}{\tau(1+n)^2\F}+\frac{1}{\tau(1+n)^2}
\end{equation}
which is equivalent to
\begin{equation}\label{2.66}
\frac{d\F}{dn}=\frac{1}{\tau(1+n)^2}\cdot\frac{\F-\beta\Xi}{\F}+\frac{(\beta-1)\Xi}{\tau(1+n)^2\F},
\end{equation}
where $\beta>0$ is a constant to be determined later. This equation
in combination with  \eqref{new5} leads to
\begin{equation}\label{new7}
\begin{split}\left(\F^2-\beta^2\Xi^2\right)'&=\frac{2(\F-\beta\Xi)}{\tau(1+n)^2}+
2\Xi\left[\frac{\beta-1}{\tau(1+n)^2}+\tau\beta^2\left(2-b+2n-\frac{b}{(1+n)^2}\right)\right]\\
&=\left(\F^2-\beta^2\Xi^2\right)\cdot\frac{2}{\tau(1+n)^2(\F+\beta\Xi)}+2\Xi\cdot
I,
\end{split}\end{equation}where
$I:=\frac{\beta-1}{\tau(1+n)^2}+\tau\beta^2\left(2-b+2n-\frac{b}{(1+n)^2}\right)$.
Since $\Xi(0)=0$, if $\F(0)=h>0$, then we have
$\F^2(0)-\beta^2\Xi^2(0)=h^2>0$ for any $\beta>0$. We next determine
$\beta$ such that $I>0$ for $n\in[0,b-1]$. To do this, we set
$\beta=\frac{c_0}{\tau^2}$ with $c_0=\frac{1}{2(b^3+b)}$. When
$\tau^2<\frac{c_0}{2}$,  we have for $n\in[0,b-1]$
\[\begin{split}
I&=\frac{1}{\tau(1+n)^2}\cdot\left[\frac{c_0}{\tau^2}-1
+\frac{c_0^2}{\tau^2}\cdot(2(1+n)^3-b(1+n)^2-b)\right]\\
&\geq\frac{1}{\tau(1+n)^2}\cdot\left[\frac{c_0}{\tau^2}-1
-\frac{c_0^2}{\tau^2}\cdot(b^3+b)\right]\\
&=\frac{1}{\tau(1+n)^2}\cdot\left[\frac{c_0}{\tau^2}\cdot(1-c_0(b^3+b))-1\right]\\
&=\frac{1}{\tau(1+n)^2}\cdot\left(\frac{c_0}{2\tau^2}-1\right)\\
&>0.\end{split}\] Noting $\Xi(n)>0$ on $(0,b-1)$, it then follows
from \eqref{new7} that
\begin{equation}\label{new8}
\F^2(n)>\beta^2\Xi^2(n) \ \text{ for }\ n\in[0,b-1].
\end{equation}
Since $(b-1,0)$ is a saddle point lying on the curve $\Xi=\Xi(n)$,
the trajectories starting from $(0,h)$ with $h>0$ can not go back to
the line $n=0$, but go to infinity. Obviously, a  trajectory can not
start from $(0,-h)$. Therefore, when $\tau<\frac{1}{2\sqrt{b^3+b}}$,
all interior positive trajectories to system \eqref{new1.5} must
start from $(0,0)$.
\end{proof}

\begin{lemma}\label{new-lem3}
When $0<\tau<\frac{1}{3\sqrt{b^3+b}}$, all interior positive
trajectories to system \eqref{new1.5} satisfy
\begin{equation}\label{new3/2}
\F(n)\leq\frac{3}{2}\cdot\Xi(n) \ \text{for} \ n\geq0.
\end{equation}
\end{lemma}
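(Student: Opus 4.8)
The plan is to combine the phase‑plane structure of an interior positive trajectory with the identity \eqref{new7} taken at $\beta=\tfrac32$, but anchored at the \emph{first critical point} of the trajectory rather than at the degenerate point $(0,0)$. First I would pin down the geometry. By Lemma \ref{new-lem1} an interior positive trajectory $\F=\F(n)$ emanates from $(0,0)$; since it comes from an interior subsonic solution on some interval with sonic endpoints (at which $\F=0$), it is a closed loop based at $(0,0)$, and $\Xi(0)=\F(0)=0$. On its forward branch $\F>0$, and \eqref{new6} gives $\F'=\frac{1}{\tau(1+n)^2}\bigl(1-\Xi/\F\bigr)$, so $\F$ is strictly increasing while it stays above $\Xi$ and strictly decreasing while $0<\F<\Xi$. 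Reading this together with the concavity of $\Xi$: the forward branch leaves $(0,0)$ into $\{\F>\Xi\}$, has a first critical point $n_c$ with $\F(n_c)=\Xi(n_c)$, satisfies $\F>\Xi>0$ on $(0,n_c)$, and — since it must drop below $\Xi$ at $n_c$ — one has $\Xi'(n_c)>0$, i.e.\ $0<n_c<n^*$ (the unique maximizer of $\Xi$); afterwards $\F$ stays in $[0,\Xi]$ until it returns to $0$ at some $n_{\max}<b-1$, and the return arc has $\F\le0$. Thus on the whole loop except the arc $(0,n_c)$ one has $\F\le\Xi$ or $\F\le0$, hence $\F\le\tfrac32\Xi$ trivially (as $\Xi\ge0$ on $[0,b-1]$), and everything reduces to proving $\F<\tfrac32\Xi$ on $(0,n_c)$.

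Next I would check the sign of the term $I$ in \eqref{new7}. Writing $\phi(n):=2-b+2n-\frac{b}{(1+n)^2}$, so that $\Xi'=-\tau\phi$ by \eqref{new5}, $\phi$ is increasing with $\phi(n^*)=0$, hence $\phi\le0$ on $[0,n^*]$. With $\beta=\tfrac32$ one has $I(n)=\frac{1}{2\tau(1+n)^2}+\frac{9\tau}{4}\phi(n)$, and on $[0,n^*]$ the inequality $I(n)>0$ is equivalent to $(1+n)^2(-\phi(n))<\frac{2}{9\tau^2}$. An elementary estimate (split according to the sign of $b-2-2n$) gives $(1+n)^2(-\phi(n))=(b-2-2n)(1+n)^2+b<b^3+b$ for all $n\ge0$, while $\tau<\frac{1}{3\sqrt{b^3+b}}$ forces $\frac{2}{9\tau^2}>2(b^3+b)>b^3+b$. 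Hence $I(n)>0$ on $[0,n^*]$; this is the only place the hypothesis is used.

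Finally, on $[0,n_c]$ set $y:=\F^2-\tfrac94\Xi^2$, so that by \eqref{new7} with $\beta=\tfrac32$,
\[
y'=c(n)\,y+2\Xi(n)I(n),\qquad c(n):=\frac{2}{\tau(1+n)^2\bigl(\F+\tfrac32\Xi\bigr)},
\]
where $c>0$ on $(0,n_c]$ (both $\F,\Xi>0$ there), $2\Xi I>0$ on $(0,n_c]$ by the previous step, and at the anchor $y(n_c)=\Xi(n_c)^2-\tfrac94\Xi(n_c)^2=-\tfrac54\Xi(n_c)^2<0$. Integrating this scalar linear ODE backward from $n_c$,
\[
y(n)=e^{-\int_n^{n_c}c(s)\,ds}\Bigl(y(n_c)-\int_n^{n_c}e^{\int_t^{n_c}c(s)\,ds}\,2\Xi(t)I(t)\,dt\Bigr)<0\qquad\text{for }n\in(0,n_c),
\]
since every factor has the favourable sign; as $\F,\Xi>0$ on $(0,n_c)$ this gives $\F<\tfrac32\Xi$ there, and together with $\F(0)=\Xi(0)=0$ and the trivial bound on the rest of the loop we obtain $\F(n)\le\tfrac32\Xi(n)$ for all $n\ge0$. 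The part I expect to be most delicate is the phase‑plane bookkeeping of the first step — in particular that the first critical point satisfies $n_c<n^*$ and that the trajectory never climbs back above $\Xi$ past $n_c$ — which is exactly what confines the genuine comparison to the short arc $(0,n_c)\subset(0,n^*)$ on which $I>0$. (Anchoring $y$ at $(0,0)$ instead would fail: $y$ vanishes there too, but $2\Xi I>0$ is the wrong sign to propagate $y\le0$ forward; it is running the comparison backward from $n_c$ that makes the positivity of $I$ work in our favour.)
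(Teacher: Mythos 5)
Your core mechanism is sound and coincides with the paper's essential ingredient: with $\beta=\tfrac32$ the coefficient $I$ in \eqref{new7} is positive under the stated smallness of $\tau$ (your bound $(1+n)^2(-\phi(n))<b^3+b$ is correct and in fact holds for all $n\ge0$, so the restriction to $[0,n^*]$ is not even needed), and $y=\F^2-\tfrac94\Xi^2$ obeys a linear ODE whose inhomogeneity $2\Xi I$ has a definite sign; the backward variation-of-constants computation from an anchor where $y<0$ is also correct. The genuine gap is in the reduction that precedes it: you dispose of the arc past the first critical point $n_c$ by asserting that $\F$ stays in $[0,\Xi]$ until it returns to $0$, i.e.\ that the trajectory never re-crosses $\Xi$ from below after $n_c$. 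That is exactly the ``at most one critical point'' statement. It is not free: at the tangency $\F(n_c)=\Xi(n_c)$, $\F'(n_c)=0$, and first-order information does not prevent $\F$ from re-emerging above $\Xi$ on some subinterval of $(n_c,n_{\max})$, where your ``trivial bound'' $\F\le\Xi\le\tfrac32\Xi$ is then unavailable. In the paper this uniqueness is a separate result (Lemma \ref{new-lem2}, proved \emph{after} the present lemma via the $\beta=1$ identity \eqref{new2.6} and the concavity of $\Xi$), and its proof uses precisely the forward-propagation-to-the-saddle argument you are trying to sidestep. As written, your proof leaves an interval on which the inequality is unverified; you flag this yourself as the delicate point, but it is the whole difficulty.

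The repair is easy and makes your argument both complete and shorter: anchor the backward integration at $n_{\max}$ (the largest $n$ on the positive branch, which you already note satisfies $n_{\max}<b-1$) rather than at $n_c$. There $\F(n_{\max})=0$ while $\Xi(n_{\max})>0$, so $y(n_{\max})=-\tfrac94\Xi(n_{\max})^2<0$ with no structural input whatsoever; since $\F+\tfrac32\Xi>0$ and $2\Xi I>0$ on all of $(0,n_{\max})$, the same variation-of-constants formula gives $y<0$ on the entire positive branch in one stroke, and the negative branch is trivial because $\F\le0\le\tfrac32\Xi$ there. This is, up to direction of integration, the paper's own proof: if $y(\bar n)>0$ at some $\bar n$, then $y'=cy+2\Xi I>0$ keeps $y$ positive forward, so $\F>\tfrac32\Xi>0$ persists up to $n=b-1$, the trajectory cannot turn around before the saddle and escapes to infinity, contradicting its being an interior positive trajectory. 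Either version needs none of the critical-point bookkeeping of your first step.
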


\begin{proof}
Taking $\beta=\frac{3}{2}$ in \eqref{new7}, when
$\tau^2<\frac{1}{9(b^3+b)}$, we have for $n\in[0,b-1]$
\[\begin{split}
I&=\frac{1}{\tau(1+n)^2}\cdot\left[\frac{1}{2}
+\frac{9\tau^2}{4}\cdot(2(1+n)^3-b(1+n)^2-b)\right]\\
&\geq\frac{1}{\tau(1+n)^2}\cdot\left[\frac{1}{2}
-\frac{9\tau^2}{4}\cdot(b^3+b)\right]\\
&>0.\end{split}\] If there is a point $\bar{n}\in(0,b-1)$ on the
trajectory such that $\F(\bar{n})>\frac{3}{2}\Xi(\bar{n})$, then
noting $\Xi(n)>0$ and $I>0$ on $(\bar{n},b-1)$, we get from
\eqref{new7} that $\F(n)>\frac{3}{2}\Xi(n)$ on $(\bar{n},b-1)$.
Because $(b-1,0)$ is a saddle point, this trajectory will go to infinity.
We hence get \eqref{new3/2}.
\end{proof}

\begin{lemma}\label{new-lem2}
When $0<\tau<\frac{1}{2\sqrt{b^3+b}}$, all interior positive
trajectories to system \eqref{new1.5} with $\F\geq0$ are Lipschitz
continuous on a neighborhood of $n=0$.
\end{lemma}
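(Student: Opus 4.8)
The plan is to analyze the trajectory equation \eqref{new6} in the strip $\{0<n<b-1\}$, where $\Xi(n)>0$, and to show that along an interior positive trajectory the graph $\F=\F(n)$ has bounded one-sided derivative near $n=0$. By Lemma~\ref{new-lem1} (whose hypothesis coincides with the one here) every interior positive trajectory passes through $(0,0)$, so $\F(0^+)=0$; hence a bound $0\le \F'\le C$ on a right neighbourhood $(0,\varepsilon)$ of $n=0$ immediately gives Lipschitz continuity on $[0,\varepsilon]$. Rewriting \eqref{new6} as
\[
\F'(n)=\frac{1}{\tau(1+n)^2}\Big(1-\frac{\Xi(n)}{\F(n)}\Big),
\]
and noting that $\F(n)>0$ and $\Xi(n)>0$ for $0<n<b-1$, I get for free the upper bound $\F'(n)<\frac{1}{\tau(1+n)^2}\le\frac1\tau$. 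So the only way Lipschitz continuity could fail is $\F'(n)\to-\infty$ along some sequence $n_k\to0^+$, i.e.\ $\F$ shrinking much faster than $\Xi$ at the origin; equivalently, it suffices to prove that $\F(n)>\Xi(n)$ on some interval $(0,\varepsilon)$.

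The decisive step — and the part I expect to be the real work — is to rule out oscillation of the trajectory about the critical curve $\F=\Xi(n)$ near $n=0$. Since every point where a trajectory meets $\Xi$ is a critical point of that trajectory, at a crossing $n_c$ we have $\F(n_c)=\Xi(n_c)$ and hence $\F'(n_c)=0$ by \eqref{new6}; on the other hand $\Xi'(0)=2(b-1)\tau>0$ (see \eqref{new5}), so $\Xi'>0$ throughout a fixed neighbourhood of $0$. A first-order comparison at $n_c$ then gives $\F(n)-\Xi(n)=-\Xi'(n_c)(n-n_c)+o(n-n_c)$, so that for $n_c$ small the trajectory always passes from $\{\F>\Xi\}$ into $\{\F<\Xi\}$ as $n$ increases through $n_c$, and never the other way. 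Consequently there is at most one such crossing in a small interval $(0,\varepsilon)$, and on a right neighbourhood of $0$ the trajectory must lie entirely on one side of $\Xi$.

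Finally I would eliminate the side $\F\le\Xi$: on an interval $(0,\varepsilon)$ where $\F\le\Xi$ one has $\Xi/\F\ge1$, hence $\F'\le0$ by \eqref{new6}, so $\F$ is nonincreasing on $(0,\varepsilon)$; combined with $\F\ge0$ and $\F(0^+)=0$ this forces $\F\equiv0$ on $(0,\varepsilon)$, which contradicts $\Xi\not\equiv0$ there (the equation $\F\F'=\frac{\F-\Xi}{\tau(1+n)^2}$ cannot hold with $\F\equiv0$ and $\Xi\ne0$). Therefore $\F(n)>\Xi(n)>0$ on some $(0,\varepsilon)$, and on this interval
\[
0<\F'(n)=\frac{1}{\tau(1+n)^2}\Big(1-\frac{\Xi(n)}{\F(n)}\Big)<\frac1\tau,
\]
so $\F$ is Lipschitz on $[0,\varepsilon]$ with constant at most $1/\tau$; integrating also yields $\F(n)\le n/\tau$ near $0$. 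The main obstacle is precisely the crossing analysis of the second paragraph, which prevents the slope of the trajectory from degenerating at the origin; once $\F>\Xi$ is secured, the conclusion is immediate.
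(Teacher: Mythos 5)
Your proof is correct and arrives at the same two estimates as the paper, namely $0<\F'(n)<\tfrac{1}{\tau}$ on a right neighbourhood of $n=0$, but the route to the crucial lower bound is genuinely different. The paper proves a global structural fact -- each interior positive trajectory has exactly one critical point $n_0\in(0,n^*)$ on $(0,b-1)$ -- using the identity \eqref{new2.6} for $\F^2-\Xi^2$ together with the saddle-point escape argument at $(b-1,0)$, and then reads off $\F'>0$ on $(0,n_0)$. You instead argue purely locally: every intersection of the trajectory with the curve $\Xi$ near the origin is a transversal crossing from $\{\F>\Xi\}$ into $\{\F<\Xi\}$ (since $\F'=0$ while $\Xi'>0$ there), so there is at most one such crossing near $n=0$ and the trajectory lies above $\Xi$ to its left; the remaining possibility $\F\le\Xi$ on a whole right neighbourhood of $0$ is excluded by monotonicity together with $\F(0^+)=0$ from Lemma \ref{new-lem1}, which closely parallels the paper's argument that a critical point must exist at all. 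Your version is shorter and avoids the phase-plane escape reasoning, at the cost of not recovering the global uniqueness of the critical point that the paper's proof yields as a by-product. Your upper bound is also obtained more directly, from the sign of the first term of \eqref{new3} on $(0,b-1)$, rather than from the paper's integration of $F_x$ in the $x$-variable; both give the same Lipschitz constant $1/\tau$. The only point worth making explicit is that the trajectory is indeed a graph $\F=\F(n)$ near the origin because $n_x>0$ on the branch $n>0$, $\F>0$.
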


\begin{proof}
We first present a lower bound of $\frac{d\F}{dn}$. Notice that all
critical points of trajectories lie on the curve $\Xi=\Xi(n)$. We
claim that an interior positive trajectory to system \eqref{new1.5}
must have at least one critical point on $(0,b-1)$. Otherwise, the
trajectory has no critical point on $(0,b-1)$, then
\[\F'(n)>0 \ \text{ on }(0,b-1) \ \text{ or } \F
'(n)<0 \ \text{ on
}(0,b-1).\] If the former case holds, by \eqref{new3} and
\eqref{new4},
\[(\F-\Xi)'(n)>0 \ \text{ on }(0,b-1).\]
By Lemma \ref{new-lem1}, when $\tau<\frac{1}{2\sqrt{b^3+b}}$, it
holds $\F(0)=0=\Xi(0)$, then it follows that $\F(n)>\Xi(n)$ on
$(0,b-1)$. Since $(b-1,0)$ is a saddle point, this indicates that the
trajectory can not go back to the line $n=0$ but goes to infinity.
If the latter case holds, since $\F(0)=0$, we get
\[\F(n)<0 \ \text{ for any }n\in(0,b-1).\]
Using \eqref{new3} again, noting $n+1-b<0$ for $n\in(0,b-1)$, we
derive
\[\F'(n)>\frac{1}{\tau(1+n)^2}>0 \ \text{ on }(0,b-1),\]
which is a contradiction. Thus, an interior positive trajectory to
system \eqref{new1.5} has at least one critical point over
$(0,b-1)$.

We next claim that an interior positive trajectory has at most one
critical point. Denote by $n_0$ a critical point of this trajectory.
Taking $\beta=1$ in \eqref{new7}, and using \eqref{new5},
we have
\begin{equation}\label{new2.6}
\begin{split}\left(\F^2-\Xi^2\right)'&=\frac{2(\F-\Xi)}{\tau(1+n)^2}+
2\Xi\tau\left(2-b+2n-\frac{b}{(1+n)^2}\right)\\
&=\left(\F^2-\Xi^2\right)\cdot\frac{2}{\tau(1+n)^2(\F+\Xi)}-2\Xi\Xi'.
\end{split}\end{equation}
Recall $n^*$ is the maximal point of the function $\Xi(n)$ on
$(0,b-1)$. If $n_0\geq n^*$, noting $\F(n_0)=\Xi(n_0)$ and $\Xi(n)>0$,
$\Xi'(n)<0$ on $(n^*,b-1)$, it follows from \eqref{new2.6} that
\[\F(n)>\Xi(n) \ \text{ over } (n^*,b-1).\]
Because $(b-1,0)$ is a saddle point, this trajectory will go to infinity.
Thus, $n_0\in(0,n^*)$.

Now since $\Xi(n)>0$, $\Xi'(n)>0$ on $(n_0,n^*)$ and
$\F(n_0)=\Xi(n_0)$, by \eqref{new2.6} again, we have
\begin{equation}\label{new-2.70}
\F(n)<\Xi(n) \ \text{ over }  (n_0,n^*].\end{equation} Since all
critical points of the trajectory are on the curve $\Xi(n)$,
\eqref{new-2.70} indicates that there is no other critical point on
$(n_0,n^*]$ for this trajectory. On the other hand, suppose that
there is a critical point $n_1\in(0,n_0)$ for this trajectory, then
\[\F(n_1)=\Xi(n_1), \  \Xi(n)>0 \ \text{ and } \Xi'(n)>0 \ \text{ on } (n_1,n^*].\]
Applying \eqref{new2.6} repeatedly, we get
\[\F(n)<\Xi(n) \ \text{ for } n\in(n_1,n^*].\]
This contradicts to the fact that $\F(n_0)=\Xi(n_0)$ because
$n_0\in(n_1,n^*)$. Thus, there is no critical point on $(0,n_0)$ for
this trajectory, and $n_0$ is the unique critical point of this
interior positive trajectory. As a consequence, we conclude that
\begin{equation}\label{new2.7}
\frac{d\F(n)}{dn}>0 \ \text{ on } (0,n_0).
\end{equation}

We next derive an upper bound of $\frac{dF}{dn}$. By \eqref{new1.5},
we get
\begin{equation}\label{2.70}
F_x=n+1-b+\frac{n_x}{\tau(1+n)^2}
=n+1-b-\frac{1}{\tau}\cdot\left(\frac{1}{1+n}\right)_x.\end{equation}
Noting $n(0)=0$, by the continuity of the trajectory, $0\leq n<b-1$
on $[0,z]$ for some $z>0$. Noting $F(0)=0$, hence, for $x\in[0,z]$
\[F(x)<-\frac{1}{\tau}\int_0^x\left(\frac{1}{1+n}\right)_xdx=\frac{1}{\tau}-\frac{1}{\tau(1+n)}.\]
It then follows that for $n\in[0,b-1]$ and $\F\geq0$,
\[\frac{d\F(n)}{dn}<\frac{\tau(n+1-b)(2+n)}{(1+n)^2}+\frac{1}{\tau(1+n)^2}\leq\frac{1}{\tau}.\]
This estimate together with \eqref{new2.7} implies  the trajectory
is Lipschitz continuous  on $(0,n_0)$.
\end{proof}

\begin{lemma}\label{new-lem4}
When $0<\tau<\min\{\frac{1}{3\sqrt{b^3+b}},\frac{1}{4\sqrt{b-1}}\}$,
all interior positive trajectories to system \eqref{new1.5} with
$\F\geq0$ are $C^1$ smooth on a neighborhood of $n=0$ and
\begin{equation}\label{first order}
\frac{d\F}{dn}(0)=
\frac{1}{2}\left(\frac{1}{\tau}-\sqrt{\frac{1}{\tau^2}-8(b-1)}\right).
\end{equation}

\end{lemma}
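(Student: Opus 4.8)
We build on the three preceding lemmas, all of which apply since $\tau<\frac{1}{3\sqrt{b^{3}+b}}\le\frac{1}{2\sqrt{b^{3}+b}}$: Lemma~\ref{new-lem1} gives $\F(0)=0$; Lemma~\ref{new-lem2} gives that the trajectory is Lipschitz near $n=0$, has a unique critical point $n_{0}\in(0,b-1)$, and satisfies $\F'(n)>0$ on $(0,n_{0})$, which by \eqref{new3}--\eqref{new4} is equivalent to $\F(n)>\Xi(n)$ there; and Lemma~\ref{new-lem3} gives $\F(n)\le\tfrac32\Xi(n)$ on the same interval. Writing $h(n):=\frac{(b-1-n)(2+n)}{1+n}$, so that $\Xi(n)/n=\tau h(n)$ with $h$ strictly decreasing from $h(0)=2(b-1)$, the ratio $g(n):=\F(n)/n$ is trapped, for all $n$ in a one-sided neighbourhood of $0$, by
\[
\tau(b-1)<\tau h(n)=\frac{\Xi(n)}{n}<g(n)\le\frac{3}{2}\,\frac{\Xi(n)}{n}\le 3\tau(b-1).
\]
The hypothesis $\tau<\frac{1}{4\sqrt{b-1}}$ gives $\frac1{\tau^{2}}-8(b-1)>8(b-1)>0$, so the roots $\lambda_{\pm}=\frac12\bigl(\frac1\tau\pm\sqrt{\frac1{\tau^{2}}-8(b-1)}\bigr)$ of $\lambda^{2}-\frac\lambda\tau+2(b-1)=0$ are real and distinct, and it also gives $6\tau^{2}(b-1)<1$, hence $3\tau(b-1)<\frac1{2\tau}<\lambda_{+}$; thus $g$ stays strictly inside $(0,\lambda_{+})$.

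Next I would rewrite the trajectory equation \eqref{new3} for $g$. Using $\F=ng$ and $n/\F=1/g$, substitution into \eqref{new3} and multiplication by $g$ gives
\[
n\,g(n)\,g'(n)=Q\bigl(n,g(n)\bigr),\qquad Q(n,g):=\frac{(n+1-b)(2+n)}{(1+n)^{3}}+\frac{g}{\tau(1+n)^{2}}-g^{2},
\]
with $Q(0,g)=-2(b-1)+\frac g\tau-g^{2}=-(g-\lambda_{-})(g-\lambda_{+})$. On $(0,\lambda_{+})$ the factor $g-\lambda_{+}$ is negative, so $Q(0,g)$ has the sign of $g-\lambda_{-}$, and the reduced equation $g'=Q/(ng)$ drives $g$ monotonically toward $\lambda_{-}$ as $n$ decreases to $0$. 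To make this rigorous, set $\mu=\limsup_{n\to0^{+}}g(n)$ and $\nu=\liminf_{n\to0^{+}}g(n)$; both lie in $[2\tau(b-1),3\tau(b-1)]\subset(0,\lambda_{+})$. If $\mu>\lambda_{-}$, put $\varepsilon_{1}=\tfrac12(\mu-\lambda_{-})$ and $M=\tfrac12(3\tau(b-1)+\lambda_{+})<\lambda_{+}$; on the band $\lambda_{-}+\varepsilon_{1}\le g\le M$ (contained in $(\lambda_{-},\lambda_{+})$, where $Q(0,\cdot)>0$) one has $Q(n,g)\ge\delta_{1}>0$ for $n$ small, so there $g'(n)\ge\delta_{1}/(nM)$; integration then shows $g$ cannot stay in the band down to $n=0$ (this would force $\ln(1/n)$ growth, contradicting boundedness), so $g$ exits through the lower edge, and since $g'=Q/(ng)>0$ at any crossing of the level $\lambda_{-}+\varepsilon_{1}$ for small $n$, it cannot re-enter; hence $\limsup g\le\lambda_{-}+\varepsilon_{1}<\mu$, a contradiction. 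Thus $\mu\le\lambda_{-}$; the mirror argument on the band $\tau(b-1)\le g\le\lambda_{-}-\varepsilon_{2}$ (where $Q(0,\cdot)<0$) yields $\nu\ge\lambda_{-}$. Therefore $g(n)\to\lambda_{-}$, i.e.\ $\F(n)/n\to\lambda_{-}$ as $n\to0^{+}$.

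Finally, since $\lambda_{-}\ge2\tau(b-1)>0$, letting $n\to0^{+}$ in \eqref{new3} gives $\F'(n)\to\frac{(1-b)\cdot2}{\lambda_{-}}+\frac1\tau=\frac1\tau-\frac{2(b-1)}{\lambda_{-}}$, and the relation $\frac{2(b-1)}{\lambda_{-}}=\frac1\tau-\lambda_{-}$ coming from the quadratic collapses this to $\F'(n)\to\lambda_{-}$. On $(0,n_{0})$ the field in \eqref{new3} is smooth (there $\F>\Xi>0$), so $\F\in C^{1}(0,n_{0})$; together with $\F(0)=0$ and $\F'(n)\to\lambda_{-}$ this shows $\F\in C^{1}$ on a one-sided neighbourhood of $n=0$ with $\F'(0)=\lambda_{-}=\frac12\bigl(\frac1\tau-\sqrt{\frac1{\tau^{2}}-8(b-1)}\bigr)$, which is \eqref{first order}.

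The crux is the selection step in the second paragraph: showing that $g=\F/n$ actually converges (rather than oscillating near $n=0$) and that its limit is the smaller root $\lambda_{-}$ and not $\lambda_{+}$. Both are forced by the a priori enclosure $\Xi(n)<\F(n)\le\tfrac32\Xi(n)$ inherited from Lemmas~\ref{new-lem2} and~\ref{new-lem3}: the lower bound keeps $g$ bounded away from $0$ (where $Q$ is singular), and the upper bound keeps $g$ strictly below $\lambda_{+}$, so $\lambda_{-}$ is the unique zero of $Q(0,\cdot)$ in the region to which the trajectory is confined. The two smallness conditions on $\tau$ are precisely what place this enclosure inside $(0,\lambda_{+})$ and make $\lambda_{\pm}$ real and distinct.
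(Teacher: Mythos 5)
Your proof is correct, but it takes a genuinely different route from the paper's. The paper computes $\frac{d^2\F}{dn^2}$ explicitly, reduces its sign to a cubic $H_2(n,\F)=0$ in $\F$, analyzes that cubic via Cardan's formula and an asymptotic expansion of its roots, and then shows (via a third-derivative computation at any zero of $\F''$) that $\frac{d\F}{dn}$ is eventually monotone near $n=0$; monotonicity plus the Lipschitz bound gives existence of $\lim_{n\to0^+}\F'(n)$, and L'Hospital applied to \eqref{new3} together with Lemma \ref{new-lem3} selects the smaller root. You instead pass to the ratio $g=\F/n$, which satisfies a first-order equation whose frozen right-hand side at $n=0$ factors as $-(g-\lambda_-)(g-\lambda_+)$, and you use the a priori enclosure $\Xi<\F\le\frac32\Xi$ (the lower bound coming from $\F'>0$ on $(0,n_0)$ established in the proof of Lemma \ref{new-lem2}, the upper from Lemma \ref{new-lem3}) to confine $g$ to a compact subinterval of $(0,\lambda_+)$; the trapping/limsup--liminf argument then forces $g\to\lambda_-$, and $\F'\to\lambda_-$ drops out of \eqref{new3}. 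Your route avoids the second- and third-derivative computations and the Cardan analysis entirely, and it makes the root selection transparent: the bound $\F\le\frac32\Xi$ is exactly what keeps $g$ strictly below $\lambda_+$, so $\lambda_-$ is the only available limit. What it gives up is the extra structural fact the paper extracts along the way, namely the monotonicity of $\F'$ near $n=0$, which is not needed for the statement of the lemma. Two minor presentational points: you quote from Lemma \ref{new-lem2} facts (uniqueness of the critical point $n_0$ and $\F'>0$ on $(0,n_0)$) that appear only in its proof, not its statement, and in the trapping argument you should say explicitly that the sign of $g'$ inside each band dictates the direction of exit as $n$ decreases (through the lower edge of the upper band, through the upper edge of the lower band); both are easily supplied and do not affect correctness.
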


\begin{proof}
By Lemma \ref{new-lem2}, we only need to show that the second order
derivative of the trajectory does not change sign on a neighborhood
of $n=0$, i.e.
\begin{equation}\label{new-2nd}
\frac{d^2\F}{dn^2} \text{  does not change sign, if } 0<n\ll1.
\end{equation}

Step 1. We first compute $\frac{d^2\F}{dn^2}$. By \eqref{new3} and
\eqref{new4},
\begin{equation}\label{new2.68}
(1+n)^2\F\F'=\frac{1}{\tau}(\F-\Xi).
\end{equation}
Notice that $\F(n)$ is $C^\infty$ over $(0,b-1)$. Differentiating
\eqref{new2.68} in $n$ and using the first equality of \eqref{new6},
a direct calculation yields
\begin{equation}\label{new2.69}
\begin{split}
(1+n)^2\F\F''&=-2(1+n)\F\F'-(1+n)^2(\F')^2+\frac{1}{\tau}(\F'-\Xi')\\
&=-\frac{1}{\tau(1+n)\F^2}\left[2\F^3-(2\Xi-(1+n)\Xi')\F^2-\frac{\Xi
\F}{\tau(1+n)}+\frac{\Xi^2}{\tau(1+n)}\right].
\end{split}
\end{equation}
By \eqref{new4} and \eqref{new5}, it is easy to see that
\begin{equation*}
\begin{split}
2\Xi-(1+n)\Xi'&=-\frac{2\tau(n+1-b)(2+n)n}{1+n}+\tau(1+n)\left(2-b+2n-\frac{b}{(1+n)^2}\right)\\
&=\frac{\tau}{1+n}\left[2(n+1-b)+bn(2+n)\right].
\end{split}
\end{equation*}
It then follows that
\begin{equation}\label{new2.70}
\begin{split}
\F''&=-\frac{2}{\tau(1+n)^3\F^3}\bigg\{\F^3-\frac{\tau\left[2(n+1-b)
+bn(2+n)\right]}{2(1+n)}\cdot \F^2\\
&\quad+\frac{(2+n)n(n+1-b)}{2(1+n)^2}\cdot \F
+\frac{\tau(2+n)^2n^2(n+1-b)^2}{2(1+n)^3}\bigg\}\\
&\triangleq-\frac{2}{\tau(1+n)^3\F^3}\cdot H_2(n,\F).
\end{split}
\end{equation}

Step 2. We next solve the equation $H_2(n,\F)=0$, which is a third
order algebraic equation in the form:
\begin{equation}\label{new2.71}
\F^3+k\F^2+m\F+\ell=0,
\end{equation}
where
\begin{equation}\label{new2.73}
\begin{split}
k&=-\frac{\tau\left[2(n+1-b)+bn(2+n)\right]}{2(1+n)},\ m=\frac{(2+n)n(n+1-b)}{2(1+n)^2},\\
\ell&=\frac{\tau(2+n)^2n^2(n+1-b)^2}{2(1+n)^3}.\end{split}\end{equation}
Denote by $p=-\frac{k^2}{3}+m$,
$q=2(\frac{k}{3})^3-\frac{km}{3}+\ell$, by Cardan's formula,
equation \eqref{new2.71} has three roots:
\[F_1=A^{\frac{1}{3}}+B^{\frac{1}{3}},\ F_2=\varpi A^{\frac{1}{3}}+\varpi^2B^{\frac{1}{3}},
\ F_3=\varpi^2A^{\frac{1}{3}}+\varpi B^{\frac{1}{3}},\] where
$\varpi=\frac{-1+\sqrt{3}i}{2}$,
$A=-\frac{q}{2}+[(\frac{q}{2})^2+(\frac{p}{3})^3]^{\frac{1}{2}}$ and
$B=-\frac{q}{2}-[(\frac{q}{2})^2+(\frac{p}{3})^3]^{\frac{1}{2}}$.
Furthermore, if $(\frac{q}{2})^2+(\frac{p}{3})^3\leq0$, then all
roots are real valued. We claim that when
$\tau<\frac{1}{4\sqrt{b-1}}$ and $0<n\ll1$, then
$(\frac{q}{2})^2+(\frac{p}{3})^3\leq0$. Actually, a simple
calculation gives
\begin{equation}\label{new2.75}
\left(\frac{q}{2}\right)^2+\left(\frac{p}{3}\right)^3=\frac{1}{4\cdot3^4}[(km-9\ell)^2-4(k^2-3m)(m^2-3k\ell)].
\end{equation}
When $0<n\ll1$, by \eqref{new2.73},
\begin{equation*}
k=\tau(b-1)+O(n),\ m=(1-b)n+O(n^2), \
\ell=2\tau(b-1)^2n^2+O(n^3).\end{equation*} It then follows that
\[\begin{split}km-9\ell&=-\tau(b-1)^2n+O(n^2), \ k^2-3m=(b-1)^2\tau^2+O(n), \\ m^2-3k\ell&=(b-1)^2n^2-6\tau^2(b-1)^3n^2+O(n^3).\end{split}\]
Substituting these  three estimates into \eqref{new2.75} yields
\begin{equation*}\begin{split}
\left(\frac{q}{2}\right)^2+\left(\frac{p}{3}\right)^3&=\frac{1}{4\cdot3^4}\cdot[(b-1)^4\tau^2n^2-4(b-1)^2\tau^2((b-1)^2-6\tau^2(b-1)^3)n^2+O(n^3)]\\
&=\frac{1}{4\cdot3^4}\cdot[3(b-1)^4\tau^2(-1+8\tau^2(b-1))n^2+O(n^3)].
\end{split}\end{equation*}
Thus, when $\tau<\frac{1}{4\sqrt{b-1}}$ and $0<n\ll1$, we have
$(\frac{q}{2})^2+(\frac{p}{3})^3<0$.

Now all roots of the equation $H_2(n,\F)=0$ are real valued
functions. And clearly, they are analytic in $n$ on $(0,b-1)$. We
then take an expansion of the roots denoted by $\F_0(n)$ as
\begin{equation*}
\F_0(n)=\theta_0+\theta_1n+O(n^2),
\end{equation*}
and substitute this formula into $H_2(n,\F)=0$ to get
\[\theta_0=0 \ \text{ or } \theta_0=-\tau(b-1)<0,\]
and
\[\theta_1=\frac{1}{2}\left(\frac{1}{\tau}+\sqrt{\frac{1}{\tau^2}-8(b-1)}\right)
\ \text{ or }
\theta_1=\frac{1}{2}\left(\frac{1}{\tau}-\sqrt{\frac{1}{\tau^2}-8(b-1)}\right).\]
Notice that when $\tau\ll1$,
$\frac{1}{2}\left(\frac{1}{\tau}+\sqrt{\frac{1}{\tau^2}-8(b-1)}\right)=O(\frac{1}{\tau})$
and
$\frac{1}{2}\left(\frac{1}{\tau}-\sqrt{\frac{1}{\tau^2}-8(b-1)}\right)=\frac{4(b-1)\tau}{1+\sqrt{1-8(b-1)\tau^2}}=O(\tau)$.
Because we are interested in the interior positive trajectories with
$\F\geq0$, by Lemma \ref{new-lem3}, it holds that
\begin{equation}\label{new2.76}
\theta_0=0\text{ and } \theta_1=\frac{1}{2}\left(\frac{1}{\tau}-\sqrt{\frac{1}{\tau^2}-8(b-1)}\right).
\end{equation}
Thus, the solution curve of the equation $H_2(n,\F)=0$ satisfies
\begin{equation}\label{new2.77}
\F_0(n)=\theta_1n+O(n^2)=\frac{4(b-1)\tau}{1+\sqrt{1-8(b-1)\tau^2}}\cdot
n+O(n^2).
\end{equation}

Step 3. We proceed to show that when $0<n\ll1$, the function
$\frac{d\F}{dn}(n)$ is monotone.

Assume that $\hat{n}_0>0$ is a critical point of the function
$\frac{d\F}{dn}(n)$, then $\frac{d^2\F}{dn^2}(\hat{n}_0)=0$. We
claim that when $\hat{n}_0$ is small enough, it holds that
$\frac{d^3\F}{dn^3}(\hat{n}_0)>0$. Differentiating \eqref{new2.69}
in $n$, we have
\begin{equation*}
\begin{split}
&2(1+n)\F\F''+(1+n)^2\F'\F''+(1+n)^2\F\F'''\\
&=-2\F\F'-4(1+n)(\F')^2-2(1+n)\F\F''
-2(1+n)^2\F'\F''+\frac{1}{\tau}(\F''-\Xi'').\end{split}
\end{equation*}
Noting $\F''(\hat{n}_0)=0$, it then follows from \eqref{new52} that
\begin{equation}\label{new2.78}
\begin{split}
(1+\hat{n}_0)^2\F\F'''(\hat{n}_0)
&=-2\F\F'(\hat{n}_0)-4(1+\hat{n}_0)(\F'(\hat{n}_0))^2
-\frac{\Xi''(\hat{n}_0)}{\tau}\\
&=-2\F\F'(\hat{n}_0)-4(1+\hat{n}_0)(\F'(\hat{n}_0))^2
+2+\frac{2b}{(1+\hat{n}_0)^3}.
\end{split}\end{equation}
Using \eqref{new2.69} again, since $\F''(\hat{n}_0)=0$, it holds
\[(1+\hat{n}_0)^2(\F'(\hat{n}_0))^2=-2(1+\hat{n}_0)\F\F'(\hat{n}_0)
+\frac{1}{\tau}(\F'(\hat{n}_0)-\Xi'(\hat{n}_0)).\] Substituting this
inequality into \eqref{new2.78} and using \eqref{new6} leads to
\begin{equation}\label{new2.79}
\begin{split}
&(1+\hat{n}_0)^3\F\F'''(\hat{n}_0)\\
=&-2(1+\hat{n}_0)\F\F'(\hat{n}_0)-4(1+\hat{n}_0)^2(\F'(\hat{n}_0))^2
+2(1+\hat{n}_0)+\frac{2b}{(1+\hat{n}_0)^2}\\
=&6(1+\hat{n}_0)\F\F'(\hat{n}_0)-\frac{4\F'(\hat{n}_0)}{\tau}
+\frac{4\Xi'(\hat{n}_0)}{\tau}+2(1+\hat{n}_0)+\frac{2b}{(1+\hat{n}_0)^2}\\
=&\frac{6(\F(\n)-\Xi(\n))}{\tau(1+\n)}
-\frac{4(\F(\n)-\Xi(\n))}{\tau^2(1+\n)^2\F(\n)}
+\frac{4\Xi'(\n)}{\tau}+2(1+\n)+\frac{2b}{(1+\n)^2}\\
=&\frac{2}{(1+\n)^2\F}\cdot\Big[\frac{3\F(\F-\Xi)(1+\n)}{\tau}
-\frac{2(\F-\Xi)}{\tau^2}+\frac{2(1+\n)^2\F\Xi'}{\tau}+(1+\n)^3\F+b\F\Big]\\
:=&\frac{2}{(1+\n)^2\F}\cdot J(\n).
\end{split}\end{equation}
By \eqref{new4}, \eqref{new5} and \eqref{new2.77}, when $\n\ll1$,
\[\begin{split}&\F(\n)=\theta_1\n+O(\n^2), \
\F(\n)-\Xi(\n)=(\theta_1-2\tau(b-1))\n+O(\n^2),
\\&\hat{\Xi}'(\n)=2\tau(b-1)+O(\n).\end{split}\]
Thus,
\begin{equation}\label{new2.80}\begin{split}J(\n)=&-\frac{2(\theta_1-2\tau(b-1))}{\tau^2}\n
+4(b-1)\theta_1\n+\theta_1\n+b\theta_1\n+O(\n^2)\\
=&\left[\frac{4(b-1)}{\tau}-\frac{2\theta_1}{\tau^2}+(5b-3)\theta_1\right]\n+O(\n^2).
\end{split}\end{equation}
By \eqref{new2.76},
\[\frac{4(b-1)}{\tau}-\frac{2\theta_1}{\tau^2}=-\theta_1\cdot\frac{8(b-1)}{1+\sqrt{1-8(b-1)\tau^2}}.\]
It hence follows that if $\tau\ll1$ such that
$\tau^2<\frac{1}{16(b-1)}<\frac{2}{25(b-1)}$, then
\[\begin{split}\frac{4(b-1)}{\tau}-\frac{2\theta_1}{\tau^2}+(5b-3)\theta_1
&=\theta_1\left[5b-3-\frac{8(b-1)}{1+\sqrt{1-8(b-1)\tau^2}}\right]\\&>\theta_1(5b-3-5(b-1))\\&=2\theta_1>0.
\end{split}\]
Substituting this inequality into \eqref{new2.80} and then
\eqref{new2.79}, we conclude that
\[\F'''(\n)>0 \ \text{ if } \n\ll1 \ \text{ and } \tau<\frac{1}{4\sqrt{b-1}}.\]
Thus, the critical point $\n$ must be the local minimal point of $\frac{d\F}{dn}(n)$. And
hence there exists $n_2>0$ such that the function
$\frac{d\F}{dn}(n)$ has at most one critical point over $(0,n_2)$.
This implies $\frac{d^2\F}{dn^2}$ could change sign at most once on
$(0,n_2)$. As a consequence, there exists $n_3\in(0,n_2)$ such that the function $\frac{d\F}{dn}(n)$ is
monotone  on $(0,n_3)$.

Step 4. Now by Lemma \ref{new-lem2} and the monotonicity of
$\frac{d\F}{dn}$, one can easily see that
\[\lim_{n\rightarrow0^+}\F'(n) \text{ exists }\triangleq \F'(0).\]
Then $\F'(n)$ is continuous on $[0,n_2]$. It is left to show
\eqref{first order}. Applying L'Hospital principle to equation
\eqref{new3} at $n=0$, it holds that
\[\F'(0)=\frac{2(1-b)}{\F'(0)}+\frac{1}{\tau}.\]
Thus,
\[\F'(0)=\frac{1}{2}\left(\frac{1}{\tau}+\sqrt{\frac{1}{\tau^2}-8(b-1)}\right)=O\left(\frac{1}{\tau}\right)
\ \text{ or }
\F'(0)=\frac{1}{2}\left(\frac{1}{\tau}-\sqrt{\frac{1}{\tau^2}-8(b-1)}\right)=O(\tau).\]
By Lemma \ref{new-lem3},
$\F'(0)=\frac{1}{2}\left(\frac{1}{\tau}-\sqrt{\frac{1}{\tau^2}-8(b-1)}\right)$.
\end{proof}

\begin{theorem}\label{new-thm1}
Assume that $b(x)=b>1$ is a constant. There exists a constant
$\tau_0=\tau_0(b)$ only depending on $b$, such that for any
$0<\tau<\tau_0$ the interior subsonic solution to system \eqref{1.5}
satisfies
\begin{equation}\label{new-2.82}
\rho\in C^1[0,\epsilon], \ \rho(0)=1, \ E(0)=\frac{1}{\tau} \ \ and
\ \
\rho_x(0)=\frac{1}{4}\left(\frac{1}{\tau}-\sqrt{\frac{1}{\tau^2}-8(b-1)}\right)
\end{equation}
for some $\epsilon>0$.
\end{theorem}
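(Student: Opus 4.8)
The plan is to reduce the statement to the phase-plane analysis of the transformed system \eqref{new1.5} carried out in Lemmas \ref{new-lem1}--\ref{new-lem4}, so that the theorem becomes essentially a translation back to the original variables. Set $\tau_0(b):=\min\{\tfrac{1}{3\sqrt{b^3+b}},\tfrac{1}{4\sqrt{b-1}}\}$, the smallest of the thresholds appearing in those lemmas, and assume $0<\tau<\tau_0(b)$. By Theorem \ref{thm1} the interior subsonic solution $\rho=\rho_{sub}$ of \eqref{elliptic} exists, is unique, and satisfies $1+m\sin(\pi x)\le\rho\le\overline b$ on $[0,1]$; in particular $\rho>1$ on $(0,1)$, so \eqref{elliptic} is uniformly elliptic there and $\rho$ is smooth on $(0,1)$ by interior elliptic regularity, while $(\rho-1)^2\in H_0^1(0,1)\hookrightarrow C^{1/2}[0,1]$ forces $\rho$ to be continuous up to $x=0$ with $\rho(0)=1$. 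Passing to the variables \eqref{transformation}, $n=\rho-1$ and $F=E-\tfrac1{\tau\rho}$, the solution $\rho_{sub}$ becomes an interior positive trajectory $\F=\F(n)$ of \eqref{new1.5}. Near $x=0$ the solution leaves the value $1$ while staying $>1$, so $n$ leaves $0$ increasing; thus locally we are on the ascending branch $(0,n_0)$ of an interior positive trajectory with $\tfrac{d\F}{dn}>0$, and by Lemma \ref{new-lem1} this branch issues from the origin, so $F(x)\to0$ and hence $E(x)=F(x)+\tfrac1{\tau\rho(x)}\to\tfrac1{\tau}$ as $x\to0^+$, giving $E(0)=\tfrac1\tau$.

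Next I would invoke Lemma \ref{new-lem4} on this branch: for $\tau<\tau_0(b)$ the trajectory $\F$ is $C^1$ near $n=0$ with
\[
\theta_1:=\frac{d\F}{dn}(0)=\frac12\Big(\frac1\tau-\sqrt{\frac1{\tau^2}-8(b-1)}\Big)>0,
\]
the choice of sign of the square-root root being fixed by the bound $\F\le\tfrac32\Xi$ of Lemma \ref{new-lem3} (which near $n=0$ reads $\F(n)\le3\tau(b-1)n+O(n^2)$, so $\theta_1=O(\tau)$, excluding the $O(1/\tau)$ root of the quadratic $\F'(0)=2(1-b)/\F'(0)+1/\tau$). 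In particular $\F(n)=\theta_1 n+O(n^2)$.

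It remains to transfer this information through the first equation of \eqref{new1.5}, namely $\rho_x=n_x=\tfrac{(1+n)^3\F(n)}{(2+n)n}$ for $x\in(0,1)$. Since $\F(0)=0$ one has $\F(n)/n\to\F'(0)=\theta_1$ as $n\to0^+$, so the right-hand side, call it $g(n)$ with $g(0):=\theta_1/2$, is continuous on $[0,\delta)$. Because $n(x)\to0$ continuously as $x\to0^+$ and $n_x=g(n(x))$ is continuous on $(0,\epsilon)$ by interior regularity, we get $n_x(x)\to\theta_1/2$; hence $n_x$ extends continuously to $x=0$, i.e. $\rho\in C^1[0,\epsilon]$ for a suitable $\epsilon>0$, and
\[
\rho_x(0)=\frac{\theta_1}{2}=\frac14\Big(\frac1\tau-\sqrt{\frac1{\tau^2}-8(b-1)}\Big).
\]
Since $\theta_1>0$, $n$ is strictly increasing near $x=0$, so $x\leftrightarrow n$ is a $C^1$ change of variable there, which legitimizes reading off the one-sided derivative. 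The analytical content lives entirely in Lemma \ref{new-lem4}; within the present argument the only genuinely delicate point is the passage through the $0/0$ relation $\rho_x=\tfrac{(1+n)^3\F(n)}{(2+n)n}$ at $n=0$, i.e. ensuring that the trajectory slope $\F'(0)$ itself — not merely the Lipschitz bound of Lemma \ref{new-lem2} — is available and equals the claimed root, which is precisely why Lemmas \ref{new-lem3} and \ref{new-lem4} were proved beforehand. I expect no further obstacle.
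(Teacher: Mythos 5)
Your proposal is correct and follows essentially the same route as the paper's own proof: transform via \eqref{transformation}, use Lemma \ref{new-lem1} to pin down $\rho(0)=1$, $E(0)=\tfrac1\tau$, use Lemma \ref{new-lem4} (with the sign of the root fixed by Lemma \ref{new-lem3}) to get $\F'(0)=\theta_1$, and pass to $n_x(0)=\theta_1/2$ through the first equation of \eqref{new1.5}. Your treatment of the $0/0$ limit and the extension of $n_x$ to $x=0$ is, if anything, slightly more explicit than the paper's L'H\^opital computation, but the content is identical.
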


\begin{proof}
Recalling the transformation \eqref{transformation}, $\rho=n+1$ and
$E=F+\frac{1}{\tau(n+1)}$. By Lemma \ref{new-lem1}, one can find
that all interior subsonic trajectories to system \eqref{1.5} must
start from $(1,\frac{1}{\tau})$. In other words, all interior
subsonic solutions to system \eqref{1.5} must satisfy $\rho(0)=1$
and $E(0)=\frac{1}{\tau}$. By L'Hospital principle and \eqref{first
order},
\[\lim_{n\rightarrow0^+}\frac{n}{\F(n)}
=\lim_{n\rightarrow0^+}\frac{1}{\F'(n)}=\frac{1}{\theta_1},\] which
together with the first equation of \eqref{new1.5} gives
\[n_x(0)=\lim_{x\rightarrow0^+}\frac{F(x)}{2n(x)}=\frac{\theta_1}{2}.\]
Thus, $n\in C^1[0,\epsilon]$ for some $\epsilon>0$. Recalling
$n=\rho-1$, we have $\rho\in C^1[0,\epsilon]$ for some $\epsilon>0$,
and
\[\rho_x(0)=\frac{\theta_1}{2}
=\frac{1}{4}\left(\frac{1}{\tau}-\sqrt{\frac{1}{\tau^2}-8(b-1)}\right),\]
where we have used
$\theta_1=\frac{1}{2}\left(\frac{1}{\tau}-\sqrt{\frac{1}{\tau^2}-8(b-1)}\right)$.
\end{proof}

We next study the structure of the interior supersonic solutions. To
do so, we still study the transformed equations \eqref{new1.5} and
\eqref{new3} but with $n\in(-1,0]$.

\begin{lemma}\label{new-lem1-2}
When $0<\tau<\frac{1}{3\sqrt{b}}$, all interior negative
trajectories to system \eqref{new1.5} end at the point $(0,0)$.

\end{lemma}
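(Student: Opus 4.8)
The plan is to mirror the barrier argument of Lemma \ref{new-lem1}, now carried out in the strip $n\in(-1,0)$. First I would record the elementary facts about $\Xi$ there: from \eqref{new4}, since $n+1-b<0$, $2+n>0$, $n<0$ and $1+n>0$, one has $\Xi(n)<0$ on $(-1,0)$, with $\Xi(0)=0$ and $\Xi(n)\to-\infty$ as $n\to-1^+$, while \eqref{new52} still gives $\Xi''<0$. Next I would pin down the geometry of an interior negative trajectory: it leaves the line $n=0$, dips into $\{n<0\}$, and comes back to $n=0$; because $(2+n)n<0$ for $-1<n<0$, the first equation of \eqref{new1.5} forces $\F<0$ on the final approach to $n=0$ (where $n$ is increasing), and at the last interior critical point $\hat n\in(-1,0)$ of $n$ along the trajectory one has $\F(\hat n)=0$ whereas $\Xi(\hat n)<0$. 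Hence it suffices to prove that, along the incoming branch parametrized by $n\in(\hat n,0)$, $\F(n)\to0$ as $n\to0^-$.

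To show this I would argue by contradiction, assuming $\F(0^-)=-h$ with $h>0$, and use the weight $w(n):=\F^2(n)-\beta^2\Xi^2(n)$ with $\beta=\tfrac32$. On $(\hat n,0)$ the trajectory is smooth and $\F\ne0$, so the identity \eqref{new7} applies:
\[
w'(n)=w(n)\,g(n)+2\Xi(n)\,I(n),\qquad g(n):=\frac{2}{\tau(1+n)^2\bigl(\F+\beta\Xi\bigr)},
\]
with $I(n)=\frac{\beta-1}{\tau(1+n)^2}+\tau\beta^2\bigl(2-b+2n-\frac{b}{(1+n)^2}\bigr)$. The core estimate is that $I(n)>0$ on $(-1,0]$ when $0<\tau<\frac{1}{3\sqrt b}$: writing $t=1+n\in(0,1)$ gives $I=\frac{1}{t^2}\bigl(\frac{\beta-1}{\tau}-\tau\beta^2b\bigr)+\tau\beta^2(2t-b)$, and since $t^2\le1$, $2t-b\ge-b$, and (for $\beta=\tfrac32$ and $\tau<\frac{1}{3\sqrt b}$) $\frac{\beta-1}{\tau}-\tau\beta^2b>0$, this is bounded below by $\frac{\beta-1}{\tau}-2\tau\beta^2b=\frac{1-9\tau^2b}{2\tau}>0$. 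Since $\Xi<0$ on $(-1,0)$ and $\F<0$ on the incoming branch, the source satisfies $2\Xi I<0$; moreover $\F+\beta\Xi<0$ stays bounded away from zero on $[\hat n,0]$ (at $\hat n$ it equals $\beta\Xi(\hat n)<0$, and near $0$ it equals $-h<0$ by the contradiction hypothesis), so $g$ is bounded there. Therefore $(we^{-\int_{\hat n}^{\,n}g})'\le0$, whence $w(n)\le w(\hat n)\,e^{\int_{\hat n}^{\,n}g}<0$ because $w(\hat n)=-\beta^2\Xi^2(\hat n)<0$. Letting $n\to0^-$ and using $\Xi(n)\to0$ yields $\limsup_{n\to0^-}\F^2(n)\le w(\hat n)\,e^{\int_{\hat n}^{\,0}g}<0$, contradicting $\F^2\ge0$. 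Hence $\F(0^-)=0$, i.e. every interior negative trajectory ends at $(0,0)$.

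The main obstacle is precisely the sign of $I$: as $n\to-1^+$ the two competing terms in $I$ both blow up like $(1+n)^{-2}$, so one must choose $\beta$ so that $\frac{\beta-1}{\tau(1+n)^2}$ beats $\tau\beta^2\frac{b}{(1+n)^2}$, and $\beta=\tfrac32$ is exactly the value that turns this requirement into the stated smallness condition $\tau<\frac{1}{3\sqrt b}$ (the same $\beta$ as in Lemma \ref{new-lem3}). A secondary, purely technical, issue is the integrability of $g$ at the two ends of the incoming branch, handled as above — at $\hat n$ by $\F+\beta\Xi\to\beta\Xi(\hat n)<0$ and at $n=0$ by the contradiction hypothesis $\F+\beta\Xi\to-h<0$ — so that the Gronwall comparison on $w$ goes through.
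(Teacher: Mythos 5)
Your proof is correct, and it runs on the same engine as the paper's: the identity \eqref{new7} for $\F^2-\beta^2\Xi^2$ together with a choice of $\beta$ making $I>0$ on $(-1,0]$ under $\tau<\frac{1}{3\sqrt b}$ (your computation with $\beta=\tfrac32$ is verbatim the one the paper uses in Lemma \ref{new-lem3-2}). The deployment, however, is reversed. The paper takes the large weight $\beta=\frac{1}{3b\tau^2}$, assumes the trajectory ends at $(0,-h)$ with $h>0$, and propagates $\F^2>\beta^2\Xi^2$ \emph{backward} in $n$ from the endpoint; this keeps $\F$ bounded away from $0$, so the branch never turns around, $\F<\beta\Xi<\Xi\to-\infty$ as $n\to-1^+$, and the trajectory cannot have started on the line $n=0$. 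You instead anchor at the turning point $\hat n$, where $\F(\hat n)=0$ and hence $w(\hat n)=-\beta^2\Xi^2(\hat n)<0$, and propagate $w<0$ \emph{forward} along the incoming branch; since $\Xi(n)\to0$ as $n\to0^-$, the bound $\F^2<\beta^2\Xi^2$ squeezes $\F$ to $0$ directly (your contradiction framing is not really needed: once $w<0$ on $(\hat n,0)$ you are done, without controlling $g$ near $n=0$). This is arguably cleaner, as it avoids following the trajectory down to $n=-1$. The one point you pass over is the existence of a \emph{last} critical point, i.e., that the zeros of $F$ do not accumulate at the right endpoint. This does hold: by \eqref{new1.5}, at any interior zero of $F$ one has $F_x=n+1-b<0$ (since $n<0$ and $b>1$), so every zero is a transversal downcrossing and there is exactly one — a fact the paper establishes separately in Lemma \ref{new-lem2-2}. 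With that sentence added, your argument is complete.
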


\begin{proof}
By \eqref{new4}-\eqref{new52},
\begin{equation}\label{new2.85}
\Xi(n)\leq0,\ \Xi'(n)>2\tau(b-1)>0, \ \Xi''(n)<0 \text{ for }
n\in(-1,0],
\end{equation}
\begin{equation}\label{new2.86}
\lim_{n\rightarrow-1}\Xi(n)=-\infty.
\end{equation}
We next focus on the region $\F\leq0$. Notice that \eqref{new7}
still holds. If $\F(n)=-h<0$, then $\F^2(0)-\beta^2\Xi^2(0)=h^2>0$
for any $\beta>0$. To ensure $\F^2(n)>\beta^2\Xi^2(n)$ for any
$n\in(-1,0]$, we also need to determine $\beta$ such that $I>0$ for
$n\in(-1,0]$. Setting $\beta=\frac{c_1}{\tau^2}$ with
$c_1=\frac{1}{3b}$, when $\tau<\frac{1}{3\sqrt{b}}$,  we have for
$n\in(-1,0]$
\[\begin{split}
I&=\frac{1}{\tau(1+n)^2}\cdot\left[\frac{c_1}{\tau^2}-1
+\frac{c_1^2}{\tau^2}\cdot(2(1+n)^3-b(1+n)^2-b)\right]\\
&\geq\frac{1}{\tau(1+n)^2}\cdot\left[\frac{c_1}{\tau^2}-1
-\frac{2bc_1^2}{\tau^2}\right]\\
&=\frac{1}{\tau(1+n)^2}\cdot\left(\frac{1}{9b\tau^2}-1\right)\\
&>0.\end{split}\] It then follows from \eqref{new7} that
$\F^2(n)>\frac{\Xi^2(n)}{3b\tau^2}$ for $n\in(-1,0)$. Noting
$\F(0)<0$ and $\Xi(n)<0$ on $(-1,0)$, we get
\[\F(n)<\Xi(n) \ \text{ for }n\in(-1,0).\]
It hence follows from \eqref{new2.86} that
\[\lim_{n\rightarrow-1}\F(n)=-\infty,\]
and the trajectory ending at $(0,-h)$ with $h>0$ does not start from
a point of the line $n=0$. Thus, when $\tau<\frac{1}{3\sqrt{b}}$,
all interior negative trajectories should end at the point $(0,0)$.
\end{proof}

\begin{lemma}\label{new-lem3-2}
When $\tau<\frac{1}{3\sqrt{b}}$, all interior negative trajectories
to system \eqref{new1.5} satisfy
\begin{equation}\label{new3/2-2}
\F(n)\geq\frac{3}{2}\cdot\Xi(n) \ \text{for} \ n\in(-1,0].
\end{equation}
\end{lemma}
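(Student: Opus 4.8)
The plan is to repeat, in the strip $n\in(-1,0]$ with $\F\le 0$, the argument of Lemma~\ref{new-lem3}; this is legitimate because on $(-1,0]$ one has $\Xi(n)\le 0$ by \eqref{new2.85} and the differential identity \eqref{new7} still holds. First I would take $\beta=\tfrac32$ in \eqref{new7}, so that the source term is $2\Xi\cdot I$ with
\[
I=\frac{1}{\tau(1+n)^2}\Big[\tfrac12+\tfrac{9\tau^2}{4}\big(2(1+n)^3-b(1+n)^2-b\big)\Big].
\]
The elementary input is that $2(1+n)^3-b(1+n)^2+b>0$ for $n\in(-1,0]$ (because $2(1+n)^3>0$ and $b-b(1+n)^2=b\big(1-(1+n)^2\big)\ge 0$), hence $2(1+n)^3-b(1+n)^2-b>-2b$; this is precisely the source of the sharper threshold $\tau<\tfrac1{3\sqrt b}$, to be contrasted with $\tau<\tfrac1{3\sqrt{b^3+b}}$ for positive trajectories where $1+n$ runs over $[1,b]$. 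So when $\tau^2<\tfrac1{9b}$ one gets $I>\tfrac1{\tau(1+n)^2}\big(\tfrac12-\tfrac{9b\tau^2}{2}\big)>0$ throughout $(-1,0]$.

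Next I would carry out the propagation argument for $g(n):=\F^2(n)-\tfrac94\Xi^2(n)$. Suppose, for contradiction, that $\F(\bar n)<\tfrac32\Xi(\bar n)$ at some $\bar n\in(-1,0)$ on an interior negative trajectory; since $\Xi(\bar n)<0$ this forces $\F(\bar n)<0$ and $g(\bar n)>0$. As long as $g>0$ the trajectory stays in $\{\F<0,\ \Xi<0\}$, where $\F+\tfrac32\Xi<0$, so \eqref{new7} reads $g'=-a(n)g-c(n)$ with $a(n)=\frac{-2}{\tau(1+n)^2(\F+\frac32\Xi)}>0$ and $c(n)=-2\Xi I>0$; the integrating factor then shows $g$ cannot vanish and in fact $g(n)\ge g(\bar n)>0$ for every $n$ below $\bar n$ in the $n$-range of the trajectory. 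But an interior negative trajectory is the image of an interior supersonic solution $(\rho,E)$ on a finite interval $[0,L]$ with $\rho(0)=\rho(L)=1$; on the compact $[0,L]$, $\rho$ is continuous, positive and (since $\rho\not\equiv 1$ as $b>1$) attains an interior minimum $\rho_{\min}=1+n_{\min}\in(0,1)$, at which, by interior elliptic regularity and the first equation of \eqref{1.5}, $\rho_x=0$ and hence $F=E-\frac1{\tau\rho}=0$. Thus the trajectory passes through $(n_{\min},0)$ with $n_{\min}>-1$, where $g(n_{\min})=-\tfrac94\Xi^2(n_{\min})<0$ — contradicting $g(n_{\min})\ge g(\bar n)>0$. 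Hence $\F(n)\ge\tfrac32\Xi(n)$ at every point of the trajectory with $\F<0$, while it is trivial where $\F\ge 0$ since $\Xi\le 0$; this is \eqref{new3/2-2}. (Equivalently, one may mimic Lemma~\ref{new-lem1-2}: $\F<\tfrac32\Xi<0$ propagates down to $n\to-1$, and $\lim_{n\to-1}\Xi(n)=-\infty$ from \eqref{new2.86} forces $\F\to-\infty$, incompatible with boundedness of the trajectory.)

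The computation itself is pure sign bookkeeping, parallel to Lemma~\ref{new-lem3}, so the real content is the sharper elementary bound and the structural facts supporting the contradiction: that an interior negative trajectory necessarily carries a turning point $(n_{\min},0)$ with $n_{\min}>-1$, and that on the branch $\F<0$ one may treat $\F$ as a $C^1$ function of $n$ (via \eqref{new3}) with $\F\to 0$ at that turning point, so \eqref{new7} is valid up to it. Both facts follow, as in Theorem~\ref{thm2}(ii), from interior regularity of supersonic solutions and from $\rho$ being bounded away from $0$ on a compact interval; I expect this structural justification, not the estimate, to be the only point requiring care.
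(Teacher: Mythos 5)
Your proposal is correct and follows essentially the same route as the paper: the same choice $\beta=\tfrac32$ in \eqref{new7}, the same elementary bound $2(1+n)^3-b(1+n)^2-b>-2b$ giving $I>0$ under $\tau<\tfrac{1}{3\sqrt b}$, and the same downward propagation of the sign of $\F^2-\tfrac94\Xi^2$. The only difference is cosmetic: the paper concludes by pushing the inequality to $n\to-1$ and invoking $\Xi\to-\infty$ so that the trajectory "starts from infinity," whereas you stop at the turning point $(n_{\min},0)$ (which exists since $\rho$ is continuous and bounded away from $0$ on $[0,L]$) and read off the sign contradiction there — a variant you yourself note is equivalent, and which is if anything slightly more careful about the actual $n$-range of the trajectory.
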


\begin{proof}
Taking $\beta=\frac{3}{2}$ in \eqref{new7}, when
$\tau<\frac{1}{3\sqrt{b}}$, we have for $n\in(-1,0]$
\[\begin{split}
I&=\frac{1}{\tau(1+n)^2}\cdot\left[\frac{1}{2}
+\frac{9\tau^2}{4}\cdot(2(1+n)^3-b(1+n)^2-b)\right]\\
&\geq\frac{1}{\tau(1+n)^2}\cdot\left[\frac{1}{2}
-\frac{9\tau^2}{4}\cdot2b\right]\\
&>0.\end{split}\] If there is a point $\bar{n}\in(-1,0)$ on the
trajectory such that $\F(\bar{n})<\frac{3}{2}\cdot\Xi(\bar{n})<0$,
then noting $\Xi(n)<0$ and $I>0$ on $(-1,\bar{n})$, by \eqref{new7},
we have
$$\F^2(n)>\frac{9}{4}\cdot\Xi^2(n) \ \text { on }
(-1,\bar{n}).$$ Because $\Xi(\bar{n})<0$ and $\F(\bar{n})<0$ on
$(-1,\bar{n})$, we have $\F(n)<\frac{3}{2}\cdot\Xi(n)$ for
$n\in(-1,\bar{n})$. Thus, by \eqref{new2.86},
$\lim_{n\rightarrow-1}\F(n)=-\infty$, and this trajectory starts
from infinity and can not be an interior negative trajectory to
system \eqref{new1.5}. We hence get \eqref{new3/2-2}.
\end{proof}

\begin{lemma}\label{new-lem2-2}
When $\tau<\frac{1}{3\sqrt{b}}$, all interior negative trajectories
to system \eqref{new1.5} with $\F\leq0$ are Lipschitz continuous on
a neighborhood of $n=0$.
\end{lemma}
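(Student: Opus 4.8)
The plan is to mirror the proof of Lemma~\ref{new-lem2}, now carried out in the negative region $n\in(-1,0]$, $\F\le 0$. Fix an interior negative trajectory $\F=\F(n)$. Since $\tau<\frac{1}{3\sqrt b}$, Lemma~\ref{new-lem1-2} applies, so the trajectory ends at $(0,0)$; hence $\F(n)\to 0$ and $\Xi(n)\to 0$ as $n\to 0^-$, and $\F$ stays strictly negative on a punctured left neighbourhood of $0$ (if $\F$ vanished at an interior point of the $\F\le 0$ branch, the first term of \eqref{new3} would blow up to $-\infty$ there, which is incompatible with $0$ being a local maximum value of $\F$). Consequently it suffices to bound $d\F/dn$ both above and below on some interval $(-\varepsilon,0)$: integrating then gives $0<-\F(n)\le C|n|$, which is exactly Lipschitz continuity at $n=0$.

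For the \emph{lower bound} I would use that \eqref{new6} (equivalently \eqref{new3}--\eqref{new4}) reads, wherever $\F\neq0$,
\[
\frac{d\F}{dn}=\frac{\F-\Xi}{\tau(1+n)^2\,\F},
\]
so on the branch $\F<0$ the sign of $d\F/dn$ is opposite to that of $\F-\Xi$; thus $d\F/dn>0$ is equivalent to $\F(n)<\Xi(n)$. Setting $g:=\F-\Xi$, which is smooth on $(-1,0)$ with $g(0^-)=0$, I note that at every zero $n_0\in(-1,0)$ of $g$ (i.e.\ at every critical point of the trajectory) the displayed identity forces $\F'(n_0)=0$, while \eqref{new2.85} gives $\Xi'(n_0)>2\tau(b-1)>0$; hence $g'(n_0)=-\Xi'(n_0)<0$. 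A $C^1$ function that is strictly decreasing at each of its zeros has at most one zero in $(-1,0)$, so $g$ has constant sign on the interval $(n_0,0)$, where $n_0$ is that unique zero (or all of $(-1,0)$ if there is none). If that sign were positive, then $d\F/dn<0$ near $0^-$, and together with $\F(0^-)=0$ this would force $\F>0$ just left of $0$, contradicting $\F\le 0$. Therefore $g<0$, i.e.\ $\F<\Xi$ and $d\F/dn>0$, on some $(-\varepsilon,0)$.

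For the \emph{upper bound} I would observe that on $(-\varepsilon,0)$ the factor $\dfrac{(n+1-b)(2+n)}{(1+n)^3}$ in \eqref{new3} is negative while $\dfrac n\F>0$, and then invoke Lemma~\ref{new-lem3-2}: $\F(n)\ge\tfrac32\Xi(n)$ with both sides negative gives $\dfrac1\F\le\dfrac2{3\Xi}$, and multiplying by $n<0$ gives $\dfrac n\F\ge\dfrac{2n}{3\Xi}$. Feeding this into \eqref{new3} and simplifying the product via $\Xi=-\dfrac{\tau(n+1-b)(2+n)n}{1+n}$ from \eqref{new4} yields
\[
\frac{d\F}{dn}\le -\frac{2}{3\tau(1+n)^2}+\frac{1}{\tau(1+n)^2}=\frac{1}{3\tau(1+n)^2},
\]
which is bounded for $n\in(-\varepsilon,0)$ as soon as $\varepsilon<1$.

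Combining, $0<\dfrac{d\F}{dn}\le\dfrac1{3\tau(1+n)^2}\le C(\tau)$ on $(-\varepsilon,0)$, and integrating from $n$ to $0$ gives the Lipschitz bound $|\F(n)|\le C(\tau)|n|$, completing the proof. The main obstacle is the lower bound — ruling out that the trajectory meets the critical curve $\Xi$ infinitely often as $n\to 0^-$ and fixing the sign of $\F-\Xi$ on a left neighbourhood of $0$; this is handled by the elementary observation that $g=\F-\Xi$ is strictly decreasing at each of its zeros, which rests only on $\Xi'>0$ on $(-1,0)$ from \eqref{new2.85}. Everything else is the same bookkeeping as in Lemma~\ref{new-lem2}.
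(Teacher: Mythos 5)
Your proof is correct. It follows the same skeleton as the paper's — a two-sided bound on $d\F/dn$ near $n=0^-$, with positivity coming from the fact that the trajectory meets the curve $\Xi=\Xi(n)$ at most once — but both halves are executed differently. For uniqueness of the intersection, the paper propagates the identity \eqref{new2.6} for $(\F^2-\Xi^2)'$ away from a critical point, while you note that $g=\F-\Xi$ satisfies $g'=-\Xi'<0$ at each of its zeros (by \eqref{new2.85}), so it has at most one; this shortcut is available precisely because $\Xi'>0$ throughout $(-1,0]$, and it would not transfer verbatim to Lemma \ref{new-lem2}, where $\Xi'$ changes sign at $n^*$. You also rightly skip the paper's preliminary step showing that a critical point exists, which is not needed for the Lipschitz conclusion. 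For the upper bound, the paper integrates \eqref{2.70} in the $x$-variable to obtain $\frac{n}{F}>\tau(1+n)$, whereas you feed Lemma \ref{new-lem3-2} into \eqref{new3} to get $d\F/dn\le\frac{1}{3\tau(1+n)^2}$ — a cleaner, purely phase-plane route that reuses a lemma already proved under the same hypothesis $\tau<\frac{1}{3\sqrt{b}}$. Your somewhat informal exclusion of interior zeros of $\F$ on the negative branch (via blow-up of the first term of \eqref{new3}) is at the same level of rigor as the paper's implicit treatment of the point $n_*$, so I would not count it as a gap.
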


\begin{proof}
We first show that an interior negative trajectory must have at
least one critical point on $(-1,0)$. Otherwise, the trajectory has
no critical point over $(-1,0)$, then
\[\F'(n)>0 \ \text{ on }(-1,0) \ \text{ or } \F'(n)<0 \ \text{ on }(-1,0).\]
If $\F'(n)>0$ on $(-1,0)$, then
\[(\F-\Xi)'(n)>0 \ \text{ on }(-1,0).\]
By Lemma \ref{new-lem1-2}, when $\tau<\frac{1}{3\sqrt{b}}$,
$\F(0)=\Xi(0)=0$, we then have $\F(n)<\Xi(n)<0$ on $(-1,0)$. Thus,
by \eqref{new2.86}
\[\lim_{n\rightarrow-1}\F(n)<\lim_{n\rightarrow-1}\Xi(n)=-\infty.\]
This implies the trajectory can not start from a point of the line
$n=0$. If $\F'(n)<0$ on $(-1,0)$, noting $\F(0)=0$, it holds that
$\F(n)>0$ for $n\in(-1,0).$ By \eqref{new3}, since
$\frac{(n+1-b)(2-n)n}{(1+n)^3F}>0$, we have
\[\F'(n)>\frac{1}{\tau(1+n)^2}>0 \ \text{ on }(-1,0),\]
which is a contradiction. Thus, an interior negative trajectory to
system \eqref{new1.5} has at least one critical point on $(-1,0)$.

We next claim that this interior negative trajectory has at most one
critical point. Denote by $\widetilde{n_0}\in(-1,0)$ a critical
point of this trajectory. Then by \eqref{new2.85},
\begin{equation}\label{new2.88}
\F(\widetilde{n_0})=\Xi(\widetilde{n_0})<0, \ \Xi(n)<0, \Xi'(n)>0 \
\text{ on } (-1,\widetilde{n_0}).
\end{equation}
Noting \eqref{new2.6} still holds, it follows from \eqref{new2.88}
and \eqref{new2.6} that when $F\leq0$,
\[0\geq \F(n)>\Xi(n) \ \text{ for } n\in(n_*,\widetilde{n_0}),\]
where $n_*$ is the point that $\F(n_*)=0$. In other words, there is
no critical point on $(-1,\widetilde{n_0})$. On the other hand, if
there is a critical point $\widetilde{n_1}\in (\widetilde{n_0},0)$,
then
\[\F(\widetilde{n_1})=\Xi(\widetilde{n_1})<0, \  \Xi(n)<0
\ \text{ and } \Xi'(n)>0 \ \text{ on }
(\widetilde{n_0},\widetilde{n_1}).\] Applying \eqref{new2.6} again,
we have
\[\F(n)>\Xi(n) \ \text{ for } n\in(n_*,\widetilde{n_1}),\]
which contradicts to the fact that
$\F(\widetilde{n_0})=\Xi(\widetilde{n_0})$. Thus, there is no
critical point on $(\widetilde{n_0},0)$ for this trajectory, and
$\widetilde{n_0}$ is the unique critical point of this trajectory.
As a consequence, we obtain
\begin{equation}\label{new2.89}
\frac{d\F(n)}{dn}>0 \ \text{ on } (\widetilde{n_0},0).
\end{equation}

We next derive an upper bound of $\frac{d\F}{dn}$. Integrating
\eqref{2.70} on $(x,1)$, noting $F(1)=0$, we have
\[F(x)>\frac{1}{\tau}\int_x^1\left(\frac{1}{1+n}\right)_xdx
=\frac{1}{\tau}-\frac{1}{\tau(1+n)}=\frac{n}{\tau(1+n)}.\] Noting
$F<0$ on $(x,1)$, it follows that $\frac{n}{F}>\tau(1+n)$. By
\eqref{new3}, we obtain
\[\frac{d\F(n)}{dn}
<\frac{\tau(n+1-b)(2+n)}{(1+n)^2}
+\frac{1}{\tau(1+n)^2}\leq\frac{1}{\tau(1+n)^2}<\frac{1}{\tau(1+\widetilde{n_0})^2}
\ \text{ for } n\in(\widetilde{n_0},0).\] This estimate together
with \eqref{new2.89} implies the trajectory is Lipschitz continuous
on $(\widetilde{n_0},0)$.
\end{proof}

\begin{lemma}\label{new-lem4-2}
When $\tau<\min\{\frac{1}{3\sqrt{b}},\frac{1}{4\sqrt{b-1}}\}$, all
interior negative trajectories to system \eqref{new1.5} with
$\F\leq0$ are $C^1$ smooth on a neighborhood of $n=0$ and
\begin{equation*}
\frac{d\F}{dn}(0)=
\frac{1}{2}\left(\frac{1}{\tau}-\sqrt{\frac{1}{\tau^2}-8(b-1)}\right).
\end{equation*}

\end{lemma}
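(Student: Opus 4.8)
The plan is to follow the argument of Lemma~\ref{new-lem4} line by line, flipping all the relevant signs and replacing the working interval $n\in[0,b-1)$, $\F\ge 0$ by $n\in(-1,0]$, $\F\le 0$. By Lemma~\ref{new-lem2-2} an interior negative trajectory is already Lipschitz on a left neighborhood of $n=0$, so exactly as in \eqref{new-2nd} it suffices to show that $\frac{d^2\F}{dn^2}$ does not change sign for $0<-n\ll1$. Once that is established, $\lim_{n\to 0^-}\F'(n)$ exists, $\F'$ is continuous up to $n=0$, and the value is then pinned down by L'Hospital's rule applied to \eqref{new3}.

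First I would recompute $\F''$. The identity $(1+n)^2\F\F'=\tfrac1\tau(\F-\Xi)$ from \eqref{new2.68} holds along the whole trajectory irrespective of sign, so \eqref{new2.69} and \eqref{new2.70} carry over verbatim, giving $\F''=-\tfrac{2}{\tau(1+n)^3\F^3}H_2(n,\F)$ with the same cubic $H_2$ of \eqref{new2.70}-\eqref{new2.73}. Near $n=0$ one still has $k=\tau(b-1)+O(n)$, $m=(1-b)n+O(n^2)$, $\ell=2\tau(b-1)^2n^2+O(n^3)$, so the discriminant \eqref{new2.75} again equals $\tfrac{1}{4\cdot 3^4}[3(b-1)^4\tau^2(-1+8\tau^2(b-1))n^2+O(n^3)]$, which is negative for $\tau<\tfrac{1}{4\sqrt{b-1}}$ and $0<|n|\ll1$. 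Hence all three roots of $H_2(n,\cdot)=0$ are real and analytic in $n$ near $0$; expanding $\F_0(n)=\theta_0+\theta_1 n+O(n^2)$ gives $\theta_0\in\{0,-\tau(b-1)\}$ and $\theta_1\in\{\tfrac12(\tfrac1\tau\pm\sqrt{\tfrac1{\tau^2}-8(b-1)})\}$. Since $\F(0)=0$ by Lemma~\ref{new-lem1-2}, and $\Xi(n)=2\tau(b-1)n+O(n^2)$ so that $\tfrac32\Xi(n)=3\tau(b-1)n+O(n^2)<0$ for $n<0$, Lemma~\ref{new-lem3-2} together with $\F\le0$ forces the admissible branch to be $\theta_0=0$, $\theta_1=\tfrac12(\tfrac1\tau-\sqrt{\tfrac1{\tau^2}-8(b-1)})$: the ``large'' root $O(1/\tau)$ would make $\F_0(n)$ fall below $\tfrac32\Xi(n)$ for $n<0$, contradicting Lemma~\ref{new-lem3-2}.

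Next I would prove that $\F'$ is monotone on a left neighborhood of $0$, which is the technical heart. Let $\n<0$ be a critical point of $\F'$, so $\F''(\n)=0$. Differentiating \eqref{new2.69} once more, using $\F''(\n)=0$ and \eqref{new52}, and then substituting \eqref{new6} and the root expansion just obtained, reduces the sign of $\F'''(\n)$ — precisely as in \eqref{new2.78}-\eqref{new2.80} — to the sign of $\big[\tfrac{4(b-1)}{\tau}-\tfrac{2\theta_1}{\tau^2}+(5b-3)\theta_1\big]\n$. By the value of $\theta_1$ above the bracket equals $\theta_1\big[5b-3-\tfrac{8(b-1)}{1+\sqrt{1-8(b-1)\tau^2}}\big]>2\theta_1>0$ whenever $\tau^2<\tfrac{1}{16(b-1)}$, which is guaranteed by $\tau<\tfrac1{4\sqrt{b-1}}$. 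Therefore $\F'''(\n)$ has a fixed sign for $|\n|$ small, so $\F'$ has at most one critical point near $0$; consequently $\F''$ changes sign at most once and $\F'$ is monotone on some interval $(\widetilde n_0',0)$. This is where the extra smallness $\tau<\tfrac1{4\sqrt{b-1}}$ is essential, just as in Lemma~\ref{new-lem4}.

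Finally, monotonicity of $\F'$ together with the Lipschitz bound of Lemma~\ref{new-lem2-2} shows that $\F'(0):=\lim_{n\to0^-}\F'(n)$ exists, so $\F\in C^1$ on a neighborhood of $n=0$. Applying L'Hospital's rule to \eqref{new3} at $n=0$ gives $\F'(0)=\tfrac{2(1-b)}{\F'(0)}+\tfrac1\tau$, i.e.\ $(\F'(0))^2-\tfrac1\tau\F'(0)+2(b-1)=0$, whose roots are $\tfrac12(\tfrac1\tau\pm\sqrt{\tfrac1{\tau^2}-8(b-1)})$; the branch already selected in the discriminant step (equivalently, forced by Lemma~\ref{new-lem3-2}) leaves $\F'(0)=\tfrac12(\tfrac1\tau-\sqrt{\tfrac1{\tau^2}-8(b-1)})$, which is the asserted formula. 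The hypothesis $\tau<\tfrac1{3\sqrt{b}}$ is carried along only to remain within the scope of Lemmas~\ref{new-lem1-2}-\ref{new-lem2-2}.
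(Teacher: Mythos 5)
Your proposal is correct and takes essentially the same route the paper intends: the paper's own proof of this lemma is a one-sentence remark that it is "quite similar" to Lemma \ref{new-lem4} with the unique critical point of $\frac{d\F}{dn}$ now being a maximum rather than a minimum, and your write-up supplies exactly those sign-flipped details (the discriminant computation and root selection via Lemma \ref{new-lem3-2}, the $\F'''$ sign analysis showing critical points of $\F'$ are local maxima since $\hat n_0<0$, and the L'Hospital identification of $\F'(0)$). No gaps.
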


\begin{proof}
The proof is quite similar to that of Lemma \ref{new-lem4}. The main
difference is that, now the unique critical point of the function
$\frac{d\F}{dn}$ is the maximal point of $\frac{d\F}{dn}$. Other
changes are obvious.
\end{proof}
On the base of Lemma \ref{new-lem4-2}, analog to Theorem
\ref{new-thm1}, one can obtain the refined structure of the interior
supersonic solution established in Theorem \ref{thm2}.

\begin{theorem}\label{new-thm2}
Assume that $b(x)=b>1$ is a constant. There exists a constant
$\tau_0=\tau_0(b)$ such that for any $0<\tau<\tau_0$ the interior
supersonic solution $(\rho,E)$ on an interval $[0,L]$ satisfies
\begin{equation}\label{new-2.92}
\rho\in C^1[L-\epsilon,L], \ \rho(0)=\rho(L)=1, \
E(L)=\frac{1}{\tau} \ \ and \ \
\rho_x(L)=\frac{1}{4}\left(\frac{1}{\tau}-\sqrt{\frac{1}{\tau^2}-8(b-1)}\right).
\end{equation}
for some $\epsilon>0$.
\end{theorem}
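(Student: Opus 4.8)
The plan is to mirror the proof of Theorem \ref{new-thm1}, but now working near the right endpoint $x=L$ where the interior supersonic solution $(\rho,E)$ of Theorem \ref{thm2} attains the sonic value $\rho(L)=1$, and to use Lemmas \ref{new-lem1-2}--\ref{new-lem4-2} in place of Lemmas \ref{new-lem1}--\ref{new-lem4}. First I would apply the transformation \eqref{transformation}, $F=E-\frac{1}{\tau\rho}$, $n=\rho-1$, which carries $(\rho,E)$ into a solution of the nondegenerate ODE system \eqref{new1.5} on every compact subinterval of $(0,L)$ (the needed interior $C^1$ regularity of $\rho$ comes from Theorem \ref{thm2}, where $\rho_{sup}<1$ in the interior, plus standard interior elliptic regularity). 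Since $0<\rho\le 1$ we have $n\in(-1,0]$; near $x=L$ the solution increases to the sonic value, because by Theorem \ref{thm2}(ii) $\rho$ has a single minimum $z_0$ and $\rho_x>0$ on $(z_0,L)$. Thus $n<0$ there, and reading off $n_x=\frac{(1+n)^3F}{(2+n)n}>0$ together with $(2+n)n<0$ forces $F<0$. Hence the transformed trajectory $\F=\F(n)$ near $n=0^-$ is exactly an interior negative trajectory to \eqref{new1.5} with $\F\le 0$, so Lemmas \ref{new-lem1-2}--\ref{new-lem4-2} apply once we set $\tau_0=\tau_0(b):=\min\{\tfrac{1}{3\sqrt b},\tfrac{1}{4\sqrt{b-1}}\}$, which puts all four lemmas in force.

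Next, by Lemma \ref{new-lem1-2} every interior negative trajectory ends at $(0,0)$; translating back through \eqref{transformation} gives $\rho(L)=n(L)+1=1$ and $E(L)=F(L)+\frac{1}{\tau\rho(L)}=0+\frac1\tau=\frac1\tau$, which are the second and third assertions of \eqref{new-2.92}. Lemma \ref{new-lem4-2} then tells us this trajectory is $C^1$ near $n=0$ with $\frac{d\F}{dn}(0)=\theta_1:=\frac12\big(\frac1\tau-\sqrt{\tfrac{1}{\tau^2}-8(b-1)}\big)$. Since $\F(0)=0$, L'Hospital's rule yields $\lim_{n\to0^-}\frac{n}{\F(n)}=\lim_{n\to0^-}\frac{1}{\F'(n)}=\frac{1}{\theta_1}$, equivalently $\frac{\F(n)}{n}\to\theta_1$. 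Feeding this into the first equation of \eqref{new1.5} as $x\to L^-$, where $\frac{(1+n)^3}{2+n}\to\frac12$, gives $n_x=\frac{(1+n)^3}{2+n}\cdot\frac{F}{n}\to\frac{\theta_1}{2}$; combined with the Lipschitz bound of Lemma \ref{new-lem2-2}, this shows $\lim_{x\to L^-}n_x(x)$ exists and equals $\frac{\theta_1}{2}$. Hence $n\in C^1[L-\epsilon,L]$ for some $\epsilon>0$, so $\rho=n+1\in C^1[L-\epsilon,L]$ with $\rho_x(L)=\frac{\theta_1}{2}=\frac14\big(\frac1\tau-\sqrt{\tfrac{1}{\tau^2}-8(b-1)}\big)$, which is the last assertion of \eqref{new-2.92}.

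The genuinely delicate input — the $C^1$ regularity of the trajectory at $n=0$ and the precise value $\F'(0)=\theta_1$ — is already supplied by Lemma \ref{new-lem4-2}, whose proof locates the roots of the cubic $H_2(n,\F)=0$ and shows $\frac{d\F}{dn}$ is eventually monotone (with Lemma \ref{new-lem3-2} pinning down the correct, smaller root). So the only new work here is the bookkeeping transferring the $x=0$ statement of Theorem \ref{new-thm1} to the $x=L$ setting, and the main step requiring care is the orientation/sign discussion of the first paragraph: one must check that near $x=L$ the relevant branch of the phase-plane trajectory genuinely lies in $\F\le0$ (so the hypotheses of Lemmas \ref{new-lem2-2} and \ref{new-lem4-2} are met) and that $\F(n)/n\to\theta_1>0$ is consistent with both $n<0$ and $F<0$ there — which it is, since then $\F/n>0$. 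Everything else is identical, mutatis mutandis, to the proof of Theorem \ref{new-thm1}.
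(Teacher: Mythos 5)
Your proposal is correct and follows exactly the route the paper intends: the paper itself only states that Theorem \ref{new-thm2} is obtained ``on the base of Lemma \ref{new-lem4-2}, analog to Theorem \ref{new-thm1}'', and your argument carries out precisely that analogy, including the sign/orientation check that near $x=L$ the trajectory is an interior negative one with $\F\le 0$ and the L'Hospital computation giving $\rho_x(L)=\theta_1/2$. No gaps.
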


On the base of Theorems \ref{new-thm1} and \ref{new-thm2}, we are
able to construct interior $C^1$ smooth transonic solutions to
system \eqref{1.5}.

\begin{theorem}\label{new-thm3}
Assume that $b(x)=b>1$ is a constant. There exists a constant
$\tau_0=\tau_0(b)$ such that for any $0<\tau<\tau_0$, there exist
infinitely many interior $C^1$ smooth transonic solutions to system
\eqref{1.5}-\eqref{boundary} in the form
\begin{equation*}
\rho(x)=\left \{\begin{array}{ll}
        \rho_{sup}(x), \ x\in(0,x_0),\\
        \rho_{sub}(x), \ x\in(x_0,1),
        \end{array} \right.
\end{equation*}
where $x_0\in(0,1)$ is the location of transition,
$0<\rho_{sup}(x)\le 1$ and $\rho_{sub}(x)\ge 1$ satisfy
\begin{equation}\label{new2.93}
\rho_{sup}(x_0)=\rho_{sub}(x_0)=1,
\end{equation}
\begin{equation}\label{new2.94}
\begin{split}
(\rho_{sup})_x(x_0)=(\rho_{sub})_x(x_0)&=\frac{1}{4}\left(\frac{1}{\tau}-\sqrt{\frac{1}{\tau^2}-8(b-1)}\right),\\
E_{sup}(x_0)&=E_{sub}(x_0)=\frac{1}{\tau}.
\end{split}
\end{equation}

\end{theorem}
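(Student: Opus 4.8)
The plan is to glue together an interior supersonic solution on a left subinterval and the unique interior subsonic solution on the complementary right subinterval, using the refined boundary behavior established in Theorems \ref{new-thm1} and \ref{new-thm2}. First I would fix $\tau<\tau_0(b)$ small enough that all the lemmas of this subsection apply (in particular $\tau<\min\{\frac{1}{3\sqrt{b^3+b}},\frac{1}{4\sqrt{b-1}}\}$), so that every interior positive trajectory and every interior negative trajectory of the transformed system \eqref{new1.5} emanates from, respectively terminates at, the saddle-related point $(0,0)$ and is $C^1$ there with the common one-sided slope $\theta_1=\frac{1}{2}\big(\frac{1}{\tau}-\sqrt{\frac{1}{\tau^2}-8(b-1)}\big)$. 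Back in the original variables this says: every interior supersonic solution, wherever it lives, leaves $\rho=1$ (at the left endpoint) and returns to $\rho=1$ (at its right endpoint) with $E=\frac{1}{\tau}$ and $\rho_x=\frac{\theta_1}{2}$ at those sonic endpoints, and likewise the interior subsonic solution attains $\rho=1$, $E=\frac1\tau$, $\rho_x=\frac{\theta_1}{2}$ at $x=0$. This is precisely the matching data needed in \eqref{new2.93}--\eqref{new2.94}.

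Next I would set up the construction. Pick any $x_0\in(0,1)$. On $[0,x_0]$ I want an interior supersonic solution $\rho_{sup}$ with $\rho_{sup}(0)=\rho_{sup}(x_0)=1$; on $[x_0,1]$ I want the interior subsonic solution $\rho_{sub}$ with $\rho_{sub}(x_0)=\rho_{sub}(1)=1$. The subsonic piece is obtained by rescaling: Theorem \ref{thm1} (equivalently Theorem \ref{new-thm1}) gives, for the interval of length $1-x_0$, a unique interior subsonic solution; translating it to live on $[x_0,1]$ gives $\rho_{sub}$, and by Theorem \ref{new-thm1} it satisfies $\rho_{sub}(x_0)=1$, $E_{sub}(x_0)=\frac1\tau$, $(\rho_{sub})_x(x_0)=\frac{\theta_1}{2}$. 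For the supersonic piece I would argue by a shooting/continuation argument in the phase-plane $(n,\F)$: by Lemma \ref{new-lem1-2} the interior negative trajectories end at $(0,0)$, and by Lemma \ref{new-lem4-2} they are $C^1$ there with slope $\theta_1$; the family of such trajectories is parametrized (e.g. by how far into $n<0$ the trajectory dips, i.e. by $\min\rho_{sup}$), and the corresponding interval length varies continuously from near $0$ (shallow trajectories) up through all sufficiently small positive values. Hence for every $x_0$ in some interval $(0,\epsilon_0)$—and, after iterating the continuation to longer trajectories exactly as in the transonic-shock construction of Theorem \ref{thm3}, for every $x_0\in(0,1)$—there is an interior supersonic solution on $[0,x_0]$ with the correct sonic endpoint data. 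Concatenating, $\rho:=\rho_{sup}$ on $(0,x_0)$ and $\rho:=\rho_{sub}$ on $(x_0,1)$ has $\rho(0)=\rho(1)=1$, $0<\rho\le1$ on the left, $\rho\ge1$ on the right, and at $x_0$ both pieces equal $1$ with equal first derivative $\frac{\theta_1}{2}$ and equal electric field $\frac1\tau$; so $\rho\in C^1(0,1)$ and \eqref{new2.93}--\eqref{new2.94} hold. Since $x_0\in(0,1)$ is arbitrary, there are infinitely many such $C^1$ transonic solutions, and since at $x_0$ the solution passes through the sonic state with finite slope rather than jumping, no Rankine–Hugoniot jump is involved—indeed the entropy condition \eqref{entropy} fails strictly, so none of these are transonic shock solutions, and one checks separately (as asserted in Part 4 of Theorem \ref{main-thm-1}) that for $\tau$ small no transonic shock can exist at all.

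The main obstacle is the continuation argument that produces an interior supersonic solution on $[0,x_0]$ for a \emph{prescribed} length $x_0$: one must show that as the negative trajectory is deformed in the phase-plane, the length functional (the integral $\int d n/n_x$ along the trajectory from $n=0$ down to $\min n$ and back) sweeps out the full interval $(0,1)$ continuously, with no gaps and no blow-up before length $1$ is reached. The degeneracy of $n_x$ at $n=0$ makes this delicate, but it is controlled precisely by the $C^1$ regularity at $n=0$ with slope $\theta_1>0$ proved in Lemma \ref{new-lem4-2}, which guarantees $n_x\sim \frac{\theta_1}{2}\cdot(\text{nonzero})$ near the endpoints so the length integral converges; the monotone dependence of the length on the trajectory parameter, together with the continuation in length already carried out in the proof of Theorem \ref{thm3}, then closes the argument. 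The remaining steps—verifying the $C^1$ gluing and the failure of the entropy condition—are routine given Theorems \ref{new-thm1} and \ref{new-thm2}.
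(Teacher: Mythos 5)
Your overall strategy is exactly the paper's: glue an interior supersonic solution on $[0,x_0]$ to the interior subsonic solution on $[x_0,1]$, using the $C^1$ sonic-endpoint data of Theorems \ref{new-thm1} and \ref{new-thm2} to match $\rho$, $\rho_x$ and $E$ at $x_0$, and let $x_0$ range over $(0,1)$ to get infinitely many solutions. The one place you diverge is in producing the supersonic piece of prescribed length $x_0$: you propose a shooting/continuation argument in the $(n,\F)$ phase plane and correctly flag the continuity and surjectivity of the length functional as the delicate point --- but this is an unnecessary detour. The paper simply invokes Theorem \ref{thm2} (proved by the Schauder fixed-point and compactness argument, which is insensitive to the length of the interval) directly on $[0,x_0]$ to get an interior supersonic solution with $\rho_{sup}(0)=\rho_{sup}(x_0)=1$; no deformation of trajectories or control of the length integral is needed, so the "main obstacle" you identify simply does not arise on the paper's route. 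Your sketch of the shooting argument is plausible but not closed (monotonicity and range of the length functional are asserted, not proved), so if you keep that route you owe a genuine argument there; switching to the direct boundary-value solvability makes the step immediate. One further small correction: Theorem \ref{new-thm2} gives $C^1$ regularity and the data $E=\frac{1}{\tau}$, $\rho_x=\frac{1}{4}\bigl(\frac{1}{\tau}-\sqrt{\frac{1}{\tau^2}-8(b-1)}\bigr)$ only at the \emph{right} endpoint of the supersonic piece (and Theorem \ref{new-thm1} only at the \emph{left} endpoint of the subsonic piece); at the opposite endpoints the solutions have the $C^{1/2}$ square-root behavior of Propositions \ref{local-analysis-sub} and \ref{local-analysis-super}, so your phrase ``at those sonic endpoints'' overstates the regularity. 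This does not affect the construction, since only the two interior-facing endpoints are matched at $x_0$, but it should be stated precisely.
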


\begin{proof}
For any $x_0\in (0,1)$, by Theorem \ref{thm2}, system \eqref{1.5}
admits an interior supersonic solution $\rho_{sup}$ on $[0,x_0]$
satisfying
\[\rho_{sup}(0)=\rho_{sup}(x_0)=1.\]
By Theorem \ref{new-thm2}, there exists a constant
$\tau_0=\tau_0(b)$ such that for any $0<\tau<\tau_0$
\begin{equation}\label{new-2.95}
\rho_{sup}\in C^1[x_0-\epsilon_0,x_0], \ E_{sup}(x_0)=\frac{1}{\tau}
\ \ and \ \
(\rho_{sup})_x(x_0)=\frac{1}{4}\left(\frac{1}{\tau}-\sqrt{\frac{1}{\tau^2}-8(b-1)}\right).
\end{equation}
for some $\epsilon_0>0$.

On the other hand, by Theorem \ref{thm1}, system \eqref{1.5} has a
unique interior subsonic solution $\rho_{sub}$ on $[x_0,1]$
satisfying
\[\rho_{sub}(x_0)=\rho_{sub}(1)=1.\]
By Theorem \ref{new-thm1}, there exists a constant
$\tau_1=\tau_1(b)$ such that for any $0<\tau<\tau_1$
\begin{equation}\label{new-2.96}
\rho_{sub}\in C^1[x_0,x_0+\epsilon_1], \ E_{sub}(x_0)=\frac{1}{\tau}
\ \ and \ \
(\rho_{sub})_x(x_0)=\frac{1}{4}\left(\frac{1}{\tau}-\sqrt{\frac{1}{\tau^2}-8(b-1)}\right).
\end{equation}
for some $\epsilon_1>0$. We can now construct an interior $C^1$
smooth transonic solution by
\begin{equation*}
\rho(x)=\left \{\begin{array}{ll}
        \rho_{sup}(x), \ x\in[0,x_0],\\
        \rho_{sub}(x), \ x\in[x_0,1].
        \end{array} \right.
\end{equation*}
Furthermore, \eqref{new2.93} and \eqref{new2.94} follows from
\eqref{new-2.95} and \eqref{new-2.96}. Because $x_0\in(0,1)$ is
arbitrary, the $C^1$ smooth transonic solutions are infinitely many.
\end{proof}

As a byproduct, one can easily see that when $0<\tau\ll1$, there is
no transonic solution with shock. In other words, when $\tau$ is
small, system \eqref{1.5} admits transonic solution of $C^1$ smooth
type only.

\begin{theorem}\label{new-thm4}
Assume that $b(x)=b>1$ is a constant. There exists a constant
$\tau_0=\tau_0(b)$ such that for any $0<\tau<\tau_0$, system
\eqref{1.5}-\eqref{boundary} has no transonic shock solution.

\end{theorem}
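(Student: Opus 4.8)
The plan is to argue by contradiction in the transformed phase plane \eqref{new1.5}, pitting the supersonic and subsonic branches of a putative shock against each other through the Rankine--Hugoniot relation. Take $\tau_0=\tau_0(b)$ to be the minimum of the positive constants appearing in Lemmas~\ref{new-lem1}, \ref{new-lem1-2}, \ref{new-lem3}, \ref{new-lem3-2}, \ref{new-lem4}, \ref{new-lem4-2} and Theorems~\ref{new-thm1}, \ref{new-thm2}, \ref{new-thm3}. Suppose for some $0<\tau<\tau_0$ that \eqref{1.5}--\eqref{boundary} has a transonic shock solution with jump point $x_0\in(0,1)$, and set $\rho_l:=\rho_{sup}(x_0^-)<1<\rho_{sub}(x_0^+)=:\rho_r$; by \eqref{2.24} and \eqref{RH} one has $\rho_l\rho_r=1$ and $E_{sup}(x_0^-)=E_{sub}(x_0^+)$. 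In the variables \eqref{transformation}, $n=\rho-1$, $F=E-\tfrac1{\tau\rho}$, the supersonic branch on $[0,x_0]$ and the subsonic branch on $[x_0,1]$ become trajectories of \eqref{new1.5} over $n\in[n_l,0]$ and $n\in[0,n_r]$ respectively ($n_l=\rho_l-1<0<n_r=\rho_r-1$), with endpoint field values $F_l,F_r$, and the field-continuity part of \eqref{RH} reads
\[
F_r-F_l=\frac1\tau\Big(\frac1{1+n_l}-\frac1{1+n_r}\Big)=\frac{\rho_r-\rho_l}{\tau}>0 .
\]

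First I would pin down the sonic-layer picture. Since $\rho_{sup}\not\equiv1$ near $x=0$ and every critical point of a supersonic solution is a minimum (Theorem~\ref{thm2}(ii)), $\rho_{sup}$ strictly decreases off the sonic line near $x=0$; plugging into the first equation of \eqref{1.5} gives $E_{sup}(0)>\tfrac1\tau$, i.e. $F(0^+)>0$ on the supersonic side, and symmetrically $E_{sub}(1)<\tfrac1\tau$, i.e. $F(1^-)<0$ on the subsonic side (this is the boundary analysis behind Propositions~\ref{local-analysis-sub} and \ref{local-analysis-super}). Next, integrating \eqref{elliptic} across a critical point of the subsonic branch shows it is a local maximum as long as $\rho_{sub}<b$; the case $\rho_r\ge b$ is disposed of at once, because then $\Xi(n_r)\le0$ by \eqref{new4} and Lemma~\ref{new-lem3} forces $F_r\le\tfrac32\Xi(n_r)\le0$, contradicting the dichotomy below. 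Hence each branch is either monotone up to $x_0$ or has exactly one interior extremum, so the relevant cases are distinguished by the signs of $(\rho_{sup})_x(x_0^-)$ and $(\rho_{sub})_x(x_0^+)$.

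Then I would split into cases. If $(\rho_{sup})_x(x_0^-)\le0$ and $(\rho_{sub})_x(x_0^+)\le0$ (both branches monotone into the shock), the first equation of \eqref{1.5} gives $F_l\ge0\ge F_r$, contradicting $F_r-F_l>0$; this case needs no smallness of $\tau$. In every other case at least one branch overshoots its shock value; I extend that branch across $x_0$ (the equation is non-degenerate there since $\rho_l,\rho_r\ne1$) until it returns to the sonic line, producing a genuine interior supersonic or subsonic solution. Applying Lemma~\ref{new-lem3-2} gives $F_l\ge\tfrac32\Xi(n_l)$, Lemma~\ref{new-lem3} gives $F_r\le\tfrac32\Xi(n_r)$, and by \eqref{new4} together with the uniform bounds $\ell\le\rho\le b$ from Theorems~\ref{thm1}, \ref{thm2} one has $|\Xi(n_l)|,|\Xi(n_r)|\le C(b)\tau$. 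Hence $|F_l|,|F_r|\le C(b)\tau$, and the Rankine--Hugoniot relation above then forces $\rho_r-\rho_l\le C(b)\tau^2$: the shock must be arbitrarily weak as $\tau\to0^+$.

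The hard part is to exclude this residual weak shock, and this is the crux of the argument. The idea is to feed $n_l,n_r=O(\tau^2)$ back into the analysis at $n=0$ and use the $C^1$-smoothing results together with the escape-to-infinity mechanism: along the supersonic branch $F$ runs from a positive value at $n=0$ down to $F_l$ at $n_l$, and along the subsonic branch from $F_r$ at $n_r$ up to a negative value at $n=0$; by Lemmas~\ref{new-lem1}, \ref{new-lem1-2} a trajectory that crosses $n=0$ with $F\ne0$ cannot return to the sonic line, while Lemmas~\ref{new-lem4}, \ref{new-lem4-2} and Theorems~\ref{new-thm1}, \ref{new-thm2} pin the interior branches to reach the sonic line $C^1$-smoothly with $F=0$ and slope $\tfrac12\big(\tfrac1\tau-\sqrt{\tfrac1{\tau^2}-8(b-1)}\big)$. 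Combining these should show that the supersonic and subsonic trajectories can only meet $n=0$ with $F=0$, i.e. in precisely the $C^1$-smooth configuration of Theorem~\ref{new-thm3}, leaving no room for a genuine jump $\rho_l<1<\rho_r$. Making this step rigorous requires a quantitative comparison of $F$ with $\Xi$ on the whole interval $n\in[n_l,n_r]$, controlling the competition between the degenerate term and the damping term $\tfrac1{\tau(1+n)^2}$ in \eqref{new3} exactly as in the proofs of those lemmas; this is where the hypothesis $\tau<\tau_0$ is genuinely needed, and I expect it to be the main obstacle.
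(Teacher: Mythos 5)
Your proposal sets up the right framework (the $(n,\F)$ phase plane, the Rankine--Hugoniot identity $F_r-F_l=(\rho_r-\rho_l)/\tau>0$, and the escape mechanism of Lemmas \ref{new-lem1} and \ref{new-lem1-2}), but it does not actually prove the theorem: the decisive step is explicitly deferred in your last paragraph, where you acknowledge that excluding the residual weak shock ``is the crux'' and ``the main obstacle.'' The intermediate estimate $\rho_r-\rho_l\le C(b)\tau^2$ is a dead end --- a weak shock is not by itself contradictory, and no contradiction is ever derived from it. (Also, applying Lemmas \ref{new-lem3} and \ref{new-lem3-2} to the two branches presupposes that their extensions are genuine \emph{interior} trajectories, which itself requires the case analysis you are trying to avoid; and those lemmas only give one-sided bounds $F_l\ge\frac32\Xi(n_l)$, $F_r\le\frac32\Xi(n_r)$, which happen to suffice for $F_r-F_l\le C\tau$ but not for the two-sided bounds $|F_l|,|F_r|\le C\tau$ as stated.)

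The paper closes the argument without any weak-shock quantification, by a dichotomy on $E_l$ versus $1/\tau$, which is the idea your proposal circles around but never executes. If $E_l\le 1/\tau$, then $\rho_lE_l-1/\tau<0$, so $(\rho_{sup})_x(x_0^-)>0$ and the supersonic branch extends forward to a point $L$ with $\rho_{sup}(L)=1$; since $E_{sup}$ is strictly decreasing ($\rho<1<b$), one gets $F_{sup}(L)=E_{sup}(L)-1/\tau<0$ at the sonic line, and the blow-up computation in the proof of Lemma \ref{new-lem1-2} shows such a negative trajectory satisfies $\F\to-\infty$ as $n\to-1^+$, so it cannot also emanate from the sonic line at $x=0$ --- contradiction with $\rho_{sup}(0)=1$. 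If instead $E_l>1/\tau$, then $E_r>1/\tau$ and the subsonic branch extends backward to a point $x_{-1}$ with $\rho_{sub}(x_{-1})=1$ and $F_{sub}(x_{-1})>0$; the proof of Lemma \ref{new-lem1} shows a positive trajectory leaving $(0,h)$ with $h>0$ escapes past the saddle to infinity and never returns to $n=0$, contradicting $\rho_{sub}(1)=1$. This is the step you need to supply; once you have it, your preliminary reductions (the sign computation $F(0^+)>0$, $F(1^-)<0$, and the elimination of the doubly monotone case) become unnecessary.
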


\begin{proof}
We argue by contradiction. Assume that there is a transonic solution
with shock. Denote by $x_0$ the jump location. By the
Rankine-Hugoniot condition \eqref{RH} and \eqref{2.24},
\begin{equation}\label{new2.97}
E_l=E_r\ \text{ and } \ \rho_l\rho_r=1.
\end{equation}
Because the solution is discontinuous, it holds $0<\rho_l<1<\rho_r$.
Clearly, there are two cases for the value of $E_l$:
\[E_l\leq\frac{1}{\tau} \ \text{ or } \ E_l>\frac{1}{\tau}.\]
If the former case holds, observing that at $x_0$,
$\rho_{sup}(x_0)E_{sup}(x_0)-\frac{1}{\tau}=\rho_lE_l-\frac{1}{\tau}<0$,
it follows from the first equation of \eqref{1.5} that
\[\rho_{sup}(x_0)=\frac{\rho_lE_l-\frac{1}{\tau}}{1-\frac{1}{\rho_l^2}}>0.\]
Thus, we can extend this supersonic solution to an interval $[0,L]$
such that
\[\rho_{sup}(L)=1, \ \text{ and }\ E_{sup}(L)<E_{sup}(x_0)=E_l<\frac{1}{\tau}.\]
Here we have used the fact that $E_{sup}$ is monotone decreasing.
Recalling the transformation \eqref{transformation}, this implies
\[F_{sup}(L)=E_{sup}(L)-\frac{1}{\tau} <0.\]
In view of the proof of Lemma \ref{new-lem1-2}, we find that the
corresponding trajectory satisfies
\[\lim_{n\rightarrow-1^+}\F_{sup}(n)=-\infty.\]
Thus, this supersonic solution can not satisfy the left boundary
condition $\rho_{sup}(0)=1$, which is a contradiction.

If the latter case happen, by \eqref{new2.97}, we get
\[E_r>\frac{1}{\tau} \ \text{ and } \rho_r>1.\]
Thus, we can extend backward this subsonic part to an interior
subsonic solution of system \eqref{1.5}, still denoted by
$(\rho_{sub},E_{sub})$ such that for some $x_{-1}\in\R$
\[\rho_{sub}(x_{-1})=1, \ E_{sub}(x_{-1})>E_r>\frac{1}{\tau},\]
where we have used the fact that $E_{sub}$ is monotone decreasing.
Recalling the transformation again, we have
\[F_{sub}(x_{-1})=E_{sub}(x_{-1})-\frac{1}{\tau}>E_r-\frac{1}{\tau}>0.\]
In view of the proof of Lemma \ref{new-lem1}, one can see that the
corresponding trajectory will go to infinity, which contradicts to
the right boundary condition $\rho_{sub}(1)=1$. Therefore, there is
no transonic solution with shock.
\end{proof}

\begin{proof}[Proof of Theorem \ref{main-thm-1}] Combining Theorems \ref{thm1}, \ref{thm2}, \ref{thm3},
\ref{new-thm3} and \ref{new-thm4},  we immediately obtain Theorem
\ref{main-thm-1}.
\end{proof}

\section{The case of supersonic doping profile}\label{supersonic doping profile}
In this section, we consider the more general case of non-subsonic
doping file, namely, we assume that $0\leq\underline{b}\le
b(x)\leq\overline{b}\leq1$, which allows $b(x)$ partially supersonic
and partially sonic in the domain $[0,1]$. To observe the structure
of the stationary solutions, let us first test the specially case
with a constant supersonic doping profile $b(x)\equiv b<1$, where
the phase-plane analysis is helpful. In this case, the critical
point $A=\left(b,\frac{1}{\tau b}\right)$ sits at the left hand side
of the sonic line $\rho=1$. By \eqref{eigenvalue}, the eigenvalues
of the Jacobian matrix $J(A)$ satisfy
$$\lambda_1+\lambda_2<0,\ \ \lambda_1\lambda_2>0.$$
Thus, $\lambda_1,\lambda_2<0$, which indicates that $A$ is a stable
focal point. In view of \eqref{direction} and \eqref{1.5}, we draw
the phase-plane of $(\rho,E)$ in Figure 7 with $\tau=15$ and
$b=0.5$. From Figure 7, we see that one outside curve starts from
the sonic line, passes through the supersonic regime, then ends at
the sonic line, so there is a possible interior supersonic solution.
The other curve starts from the sonic line, but rotates in the
supersonic regime, and never ends at the sonic line, thus such a
curve is not a solution. Obviously, there is no interior subsonic
solution.


\begin{figure}
\begin{center}
\includegraphics[height=2.5in]{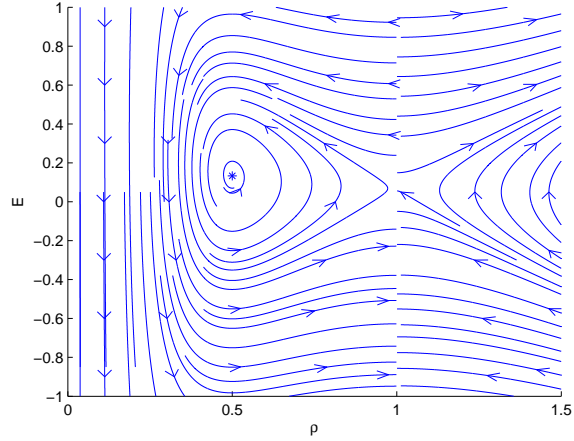}
\caption{Phase plane of $(\rho,E)$ with $\tau=15$ and $b=0.5$; $*$
is the focal point $A=(0.5,2/15)$.}
\end{center}
\label{fig5-1}
\end{figure}


Now, for the general  case of  supersonic doping profile,  we are
going to prove that, there is no interior subsonic solution, nor
transonic solution, even no interior supersonic solution if the
doping profile $b(x)\ll 1$  or $\tau\ll1$ , namely, when the
semiconductor device is almost  pure, or the relaxation time is
really small (equivalently, the semiconductor effect is large). The
supersonic solution and transonic solution  exist only when the
doping profile is close to the sonic line  and $\tau$ is large
enough. This is totally different from the previous studies
\cite{Luo-Rauch-Xie-Xin,Luo-Xin} for the case without semiconductor
effect.


Now we are going to prove each case stated in Theorem \ref{main-thm-3}.

\subsection{Non-existence of interior subsonic/supersonic/transonic solutions}

In this subsection, we are going to prove the non-existence of interior
subsonic/supersonic/transonic solutions when the doping profile is
small or the relaxation time is small.

\begin{theorem}\label{thm3.1}  No interior subsonic solution to \eqref{elliptic} exists  for the case of non-subsonic doping profile $0\leq\underline{b}\le b(x)\leq\overline{b}\leq1$.
\end{theorem}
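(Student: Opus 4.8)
The plan is to test the (equivalent) weak formulation \eqref{weak-solution} against the nonnegative function $\varphi=w:=(\rho-1)^2$ itself and to read off the sign of each resulting term. So suppose, aiming at a contradiction, that $\rho$ is an interior subsonic solution in the sense of Definition \ref{definition-1}; then $\rho(x)\ge1$ on $(0,1)$, $\rho(0)=\rho(1)=1$, and $w\in H_0^1(0,1)$, so $\varphi=w$ is an admissible test function and $\rho=1+\sqrt w$. Substituting $\varphi=w$ into \eqref{weak-solution} gives
\begin{equation*}
\frac12\int_0^1\frac{\rho+1}{\rho^3}\,(w_x)^2\,dx
+\frac1\tau\int_0^1\frac{w_x}{1+\sqrt w}\,dx
+\int_0^1(\rho-b)\,w\,dx=0 .
\end{equation*}

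First I would show the middle integral vanishes. Set $G(s):=2\sqrt s-2\ln(1+\sqrt s)$ for $s\ge0$; then $G'(s)=\dfrac1{1+\sqrt s}$, which is continuous on $[0,\infty)$ and bounded by $1$, so $G$ is Lipschitz on $[0,\|w\|_{L^\infty(0,1)}]$. Consequently $G\circ w$ is absolutely continuous on $[0,1]$ with $(G\circ w)_x=\dfrac{w_x}{1+\sqrt w}$ a.e., and since $w(0)=w(1)=0$ the fundamental theorem of calculus yields $\int_0^1\frac{w_x}{1+\sqrt w}\,dx=G(0)-G(0)=0$. Next, the third integral is nonnegative because $\rho(x)\ge1\ge\overline b\ge b(x)$ forces $\rho-b\ge0$ while $w\ge0$; and the first integral is nonnegative since $\frac{\rho+1}{\rho^3}>0$. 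Hence all three terms must vanish; in particular $\int_0^1\frac{\rho+1}{\rho^3}(w_x)^2\,dx=0$, so $w_x=0$ a.e., i.e. $w$ is constant, and then $w\equiv0$ because $w\in H_0^1(0,1)$. Therefore $\rho\equiv1$ on $[0,1]$.

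Finally I would rule out the constant state $\rho\equiv1$: from the first equation of \eqref{1.5} it forces $E\equiv\frac1\tau$, and then the second equation $E_x=\rho-b$ forces $b\equiv1$ a.e.; this contradicts the doping profile being non-subsonic (the purely sonic case $b\equiv1$ yields only the trivial solution $(\rho,E)\equiv(1,\frac1\tau)$). The only delicate step is the vanishing of the middle integral: because \eqref{elliptic} degenerates at the sonic boundary, $\rho$ itself need not lie in $H^1(0,1)$—generically $\rho_x$ behaves like $(1-x)^{-1/2}$ near an endpoint—so one cannot integrate by parts in $\rho$ directly; routing the computation through $G(w)$, whose derivative stays bounded as $w\to0$, is exactly what makes this step legitimate.
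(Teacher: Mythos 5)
Your proposal is correct and follows essentially the same route as the paper: test the weak formulation \eqref{weak-solution} with $\varphi=(\rho-1)^2$, observe that the relaxation term is an exact derivative (the paper writes it as $\frac{2}{\tau}\int_0^1(\rho-\ln\rho)_x\,dx=0$, which is your $G(w)$ computation) and that the remaining two terms are nonnegative. Your extra care in justifying the chain rule through $G$ and in explicitly disposing of the residual case $\rho\equiv 1$ (which forces $b\equiv 1$) only tightens steps the paper treats implicitly.
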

\begin{proof}
Suppose there is an interior subsonic solution
$\rho_{sub}$ of \eqref{elliptic} defined in Definition
\ref{definition-1}, let us take the test function  by
$\varphi=(\rho-1)^2\in H_0^1(0,1)$ in \eqref{weak-solution}, then we
have
\begin{equation}
\frac{1}{2}\int_0^1\frac{\rho+1}{\rho^3}\cdot\left|\left[(\rho-1)^2\right]_x\right|^2dx
+\int_0^1\frac{\left[(\rho-1)^2\right]_x}{\tau\rho}dx
+ \int_0^1(\rho-b)(\rho-1)^2dx=0. \label{new}
\end{equation}
Noting that
$$\int_0^1\frac{\left[(\rho-1)^2\right]_x}{\tau\rho}dx=\frac{2}{\tau}\int_0^1(\rho-\ln\rho)_xdx=0,$$
and that $\rho-b> 0$ on $(0,1)$, namely,
\[
 \int_0^1(\rho-b)(\rho-1)^2dx>0,
\]
then, from \eqref{new}, we get a contradiction:
\[
\frac{1}{2}\int_0^1\frac{\rho+1}{\rho^3}\cdot\left|\left[(\rho-1)^2\right]_x\right|^2dx<0.
\]
Therefore, there is no interior subsonic solution.
\end{proof}

\begin{theorem}\label{3.2} No interior supersonic solution to \eqref{elliptic} exists, when the doping profile $b(x)$ is small such that
$\bar{b}(1+\sqrt{2\bar{b}})<1$,
 or the relaxation time $\tau$ is small such that $\tau<\frac{1}{3}$.
 \end{theorem}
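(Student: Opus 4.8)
Argue by contradiction: suppose $(\rho,E)$ is an interior supersonic solution of \eqref{1.5}--\eqref{boundary} on $[0,1]$, so that $0<\rho(x)\le 1$ on $(0,1)$, $\rho(0)=\rho(1)=1$, $(\rho-1)^2\in H^1_0(0,1)$, and (by the regularity analysis, as in Proposition~\ref{local-analysis-super}) $\rho\in C^{\frac12}[0,1]$, $E\in C[0,1]$, with $\rho<1$ and $\rho$ smooth in the interior. The key preliminary fact, common to both hypotheses, is that \emph{$E(0)\ge\frac1\tau$ and $E(1)\le\frac1\tau$}. Indeed, if $E(0)<\frac1\tau$ then $\rho(0)E(0)-\frac1\tau<0$, so by continuity $\rho E-\frac1\tau<0$ on $(0,\varepsilon)$; since $\rho<1$ there, $1-\frac1{\rho^2}<0$, and the first equation of \eqref{1.5} gives $\rho_x=\frac{\rho E-1/\tau}{1-1/\rho^2}>0$ on $(0,\varepsilon)$, forcing $\rho(x)\ge\rho(0)=1$ near $0$, hence $\rho\equiv1$ and $E\equiv\frac1\tau$ there, contradicting $E(0)<\frac1\tau$. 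The estimate at $x=1$ is symmetric. In particular $E(1)\le E(0)$, so $\int_0^1(\rho-b)\,dx=E(1)-E(0)\le0$, i.e. $\int_0^1\rho\,dx\le\int_0^1 b\,dx\le\overline b$.

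\textbf{Case $\overline b(1+\sqrt{2\overline b})<1$ (energy method).} Test \eqref{weak-solution} with $\varphi=w:=(\rho-1)^2\in H^1_0(0,1)$; writing $\rho=1-\sqrt w$, the damping term is $\frac1\tau\int_0^1\frac{w_x}{\rho}\,dx=-\frac2\tau\bigl[(1-\rho)+\ln\rho\bigr]_0^1=0$, since $w=0$, $\rho=1$ at both endpoints (this needs $\rho\ge\ell>0$ on $[0,1]$, which follows from the a priori bound just as in Theorem~\ref{thm2}). Hence
$$\frac12\int_0^1\frac{\rho+1}{\rho^3}\,|w_x|^2\,dx+\int_0^1(\rho-b)\,w\,dx=0 .$$
Using $\rho-b=(1-b)-\sqrt w\ge-\sqrt w$, $w\ge0$, and $\frac{\rho+1}{\rho^3}\ge2$ on $(0,1]$, this yields $\|w_x\|_{L^2}^2\le\int_0^1 w^{3/2}\,dx$. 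Combining the elementary inequalities $\|w\|_{L^\infty}\le\frac12\|w_x\|_{L^1}\le\frac12\|w_x\|_{L^2}$ and $\int_0^1 w^{3/2}\,dx\le\|w\|_{L^\infty}^{1/2}\int_0^1 w\,dx\le\|w\|_{L^\infty}^{3/2}$ gives an absolute a priori bound on $\|w\|_{L^\infty}$, hence a lower bound $\min_{[0,1]}\rho=1-\|\sqrt w\|_{L^\infty}\ge\rho_*$ with $\rho_*$ close to $1$ (keeping constants one sees that $\rho_*>\overline b$ precisely under the hypothesis $\overline b(1+\sqrt{2\overline b})<1$). Then $\rho(x)>\overline b\ge b(x)$ for every $x$, so $E_x=\rho-b>0$, $E$ is strictly increasing, and $E(1)>E(0)$ --- contradicting the preliminary fact. (As in Theorem~\ref{thm3.1}, if instead $w\equiv0$ then $\rho\equiv1$, which is not a solution.)

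\textbf{Case $\tau<\frac13$ (phase-plane analysis).} The energy identity above carries no information on $\tau$ (the damping term always integrates to zero), so one must track the trajectory directly. Pass to the variables $n=\rho-1$, $F=E-\frac1{\tau\rho}$ of \eqref{transformation}--\eqref{new1.5}; by the preliminary fact, sharpened through the local behaviour of $w=(\rho-1)^2$ at the endpoints (Proposition~\ref{local-analysis-super}), the solution corresponds to a trajectory of \eqref{new1.5} meeting the line $n=0$ at $(0,F(0))$ and $(0,F(1))$ with $F(0)\ge0\ge F(1)$ while $n<0$ in between. One then reruns the comparison argument of Lemmas~\ref{new-lem1} and \ref{new-lem1-2}: choosing a multiplier $\beta=\beta(\tau,b)$ and using that $\tau<\frac13\le\frac1{3\sqrt{b(x)}}$ (valid since $b\le\overline b\le1$), one shows that $F^2-\beta^2\Xi^2$ keeps a fixed sign along the trajectory, where $\Xi$ is the critical-point curve \eqref{new4}; since $\Xi\to-\infty$ as $n\to-1^+$, this forces $F(n)\to-\infty$, i.e. $\rho\to0^+$, so the trajectory cannot return to $n=0$ at a finite point --- contradicting that $F(1)$ is finite.

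\textbf{Main obstacle.} The small-$\overline b$ case is routine once the boundary behaviour of $E$ is pinned down. The genuine difficulty is the small-$\tau$ case: unlike Section~2.4, the doping profile is now variable, so $\Xi=\Xi(n;x)$ is no longer a fixed curve and the monotonicity/comparison estimates of Lemmas~\ref{new-lem1}--\ref{new-lem1-2} must be redone using only the pointwise bounds $\underline b\le b(x)\le\overline b$; in addition the mixed region $n<0$, $F>0$ adjacent to $x=0$ is not covered verbatim by either lemma and requires a separate, careful argument along the lines of \eqref{new3}--\eqref{new4}.
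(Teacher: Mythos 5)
Your argument for the first case (small $\overline{b}$) is correct, and it takes a genuinely different route from the paper: you test \eqref{weak-solution} with $\varphi=(\rho-1)^2$ over the whole interval and combine the resulting a priori bound $1-\rho\le\frac14$ with the boundary observation $E(1)\le\frac1\tau\le E(0)$ to contradict $E_x=\rho-b>0$. The paper instead works with $u=1/\rho$, multiplies the momentum equation by $\bigl((u-1)^2\bigr)_x$ and integrates from the interior maximum point $\hat y$ of $u$ to $x=1$, obtaining the identity \eqref{3.1}; dropping the (nonpositive) critical-point term and running Young/Poincar\'e gives $u\le1+\sqrt{2\overline b}$ and hence $\bigl(b-\frac1u\bigr)(u-1)^2<0$ exactly under $\overline b(1+\sqrt{2\overline b})<1$. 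Your version is arguably cleaner and in fact proves slightly more than the stated hypothesis requires (your constants force only $\overline b<3/4$, so the word ``precisely'' in your write-up is inaccurate but harmless).

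The second case ($\tau<\frac13$) contains a genuine gap. You assert that ``the energy identity carries no information on $\tau$'' and therefore switch to a phase-plane comparison in the variables $(n,\F)$ which you do not carry out — and whose obstacles (variable $b$, the sign change of $\Xi$ when $b\le1$, the mixed region $n<0$, $\F>0$) you yourself list as unresolved. The premise is wrong: the damping term only telescopes to zero when you integrate over all of $(0,1)$. The paper's entire small-$\tau$ argument is the \emph{same} energy identity as in the first case, anchored at the interior critical point. At the maximum point $\hat y$ of $u=1/\rho$ one has $u_x(\hat y)=0$, so the first equation of \eqref{4.1} forces $E(\hat y)=\frac{u(\hat y)}{\tau}$; integrating $\bigl(u-\frac1u\bigr)u_x=E-\frac u\tau$ against $\bigl((u-1)^2\bigr)_x$ over $(\hat y,1)$ and using $u\bigl((u-1)^2\bigr)_x=\frac13\bigl((u-1)^2(2u+1)\bigr)_x$ produces the boundary contribution $-\frac{(u(\hat y)-1)^3}{3\tau}$, which is exactly where the relaxation time enters. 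Since $b\le1$ and $u\le u(\hat y)$ give $\int_{\hat y}^1\bigl(b-\frac1u\bigr)(u-1)^2\,dx\le(u(\hat y)-1)^3$, the right-hand side of \eqref{3.1} is at most $\bigl(1-\frac1{3\tau}\bigr)(u(\hat y)-1)^3<0$ for $\tau<\frac13$, contradicting the nonnegativity of the left-hand side. Without this critical-point anchoring (or a completed version of your phase-plane comparison), the small-$\tau$ half of the theorem is not proved.
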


\begin{proof}

Assume that $\rho(x)$ is an interior supersonic solution of
\eqref{elliptic} satisfying Definition \ref{definition-1}. The
velocity $u(x)=\dfrac{1}{\rho(x)}$ satisfies
\begin{equation} \label{4.1}
\left \{\begin{array}{ll}
        \left(u-\dfrac{1}{u}\right)u_x=E-\dfrac{u}{\tau},\\
         E_x=\dfrac{1}{u}-b(x).
        \end{array} \right.
\end{equation}
Because $u\in C[0,1]$, there exists a maximal point denote by
$\hat{y}$ such that $u(x)\leq u(\hat{y})$ for any $x\in[0,1]$. At
$\hat{y}$, the first equation of \eqref{4.1} gives
\begin{equation} \label{4.2}
E(\hat{y})=\frac{u(\hat{y})}{\tau}.
\end{equation}
Multiplying the first equation of \eqref{4.1} by $((u-1)^2)_x$,
integrating the resultant equation over $(\hat{y},1)$, using the
second equation of \eqref{4.1}, and noting
\[
u((u-1)^2)_x=\frac{1}{3}((u-1)^2(2u+1))_x,
\]
we obtain
\begin{equation}\label{3.1}
\begin{split}
&\int_{\hat{y}}^1\frac{u(x)+1}{2u(x)}|[(u(x)-1)^2]_x|^2dx
\\&= \int_{\hat{y}}^1\left(b(x)-\dfrac{1}{u(x)}\right)(u(x)-1)^2dx
-(u(\hat{y})-1)^2\left(E(\hat{y})-\frac{2u(\hat{y})+1}{3\tau}\right)\\
&= \int_{\hat{y}}^1\left(b(x)-\dfrac{1}{u(x)}\right)(u(x)-1)^2dx
-\frac{(u(\hat{y})-1)^3}{3\tau},\end{split}\end{equation} where we
have used \eqref{4.2} in the second equality.

In the case $b(x)\ll 1$, since $u(\hat{y})>1$, we get from
\eqref{3.1} that
\begin{equation}\label{3.5}
\begin{split}
&\int_{\hat{y}}^1\frac{u(x)+1}{2u(x)}|[(u(x)-1)^2]_x|^2dx \\
&\leq \int_{\hat{y}}^1\left(b(x)-\dfrac{1}{u(x)}\right)(u(x)-1)^2dx\\
&\leq \int_{\hat{y}}^1b(x)(u(x)-1)^2dx\\
&\leq\frac{1}{4}\int_{\hat{y}}^1(u(x)-1)^4dx
+ \int_{\hat{y}}^1b^2(x)dx\\
&\leq\frac{1}{4}\int_{\hat{y}}^1|[(u(x)-1)^2]_x|^2dx
+ \overline{b}^2 .\end{split}\end{equation} Here we
have used
\begin{equation}\label{3.6}
\int_{y}^1(u(x)-1)^4dx\leq\int_{y}^1|[(u(x)-1)^2]_x|^2dx\ \
\text{for } y\in(0,1).\end{equation} Then \eqref{3.5} gives
\[\int_{y}^1|[(u(x)-1)^2]_x|^2dx\leq4\bar{b}^2,\] and further
\begin{equation*}
u(x)\leq1+(2\overline{b} )^{1/2} \text{ on } [\hat{y},1].
\end{equation*}
It then follows that
\begin{equation*}
\left(b(x)-\dfrac{1}{u(x)}\right)(u(x)-1)^2\leq\left(\overline{b}-\dfrac{1}{1+(2\overline{b} )^{1/2}}\right)(u(x)-1)^2\
\ \text{for any }x\in[\hat{y},1].
\end{equation*}
Thus, when $\overline{b}$ is small enough such that
$\overline{b}-\dfrac{1}{1+(2\overline{b} )^{1/2}}<0$, we get from
the first inequality of \eqref{3.5} that
\begin{equation}\label{eqn-3.7}
\begin{split}
0\leq\int_{\hat{y}}^1\frac{u+1}{2u}|[(u-1)^2]_x|^2dx
\leq \left(\overline{b}-\dfrac{1}{1+(2\overline{b} )^{1/2}}\right)\int_{\hat{y}}^1(u-1)^2dx<0,
\end{split}\end{equation}
which is a contradiction.

In the case $\tau\ll1$, since $b\leq1$ and $1\leq u\leq u(\hat{y})$,
it follows from \eqref{3.1} that
\begin{equation*}
\int_{\hat{y}}^1\frac{u+1}{2u}|[(u-1)^2]_x|^2dx
\leq \int_{\hat{y}}^1\frac{(u-1)^3}{u}dx-\frac{(u(\hat{y})-1)^3}{3\tau}
\leq\left( -\frac{1}{3\tau}\right)(u(\hat{y})-1)^3.\end{equation*}
Thus, when $\tau$ is small such that $\tau<\frac{1}{3}$, we
get a contradiction. Therefore,  no interior supersonic solutions exist. The
proof is complete.
\end{proof}

\begin{theorem}\label{3.3}  No transonic solution  to system \eqref{1.5}-\eqref{boundary} exists, when the doping profile $b(x)$ is small such that
$\bar{b}(1+\sqrt{2\bar{b}})<1$,
 or the relaxation time $\tau$ is small such that $\tau<\frac{1}{3}$.
 \end{theorem}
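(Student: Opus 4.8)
A transonic solution of \eqref{1.5}--\eqref{boundary} consists, by Definition \ref{definition-2}, of a supersonic branch $(\rho_{sup},E_{sup})$ on $[0,x_0]$ with $\rho_{sup}(0)=1$, $0<\rho_{sup}\le1$, and a subsonic branch $(\rho_{sub},E_{sub})$ on $[x_0,1]$ with $\rho_{sub}(1)=1$, $\rho_{sub}\ge1$, the two being joined at $x_0$ either $C^1$-smoothly or through a shock. The plan is to exclude both. For a $C^1$ transonic solution, $\rho_{sup}(x_0)=\rho_{sub}(x_0)$ together with $\rho_{sup}\le1\le\rho_{sub}$ forces the common value to be $1$, so $\rho_{sup}$ is an interior supersonic solution of \eqref{elliptic} on $[0,x_0]$. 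The proof of Theorem \ref{3.2} uses only that $u=1/\rho$ equals $1$ at the right endpoint (so the sole surviving boundary term sits at the interior maximum of $u$) and that the underlying interval has length $\le1$ (for the Poincar\'e-type inequalities); both hold on $[0,x_0]$, so that argument applies verbatim with $[0,1]$ replaced by $[0,x_0]$ and yields a contradiction under either hypothesis.

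The main case is the transonic shock. Set $\rho_l=\rho_{sup}(x_0^-)\in(0,1)$, $\rho_r=\rho_{sub}(x_0^+)>1$ and $E_*=E_{sup}(x_0^-)=E_{sub}(x_0^+)$; the Rankine--Hugoniot conditions give $\rho_l\rho_r=1$ (see \eqref{2.24}), hence $\rho_l+\rho_r>2$. First I establish $E_*<1/\tau$: on $(x_0,1)$ one has $\rho_{sub}-b\ge\rho_{sub}-1>0$, so $E_{sub}$ is strictly increasing and $E_*<E_{sub}(1^-)$; and passing to the limit $x\to1^-$ in the first equation of \eqref{1.5} — using that $\rho_{sub}>1$ on $(x_0,1)$ with $\rho_{sub}(1)=1$ forces $(\rho_{sub})_x<0$ along a sequence tending to $1$ — gives $E_{sub}(1^-)\le1/\tau$. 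Next I set up an energy identity. Working with $u=1/\rho$ and the system \eqref{4.1}, I multiply the first equation by $\big[(u-1)^2\big]_x$, use $u\big[(u-1)^2\big]_x=\tfrac13\big[(u-1)^2(2u+1)\big]_x$ together with the second equation of \eqref{4.1}, and integrate over $[0,x_0]$ on the supersonic branch and over $[x_0,1]$ on the subsonic branch. The boundary contributions at $0$ and $1$ vanish since $u=1$ there, while those at $x_0$ combine — after invoking $\rho_l\rho_r=1$ — into
\[
(\rho_r-\rho_l)(\rho_r+\rho_l-2)\Big(E_*-\frac{2(\rho_r+\rho_l)+1}{3\tau}\Big),
\]
which is strictly negative, because $\rho_r+\rho_l>2$ yields $\tfrac{2(\rho_r+\rho_l)+1}{3\tau}>\tfrac1\tau>E_*$.

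From this identity I finish differently in the two regimes. When $\tau<1/3$, I shrink the supersonic interval of integration: a point $\hat y$ at which $u=1/\rho_{sup}$ attains its maximum over $[0,x_0]$ must lie in $(0,x_0)$, since $u(0)=1<\rho_r=u(x_0^-)$ rules out the left endpoint, while $u_x(x_0^-)\ge0$ at a right-endpoint maximum would give $(\rho_{sup})_x(x_0^-)\le0$ and hence, via the first equation of \eqref{1.5}, $E_*\ge1/(\tau\rho_l)=\rho_r/\tau>1/\tau$, contradicting $E_*<1/\tau$. Thus $u_x(\hat y)=0$, so $E(\hat y)=u(\hat y)/\tau$, and $1\le u\le u(\hat y)$ on $[\hat y,x_0]$ with $u(\hat y)\ge\rho_r>1$. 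Redoing the integration over $(\hat y,x_0)$ and $(x_0,1)$ now keeps the extra term $-\tfrac{(u(\hat y)-1)^3}{3\tau}$; bounding $\int_{\hat y}^{x_0}(b-1/u)(u-1)^2\,dx\le\int_{\hat y}^{x_0}\tfrac{(u-1)^3}{u}\,dx<(u(\hat y)-1)^3$ (using $b\le1$, $1\le u\le u(\hat y)$, and that $[\hat y,x_0]$ has length $<1$), and noting that the $x_0$ boundary term and $\int_{x_0}^1(b-1/u)(u-1)^2\,dx$ are both negative, one obtains
\[
0\le\int_{\hat y}^1\frac{u+1}{2u}\big|[(u-1)^2]_x\big|^2\,dx<(u(\hat y)-1)^3\Big(1-\frac1{3\tau}\Big)<0,
\]
a contradiction. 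When $\bar{b}(1+\sqrt{2\bar{b}})<1$, I keep the identity over all of $[0,1]$, discard the negative boundary term and the negative subsonic contribution, and use $b-1/u\le\bar{b}$ on $[0,x_0]$ together with the Poincar\'e-type bound $\int_0^{x_0}(u-1)^2\,dx\le\|[(u-1)^2]_x\|_{L^2(0,x_0)}$ (valid since $(u-1)^2$ vanishes at $0$ and $x_0\le1$) to reach $\tfrac12\|[(u-1)^2]_x\|_{L^2(0,x_0)}^2\le\bar{b}\,\|[(u-1)^2]_x\|_{L^2(0,x_0)}$; this forces $u\le1+\sqrt{2\bar{b}}$ on $[0,x_0]$, so $b-1/u\le\bar{b}-\tfrac1{1+\sqrt{2\bar{b}}}<0$ there, whence both $\int_0^1(b-1/u)(u-1)^2\,dx$ and the boundary term are negative, making the right side of the identity negative while its left side is nonnegative — again a contradiction.

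The principal difficulty, absent from Theorems \ref{thm3.1}--\ref{3.2}, is the non-trivial boundary data carried by the two branches at the shock $x_0$. What makes the argument work is the Rankine--Hugoniot relation $\rho_l\rho_r=1$, which amalgamates the two $x_0$-boundary terms into a single expression proportional to $E_*-\tfrac{2(\rho_r+\rho_l)+1}{3\tau}$, and the one-sided estimate $E_*<1/\tau$ inherited from the subsonic branch, which fixes the sign of that expression.
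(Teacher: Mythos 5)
Your proof is correct, and at its core it is the paper's argument: the same multiplier $\big[(u-1)^2\big]_x$ applied to \eqref{4.1}, the same preliminary fact $E_*<1/\tau$ (the paper's \eqref{3.7}, which you derive slightly more directly by letting $x\to1^-$ in the first equation of \eqref{1.5} rather than by the paper's monotonicity contradiction), the same Young--Poincar\'e bootstrap giving $u\le 1+\sqrt{2\bar b}$ in the small-$\bar b$ case, and the same use of an interior critical point $\hat y$ of $u$ on the supersonic branch (where $E(\hat y)=u(\hat y)/\tau$) in the small-$\tau$ case. The one structural difference is where you integrate: the paper integrates only over (a portion of) the supersonic branch, where the single interface term $(u_l-1)^2\big(E_l-\tfrac{2u_l+1}{3\tau}\big)<-\tfrac{2(u_l-1)^3}{3\tau}$ is already negative by itself since $u_l=\rho_r>1$ and $E_l<1/\tau$, so the subsonic branch never enters the energy identity. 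You integrate over both branches, which forces you to handle the subsonic-side interface term $-(u_r-1)^2\big(E_*-\tfrac{2u_r+1}{3\tau}\big)$ (not negative on its own, since $u_r=\rho_l<1$); your factorization of the sum into $(\rho_r-\rho_l)(\rho_r+\rho_l-2)\big(E_*-\tfrac{2(\rho_r+\rho_l)+1}{3\tau}\big)$ via $\rho_l\rho_r=1$ is correct and rescues the sign, but it is extra algebra the paper avoids. Two genuine improvements on your side: you explicitly dispose of the $C^1$-transonic possibility by reducing it to Theorem \ref{3.2} on $[0,x_0]$ (the paper's proof tacitly treats only the shock case), and you justify why the maximum of $u$ on the supersonic branch cannot sit at $x_0$.
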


\begin{proof}

Suppose that $(\rho,E)$ is a transonic solution separated by a point
$y_0\in(0,1)$ in the form
\begin{equation*}
\rho(x)=\left \{\begin{array}{ll}
        \rho_{sup}(x), \ x\in(0,y_0),\\
        \rho_{sub}(x), \ x\in(y_0,1),
        \end{array} \right.
\end{equation*}
and
\[
\rho_l\rho_r=1, \ E_l=E_r\ \text{with}\ \rho_l<1\ \text{and}\
\rho_r>1.
\]
We first claim
\begin{equation}\label{3.7}
E_l=E_r<\frac{1}{\tau}.
\end{equation}
In fact, if $E_r\geq\frac{1}{\tau}$, noting the
second equation of \eqref{1.5} gives
\[
(E_{sub})_x(x)= (\rho_{sub}-b)> (1-b)\geq0,
\]
i.e. $E_{sub}$ is monotone increasing, we have
\[
E_{sub}(x)\geq E_r\geq\frac{1}{\tau},\ \text{ and } \
\rho_{sub}(x)E_{sub}(x)-\frac{1}{\tau}>E_r-\frac{1}{\tau}\geq0,
\text{ on }(y_0,1),
\]
which in combination with the first equation of \eqref{1.5} further
gives $(\rho_{sub})_x(x)>0$ on $(y_0,1)$. Thus, $1<\rho_r<\rho$ over
$(y_0,1)$, which contradicts to $\rho_{sub}(1)=1$. Hence \eqref{3.7}
holds.

In the case $b(x)\ll 1$, multiplying the first equation of
\eqref{4.1} by $((u-1)^2)_x$ and integrating the resultant equation
over $(0,y_0)$, as in \eqref{3.1}, we get
\begin{equation*}
\begin{split}
&\int_{0}^{y_0}\frac{u(x)+1}{2u(x)}|[(u(x)-1)^2]_x|^2dx\\
&= \int_{0}^{y_0}\left(b(x)-\frac{1}{u(x)}\right)(u(x)-1)^2dx
+(u_l-1)^2\left(E_l-\frac{2u_l+1}{3\tau}\right)\\
&< \int_{0}^{y_0}\left(b(x)-\frac{1}{u(x)}\right)(u(x)-1)^2dx
-\frac{2(u_l-1)^3}{3\tau}\\
&< \int_{0}^{y_0}b(x)(u(x)-1)^2dx,\end{split}\end{equation*}
where we have used \eqref{3.7} in the first inequality. Thus, as in
\eqref{3.5}-\eqref{eqn-3.7}, when $\overline{b}$ is small enough
such that
$\overline{b}-\dfrac{1}{1+(2\overline{b} )^{1/2}}<0$, we get
the contradiction
\begin{equation*}
\int_{0}^{y_0}\frac{u+1}{2u}|[(u-1)^2]_x|^2dx <0.\end{equation*}

In the case $\tau\ll1$, since $\rho_l<1$, by \eqref{3.7} we get
$\rho_lE_l-1/\tau<0$. Thus, $\underset{x\rightarrow
y_0^-}{\lim}(\rho_{sup})_x(x)
=(1-1/\rho_l^2)^{-1}(\rho_lE_l-1/\tau)>0$. It is then easy to see
that $\rho_{sup}(x)$ attains a local minimal point on $(0,y_0)$.
Denote by $\breve{y}$ the last local minimal point of
$\rho_{sup}(x)$ on $(0,y_0)$, then $(\rho_{sup})'_x(\breve{y})=0$.
Set $u(x):=\frac{1}{\rho_{sup}(x)}$, then $u_x(\breve{y})=0$ and
$u_l=\frac{1}{\rho_l}>1$. Hence by the first equation of
\eqref{4.1}, we also get \eqref{4.2} at $\breve{y}$. Multiplying the
first equation of \eqref{4.1} by $((u-1)^2)_x$ and integrating the
resultant equation over $(\breve{y},y_0)$, as shown in \eqref{3.1},
using \eqref{4.2}, we get
\begin{equation*}
\begin{split}
&\int_{\breve{y}}^{y_0}\frac{u(x)+1}{2u(x)}|[(u(x)-1)^2]_x|^2dx
\\&= \int_{\breve{y}}^{y_0}\left(b(x)-\frac{1}{u(x)}\right)[u(x)-1]^2dx
+(u_l-1)^2\left(E_l-\frac{2u_l+1}{3\tau}\right)\\
&\ \ \ -(u(\breve{y})-1)^2\left(E(\hat{y})-\frac{2u(\breve{y})+1}{3\tau}\right)\\
&= \int_{\breve{y}}^{y_0}\left(b(x)-\frac{1}{u(x)}\right)(u(x)-1)^2dx
+(u_l-1)^2\left(E_l-\frac{1}{\tau}\right)-\frac{2(u_l-1)^3}{3\tau}
-\frac{(u(\breve{y})-1)^3}{3\tau}\\
&\leq \int_{\breve{y}}^{y_0}[u(x)-1]^3dx
-\frac{1}{3\tau}((u_l-1)^3+(u(\breve{y})-1)^3),\end{split}\end{equation*}
where we have used $b\leq1$ and \eqref{3.7} in the inequality.
Noting
\[
\max_{x\in[\breve{y},y_0]}[u(x)-1]^3=\max\{(u_l-1)^3,(u(\breve{y})-1)^3\}=:K,\]
we further have
\begin{equation*}
\int_{\breve{y}}^{y_0}\frac{u+1}{2u}|[(u-1)^2]_x|^2dx
\leq-\frac{K}{3\tau}<0 \  \text{ if }\
\tau<\frac{1}{3}.\end{equation*} We thus get a contradiction.
\end{proof}

\subsection{Existence of interior supersonic/transonic solutions}

In this subsection, we prove the existence of supersonic/transonic
solutions when the doping profile is close to the sonic line and the
relaxation time is large. The approach adopted is still the
compactness technique.

\begin{theorem}\label{thm3.4} There exists at least one interior supersonic solution  to  system \eqref{1.5}-\eqref{boundary} satisfying $\rho\in C^{\frac{1}{2}}[0,1]$ and the optimal estimate \eqref{2.2-3}, when $b(x)$ is
close to the sonic boundary $1$ and the relaxation time is large $\tau\gg1$.
\end{theorem}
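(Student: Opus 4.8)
The plan is to combine the phase--plane analysis with the same compactness technique that produced the interior supersonic solution in Theorem~\ref{thm2}, the main structural change being that the critical point $A=\big(b,\tfrac{1}{\tau b}\big)$ of \eqref{1.5} now lies in the supersonic regime and, by \eqref{eigenvalue}, is a \emph{stable focus} when $\tau\gg1$ (the eigenvalues of $J(A)$ being complex with negative real part). First I would regularize the degenerate problem \eqref{elliptic}: for $0<\delta\ll1$ consider \eqref{1.5} on an interval $[0,L]$ with the slightly supersonic Dirichlet data $\rho(0)=\rho(L)=1-\delta$ (as in \eqref{2.35}; a $k$--regularization in the spirit of \eqref{supersonic equation} could be used instead, but its comparison step must be reworked since $\rho-b$ is no longer sign definite). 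Phase--plane analysis about the focus shows that, when $b$ is close to the sonic line, the outermost trajectory leaving the line $\rho=1-\delta$ stays in $\{0<\rho<1\}$, has a single minimum, and returns to $\rho=1-\delta$, giving a supersonic solution on some interval; a shooting argument in the length $L$ and in $E(0)$, in the spirit of Lemma~\ref{lem4} and Steps~1--6 of the proof of Theorem~\ref{thm3}, then yields such a solution on an interval of length \emph{exactly} $1$, using $\tau\gg1$ and $b$ near $1$.

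For the uniform estimates I would multiply the first equation of \eqref{1.5} by $\big((1-\delta-\rho)^2\big)_x$ and integrate, as in \eqref{2.5}, \eqref{2.11}, \eqref{2.22}. Every term has the expected sign except $\int(\rho-b)(\,\cdot\,)$: in Theorem~\ref{thm2} the chain $\rho\le1<\underline b\le b$ forced $\rho-b<0$, and it is exactly the loss of this sign that produces the non--existence Theorems~\ref{3.2}--\ref{3.3}. To recover the bound I would write $\rho-b=(\rho-1)+(1-b)$ and absorb the $(1-b)$--term by Young's inequality using $\|1-b\|_{L^\infty}\ll1$, the damping term being negligible since $\tau\gg1$. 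This gives $\|(1-\rho^\delta)^{3/2}\|_{H^1}\le C$ and $\|(1-\delta-\rho^\delta)^2\|_{H^1}\le C$ uniformly in $\delta$, so that through $H^1\hookrightarrow C^{1/2}$ and as $\delta\to0^+$ a subsequence converges to a function $\rho_{sup}$ satisfying \eqref{weak-solution} with $(1-\rho_{sup})^2\in H^1_0(0,1)$ and $\rho_{sup}\in C^{1/2}[0,1]$. The strict interior supersonicity $\rho_{sup}<1$ on $(0,1)$ and the single--critical--point property follow from the barrier and integration arguments of Theorem~\ref{thm2}(i)--(ii), again using $\|1-b\|_{L^\infty}$ small when integrating \eqref{elliptic} near a critical point.

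For the optimal boundary estimate \eqref{2.2-3} near $x=0$ I would argue as in Proposition~\ref{local-analysis-super}: first show $E(0)>\tfrac{1}{\tau}$ (if $E(0)\le\tfrac1\tau$, integrating $E_x=\rho-b$ over a small $[0,\hat\epsilon]$, on which $\rho$ is near $1$ and $b$ lies below the sonic line, would force $\rho E-\tfrac{1}{\tau\rho}<0$ there, hence $\rho_x\ge0$ up to $x=0$, contradicting $\rho(0)=1$ with $\rho<1$ nearby); then the first equation of \eqref{1.5}, written as $\tfrac{\rho+1}{2\rho^3}\big[(\rho-1)^2\big]_x=E-\tfrac{1}{\tau\rho}$, yields a two--sided bound for $\big[(1-\rho)^2\big]_x$ on a right neighbourhood of $0$, whose integration gives \eqref{2.2-3}. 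The step I expect to be the genuine obstacle is the phase--plane/shooting argument forcing the supersonic trajectory to close on an interval of length precisely $1$, together with the $\delta$--uniform $H^1$ bound: these are the only places where the closeness of $b$ to the sonic line and the largeness of $\tau$ are essential, and by Theorems~\ref{3.2}--\ref{3.3} neither hypothesis can be dropped.
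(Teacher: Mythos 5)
Your proposal follows essentially the same route as the paper: regularize with the slightly supersonic boundary data $\rho(0)=\rho(L)=1-\delta$, treat \eqref{1.5} as a perturbation of the $\tau=\infty$, $b\equiv1$ system via Lemma \ref{lem4} and the shooting/continuation-in-$L$ machinery of Theorem \ref{thm3}, obtain $\delta$-uniform $H^1$ bounds on $(1-\delta-\rho^\delta)^2$, and pass to the limit $\delta\to0^+$, with the Proposition \ref{local-analysis-super}-type local analysis at $x=0$ giving \eqref{2.2-3}. The only (harmless) deviation is in the energy estimate: you absorb the sign-indefinite term using $\|1-b\|_{L^\infty}\ll1$, whereas the paper needs only $b\le1$ there, handling $\int(b-\tfrac1{u^\delta})(u^\delta-\tfrac1{1-\delta})^2$ by Young's and Poincar\'e's inequalities and reserving the closeness of $b$ to the sonic line for the Gronwall comparison in the shooting step.
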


\begin{proof}

The proof is similar to that of Theorem \ref{thm3}.

\emph{Step 1.} We first consider the Euler-Poisson equations without
the semiconductor effect:
\begin{equation} \label{3.8}
\left \{\begin{array}{ll}
        \left(1-\dfrac{1}{\rho^2}\right)\rho_x=\rho E,\\
         E_x=\rho-1,\\
        \rho(0)=\rho(L)=1-\delta, \ \mbox{ (supersonic boundary),}
        \end{array} \right.
\end{equation}
where $L\geq\frac{1}{4}$ is the parameter of length and $\delta>0$
is a small constant. Taking $\underline{b}=1$ in Lemma \ref{lem4},
one can see that \eqref{3.8} has a supersonic solution
$(\rho_L,E_L)(x)$ satisfying
\begin{equation}\label{3.13-2}
\beta(L)\leq\underline{\rho}\leq\gamma(L), \ E_L(0)\geq
\sqrt{f(\gamma(L)) }>0,
\end{equation}
where $\underline{\rho}:=\underset{x\in[0,L]}{\min}\rho_L(x)$.

\emph{Step 2.} Let $\eta$ be a small number to be determined such that $\delta<\eta\ll
1$. Denote by $(\rho_1,E_1)(x)$ the solution of \eqref{3.8} with
$L=\frac{1}{2}$. Now let us consider the ODE system with the
semiconductor effect $-\frac{1}{\tau}$ and a small perturbation of
the doping profile around the sonic line, i.e. $b(x)=1-\epsilon
e(x)$:
\begin{equation} \label{3.15}
\left \{\begin{array}{ll}
        \left(1-\dfrac{1}{\rho^2}\right)\rho_x=\rho E-\dfrac{1}{\tau},\\
         E_x=\rho-1+\epsilon e(x),\\
        (\rho(0),E(0))=(1-\delta,E_1(0)).
        \end{array} \right.
\end{equation}
Here $\tau\gg1$, $0<\epsilon\ll1$, $0\leq e(x)\in
L^\infty(\mathbb{R}^+)$, and we have extended periodically the
doping profile $b$ to $\mathbb{R}^+$. We claim that there exists a
number $y_1\leq C\eta$ such that $\rho(y_1)=1-\eta$, where $C>0$ is a constant independent of $\tau$, $\epsilon$, $\delta$ and $\eta$.

It is easy to see that if $\tau\geq\frac{4}{E_1(0)}$ and
$\delta\leq\frac{1}{4}$, then the initial data of \eqref{3.15}
satisfies
\[
\rho(0)E(0)-\frac{1}{\tau}=(1-\delta)E_1(0)-\frac{1}{\tau}\geq\frac{E_1(0)}{2}>0.
\]
From the first equation of \eqref{3.15}, we know that $\rho$ is
decreasing in a neighborhood of $0$. If $\rho$ keeps decreasing on
$[0,x]$, then
\begin{equation} \label{eqn-3.12}
E(x)=E_1(0)+ \int_0^{x}(\rho-1+\epsilon e(s))ds\geq
E_1(0)-x,\end{equation} which indicates that if
\begin{equation}\label{eqn-3.13}
x\leq\frac{ E_1(0)}{4},\end{equation} then
\[E(x)\geq
E_1(0)-\frac{E_1(0)}{4}=\frac{3E_1(0)}{4}.\] We next prove that if
$\rho$ keeps decreasing, denoting by $y_1$ the first number that
$\rho$ attains $1-\eta$, then $y_1\leq C\eta^2$ for some constant $C>0$. In fact, observing
that
\begin{equation*}
\rho_x=\frac{\rho
E-\frac{1}{\tau}}{1-\frac{1}{\rho^2}}=\frac{\rho^2(\rho
E-\frac{1}{\tau})}{\rho^2-1}\leq\frac{\rho^3
E}{\rho^2-1}\leq-\frac{3(1-\eta)^3E_1(0)}{4\eta(2-\eta)}\leq-\frac{E_1(0)}{16\eta},
\text{ if }\eta\leq\frac{1}{2}.
\end{equation*}
Thus,
\[y_1=\frac{\delta-\eta}{\int_0^1\rho_x(sy_1)ds}\leq\frac{16\eta^2}{E_1(0)}.\]
Hence, if $\eta\leq\frac{ E_1(0)}{8}$, then
\eqref{eqn-3.13} holds, and $\rho$ keeps decreasing and attains
$1-\eta$ at $y_1$ with $y_1\leq\frac{16\eta^2}{E_1(0)}$. By
\eqref{eqn-3.12},
\begin{equation}\label{3.25}
E_1(0)-C\eta^2\leq E(y_1)\leq E_1(0)+C\eta^2.
\end{equation}

\emph{Step 3.} Now let us reconsider the ODE system without the
semiconductor effect
\begin{equation} \label{eqn-3.15}
\left \{\begin{array}{ll}
        \left(1-\dfrac{1}{\hat{\rho}^2}\right)\hat{\rho}_x=\hat{\rho} \hat{E},\\
         \hat{E}_x=\hat{\rho}-1,\\
        (\hat{\rho}(0),\hat{E}(0))=(1-\delta,\hat{E}_0).
        \end{array} \right.
\end{equation}
Taking $b=1$ in step 2 in the proof of Theorem \ref{thm3}, we know
that there exist $\hat{E}_0\in(\frac{E_1(0)}{2},2E_1(0))$ and
$y_2\leq C\eta^2$ such that \eqref{eqn-3.15} has a supersonic
solution $(\hat{\rho},\hat{E})$ satisfying
\begin{equation}\label{eqn-3.17}
\hat{\rho}(y_2)=1-\eta, \ \hat{E}(y_2)=E(y_1), \
E_1(0)-C\eta^2\leq\hat{E}(y_2)\leq E_1(0)+C\eta^2.\end{equation}
Here $E$ and $y_1$ are given by step 2. Moreover, the length
$\hat{L}$ of the solution of \eqref{eqn-3.15} with initial boundary
data $(\hat{\rho}(0),\hat{E}(0))=(1-\delta,\hat{E}_0)$,
$\hat{\rho}(\hat{L})=1-\delta$ satisfies
\[\frac{1}{4}\leq\hat{L}\leq\frac{3}{4}.\]

\emph{Step 4.} Set
$(\bar{\rho},\bar{E})(x):=(\hat{\rho},\hat{E})(x-y_1+y_2)$, then
$(\bar{\rho},\bar{E})$ satisfies \eqref{3.8} with initial-boundary
data
\[(\bar{\rho},\bar{E})(y_1)=(1-\eta,\hat{E}(y_2))=(\rho,E)(y_1) \ \text { and } \bar{\rho}(y_3)=1-\eta \]
with $y_3:=\hat{L}+y_1-2y_2$. As in step 3 of the proof of Theorem
\ref{thm3}, when $\tau\gg1$ and $0<\epsilon\ll1$ such that
$C(\frac{1}{\tau^2}+\epsilon^2)e^{C/\eta^2}\leq1/4$, system
\eqref{3.15} has a unique solution $(\rho,E)$ on $[0,y_3]$
satisfying
\begin{equation}\label{3.29}
\rho(y_3)\leq1-\frac{\eta}{2},\ \ E(y_3)\leq E_1(0)+C\eta.
\end{equation}

Now taking $y_3$ as the initial data, as in step 3 of the proof of
Theorem \ref{thm3}, we can extend $(\rho,E)$, the solution of
\eqref{3.15}, to the state $\rho=1-\delta$. Denote by $y_4$ the
number that $\rho(y_4)=1-\delta$, then
\begin{equation}\label{3.30}
\rho(0)=\rho(y_4)=1-\delta,\ E(0)=E_1(0),\ E(y_4)\leq E_1(0)+C\eta.
\end{equation}
Moreover,
\[\frac{1}{4}-C\eta^2\leq y_4\leq\frac{3}{4}+C\eta^2.
\]
Now we take $L=\frac{3}{2}$ in \eqref{3.8} and denote by
$(\rho_2,E_2)$ its solution. Applying a similar argument above, we
know that there exists an interval $[0,y_5]$ with
\[\frac{5}{4}-C\eta^2\leq y_5\leq\frac{7}{4}+C\eta^2,
\]
such that system \eqref{3.15} has a solution on  $[0,y_5]$
satisfying
\begin{equation}\label{3.31}
\rho(0)=\rho(y_5)=1-\delta,\ E(0)=E_2(0),\ E(y_5)\leq -E_2(0)+C\eta.
\end{equation}
Without loss of generality, we assume that $E_1(0)<E_2(0)$, then
when $\eta\ll1$, for any initial data $E_L(0)\in(E_1(0),E_2(0))$,
\eqref{3.15} has a solution. Noting the length parameter $L$ is
continuous with respect to the initial data, system \eqref{3.15} has
a solution on $[0,1]$ satisfying $\rho(0)=\rho(1)=1-\delta$ and
$E(0)\in\left(E_1(0),E_2(0)\right)$.

\emph{Step 5.} For any $\delta>0$, denote by
$(\rho^\delta,E^\delta)$ the solution of \eqref{3.15} with boundary
data $\rho^\delta(0)=\rho^\delta(1)=1-\delta$. The velocity
$u^\delta=1/\rho^\delta$ satisfies
\begin{equation}\label{3.19}
\left(\left(u^\delta-\dfrac{1}{u^\delta}\right)(u^\delta)_x\right)_x+\frac{(u^\delta)_x}{\tau}
- \left(\frac{1}{u^\delta}-b\right)=0, \
u^\delta(0)=u^\delta(1)=\frac{1}{1-\delta}.
\end{equation}
Multiplying \eqref{3.19} by
$\left(u^\delta-\frac{1}{1-\delta}\right)^2$, as in
\eqref{eqn-3.13}, we have
\[
\begin{split}
&\frac{2\delta}{1-\delta}\int_0^1\frac{(u^\delta+1)}{u^\delta}\Big(u^\delta-\frac{1}{1-\delta}\Big)|u^\delta_x|^2dx
+\int_0^1\frac{(u^\delta+1)}{2u^\delta}\left|\Big(\Big(u^\delta-\frac{1}{1-\delta}\Big)^2\Big)_x\right|^2
\\&= \int_0^1\Big(b-\frac{1}{u^\delta}\Big)\Big(u^\delta-\frac{1}{1-\delta}\Big)^2\\&
\leq\frac{1}{2}\int_0^1\Big(u^\delta-\frac{1}{1-\delta}\Big)^4
+\frac{1}{2}\int_0^1\Big(b-\frac{1}{u^\delta}\Big)^2,\\
&\leq\frac{1}{4}\int_0^1\left|\Big(\Big(u^\delta-\frac{1}{1-\delta}\Big)^2\Big)_x\right|^2
+\frac{1}{2}\int_0^1b^2,
\end{split}\]
which gives
\[\Big\|\Big(u^\delta-\frac{1}{1-\delta}\Big)^2\Big\|_{H^1}\leq C,
\]
and hence
\[\|u^\delta\|_{L^\infty}\leq C.
\]
It then follows that
\[\rho^\delta=\frac{1}{u^\delta}\geq\frac{1}{\|u^\delta\|_{L^\infty}}\geq\frac{1}{C}, \text{ and } \big\|\big(1-\delta-\rho^\delta\big)^2\big\|_{H^1}\leq C.
\]
Therefore, there exists a function $\rho^0$ such that, as
$\delta\rightarrow1^+$, up to a subsequence,
\begin{equation}\label{2.22-1}
\begin{split}
&(1-\rho^\delta)^2\rightharpoonup(1-\rho^0)^2 \ \ \text{weakly in }
H^1(0,1),\\&
(1-\rho^\delta)^2\rightarrow(1-\rho^0)^2 \ \ \text{strongly
in } C^{\frac{1}{2}}[0,1].
\end{split}\end{equation} Applying the same procedure as the proof
of Theorem \ref{thm1}, one can show that $\rho^0$ is the supersonic solution of \eqref{elliptic}.
\end{proof}

\begin{theorem}\label{thm3.5} There exist infinitely many transonic
solutions to \eqref{1.5}-\eqref{boundary}, when $b(x)$
is close to the sonic boundary $1$ and $\tau\gg1$.
\end{theorem}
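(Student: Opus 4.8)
The plan is to repeat, almost verbatim, the seven-step construction in the proof of Theorem~\ref{thm3}, but with the two substitutions already used in the proof of Theorem~\ref{thm3.4}: the semiconductor-free building block is now the completely supersonic problem~\eqref{3.8} (so that $\underline b$ is effectively replaced by $1$ in Lemma~\ref{lem4}), and the smallness requirement $\bar b-\underline b\ll1$ is replaced by the smallness $0<\epsilon\ll1$ of the doping perturbation in $b(x)=1-\epsilon e(x)$. First I would fix the lengths $L=\tfrac12$ and $L=\tfrac32$ in~\eqref{3.8}; by Lemma~\ref{lem4} with $\underline b=1$ each admits a symmetric supersonic solution with $\delta$-uniform bounds $\beta(L)\le\min_{[0,L]}\rho_L\le\gamma(L)$ and $E_L(0)\ge\sqrt{f(\gamma(L))}>0$. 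Denote the corresponding initial electric fields by $E_1(0)$ and $E_2(0)$, and set $\Lambda_1:=\sqrt{f(\gamma(1/2))}$.

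Next, for a small parameter $\eta$ with $\delta<\eta\ll1$, I would run the non-degenerate ODE system~\eqref{3.15} with initial data $(1-\delta,E_1(0))$, choosing $\tau\gg1$ so that $\rho(0)E(0)-\tfrac1\tau\ge\tfrac12E_1(0)>0$. As in Steps~1--3 of the proof of Theorem~\ref{thm3} (equivalently Steps~2--4 of the proof of Theorem~\ref{thm3.4}) I would produce: a number $y_1\le C\eta^2$ with $\rho(y_1)=1-\eta$ and $|E(y_1)-E_1(0)|\le C\eta^2$; a shooting match of the corresponding semiconductor-free ODE system giving a symmetric solution $(\hat\rho,\hat E)$ with $(\hat\rho,\hat E)(y_2)=(1-\eta,E(y_1))$ and length $\hat L\in[\tfrac14,\tfrac34]$; and then a comparison of $(\rho,E)$ with the shift $(\bar\rho,\bar E)(x):=(\hat\rho,\hat E)(x-y_1+y_2)$. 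The decisive estimate is the Gronwall bound for $\phi=\bar\rho-\rho$, $\psi=\bar E-E$, which here reads $\phi^2+\psi^2\le C\big[\tfrac{C}{\tau^2}+\epsilon^2\big]\eta^2 e^{C/\eta^2}$; choosing $\tau\gg1$ and $\epsilon\ll1$ so that $\big[\tfrac{C}{\tau^2}+\epsilon^2\big]e^{C/\eta^2}\le\tfrac14$ then yields a supersonic solution of~\eqref{3.15} on $[0,y_3]$, which extends to $[0,y_4]$ with $\rho(0)=\rho(y_4)=1-\delta$ and $\tfrac14-C\eta^2\le y_4\le\tfrac34+C\eta^2$.

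Then, following Step~4 of the proof of Theorem~\ref{thm3}, at the last point $\bar x_0$ with $\rho=1-\eta$ I would truncate the supersonic branch, set $\rho_l=1-\eta$, $\rho_r=1/\rho_l>1$, $E_r=E_l:=E_{sup}(\bar x_0)$, and solve~\eqref{3.15} forward from $(\rho_r,E_r)$; since $\tau\gg1$ forces $\rho_lE_l-\tfrac1\tau<0$ and hence $\rho_rE_r-\tfrac1\tau<0$, the subsonic branch is decreasing and attains $1+\delta$ at some $x_5$ with $\tfrac14-C\eta\le x_5\le\tfrac34+C\eta$. This gives a transonic shock solution of~\eqref{3.15} on $[0,x_5]$ with boundary values $1-\delta,\,1+\delta$, satisfying the entropy condition~\eqref{entropy} and the Rankine--Hugoniot condition~\eqref{RH} at $\bar x_0$. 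Carrying out the same construction with the $L=\tfrac32$ block gives another transonic shock solution on $[0,x_7]$ with $\tfrac54-C\eta\le x_7\le\tfrac74+C\eta$, and continuity of the interval length in the shooting parameter $E_0\in(E_1(0),E_2(0))$ yields, exactly as in Steps~5--6 of the proof of Theorem~\ref{thm3}, a transonic shock solution of~\eqref{3.15} on $[0,1]$ with $\rho(0)=1-\delta$, $\rho(1)=1+\delta$, and a jump at some $x_0^\delta\in(0,1)$.

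Finally I would pass to the limit $\delta\to0^+$: testing the first equation of~\eqref{3.15} against $((1-\delta-\rho^\delta)^2)_x$ on $(0,x_0^\delta)$ and against $((\rho^\delta-1-\delta)^2)_x$ on $(x_0^\delta,1)$, as in Step~7 of the proof of Theorem~\ref{thm3}, produces the $\delta$-uniform bounds $\|(1-\delta-\rho_{sup}^\delta)^2\|_{H^1(0,x_0^\delta)}\le C$ and $\|(\rho_{sub}^\delta-1-\delta)^2\|_{H^1(x_0^\delta,1)}\le C$. Since $\eta>0$ is fixed, the jump locations stay in a compact subinterval of $(0,1)$, so $x_0^\delta\to x_0\in(0,1)$ along a subsequence, and a diagonal argument on $H^1(0,x_0-1/k)$ and $H^1(x_0+1/k,1)$ yields a limit $(\rho_{trans},E_{trans})$ solving~\eqref{1.5}-\eqref{boundary} with the sonic boundary, the entropy and Rankine--Hugoniot conditions at $x_0$ being preserved because $\rho_l=1-\eta$ and $\rho_r=1/(1-\eta)$ stay away from $1$. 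Since $\tau$ and $\epsilon$ depend only on $(E_1(0),E_2(0),\eta)$ and $\eta$ only on $(E_1(0),E_2(0))$, there is $\eta_0>0$ such that every $\eta\in(0,\eta_0)$ produces a transonic shock solution jumping at $\rho_l=1-\eta$; distinct $\eta$ give distinct solutions, whence there are infinitely many. The hard part, as in Theorem~\ref{thm3}, is the simultaneous balancing of the three scales: the Gronwall factor $e^{C/\eta^2}$ forces $\tau$ large and $\epsilon$ small in a way that must still leave the admissible range $(0,\eta_0)$ of $\eta$ nonempty, and one has to verify that the entropy and Rankine--Hugoniot structure survives both the ODE continuation in the length $L$ and the singular limit $\delta\to0^+$.
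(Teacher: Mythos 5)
Your proposal is correct and follows essentially the same route as the paper: the paper's proof of Theorem~\ref{thm3.5} likewise imports the supersonic building block on $[0,y_4]$ from the proof of Theorem~\ref{thm3.4} (itself Steps~1--3 of Theorem~\ref{thm3} with $b=1-\epsilon e(x)$ and the Gronwall factor $C[\tfrac{C}{\tau^2}+\epsilon^2]e^{C/\eta^2}$), then repeats Steps~4--7 of Theorem~\ref{thm3} to build the shock at $\rho_l=1-\eta$, continue in the length parameter, and pass to the limit $\delta\to0^+$ by the diagonal argument. The only cosmetic difference is that you re-derive the building block directly rather than citing Theorem~\ref{thm3.4}; the scale balancing among $\eta$, $\tau$, $\epsilon$ and the preservation of the entropy and Rankine--Hugoniot conditions are handled exactly as in the paper.
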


\begin{proof}

The proof is similar to that of Theorem \ref{thm3}.

\emph{Step 1.} Consider the ODE system \eqref{3.15}, in view of step
4 in the proof of Theorem \ref{thm3.4}, given small constants
$\eta\ll1$ ($\delta<\eta$), $\epsilon\ll1$, $\tau\gg1$, \eqref{3.15}
has a supersonic solution $(\rho,E)$ on $[0,y_4]$ satisfying
\begin{equation*}
\frac{1}{4}-C\eta^2\leq y_4\leq\frac{3}{4}+C\eta^2,\
\rho(0)=\rho(y_4)=1-\delta,\ E(0)=E_1(0), \ E(y_4)\leq-E_1(0)+C\eta,
\end{equation*}
where $E_1$ is the solution of \eqref{3.8} with $L=\frac{1}{2}$.
Setting $\rho_l=1-\eta$ and taking the jump location $\bar{y}_0\in
(0,y_4)$ by the last number when $\rho(\bar{y}_0)=\rho_l$, we focus
this supersonic solution $(\rho_{sup},E_{sup})(x)$  only on
$[0,\bar{y}_0]$. As in step 4 of the proof of Theorem \ref{thm3},
when $\eta\ll1$ such that $(C+E_1(0))\eta\leq\frac{E_1(0)}{2}$ and
$C\eta^2<\frac{E_1(0)}{4}$, then
\[\rho_rE_r-\frac{1}{\tau}\leq-\frac{E_1(0)}{4}<0.\]
From the first equation of \eqref{3.15}, we know such an initial
value problem has a decreasing subsonic solution in a neighborhood
of $\bar{y}_0^+$. We denote this subsonic solution by
$(\rho_{sub},E_{sub})(x)$. If $\rho_{sub}$ keeps decreasing, then
\begin{equation*}
\begin{split}
E_{sub}(x)&=E_r+ \int_{\bar{y}_0}^{x}(\rho_{sub}-1+\epsilon
e(x))dx\\&\leq-E_1(0)+C\eta+ \int_{\bar{y}_0}^{x}(\rho_r-1+\epsilon
e(x))dx\\&\leq-E_1(0)+C\eta+(x-\bar{y}_0)(\frac{\eta}{1-\eta}+\epsilon\|e\|_{L^\infty}),
\end{split}\end{equation*}
which implies that if $C\eta\leq\min(\frac{E_1(0)}{2},\frac{1}{2})$,
$\epsilon\leq1$ and
\begin{equation}\label{3.26}
x-\bar{y}_0\leq\frac{4
}{(1+\|e\|_{L^\infty})E_1(0)},\end{equation} then
\[E_{sub}(x)<-\frac{E_1(0)}{4}<0.\]
We now claim that if $\rho_{sub}$ keeps decreasing, denoting by
$y_6$ the number that $\rho_{sub}$ attains $1+\delta$, then
$y_6-\bar{y}_0\leq C\eta$.

In fact, observing that
\[(\rho_{sub})_x=\frac{\rho_{sub}E_{sub}-\frac{1}{\tau}}{1-\frac{1}{\rho_{sub}^2}}
\leq-\frac{(1-\eta)^2E_1(0)}{4\eta(2-\eta)}<-\frac{(1-\eta)^2E_1(0)}{4\eta},\]
we get
\[y_6-\bar{y}_0=\frac{\delta-\frac{\eta}{1-\eta}}{\int_0^1(\rho_{sub})_x(sy_6+(1-s)\bar{y}_0)ds}
\leq\frac{\eta}{1-\eta}\cdot\frac{4\eta}{E_1(0)(1-\eta)^2}\leq32\eta
\text{ if }\eta<\min(E_1(0),\frac{1}{2}).\] Obviously, if
$\eta<\frac{1}{16(1+\|e\|_{L^\infty})E_1(0)}$, then
\eqref{3.26} holds and $\rho_{sub}$ keeps decreasing and attains
$1+\delta$ at $y_6$. Now we have constructed the transonic solution
to \eqref{1.5} in $[0,y_6]$ with $\frac{1}{4}-C\eta\leq
y_6\leq\frac{3}{4}+C\eta$ as follows
\[
(\rho_{trans},E_{trans})(x)=
\begin{cases}
(\rho_{sup},E_{sup})(x), & x\in[0,\bar{y}_0),\\
(\rho_{sub},E_{sub})(x), & x\in(\bar{y}_0,y_6],
\end{cases}
\]
which satisfies the boundary condition
\[\rho_{sup}(0)=1-\delta, \ \rho_{sub}(y_6)=1+\delta,\]
and the entropy condition at $\bar{y}_0$
\[
0<\rho_{sup}(\bar{y}_0^-)=1-\eta<1<\rho_{sub}(\bar{y}_0^+),
\]
and the Rankine-Hugoniot condition \eqref{RH} at $\bar{y}_0$.

\emph{Step 2.} Denote by $(\rho_2,E_2)$ the solution of  \eqref{3.8}
with $L=\frac{3}{2}$, by step 4 of the proof of Theorem
\ref{thm3.4}, \eqref{3.15} has a supersonic solution $(\rho,E)$ on
$[0,y_7]$ with
\begin{equation*}
\frac{5}{4}-C\eta^2\leq y_7\leq\frac{7}{4}+C\eta^2,\
\rho(0)=\rho(y_7)=1-\delta,\ E(0)=E_2(0), \
E(x_{10})\leq-E_2(0)+C\eta.
\end{equation*}
As in step 1, we may construct another transonic solution for
\eqref{1.5} in the form of
\[
(\rho_{trans},E_{trans})(x)=
\begin{cases}
(\rho_{sup},E_{sup})(x), & x\in[0,\tilde{y}_0),\\
(\rho_{sub},E_{sub})(x), & x\in(\tilde{y}_0,y_7],
\end{cases}
\]
where $\tilde{y}_0\in (0,x_{10})$ and $\frac{5}{4}-C\eta^2\leq
y_7\leq\frac{7}{4}+C\eta^2$ are some determined numbers. This
transonic solution satisfies the boundary condition
\[\rho_{sup}(0)=1-\delta, \ \rho_{sub}(y_7)=1+\delta,\]
the entropy condition at $\tilde{y}_0$
\[
0<\rho_{sup}(\tilde{y}_0^-)=1-\eta<1<\rho_{sub}(\tilde{y}_0^+),
\]
and the Rankine-Hugoniot condition \eqref{RH} at $\tilde{y}_0$.

\emph{Step 3.} Without loss of generality, we assume that
$E_1(0)<E_2(0)$. As in step 6 in the proof of Theorem \ref{thm3},
for any $E_0\in(E_1(0),E_2(0))$, \eqref{3.15} has a transonic
solution on an interval $[0,y_8]$. Applying the continuation
argument in the length of the interval, one can see that for any
$\delta>0$ \eqref{1.5}-\eqref{boundary} has a transonic solution
denote by $(\rho_{trans}^\delta,E_{trans}^\delta)$ on $[0,1]$, and
it satisfies the boundary conditions
\[\rho_{sup}^\delta(0)=1-\delta, \ \rho_{sub}^\delta(1)=1+\delta,\] the entropy condition
\[
0<\rho_{sup}^\delta(y_0^\delta)=1-\eta<1<\rho_{sub}^\delta(y_0^\delta),
\]
and the Rankine-Hugoniot condition \eqref{RH} at a jump location
$y_0^\delta$ in $(0,1)$. Letting $\delta\rightarrow0^+$, applying
the diagonal argument for $(\rho_{trans}^\delta,E_{trans}^\delta)$,
we know that \eqref{1.5}-\eqref{boundary} has a transonic solution
$(\rho_{trans},E_{trans})(x)$ for $x\in [0,1]$ and satisfies the
sonic boundary condition, the entropy condition and the
Rankine-Hugoniot condition at a jump location $y_0$ in $(0,1)$.

Because $\tau$ and $\epsilon$ only depend on $(E_1(0),E_2(0),\eta)$,
and $\eta$ only depends on $(E_1(0),E_2(0))$, there exists a
$\eta_0>0$ such that for any $\eta\in(0,\eta_0)$, there exists a
transonic solution jumps at $\rho_l=1-\eta$. Thus, we obtain
infinitely many transonic solutions due to arbitrary choice of
$0<\eta<\eta_0$.
\end{proof}

\begin{proof}[Proof of Theorem \ref{main-thm-3}] Combining Theorems \ref{thm3.1}-\ref{thm3.5}, we immediately obtain Theorem
\ref{main-thm-3}.
\end{proof}

\section{Concluding remarks}

In this section, we remark that Theorems  \ref{main-thm-1}-\ref{main-thm-3} both hold for the isentropic hydrodynamic model. For isentropic flow, the pressure function satisfies $P(\rho)=T\rho^\gamma$ for some constants $T>0$ and $\gamma>1$. Then system \eqref{stationary} reduces to
\begin{equation} \label{ientro}
\left \{\begin{array}{ll}
        J = \text{constant},\\
        \left(\dfrac{J^2}{\rho}+T\rho^\gamma\right)_x=\rho E-\dfrac{J}{\tau}, \qquad x\in (0,1).\\
         E_x=\rho-b(x).
        \end{array} \right.
\end{equation}
The sonic flow means
\begin{equation*}
\mbox{fluid velocity: } u=\frac{J}{\rho} =  c=\sqrt{P'(\rho)}=\sqrt{T\gamma\rho^{\gamma-1}}: \mbox{ sound speed}. \\
\end{equation*}
Without loss of generality, we assume that
\[
J=T\gamma=1.
\]
Then \eqref{ientro} is transformed to
\begin{equation} \label{4.3}
\left \{\begin{array}{ll}
        \left(\rho^{\gamma-1}-\dfrac{1}{\rho^2}\right)\rho_x=\rho E-\dfrac{1}{\tau},\\
         E_x=\rho-b(x),
        \end{array} \right.
\end{equation}
and our sonic
boundary conditions are proposed as
\begin{equation}\label{4.4}
 \rho(0)=\rho(1)=1.
\end{equation}
Now as in the isothermal fluid,  we can also identify that, for system \eqref{4.3}, $\rho>1$ is for
the subsonic flow, $\rho=1$ stands for the sonic flow, and
$0<\rho<1$ represents for the supersonic flow.

Following the proofs of Theorems \ref{main-thm-1}-\ref{main-thm-3}, one can easily obtain the following classification of solutions to system \eqref{4.3}-\eqref{4.4} for isentropic flow:

\begin{theorem} \

\begin{enumerate}
\item \underline{For subsonic doping profile}: $b(x)\in L^\infty(0,1)$ and $b(x)>1$ in $[0,1]$,
then system \eqref{4.3}-\eqref{4.4} admit:
\begin{enumerate}
\item a unique pair of interior subsonic solutions
$(\rho_{sub},E_{sub})(x)\in C^{\frac{1}{2}}[0,1]\times H^1(0,1)$
with $\rho_{sub}(x)\ge 1$;

\item at least a pair of interior supersonic solutions
$(\rho_{sup},E_{sup})(x)\in C^{\frac{1}{2}}[0,1]\times H^1(0,1)$
with $\rho_{sup}(x)\le 1$;

\item if further that $\tau$ is large and that $\bar{b}-\underline{b}\ll1$, then
system \eqref{4.3}-\eqref{4.4} has infinitely many transonic shock
solutions $(\rho_{trans},E_{trans})(x)\in L^\infty(0,1)\times
C^0(0,1)$;

\item if further that $b(x)=b>1$ is a constant, then when $\tau$ is
small enough, \eqref{4.3}-\eqref{4.4} has infinitely many $C^1$
 transonic solution; moreover, in this case there is no
transonic shock solution.
\end{enumerate}
\item \underline{For supersonic doping profile}: $b(x)\in L^\infty(0,1)$ and $0<b(x)\leq1$ in $[0,1]$,
then \eqref{4.3}-\eqref{4.4} admit:
\begin{enumerate}
\item a pair of interior supersonic solutions $(\rho_{sup},E_{sup})(x)
\in C^{\frac{1}{2}}[0,1]\times H^1(0,1)$  and infinitely many
transonic shock solution $(\rho_{trans},E_{trans})(x)\in
L^\infty(0,1)\times C^0(0,1)$ if $b(x)$ is close to 1 and $\tau$ is
large enough;
\item no interior subsonic solutions $(\rho_{sub},E_{sub})(x)$;
\item no interior supersonic solutions $(\rho_{sup},E_{sup})(x)$,
nor transonic shock solutions $(\rho_{trans},$ $E_{trans})(x)$ if
$b(x)$ is small
or $\tau$ is small.
\end{enumerate}

\end{enumerate}
\end{theorem}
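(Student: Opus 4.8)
The plan is to prove the isentropic classification by re-examining the proofs of Theorems~\ref{main-thm-1} and \ref{main-thm-3} and checking that every argument rests only on structural features of the momentum equation that survive the passage from $P(\rho)=\rho$ to $P(\rho)=\rho^\gamma/\gamma$. The key observation is the factorization
\[
\rho^{\gamma-1}-\frac{1}{\rho^2}=\frac{\rho^{\gamma+1}-1}{\rho^2}=\frac{\rho-1}{\rho^2}\cdot\frac{\rho^{\gamma+1}-1}{\rho-1},\qquad \frac{\rho^{\gamma+1}-1}{\rho-1}>0,\quad \frac{\rho^{\gamma+1}-1}{\rho-1}\Big|_{\rho=1}=\gamma+1,
\]
so the degenerate coefficient in \eqref{4.3} vanishes simply at the sonic value $\rho=1$, is positive in the subsonic regime $\rho>1$ and negative in the supersonic regime $0<\rho<1$ --- exactly as $1-\rho^{-2}$ in \eqref{1.5}. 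Dividing the first equation of \eqref{4.3} by $\rho$ and differentiating, as in the derivation of \eqref{elliptic}, gives
\[
\Big[\big(\rho^{\gamma-2}-\rho^{-3}\big)\rho_x\Big]_x+\frac1\tau\Big(\frac1\rho\Big)_x-(\rho-b)=0,\qquad \rho(0)=\rho(1)=1,
\]
which is partially elliptic and degenerate at the endpoints; the weak formulation is obtained by writing $(\rho^{\gamma-2}-\rho^{-3})\rho_x=\tfrac12\, a(\rho)\,[(\rho-1)^2]_x$ with $a(\rho)=\rho^{-3}\,\frac{\rho^{\gamma+1}-1}{\rho-1}$ smooth and bounded above and below on every interval $[\ell,\overline b]$, $\ell>0$, the precise analogue of \eqref{weak-solution}.

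With this reformulation in hand I would carry out, in order, the analogues of the results of Section~\ref{subsonic doping profile}. For the unique interior subsonic solution (Theorem~\ref{thm1}) I use the approximate problem obtained by inserting a current density $0<j<1$, i.e. replacing $\rho^{-2}$ by $j^2\rho^{-2}$ throughout \eqref{approximate equation}; since $\rho^{\gamma-2}-j^2\rho^{-3}=\rho^{\gamma-2}(1-j^2\rho^{-\gamma-1})\ge\rho^{\gamma-2}(1-j^2)$ is bounded below for $\rho\ge1$, the approximate equation is uniformly elliptic, the comparison principle (Lemma~\ref{comparison-principle}) transfers verbatim with $A(z,p)=(z^{\gamma-2}-j^2z^{-3})p+j/(\tau z)$, the lower barrier $1+m\sin(\pi x)$ still works, and the uniform $H^1$ bound on $(\rho_j-1)^{3/2}$ together with the compact embedding $H^1\hookrightarrow C^{1/2}$ passes to the limit $j\to1^-$. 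For the boundary regularity (the analogue of \eqref{eqn-2.20}), the only change is that $\rho^{\gamma-2}-\rho^{-3}=(\gamma+1)(\rho-1)+O((\rho-1)^2)$ near $\rho=1$ replaces $2(\rho-1)$, which merely adjusts the constants; the $C^{1/2}$ regularity and its optimality persist. The interior supersonic solution (Theorem~\ref{thm2}) is obtained the same way with a current parameter $k>1$ and the Schauder fixed-point scheme for $u_k=k/\rho_k$, and the ``unique critical point'' property follows from the identical sign analysis of \eqref{4.3}.

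For the transonic results I follow Theorems~\ref{thm3}, \ref{new-thm3}, \ref{thm3.4} and \ref{thm3.5}. The phase-plane analysis is unaffected: the critical point $(b,1/(\tau b))$ is still a saddle when $b>1$ and a stable focus when $b<1$, and the energy estimates of Lemma~\ref{lem4} and of the multi-step gluing in Theorem~\ref{thm3} use only the factorization above together with Young's and Poincar\'e's inequalities. The single genuine modification is the Rankine--Hugoniot relation: \eqref{RH} becomes
\[
\frac{1}{\rho_l}+\frac{\rho_l^\gamma}{\gamma}=\frac{1}{\rho_r}+\frac{\rho_r^\gamma}{\gamma},\qquad E_l=E_r,
\]
and since $g(\rho):=\rho^{-1}+\rho^\gamma/\gamma$ has $g'(\rho)=(\rho^{\gamma+1}-1)/\rho^2$, hence is strictly decreasing on $(0,1)$, strictly increasing on $(1,\infty)$ with minimum at $\rho=1$, each $\rho_l\in(0,1)$ determines a unique $\rho_r=\rho_r(\rho_l)>1$, the map $\rho_l\mapsto\rho_r$ is strictly decreasing, and $\rho_r-1=-(\rho_l-1)+O((\rho_l-1)^2)$; this replaces $\rho_l\rho_r=1$ and is all that the gluing and the ``$\rho_r-\rho_l\ll1$ determines $x_0$'' argument use, so the infinitely-many-shocks statements go through. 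The non-existence results (Theorems~\ref{thm3.1}, \ref{3.2}, \ref{3.3}) likewise use only the sign of $\rho-b$, the cancellation $\int_0^1[(\rho-1)^2]_x/(\tau\rho)\,dx=0$ (valid verbatim), and an algebraic identity of the type $u\,[(u-1)^2]_x=\frac13[(u-1)^2(2u+1)]_x$, which is replaced by the corresponding primitive for the isentropic flux; the smallness thresholds on $\overline b$ and $\tau$ change quantitatively, but the same dichotomy holds.

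The main obstacle is the family of interior $C^1$ smooth transonic solutions (Theorems~\ref{new-thm3}--\ref{new-thm4}), whose proof in the isothermal case rests on explicit formulas --- the curve $\Xi(n)$, the auxiliary quantity $I$, the cubic $H_2(n,\F)=0$ solved by Cardan's formula, and the expansions producing $\theta_0=0$ and $\theta_1=\tfrac12\big(\tau^{-1}-\sqrt{\tau^{-2}-8(b-1)}\big)$ --- each of which must be recomputed with $\rho^{\gamma-1}-\rho^{-2}$ in place of $1-\rho^{-2}$. I would redo the transformation $F=E-1/(\tau\rho)$, $n=\rho-1$ to obtain the $\gamma$-analogue of \eqref{new1.5}, verify that the transformed critical point remains a saddle, re-derive $\Xi(n)$ and the sign of the discriminant $(q/2)^2+(p/3)^3$ for $\tau\ll1$, and track the $\gamma$-dependence of the leading coefficients, expecting the sign conditions of Lemmas~\ref{new-lem1}--\ref{new-lem4} to remain valid for all $\gamma>1$ since they depend continuously on $\gamma$ and hold at $\gamma=1$. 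Once the one-sided $C^1$ matching at the sonic point is established on both the supersonic side and the subsonic side, the gluing at an arbitrary transition point $x_0\in(0,1)$ and the non-existence of shocks for $\tau\ll1$ follow exactly as in Theorems~\ref{new-thm3}--\ref{new-thm4}, completing the classification.
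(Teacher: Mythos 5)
Your proposal is correct and follows essentially the same route as the paper, whose Section 4 simply asserts that the isothermal arguments of Theorems \ref{main-thm-1} and \ref{main-thm-3} carry over to $P(\rho)=T\rho^{\gamma}$ without supplying details. In fact you give more than the paper does: the factorization $\rho^{\gamma-1}-\rho^{-2}=(\rho^{\gamma+1}-1)/\rho^{2}$ showing the degeneracy at $\rho=1$ is still simple, and the replacement of $\rho_l\rho_r=1$ by the monotone correspondence determined by $g(\rho)=\rho^{-1}+\rho^{\gamma}/\gamma$ in the Rankine--Hugoniot condition, are precisely the two substantive points that must be checked, and you handle both correctly (the remaining recomputation you flag for the $C^{1}$ transonic case is exactly the part the paper also leaves implicit).
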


\section*{Acknowledgements}
J. Li's research was partially supported by NSF of China under the
grant 11571066. M. Mei's research was partially supported by NSERC
grant RGPIN 354724-2016 and FRQNT grant 192571.  K. Zhang's work was
partially supported by NSF of China under the grant 11371082 and the
Fundamental Research Funds for the Central Universities under the
grant 111065201.

\bibliographystyle{amsplain}

\begin{thebibliography}{10}


\bibitem{AMPS}
     \newblock U. Ascher, P. Markowich, P. Pietra and C. Schmeiser,
     \newblock A phase plane analysis of transonic solutions for the hydrodynamic semiconductor model,
     \newblock {\it Math. Mod. Meth. Appl. Sci.}, {\bf 1} (1991), 347--376.

\bibitem{Bae} M. Bae, B. Duan and C. Xie,  Subsonic solutions for steady Euler-Poisson system in two-dimensional nozzles,
 {\it SIAM J. Math. Anal.}  {\bf 46} (2014), 3455--3480.


\bibitem{Bae-Duan-Xie} M. Bae, B. Duan and C. Xie, Subsonic flow for the multidimensional Euler-Poisson system,
{\it Arch. Rational Mech. Anal.}, {\bf 220} (2016), 155--191.



\bibitem{Bers} L. Bers, Existence and uniqueness of a subsonic flow past a given profile,
{\it Comm. Pure Appl. Math.}, {\bf 7} (1954), 441--504.

\bibitem{B}
K. Bl{\o}tekj{\ae}r, Transport equations for electrons in two-valley
semiconductors, {\it IEEE Trans. Electron Devices}, {\bf 17} (1970), 38--47


\bibitem{Chang}
     \newblock K. C. Chang,
     \newblock Methods in Nonlinear Analysis,
     \newblock Springer-Verlag, Berlin (2005).

\bibitem{Chen-Huang-Wang} G.Q. Chen, F. Huang and T. Wang, Subsonic-sonic limit of approximate solutions to multidimensional steady Euler equations,
{\it Arch. Rational Mech. Anal.}, {\bf 219} (2016), 719--740.

\bibitem{Chen-Dafermos-Slemrod-Wang} G.Q. Chen, C.M. Dafermos, M. Slemrod, D. Wang, On two-dimensional sonic-subsonic flow,
{\it Comm. Math. Phys.}, {\bf 271} (2007), 635--647.

\bibitem{Degond-Markowich}
     \newblock P. Degond and P. Markowich,
     \newblock On a one-dimensional steady-state hydrodynamic model for semiconductors,
     \newblock  {\it Appl. Math. Lett}. {\bf 3} (1990), 25--29.

\bibitem{Degond-Markowich2} P. Degond and P. Markowich,
A steady state potential flow model for semiconductors, {\it Ann.
Mat. Pura Appl.} {\bf 4} (1993),  87--98.



\bibitem{Fang-Ito} W. Fang and K. Ito, Steady-state solutions of a one-dimensional hydrodynamic model for
semiconductors, {\it J. Differential Equations}, {\bf 133} (1997),
224--244.

\bibitem{Gamba} I. M. Gamba, Stationary transonic solutions of a one-dimensional hydrodynamic model for semiconductors, {\it Comm. Partial Diff. Equ.},
{\bf 17} (1992), 553--577.

\bibitem{Gamba2} I. M. Gamba and C. S. Morawetz, A viscous approximation for a 2-D steady semiconductor or transonic gas dynamic flow: existence for potential flow, {\it Comm. Pure Appl. Math.}, {\bf 49} (1996), 999--1049.

\bibitem{Gilbarg-Trudinger}
     \newblock D. Gilbarg and N. Trudinger,
     \newblock Elliptic Partial Differential Equations of Second Order. Reprint of the 1998 edition,
     \newblock Springer-Verlag, Berlin (2001).

\bibitem{Guo} Y. Guo and W.  Strauss, Stability of semiconductor states with insulating and contact boundary conditions,
{\it Arch. Ration. Mech. Anal.}, {\bf  179} (2005), 1--30.

\bibitem{HMWY} F. Huang, M. Mei, Y. Wang and H. Yu, Asymptotic convergence to stationary waves for unipolar hydrodynamic model of semiconductors,
{\it SIAM J. Math. Anal.},  {\bf 43} (2011), 411--429.

 \bibitem{HMWY2} F. Huang, M. Mei, Y. Wang and H. Yu, Asymptotic convergence to planar stationary waves for multi-dimensional unipolar hydrodynamic model of semiconductors, {\it  J. Differential Equations}, {\bf 251} (2011), 1305–-1331.

 \bibitem{Jerome} J. W. Jerome, Steady Euler-Poisson systems: a differential/integral equation formulation with general constitutive relations,
  {\it Nonlinear Anal.}  {\bf 71} (2009),  e2188-e2193.


\bibitem{Jungel}
A. J\"ungel, Quasi-Hydrodynamic Semiconductor Equations, Progr.
Nonlinear Differential Equations Appl., vol. 41, Birkh\'{a}user
Verlag, Besel, Boston, Berlin, 2001

\bibitem{LMM} H.-L. Li, P. Markowich, and M. Mei, Asymptotic behavior of solutions of the hydrodynamic
model of semiconductors, {\it Proc. Roy. Soc. Edinburgh Sect. A},
{\bf 132} (2002), 359--378.

\bibitem{Luo-Rauch-Xie-Xin}
     \newblock T. Luo, J. Rauch, C. Xie and Z. Xin,
     \newblock Stability of transonic shock solutions for one-dimensional Euler-Poisson equations,
     \newblock {\it Arch. Ration. Mech. Anal.}, {\bf 202} (2011), 787--827.

\bibitem{Luo-Xin}
     \newblock T. Luo and Z. Xin,
     \newblock Transonic shock solutions for a system of Euler-Poisson equations,
     \newblock {\it Commun. Math. Sci.}, {\bf 10} (2012), 419--462.


\bibitem{Markowich-Ringhofer-Schmeiser}
     \newblock P. Markowich, C. Ringhofer and C. Schmeiser,
     \newblock Semiconductor Equations,
     \newblock Springer, Wien, New York, (1989).

\bibitem{Nishibata-Suzuki} S. Nishibata and M. Suzuki, Asymptotic stability of a stationary solution to a hydrodynamic
model of semiconductors, {\it Osaka J. Math.} {\bf  44} (2007),
639--665.


\bibitem{Peng-Violet}
     \newblock Y. Peng and I. Violet,
     \newblock Example of supersonic solutions to a steady state Euler-Poisson system,
     \newblock {\it Appl. Math. Let.}, {\bf 19} (2006), 1335--1340.

\bibitem{Rosini}
M.D. Rosini, A phase analysis of transonic solutions for the
hydrodynamic semiconductor model, {\it Quart. Appl. Math.}, {\bf 63}
(2005), 251--268.

\bibitem{S-M}
 A. Sitenko, V. Malnev, Plasma Physics Theory, Appl. Math. Math. Comput., vol. 10, Chapman \& Hall, London, 1995

\bibitem{Wang} C. Wang, Continuous subsonic-sonic flows in a general nozzle, {\it J. Differential Equations}, {\bf 259} (2015), 2546--2575.

\bibitem{Wang-Xin} C. Wang and Z. Xin, On a degenerate free boundary problem and continuous subsonic-sonic flows in a convergent nozzle,
{\it Arch. Rational Mech. Anal.}, {\bf 208} (2013), 911--975.

\bibitem{Wang-Xin-smooth} C. Wang and Z. Xin, Smooth transonic flows in de Laval
nozzles,  arXiv:1304.2473.

\bibitem{Wang-Xin-smooth2} C. Wang and Z. Xin, On sonic curves of smooth subsonic-sonic and transonic flows, {\it SIAM J. Math. Anal.} {\bf  48} (2016),  2414--2453.

\bibitem{Xie-Xin} C. Xie and Z. Xin, Global subsonic and subsonic-sonic flows  through infinitely long axially symmetric nozzles,
{\it J. Differential Equations}, {\bf 248} (2010), 2657--2683.


\end{thebibliography}

\end{document}